\newcommand{\bla}{\bm{\lambda}}
\newcommand{\1}{\mathds{1}}
\numberwithin{equation}{section}
\newcommand{\rd}{{\rm d}}
\newcommand{\bx}{{\bf{x}}}
\newcommand{\be}{\begin{equation}}
\newcommand{\ee}{\end{equation}}
\newcommand{\e}{{\varepsilon}}
\newcommand{\la}{\lambda}
\newcommand{\wt}{\widetilde}
\newcommand{\ii}{\mathrm{i}} 
\newcommand{\dd}{\mathrm{d}}
\renewcommand{\epsilon}{\varepsilon}
\renewcommand{\leq}{\leqslant}
\renewcommand{\geq}{\geqslant}
\renewcommand{\le}{\leq}
\newcommand{\E}{\mathbb{E}}
\DeclareMathOperator{\diag}{diag}
\DeclareMathOperator{\re}{Re}
\DeclareMathOperator{\im}{Im}
\DeclareMathOperator{\OO}{O}
\DeclareMathOperator{\oo}{o}
\theoremstyle{plain} 
\newtheorem{theorem}{Theorem}[section]
\newtheorem*{theorem*}{Theorem}
\newtheorem{lemma}[theorem]{Lemma}
\newtheorem*{lemma*}{Lemma}
\newtheorem{corollary}[theorem]{Corollary}
\newtheorem*{corollary*}{Corollary}
\newtheorem{proposition}[theorem]{Proposition}
\newtheorem*{proposition*}{Proposition}
\newtheorem*{definition*}{Definition}
\newtheorem*{example*}{Example}
\newtheorem{remark}[theorem]{Remark}
\newtheorem*{remark*}{Remark}
\newtheorem*{remarks*}{Remarks}
\renewcommand{\subsection}{\@startsection
{subsection}
{2}
{0mm}
{-\baselineskip}
{0 \baselineskip}
{\normalfont\bf\itshape}} 
\newcommand{\nc}{\normalcolor}
\def\@empty{}
\def\author#1{\par
    {\centering{\authorfont#1}\par\vspace*{0.05in}}
}
\def\titlefont{\fontsize{13}{15}\bfseries\boldmath\selectfont\centering{}}
\def\authorfont{\fontsize{13}{15}}
\def\abstractfont{\fontsize{8}{10}}
\let\affiliationfont\rhfont
\def\address#1{\par
    {\centering{\affiliationfont#1\par}}\par\vspace*{11pt}
}
\def\keywords#1{\par
    \vspace*{8pt}
    {\authorfont{\leftskip18pt\rightskip\leftskip
    \noindent{\it\small{Keywords}}\/:\ #1\par}}\vskip-12pt}
\def\title#1{
    \thispagestyle{plain}
    \vspace*{-14pt}
    \vskip 79pt
    {\centering{\titlefont #1\par}}%
    \vskip 1em
}
\renewenvironment{abstract}{\par%
    \vspace*{6pt}\noindent 
    \abstractfont
    \noindent\leftskip18pt\rightskip18pt
}{%
  \par}
\renewcommand{\section}{\@startsection
{section}
{1}
{0mm}
{-2\baselineskip}
{1\baselineskip}
{\normalfont\large\scshape\centering}} 
\begin{document}

~\vspace{-1.5cm}

\title{The distribution of overlaps between eigenvectors of Ginibre matrices}

\vspace{1.2cm}
\noindent\begin{minipage}[b]{0.5\textwidth}

 \author{P. Bourgade}

\address{Courant Institute, New York University\\
  bourgade@cims.nyu.edu}
 \end{minipage}
\begin{minipage}[b]{0.5\textwidth}

 \author{G. Dubach}

\address{Courant Institute, New York University\\
 dubach@cims.nyu.edu}
 \end{minipage}
\begin{minipage}[b]{0.5\textwidth}

 \end{minipage}


\begin{abstract}
We study the overlaps between eigenvectors of nonnormal matrices. 
They quantify the stability of the spectrum, and characterize
the joint eigenvalues increments under Dyson-type dynamics.
Well known work by Chalker and Mehlig calculated the expectation of these overlaps for complex Ginibre matrices. For the same model, we extend their results by deriving the distribution of diagonal overlaps (the condition numbers), and their correlations. We prove:

\vspace{0.1cm}

{\addtolength{\leftskip}{1.5em} \setlength{\parindent}{0em}

\makebox[1.5em][l]{(i)} convergence of condition numbers for bulk eigenvalues to an inverse Gamma distribution; more generally, we decompose the quenched overlap (i.e. conditioned on eigenvalues)  as a product of independent random variables;
\par

\vspace{0.1cm}

\makebox[1.5em][l]{(ii)} asymptotic expectation of off-diagonal overlaps, both for microscopic or mesoscopic separation of the corresponding eigenvalues;\par

\vspace{0.1cm}

\makebox[1.5em][l]{(iii)} decorrelation of condition numbers associated to eigenvalues at mesoscopic distance, at polynomial speed in the dimension;
 \par

\vspace{0.1cm}

\makebox[1.5em][l]{(iv)}\, second moment asymptotics to identify the fluctuations order for off-diagonal overlaps, when the related eigenvalues are separated by any mesoscopic scale;\par

\vspace{0.1cm}

\makebox[1.5em][l]{(v)}\, a new formula for the correlation between overlaps for eigenvalues at microscopic distance, both diagonal and off-diagonal.\par}
\setlength{\parindent}{0em}

\vspace{0.1cm}

These results imply estimates on the extreme condition numbers, the volume of the pseudospectrum and the diffusive evolution of eigenvalues under Dyson-type dynamics, at equilibrium. 
\end{abstract}

\keywords{Nonnormal matrices, Ginibre ensemble, Eigenvectors overlaps,  condition number, pseudospectrum.}

\tableofcontents

{\let\thefootnote\relax\footnotetext{\noindent This work is supported by the NSF grant DMS\#1513587.}}

\newpage
\section{Introduction}

\subsection{The Ginibre ensemble.}\ 
Throughout this article we will essentially consider a complex Ginibre matrix $G_N =(G_{ij})_{i,j=1}^N$ where the $G_{ij}$'s are independent and identically distributed  complex Gaussian random variables, with distribution $\mu=\mu^{(N)}$:
\begin{equation}\label{eqn:Gini}
G_{ij} \stackrel{{\rm (d)}}{=} \mathscr{N}_{\mathbb{C}} \Big(0, \frac{1}{2N} \mathrm{Id} \Big), \qquad \mu(\dd \lambda) = \frac{N}{\pi} e^{- N|\lambda|^2} \dd m(\lambda),
\end{equation}
where $m$ is the Lebesgue measure on $\mathbb{C}$. As proved in  \cite{Gin1965},
the eigenvalues of $G_N$ have joint distribution
\begin{equation}\label{eqn:partitionfunction}
\rho_{N} (\lambda_1, \dots, \lambda_N) m^{\otimes N}(\rd\bla)= \frac{1}{Z_N} \prod_{j<k} |\lambda_j - \lambda_k|^2 
\prod_{k=1}^N\mu(\rd \lambda_k),
\end{equation}
where $Z_N=N^{-N(N-1)/2}\prod_{j=1}^N j!$.
The above measure is written $\mathbb{P}_N$, with corresponding expectation $\E_N$. The limiting empirical spectral measure converges to the circular law, i.e. $\frac{1}{N}\sum \delta_{\lambda_i}\to \frac{1}{\pi}\mathds{1}_{|\lambda|<1}\rd m(\lambda)$.


The statistics of eigenvalues of Ginibre matrices have been studied in great details, and other non-Hermitian matrix models are known to be integrable, see e.g. \cite{KhoSom2011,For2010}. Much less is known about the statistical properties of eigenvectors of non-Hermitian ensembles.

\subsection{Overlaps.}\label{overlapssubsec}\ Almost surely, the eigenvalues of a Ginibre matrix are distinct and $G$ can be diagonalized with left eigenvectors denoted $(L_i)_{i=1}^N$, right eigenvectors $(R_i)_{i=1}^N$, defined by
$G R_i=\lambda_iR_i$, $L_i^{\rm t}G=\lambda_i L_i^{\rm t}$ (for a column vector $\bx$, we write $\bx^{\rm t}=(x_1,\dots,x_n)$, $\bx^{*}=(\overline{x_1},\dots,\overline{x_n})$ and 
 $\|\bx\|=(\bx^*\bx)^{1/2}$).
Right and left eigenvectors are biorthogonal basis sets, normalized by
\begin{equation}\label{eqn:normalized}
L_i^tR_j=\delta_{ij}.
\end{equation}
In other words, defining $X$ with $i$th column $R_i$, we have
$G= X \Delta X^{-1}$ with $\Delta={\rm diag}(\lambda_1,\dots,\lambda_N)$, and $L_i^{\rm t}$ is the $i$th row of $Y=X^{-1}$.
Because of the normalization (\ref{eqn:normalized}), the first interesting statistics to quantify non-orthogonality of the eigenbasis is
\begin{equation}\label{eqn:overlap}
\mathscr{O}_{ij}= (R_j^* R_i)(L_j^* L_i).
\end{equation}
These overlaps are invariant under the rescaling $R_i\to c_iR_i$, $L_i\to c_i^{-1} L_i$ and the
diagonal overlaps $\mathscr{O}_{ii}= \| R_i \|^2 \| L_i \|^2$ directly quantify the stability of the spectrum.
Indeed, if we assume all eigenvalues of $G$ are distinct and denote $\lambda_i(t)$ the eigenvalues of $G+tE$, standard perturbation theory yields (in this paper  $\|M\|=\sup_{\|\bx\|_2=1}\|M \bx\|_2$)
$$
\mathscr{O}_{ii}^{1/2}=\lim_{t\to 0}\sup_{\|E\|=1}t^{-1}|\lambda_i(t)-\lambda_i|,
$$
so that the $\mathscr{O}_{ii}^{1/2}$'s are also called condition numbers.
They also naturally appear through the formulas $\mathscr{O}_{ii}^{1/2}=\|R_iL_i^{\rm t}\|$
or  $\mathscr{O}_{ii}^{1/2}=\limsup_{z\to\lambda_i} \|(z-G)^{-1}\|\cdot |z-\lambda_i|$.
We refer to \cite[Sections 35 and 52]{Tre2005} for further discussion and references about the relevance of condition numbers to the perturbative theory of eigenvalues, and to estimates of the pseudospectrum.

  Eigenvector overlaps also play a fundamental role in non perturbative dynamical settings. 
First, the large off-diagonal $\mathscr{O}_{ij}$'s appear when $G$ is the generator of evolution in real or imaginary time, see \cite[Appendix B]{ChaMeh2000}. More generally, eigenvector correlations are as relevant as eigenvalue distributions in determining evolution at intermediate times,  a well known fact in hydrodynamic stability theory \cite{TreTreRedDri1993}.
Second, the overlaps also fully characterize the eigenvalue increments when all matrix entries undergo independent Brownian motions, as shown in Appendix A, for any deterministic initial condition.
For the Dyson Brownian motion on Hermitian matrices, the eigenvalues evolution is autonomous and coincides with Langevin dynamics for a one-dimensional log-gas. On the contrary, in the nonnormal setting, the Dyson and Langevin dynamics strongly differ.
More about the Dyson-type dynamics in the context of the Ginibre ensemble can be found in \cite{BurGreNowTarWar2014,GrelaWarchol}, and the Langevin equation related to (\ref{eqn:partitionfunction}) is studied in  \cite{BolChaFon2017}.

\subsection{Overlaps statistics.}\ The statistical study of overlaps started with the seminal work of Chalker and Mehlig \cite{ChaMeh1998,ChaMeh2000,MehCha1998}. They estimated the large $N$ limit of the expectation of diagonal and off-diagonal overlaps, for the complex Ginibre ensemble: for any $ |z_1|, |z_2|<1$,
\begin{align}
&\E\left(\mathscr{O}_{11}\mid \lambda_1=z_1\right)\underset{N\to\infty}{\sim}N(1-|z_1|^2),\label{eqn:O11}\\
&\E\left(\mathscr{O}_{12}\mid \lambda_1=z_1,\lambda_2=z_2\right)\underset{N\to\infty}{\sim}-\frac{1}{N}\frac{1-z_1\overline{z_2}}{|z_1-z_2|^4}\frac{1-(1+N|z_1-z_2|^2)e^{-N|z_1-z_2|^2}}{1-e^{-N|z_1-z_2|^2}}\label{eqn:O12},
\end{align}
with (\ref{eqn:O12}) uniformly in $|z_1-z_2|$ from the macroscopic up to the microscopic $N^{-1/2}$ scale\footnote{Our formula (\ref{eqn:O12}) differs from the analogues in \cite{ChaMeh1998,ChaMeh2000,MehCha1998,WalSta2015}
through the additional denominator, due to eigenvalues repulsion: we consider conditional expectation instead of averages.}.
In \cite{ChaMeh2000},  (\ref{eqn:O11}) and (\ref{eqn:O12}) were rigorously established for $z_1=0$,
and convincing heuristics extended them anywhere in the bulk of the spectrum. From (\ref{eqn:O11}), one readily
quantifies the instability of the spectrum, an order $N$ greater than for normal matrices in the bulk, and more stable closer to the edge. 

An increasing interest in the statistical properties of overlaps for nonnormal matrices followed in theoretical physics
\cite{JanNorNovPapZah1999,Rot2009,GoeSki2011,BelNowSpeTar2017,SchFraPatBee2000}, often
with interest in calculating overlaps averages beyond the Ginibre ensemble. For example, 
eigenvector overlaps appear to describe resonance shift if one perturbs a scattering system
\cite{FyoMeh2002,FyoSom2003,FyoSav2012}. This was experimentally verified  \cite{GroLegMorRicSav2014}. Remarkably, the exact statistics (\ref{eqn:O12}) appeared very recently in an experiment from \cite{DavyGenack} for microscopic separation of eigenvalues, suggesting some universality of this formula.
Unfortunately, many of the models considered in the physics literature are perturbative, and most of the examined statistics are limited to expectations.

In the mathematics community, the overlaps were recently studied in \cite{WalSta2015}. Walters and Starr  extended (\ref{eqn:O11}) to any $z_1$ in the bulk, established asymptotics for $z_1$ at the edge of the spectrum, and suggested an approach 
towards a proof of (\ref{eqn:O12}). They also studied the
connection between overlaps and mixed matrix moments. Concentration for such moments for more general matrix models was established in \cite{ErdKruRen2017}, together with applications to coupled differential equations with random coefficients.
We continue the rigorous analysis of overlaps by deriving the full distribution of the condition numbers for bulk eigenvalues of the complex Ginibre ensembles.
We also establish (\ref{eqn:O12}) and an explicit formula for the correlation between diagonal and off-diagonal overlaps, on any scale including microscopic. These formulas have 
consequences on the volume of the pseudospectrum and eigenvalues dynamics.

Motivated by our explicit distribution for the overlaps, Fyodorov \cite{Fyodorov2018} recently
derived the distribution of diagonal overlaps for real eigenvalues of real Ginibre matrices, as well as an alternative proof for the distribution of diagonal overlaps for the complex Ginibre ensemble. Fyodorov's method relies on the supersymmetry approach in random matrix theory,
while our technique is probabilistic, as described below.

\subsection{Main Results.}\ 
Equation (\ref{eqn:O11}) suggests that the overlaps have typical size of order $N$. 
For the complex Ginibre ensemble (like all results below), we confirm that this is indeed the typical
behavior, identifying the limiting distribution of $\mathscr{O}_{11}$. We recall that a Gamma random variable $\gamma_{\alpha}$ has density $\frac{1}{\Gamma(\alpha)}x^{\alpha-1}e^{-x}$ on $\mathbb{R}_+$.

\begin{theorem}[Limiting distribution of diagonal overlaps]\label{thm:diag} Let $\kappa>0$ be an arbitrarily constant. Uniformly\footnote{More precisely, for any smooth, bounded, compactly supported function $f$ and deterministic sequence $(z_N)$ such that $|z_N|<1-N^{-\frac{1}{2}+\kappa}$ we have $\E\left(f(\mathscr{O}_{11}/(N(1-|z_N|^2)))\mid \la_1=z_N\right)\to\E f(\gamma_2^{-1})$.} in $|z|<1-N^{-\frac{1}{2}+\kappa}$, the following holds.
Conditionally on $\lambda_1=z$, the rescaled diagonal overlap $\mathscr{O}_{11}$ converges in distribution to an inverse Gamma random variable with parameter 2 as $N\to\infty$, namely
\begin{equation}\label{eqn:law}
\frac{\mathscr{O}_{11}}{N(1-|z|^2)} \overset{(\rm d)}{\to}  \frac{1}{\gamma_2}.
\end{equation}
\end{theorem}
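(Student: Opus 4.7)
The plan is to first derive, via the Schur decomposition of $G$, an explicit product representation of $\mathscr{O}_{11}$ conditional on the eigenvalues; and then analyze this product as $N \to \infty$ using Ginibre local statistics.

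\textbf{Schur representation and quenched product formula.} Write $G = U(Z+T)U^*$ with $Z$ diagonal, $T$ strictly upper triangular, and $U$ unitary. By eigenvalue exchangeability in (\ref{eqn:partitionfunction}) we may place $\lambda_1 = z$ at position $(N,N)$ of $Z$; then, conditional on the eigenvalues, the above-diagonal entries of $T$ are i.i.d.\ $\mathscr{N}_{\C}(0, 1/N)$. With $z$ in the bottom-right corner of $Z+T$, the left eigenvector at $z$ is $e_N$, while the right eigenvector has the form $(v, 1)^{\mathrm{t}}$ with $v \in \C^{N-1}$ solving $(zI - M_0)v = b$, where $M_0$ is the upper-left $(N-1)\times(N-1)$ block of $Z+T$ and $b = (T_{1,N}, \ldots, T_{N-1,N})^{\mathrm{t}}$. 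Unitary invariance of $\|\cdot\|$ gives $\mathscr{O}_{11} = 1 + \|v\|^2$. Back-substitution in the triangular system reads $(z - \lambda_{\pi(k)})v_k = b_k + \sum_{j > k} (M_0)_{kj} v_j$ for a fixed bijection $\pi: \{1,\ldots,N-1\} \to \{2,\ldots,N\}$; conditional on $v_{k+1}, \ldots, v_{N-1}$ and the eigenvalues, the right-hand side is a centered complex Gaussian of variance $N^{-1}(1 + \sum_{j > k} |v_j|^2)$, because its fresh Gaussian inputs $b_k$ and $(M_0)_{k, k+1}, \ldots, (M_0)_{k, N-1}$ are independent of everything determining $v_{k+1}, \ldots, v_{N-1}$. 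Setting $S_j := 1 + \sum_{\ell \geq j} |v_\ell|^2$, this yields $|v_k|^2 = S_{k+1} E_k / (N |z - \lambda_{\pi(k)}|^2)$ with $(E_k)$ i.i.d.\ $\mathrm{Exp}(1)$ independent of the eigenvalues, and the recursion $S_k = S_{k+1}\bigl(1 + E_k/(N|z-\lambda_{\pi(k)}|^2)\bigr)$ telescopes to
\begin{equation}\label{eq:productplan}
\mathscr{O}_{11} \;\overset{\mathrm{(d)}}{=}\; \prod_{k=2}^{N} \pa{1 + \frac{E_k}{N\,|z - \lambda_k|^2}}
\end{equation}
conditional on $\lambda_1 = z$. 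This is the quenched product representation announced in part (i) of the abstract.

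\textbf{Asymptotics.} Take $\log$ of (\ref{eq:productplan}) and split according to whether $|z - \lambda_k|$ is microscopic ($\leq R/\sqrt{N}$) or not, with $R = R(N) \to \infty$ sufficiently slowly. For the second group, $\log(1 + E_k/(N|z-\lambda_k|^2))$ is small: Taylor expansion, concentration of $\sum E_k/(N|z-\lambda_k|^2)$ around its mean, and bulk Ginibre eigenvalue rigidity reduce this part to a deterministic prefactor of leading order $\log(N(1-|z|^2))$, using the explicit identity $\frac{1}{\pi}\int_{|w-z|>\epsilon,\,|w|<1} |z-w|^{-2}\, dA(w) = \log((1-|z|^2)/\epsilon^2)$. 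For the first group, the rescaled eigenvalues $\zeta_k = \sqrt{N}(\lambda_k - z)$ converge to the bulk Ginibre point process $\Pi_z$ of density $1/\pi$, and the corresponding microscopic product converges jointly in distribution to $\prod_{\zeta \in \Pi_z,\, |\zeta| \leq R}\bigl(1 + E_\zeta / |\zeta|^2\bigr)$ with independent $\mathrm{Exp}(1)$ marks $E_\zeta$. Combining the two contributions and sending $R \to \infty$ in a consistent way identifies the limit law of $\mathscr{O}_{11}/(N(1-|z|^2))$.

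\textbf{Identification of the limit.} To pin down that this limit is $1/\gamma_2$, the cleanest route is a direct Mellin-transform computation from (\ref{eq:productplan}): using $\E_E(1 + E/c)^{-s} = c^s e^c \Gamma(1-s, c)$ for $E \sim \mathrm{Exp}(1)$, one gets
\[
\E\!\bigl[\mathscr{O}_{11}^{-s} \mid \lambda_1 = z\bigr] = \E_{\lambda_2, \ldots, \lambda_N}\!\qBB{\prod_{k=2}^{N} c_k^{\,s}\, e^{c_k}\, \Gamma(1-s, c_k)}, \qquad c_k = N |z - \lambda_k|^2,
\]
and the outer expectation, being over a determinantal process, reduces to a Fredholm determinant of the Ginibre kernel conditioned on $\lambda_1 = z$. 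Using the explicit Gaussian form of that kernel, one verifies that $(N(1-|z|^2))^s\, \E[\mathscr{O}_{11}^{-s} \mid \lambda_1 = z]$ converges, for $s$ in a neighborhood of zero, to $\Gamma(2+s)/\Gamma(2) = \E\gamma_2^{\,s}$, which is exactly the Mellin transform of $1/\gamma_2$; uniqueness of distributions with an analytic Mellin transform, together with a tightness estimate ruling out escape of mass, then yields (\ref{eqn:law}).

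\textbf{Main obstacle.} The principal difficulty is pinning down the precise shape parameter $2$ of the limiting inverse Gamma. The sum $\sum_k 1/(N|z-\lambda_k|^2)$ diverges logarithmically in $N$, with the divergence distributed between the microscopic cluster of eigenvalues near $z$ and the mesoscopic bulk; only after a delicate cancellation—driven by the exact Gaussian form of the Ginibre correlation kernel—does one recover the normalization $N(1-|z|^2)$ together with the correct shape parameter $2$, rather than, say, $1$. The uniformity in $|z| < 1 - N^{-1/2+\kappa}$ stated in the theorem requires carrying this cancellation through quantitatively, which is possible because the Ginibre kernel retains its explicit Gaussian form throughout the allowed range of $z$.
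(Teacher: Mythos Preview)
Your quenched product formula (\ref{eq:productplan}) is correct and coincides with the paper's Theorem~\ref{thm:quenched}, derived there by the same Schur recursion. Your short-range/long-range split is also the paper's strategy: the long-range part concentrates to $\log(N(1-|z|^2))$ via rigidity, and the short-range part is handled by translation invariance of the conditioned measure.

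The genuine gap is in your ``Identification of the limit''. You write that the Mellin transform reduces to a Fredholm determinant and that ``one verifies'' it converges to $\Gamma(2+s)/\Gamma(2)$; but this verification \emph{is} the entire content of the theorem, and you have not performed it. Evaluating $\det(I-K_z(1-\phi_s))$ with $\phi_s(\lambda)=c^s e^c\Gamma(1-s,c)$, $c=N|z-\lambda|^2$, is not a routine computation, and nothing in your argument explains why the shape parameter should be exactly $2$ rather than any other value. Your ``Main obstacle'' paragraph correctly identifies this as the hard point but does not resolve it. Similarly, your alternative route via the limiting microscopic product $\prod_{\zeta\in\Pi_z}(1+E_\zeta/|\zeta|^2)$ over the infinite Ginibre process is never tied back to $1/\gamma_2$; you switch to Mellin precisely because that product is not obviously identifiable.

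The paper closes this gap by a different and much more concrete mechanism. At $z=0$, the radii $N|\lambda_2|^2,\dots,N|\lambda_N|^2$ are, conditionally on $\lambda_1=0$, \emph{independent} Gamma variables $\gamma_2,\dots,\gamma_N$ (a Kostlan-type identity, Corollary~\ref{kostbis}). Combined with your product formula this gives the exact finite-$N$ law $\mathscr{O}_{11}\overset{(\rm d)}{=}\prod_{j=2}^N(1+\gamma_1^{(j)}/\gamma_j)=1/\beta_{2,N-1}$ via elementary Beta--Gamma identities (Lemma~\ref{lemmebeta1}), and $N\beta_{2,N-1}\to\gamma_2$ is immediate. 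The extension to general bulk $z$ then proceeds not by a Fredholm computation but by showing that the short-range factor has the \emph{same} law (up to $e^{-cN^{2\kappa}}$ errors) under $\nu_z$ as under $\nu_0$; this is Lemma~\ref{lem:invariance2}, proved using negative association for determinantal processes. In short: the paper never computes your Fredholm determinant for general $z$; it computes an exact Beta law at $z=0$ and transports it.
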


\noindent Our proof  also gives convergence of the expectation, in the complex Ginibre case, so that it extends (\ref{eqn:O11}).
Equation (\ref{eqn:law}) means that for any continuous bounded function $f$ we have
\begin{equation}\label{eqn:density}
\E\left(
f\left( \frac{ \mathscr{O}_{11}}{N(1-|z|^2)}\right)
\mid \la_1=z\right)
\to
\int_0^\infty f(t)\, \frac{e^{-\frac{1}{t}}}{t^3} \rd t.
\end{equation}
This $t^{-3}$ asymptotic density was calculated for $N=2$ in \cite[Section V.A.2]{ChaMeh2000}, where this heavy tail was suggested to remain in the large $N$ limit.

Theorem \ref{thm:diag} requires integrating over all the randomness of the Ginibre ensemble, in this sense this is an {\it annealed} result. It derives from a {\it quenched} result, when conditioning on all eigenvalues: the overlap $\mathscr{O}_{11}$ can then be decomposed as a product of independent random variables, see Theorem \ref{thm:quenched}. Very similar results have been recently established for the Quaternionic Ginibre Ensemble \cite{DubachQGE}, as well as for the Spherical and Truncated Unitary Ensembles \cite{DubachSpherical}.

We observe that the limiting density in (\ref{eqn:density}) vanishes  exponentially fast at $0$, so that it is extremely unlikely to find any bulk overlap of polynomial order smaller than $N$: the spectrum is uniformly unstable. This is confirmed by the following bound on the extremes of condition numbers.

\begin{corollary}[Bounds on the condition numbers]\label{cor:extremes} Let $\kappa,\e>0$, $\kappa<\kappa_0\leq 1/2$ be fixed  and $\Omega_N\subset\{1-N^{-\frac{1}{2}+\kappa_0}\leq |z|<1-N^{-\frac{1}{2}+\kappa}\}$ be deterministic, measurable.
Then with probability tending to 1 as $N\to\infty$, the following event holds: for any $\lambda_i\in\Omega_N$,
$$
N^{\frac{1}{2}+\kappa-\e} \leq \mathscr{O}_{ii}\leq N^{1+\kappa_0+\e}m(\Omega_N)^{1/2}.
$$
\end{corollary}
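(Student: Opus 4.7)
The plan is to deduce uniform bounds on all bulk diagonal overlaps from pointwise tail estimates for the normalized overlap $X_i \deq \mathscr{O}_{ii}/(N(1-|\la_i|^2))$, combined with a first-moment union bound that takes the size of $\Omega_N$ into account. Theorem~\ref{thm:diag} identifies the limiting law of $X_1$ (conditioned on $\la_1$) as inverse Gamma of parameter $2$, whose density $e^{-1/t}/t^3$ on $\R_+$ has a super-exponentially thin left tail and a polynomial right tail of order $t^{-2}$. To use these tails quantitatively I would first promote the weak convergence in Theorem~\ref{thm:diag} to non-asymptotic bounds valid uniformly in the conditioning point: there exist $c,C > 0$ such that for all $N$ large, all $|z| < 1 - N^{-1/2+\kappa}$, and all $s, t > 0$,
\begin{equation*}
\P\pa{X_1 < s \mid \la_1 = z} \leq C s^{-1} e^{-c/s}, \qquad \P\pa{X_1 > t \mid \la_1 = z} \leq \frac{C}{t^2}.
\end{equation*}
These bounds follow from the quenched decomposition of $\mathscr{O}_{11}$ as an explicit product of independent random variables conditionally on $\bla$ (Theorem~\ref{thm:quenched}): the limiting inverse-Gamma law is detected already at the level of a single dominant factor, and the remaining factors contribute only bounded multiplicative corrections whose tails are no worse than the leading one.

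For the lower bound, the condition $|\la_i| < 1 - N^{-1/2+\kappa}$ forces $N(1-|\la_i|^2) \geq N^{1/2+\kappa}$, so it suffices to ensure $X_i \geq N^{-\eps}$ for every $\la_i \in \Omega_N$. By exchangeability and the boundedness of the one-point intensity $\rho_1$ on $\{|z|<1\}$,
\begin{equation*}
\P\pa{\exists\, \la_i \in \Omega_N \colon X_i < N^{-\eps}} \leq \E\qB{\sum_i \ind{\la_i \in \Omega_N}\,\ind{X_i < N^{-\eps}}} = N \int_{\Omega_N} \P\pa{X_1 < N^{-\eps} \mid \la_1 = z}\,\rho_1(z)\, \dd m(z),
\end{equation*}
which by the left-tail bound is at most $C N\, m(\Omega_N)\, N^{\eps}\, e^{-c N^{\eps}}$, super-polynomially small. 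For the upper bound, $|\la_i| \geq 1 - N^{-1/2+\kappa_0}$ gives $N(1-|\la_i|^2) \leq 2 N^{1/2+\kappa_0}$, so the desired inequality $\mathscr{O}_{ii} \leq N^{1+\kappa_0+\eps} m(\Omega_N)^{1/2}$ would follow from $X_i \leq \tfrac12 N^{1/2+\eps} m(\Omega_N)^{1/2}$. The same first-moment computation with the right-tail bound yields
\begin{equation*}
\E\qB{\sum_i \ind{\la_i \in \Omega_N}\,\ind{X_i > t}} \lesssim \frac{N\, m(\Omega_N)}{t^2},
\end{equation*}
and the choice $t = \tfrac12 N^{1/2+\eps} m(\Omega_N)^{1/2}$ produces a Markov bound of order $N^{-2\eps} \to 0$.

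The main obstacle is the first step: Theorem~\ref{thm:diag} is only a convergence-in-distribution statement and does not by itself control the tails of $X_1$ at values growing or vanishing polynomially in $N$. The quenched product representation, together with uniformity in $z$ of the associated density estimates, is what makes the non-asymptotic control possible. Calibrating the polynomial exponent in the right-tail bound is precisely what forces the exact $m(\Omega_N)^{1/2}$ scaling in the upper bound; any worse tail would degrade this sharpness, and any attempt to cover $\Omega_N$ by a naive union over all $N$ eigenvalues (instead of averaging against $\rho_1$) would lose the same factor.
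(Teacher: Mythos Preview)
Your union-bound strategy with tail estimates for $X_1 = \mathscr{O}_{11}/(N(1-|\la_1|^2))$ is exactly the paper's approach, and your arithmetic linking the scales $N^{1/2+\kappa}$, $N^{1/2+\kappa_0}$, and $m(\Omega_N)^{1/2}$ to the claimed inequalities is correct; the final first-moment computations match the paper's.

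The only substantive issue is your justification of the uniform tail bounds, which you correctly flag as the main obstacle but do not actually resolve. The assertion that the tails ``follow from the quenched decomposition \ldots\ the limiting inverse-Gamma law is detected already at the level of a single dominant factor'' is not a proof and is not the paper's route. No individual factor $(1+|X_n|^2/(N|\la_1-\la_n|^2))$ has an inverse-Gamma tail; Markov on $\E[X_1\mid\la_1=z]$ only gives $\P(X_1>t)\lesssim 1/t$, not $1/t^2$, and the second moment diverges. The paper instead \emph{transfers} the tail probability from general $z$ to $z=0$: the short/long-range splitting and the translation-invariance lemma (Lemma~\ref{lem:invariance2}) used in the proof of Theorem~\ref{thm:diag} show that the Fourier transforms of the short-range contributions at $z$ and at $0$ differ by $\OO(e^{-cN^{2\kappa}})$ (equation~\eqref{eqn:FourierGood}), and a quantitative Fourier-inversion bound (Lemma~\ref{lem:FourierInversion}) converts this into a pointwise tail comparison, yielding~\eqref{eqn:boundby0} and~\eqref{eqn:tail}. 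At $z=0$ one then reads off both tails from the exact identity $\mathscr{O}_{11}\stackrel{(\rm d)}{=}1/\beta_{2,N-1}$ of Proposition~\ref{prop:at0}: the left tail is stretched-exponential and the right tail is exactly $\sim N^2/(2x^2)$. So your step~1 should be: reuse the comparison machinery already built in the proof of Theorem~\ref{thm:diag} to reduce to $z=0$, and use the explicit Beta law there --- not a direct tail analysis of the quenched product.
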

\noindent In particular, all bulk overlaps are in $[N^{1-\e},N^{3/2+\e}]$ with large probability.
In terms of polynomial scales in $N$, the above lower bound is clearly optimal, and we believe the upper bound is also the best possible.

The next result is a rigorous proof of (\ref{eqn:O12})  in the bulk of the spectrum. It answers Conjecture 4.5 in \cite{WalSta2015} and gives firm grounds to the heuristic arguments of Chalker and Mehlig \cite{ChaMeh2000}. Different heuristics towards Theorem \ref{thm:expoff} for more general ensembles recently appeared in \cite{NowTar2018}, based on diagrammatics. Another recent approach \cite{CrawfordRosenthal} allows to compute the conditional expectation of more general multi-index overlaps when eigenvalues are conditioned to be at macroscopic distance.

\begin{theorem}[Expectation of off-diagonal overlaps, microscopic and mesoscopic scales]\label{thm:expoff} 
For any $\kappa\in(0,1/2)$, any $\e>0$ and  $C>0$ the following holds.
Uniformly in $z_1,z_2$ such that $|z_1|<1-N^{-\frac{1}{2}+\kappa}$, $\omega=\sqrt{N}|z_1-z_2|\in[N^{-C},N^{\kappa-\e}]$, 
 we have
\begin{align}
\E\left(\mathscr{O}_{12}\mid \lambda_1=z_1,\lambda_2=z_2\right)&=-N\frac{1-z_1\overline{z_2}}{|\omega|^4}\frac{1-(1+|\omega|^2)e^{-|\omega|^2}}{1-e^{-|\omega|^2}} \left(1+\OO(N^{-2\kappa+\e})\right) .\label{eqn:diagok}
\end{align}
\end{theorem}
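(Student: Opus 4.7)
The plan is to derive a closed-form conditional formula for $\E[\mathscr{O}_{12}\mid \lambda_1=z_1, \lambda_2=z_2]$ via the probabilistic Schur decomposition used to prove Theorem~\ref{thm:diag}, and then extract the claimed asymptotics uniformly across the full window $\omega\in[N^{-C}, N^{\kappa-\epsilon}]$.

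First, I would exploit the standard representation $G = U(D+T)U^*$ in which, conditionally on a choice of ordering of the eigenvalues, $D = \mathrm{diag}(\lambda_1,\ldots,\lambda_N)$, the Haar unitary $U$ is independent of $(D,T)$, and $T$ is strictly upper triangular with i.i.d.\ entries $T_{ij}\sim\mathscr{N}_{\mathbb{C}}(0,(2N)^{-1})$. The overlap $\mathscr{O}_{12}=(R_2^*R_1)(L_2^*L_1)$ does not depend on $U$, and the biorthogonal eigenvectors of $D+T$ (normalized by $R_k(k)=L_k(k)=1$, $R_k(i)=0$ for $i>k$, $L_k(i)=0$ for $i<k$) admit the path expansions
\begin{equation*}
R_k(i) = \sum_{i=j_0<j_1<\cdots<j_m=k}\prod_{s=1}^m\frac{T_{j_{s-1},j_s}}{\lambda_k-\lambda_{j_{s-1}}},\qquad L_k(i) = \sum_{k=j_0<\cdots<j_m=i}\prod_{s=1}^m\frac{T_{j_{s-1},j_s}}{\lambda_k-\lambda_{j_s}}.
\end{equation*}
Since $R_1$ has only one nonzero component, one gets $R_2^*R_1=\overline{T_{12}}/(\overline{\lambda_2}-\overline{\lambda_1})$, so that $\mathscr{O}_{12}$ reduces to a single factor of $\overline{T_{12}}$ multiplied by the pairing of $L_1$ and $L_2$ along indices $\geq 2$.

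Next I would compute the Gaussian expectation in $T$. Wick's theorem forces $\overline{T_{12}}$ to pair with a $T_{12}$ in $L_1$ (which exists only for the distinguished paths starting $1\to 2\to\cdots$), while the remaining $T_{ij}$'s in $L_1, L_2^*$ must pair between themselves. The combinatorics reduce to a sum over matched pairs of paths from $2$ to a common endpoint $i\geq 2$ (one contributing to $L_1$ after its initial $1\to 2$ step, the other to $L_2$), yielding a quenched identity of the schematic form
\begin{equation*}
\E[\mathscr{O}_{12}\mid \lambda_1,\ldots,\lambda_N] \;=\; -\frac{1}{N(\lambda_1-\lambda_2)(\overline{\lambda_1}-\overline{\lambda_2})}\;F_N(\lambda_1,\lambda_2;\lambda_3,\ldots,\lambda_N),
\end{equation*}
where $F_N$ is an explicit rational function of the remaining eigenvalues, encoding the two-path interaction. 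Averaging $F_N$ against the conditional determinantal law of $(\lambda_3,\ldots,\lambda_N)$ given $\lambda_1=z_1, \lambda_2=z_2$ is then carried out using that this conditional law is itself determinantal with kernel
\begin{equation*}
K_N^{\mathrm{cond}}(z,w) = K_N(z,w) - \big(K_N(z,z_j)\big)_{j=1,2}\,M^{-1}\,\big(K_N(z_j,w)\big)_{j=1,2},
\end{equation*}
where $M$ is the $2\times 2$ block $(K_N(z_i,z_j))_{i,j=1,2}$ and $K_N(z,w)=\frac{N}{\pi}\sum_{k=0}^{N-1}\frac{(Nz\overline{w})^k}{k!}e^{-N(|z|^2+|w|^2)/2}$. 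A discrete Cauchy--Binet step produces an exact finite-sum expression for the conditional expectation of $\mathscr{O}_{12}$ involving the truncated exponentials $e_{N-1}(Nz_1\overline{z_2})$ and a shifted variant $\partial_{N} e_{N-1}(Nz_1\overline{z_2})$.

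Finally, I would insert the bulk asymptotics
\begin{equation*}
e^{-Nz_1\overline{z_2}}\sum_{k=0}^{N-1}\frac{(Nz_1\overline{z_2})^k}{k!}=1-e^{-|\omega|^2}+O(N^{-2\kappa+\epsilon}),\qquad \omega=\sqrt{N}(z_1-z_2),
\end{equation*}
together with its first-order refinement producing the factor $(1+|\omega|^2)e^{-|\omega|^2}$, into the closed formula. Simplification of the resulting ratio yields exactly \eqref{eqn:diagok}, with the factor $1-z_1\overline{z_2}$ arising from the analytic continuation kernel evaluated on the diagonal and $1-e^{-|\omega|^2}$ from the two-point repulsion denominator. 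The main obstacle is the \emph{uniformity} of the error across the range $\omega\in[N^{-C},N^{\kappa-\epsilon}]$: for $|\omega|\lesssim 1$ the denominator $1-e^{-|\omega|^2}$ is small and magnifies any subleading term, so the exact identity, not merely its leading approximation, must be carried through; for $|\omega|$ approaching $N^{\kappa-\epsilon}$ the eigenvalues may approach the edge at distance $N^{-1/2+\kappa}$, and the kernel asymptotics must be controlled with error $O(e^{-cN^{2\kappa}}+N^{-2\kappa+\epsilon})$ uniformly in that regime. Matching these two error sources is precisely what yields the gain $N^{-2\kappa+\epsilon}$ in the statement.
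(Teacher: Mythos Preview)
Your quenched identity is correct and matches the paper's equation~(\ref{eqn:ExpO12}): conditionally on the eigenvalues,
\[
\E_T(\mathscr{O}_{12}) = -\frac{1}{N|\lambda_1-\lambda_2|^2}\prod_{k=3}^{N}\Big(1+\frac{1}{N(\lambda_1-\lambda_k)(\overline{\lambda_2-\lambda_k})}\Big).
\]
From that point on, however, your route diverges from the paper's and contains a genuine gap. You assert that averaging this product against the Palm kernel $K_N^{\rm cond}$ collapses, via ``a discrete Cauchy--Binet step,'' to a finite closed form in the truncated exponentials $e_{N-1}(Nz_1\overline{z_2})$. No such formula is derived, and for general $z_1$ there is no rotational symmetry to exploit: if one instead uses Andr\'eief's identity (the paper's Theorem~\ref{cond2}), the resulting matrix is tridiagonal only when $z_1=0$; for generic $z_1,z_2$ the structure is pentadiagonal and a closed form is not available by the methods you indicate. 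This is precisely the obstruction that forces the paper to take a different path; an exact formula at general $(z_1,z_2)$ was only obtained later by Akemann--Tribe--Tsareas--Zaboronski via a substantially more involved determinantal analysis, and even there the asymptotic extraction with the stated uniform error is not a one-line consequence.

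The paper's proof avoids computing anything exactly at general $z_1$. It instead (i) introduces a small cutoff near the singularities $z_1,z_2$; (ii) splits the logarithm of the product into a short-range piece (supported in a ball of radius $N^{-1/2+\delta}$ around $z=(z_1+z_2)/2$) and a long-range piece; (iii) shows the long-range piece concentrates at its mean $\bar f_{z_1,z_2}$ using rigidity for the conditioned determinantal process, contributing the factor $e^{\bar f_{z_1,z_2}-\bar f_{0,z_2-z_1}}=(1-z_1\overline{z_2})$ via Lemma~\ref{lem:integral2}; and (iv) shows by a negative-association argument (Lemma~\ref{lem:invariance2II}) that the short-range statistic is translation invariant up to $\OO(e^{-cN^{2\kappa}})$, reducing to the case $z_1=0$, where Chalker--Mehlig's exact tridiagonal computation (Corollary~\ref{meandiag2}) supplies the factor $|\omega|^{-4}(1-(1+|\omega|^2)e^{-|\omega|^2})/(1-e^{-|\omega|^2})$. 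Your proposal does not contain the key mechanism of translation invariance and long-range concentration, and without a bona fide closed formula at general $z_1$ the argument does not close.
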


\noindent In particular, under the same hypothesis, in the mesoscopic regime $|w|\to\infty$, equation (\ref{eqn:diagok}) simplifies to
\begin{equation}\label{eqn:mesoasymp}
\E\left(\mathscr{O}_{12}\mid \lambda_1=z_1,\lambda_2=z_2\right)=-\frac{1-z_1\overline{z_2}}{N|z_1-z_2|^4}(1+\oo(1)),
\end{equation}
showing that the expectation of off-diagonal overlaps decreases with the separation of eigenvalues. 
Our next result, about second moments at any scale, allows to identify the natural size of off-diagonal overlaps, and 
gives polynomial decay of correlations between condition numbers.

\begin{theorem}[Correlations of overlaps: microscopic and mesoscopic scales]\label{thm:meso2} 
Let $\kappa\in(0,1/2)$ and $\sigma\in(0,\kappa)$. Let $\e>0$. Then
uniformly in $z_1,z_2$ such that $|z_1|<1-N^{-\frac{1}{2}+\kappa}$, $\omega=\sqrt{N}|z_1-z_2|\in[N^{-\frac{\kappa}{2}+\e},N^{\sigma}]$, 
 we have
\begin{align}
\E\left(|\mathscr{O}_{12}|^2\mid \lambda_1=z_1,\lambda_2=z_2\right)&= \frac{N^2(1-|z_1|^2)(1-|z_2|^2)}{|\omega|^4} \left(1+\OO(N^{2(\sigma-\kappa)+\e})\right),\\
\E\left(\mathscr{O}_{11}\mathscr{O}_{22}\mid \lambda_1=z_1,\lambda_2=z_2\right)&= \frac{N^2(1-|z_1|^2)(1-|z_2|^2)}{|\omega|^4}\frac{1+|\omega|^4- e^{-|\omega|^2}}{1-e^{-|\omega|^2}}\left(1+\OO(N^{2(\sigma-\kappa)+\e})\right)\label{eqn:decorrel}.
\end{align}
\end{theorem}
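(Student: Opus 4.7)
The plan is to build on the quenched Schur representation that underpins Theorems \ref{thm:diag} and \ref{thm:quenched}, extended to accommodate two conditioned eigenvalues. Write $G_N = U(D+T)U^*$ with $D = \diag(\nu_1,\dots,\nu_{N-2},\lambda_2,\lambda_1)$, and let $V$ be the unit upper-triangular matrix with $V^{-1}(D+T)V = D$, so that, conditional on the eigenvalues, the entries $(T_{ij})_{i<j}$ are i.i.d.\ $\mathcal{N}_{\mathbb C}(0,1/N)$. Setting $u_i = V_{iN}$, $v_i = V_{i,N-1}$ and $b = V_{N-1,N} = u_{N-1}$, the biorthogonal eigenvectors are read off directly from $V$, and I obtain
\begin{align*}
\mathscr{O}_{11} = 1 + \sum_{i=1}^{N-1}|u_i|^2,\qquad
\mathscr{O}_{22} = \pb{1+\sum_{i=1}^{N-2}|v_i|^2}(1+|b|^2),\qquad
\mathscr{O}_{12} = -\overline b\,\pb{b+\sum_{i=1}^{N-2}\overline{v_i}\,u_i}.
\end{align*}

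Second, I carry out a quenched Gaussian computation. Split the entries of $T$ into $A = \{T_{ij}:i<j\le N-1\}$, which determines the $v_i$, and the independent block $\{T_{iN}\}_{i\le N-1}$. The triangular recursion $(\lambda_1-\nu_i)u_i = T_{iN} + \sum_{i<k<N}T_{ik}u_k$ shows that, conditional on the eigenvalues and on $A$, the vector $(u_i)$ is a linear function of $(T_{iN})_i$; in particular $b$ and $c \deq b + \sum_{i\le N-2}\overline{v_i}u_i$ are jointly mean-zero complex Gaussians. The Isserlis--Wick identity then yields
\begin{equation*}
\E\qb{|b|^2|c|^2 \,\big|\, A,\bla} = \E\qb{|b|^2\,\big|\,\bla}\,\E\qb{|c|^2\,\big|\,A,\bla} + \absb{\E\qb{b\overline c\,\big|\,A,\bla}}^2,
\end{equation*}
with $\E[|b|^2\,|\,\bla] = 1/(N|\lambda_1-\lambda_2|^2)$, while $\E[|c|^2\,|\,A,\bla]$ and $\E[b\overline c\,|\,A,\bla]$ reduce to $v$-weighted sums of $1/|\lambda_1-\nu_i|^2$ and $1/[(\lambda_1-\nu_i)(\overline{\lambda_2-\nu_i})]$. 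The quantity $\E[\mathscr{O}_{11}\mathscr{O}_{22}\,|\,A,\bla]$ is treated along the same lines: the product-of-exponentials representation from Theorem \ref{thm:quenched} applied separately to the $u$- and $v$-recursions gives $(1+\sum|u_i|^2)$ and $(1+\sum|v_i|^2)$ in terms of i.i.d.\ $\text{Exp}(1)$ variables, and the coupling factor $(1+|b|^2)$ is integrated out using the independence of $b$ and $A$.

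Third, the task is to integrate these quenched expressions against the conditional law of $(\nu_i)_{i\le N-2}$, which is an $(N{-}2)$-point Ginibre ensemble with the pair $(z_1,z_2)$ removed. Using the determinantal structure of the Ginibre point process, together with the rigidity estimates established earlier in the paper, sums such as $\frac{1}{N}\sum_{i\le N-2}|z_j-\nu_i|^{-2}$ and $\frac{1}{N}\sum_{i\le N-2}|(z_1-\nu_i)(z_2-\nu_i)|^{-2}$ concentrate around their means, which can be evaluated against the circular law density. The leading factor $N^2(1-|z_1|^2)(1-|z_2|^2)/|\omega|^4$ comes from these evaluations and matches the Chalker--Mehlig one-point asymptotics \eqref{eqn:O11} applied to both $\lambda_1$ and $\lambda_2$. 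The microscopic correction $(1+|\omega|^4 - e^{-|\omega|^2})/(1-e^{-|\omega|^2})$ in \eqref{eqn:decorrel} originates from the two-point function of the Ginibre ensemble at scale $1/\sqrt N$: the denominator $1-e^{-|\omega|^2}$ reflects the conditioning on two eigenvalues at separation $|z_1-z_2|$, and the numerator captures the residual microscopic coupling between the $u$- and $v$-recursions entering through the cross-term $|\E[b\overline c\,|\,A,\bla]|^2$.

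The main obstacle is the sharp quantitative control of this cross-term. A naive Cauchy--Schwarz bound loses exactly the factor $1-e^{-|\omega|^2}$ that is essential for the claimed error $\OO(N^{2(\sigma-\kappa)+\e})$, so the argument must identify the leading contribution by matching the exponentially suppressed local kernel $K_N(z_1,z_2) = \frac{N}{\pi}\exp(-\frac{N}{2}(|z_1|^2+|z_2|^2)+Nz_1\overline{z_2})$ with the tail sums $\sum_{|\nu_i-z_j|\gg 1/\sqrt N}|\nu_i-z_j|^{-2}$. Establishing joint rigidity uniformly across the mesoscopic scales $\omega \in [N^{-\kappa/2+\e},N^\sigma]$, and showing that the error picked up at each layer of the $u$- and $v$-recursions accumulates only to $\OO(N^{2(\sigma-\kappa)+\e})$, is the technical heart of the proof; once this is in hand, the mesoscopic decorrelation statement follows by letting $|\omega|\to\infty$, where the ratio in \eqref{eqn:decorrel} collapses to $|\omega|^4$ and the answer factorizes into the product of the individual $\E[\mathscr{O}_{jj}\,|\,\lambda_j=z_j]$ asymptotics.
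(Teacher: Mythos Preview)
Your quenched setup is correct and matches Proposition \ref{fund}, and the Isserlis--Wick decomposition you propose for $\E[|b|^2|c|^2 \mid A,\bla]$ is valid. But from that point the proposal diverges from the paper's argument and leaves a genuine gap.

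The paper's proof hinges on an algebraic identity you have not found: the pair $X_n = (|B_n^{\rm t}\bar D_n|^2,\, \|B_n\|^2\|D_n\|^2)^{\rm t}$ satisfies the matrix recursion $\E(X_{n+1}\mid\mathcal F_{n-1}) = A_{n+1}X_n$, and crucially all the $2\times 2$ matrices $A_n$ are \emph{simultaneously diagonalizable} (equation \eqref{eqn:diago}). This reduces $\E_T(|\mathscr{O}_{12}|^2)$ and $\E_T(\mathscr{O}_{11}\mathscr{O}_{22})$ to fixed linear combinations of two scalar products $d_\pm = \prod_{n=3}^N \lambda_\pm(n)$. Your Isserlis--Wick split, by contrast, still leaves expressions like $\E[|c|^2\mid A,\bla]$ and the cross term $|\E[b\bar c\mid A,\bla]|^2$ that depend on the $v_i$'s through $A$; integrating these over $A$ is not visibly easier than the original problem, and you do not say how to do it. Your remark that Theorem \ref{thm:quenched} can be applied ``separately to the $u$- and $v$-recursions'' for $\mathscr{O}_{11}\mathscr{O}_{22}$ overlooks that the $u_i$'s and $v_i$'s are correlated through the shared block $A$, so $\|u\|^2$ and $\|v\|^2$ are not independent; this correlation is exactly what the joint matrix recursion tracks.

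The second and more serious gap is your step three. Rigidity and concentration do handle the long-range part of sums like $\frac{1}{N}\sum|z_j-\nu_i|^{-2}$, but the microscopic factor $(1+|\omega|^4-e^{-|\omega|^2})/(1-e^{-|\omega|^2})$ does not fall out of the Ginibre two-point function alone. The paper obtains it only after an \emph{exact} computation of $\E(d_\pm)$ at $z_1=0$ (Proposition \ref{prop:theformula}): Andr\'eief's identity turns $\E(d_\pm)$ into a tridiagonal determinant, and the resulting three-term recursion \eqref{eqn:recf} is then solved in closed form. The paper explicitly flags this as unexpected (``we cannot see an a priori reason why this equation could be solved, but it can be''). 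Your sketch offers no substitute for this computation; the sentence that the numerator ``captures the residual microscopic coupling'' is a description, not a derivation. Once the $z_1=0$ case is settled exactly (Proposition \ref{prop:at00}), the extension to general $z_1$ proceeds by the same cutoff\,/\,long-range concentration\,/\,short-range translation-invariance scheme already used for Theorem \ref{thm:expoff}, applied to $d_+$ and $d_-$ separately, rather than by a direct rigidity argument on the full overlap moments.
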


\noindent For the mesoscopic scales $|w|\to\infty$, the above asymptotics become
\begin{align}
\E\left(|\mathscr{O}_{12}|^2\mid \lambda_1=z_1,\lambda_2=z_2\right)&\sim \frac{(1-|z_1|^2)(1-|z_2|^2)}{|z_1-z_2|^4},\label{eqn:diagok2}\\
\E\left(\mathscr{O}_{11}\mathscr{O}_{22}\mid \lambda_1=z_1,\lambda_2=z_2\right)&\sim\E\left(\mathscr{O}_{11}\mid \lambda_1=z_1\right)\E\left(\mathscr{O}_{22}\mid \lambda_2=z_2\right)\label{eqn:indep}.
\end{align}

\noindent Equations (\ref{eqn:mesoasymp}) and (\ref{eqn:diagok2}) suggest that for any mesoscopic separation of eigenvalues, $\mathscr{O}_{12}$ does not concentrate, because $\E(|\mathscr{O}_{12}|^2)$ is of larger order than $\E(\mathscr{O}_{12})^2$. Contrary to (\ref{eqn:O12}), (\ref{eqn:diagok2}) therefore identifies the size of off-diagonal overlaps, at mesoscopic scales.

The  covariance bounds from  Theorem \ref{thm:meso2} yield effective estimates on the volume of the pseudospectrum, defined through
$
\sigma_\e(G)=\left\{z: \|z-G\|^{-1}>\e^{-1}\right\}.
$
We state the result when the pseudospectrum is intersected with a mesoscopic ball, although it clearly holds on any domain within the bulk that is regular enough.

\begin{corollary}[Volume of the pseudospectrum]\label{cor:pseudo} Let $\kappa>a>0$ be any constants and $\mathscr{B}_N\subset \{|z|<1-N^{-\frac{1}{2}+\kappa}\}$ be a ball with radius at least $N^{-\frac{1}{2}+a}$, at most $N^{-\frac{1}{2}+\kappa-a}$.
Then the volume of the pseudospectrum in $\mathscr{B}_N$ is deterministic at first order: for any $c>0$, 
\begin{equation}\label{eqn:pseudo}
\lim_{N \to \infty} \lim_{\e\to0}\mathbb{P}\left(1-c<\frac{m\left(\sigma_{\e}(G)\cap \mathscr{B}_N\right)}{\e^2N^2 {\int_{\mathscr{B}_N}} (1-|z|^2)\rd m(z)}<1+c\right)= 1.
\end{equation}
\end{corollary}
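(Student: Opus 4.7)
The argument splits into two steps. First, for fixed $G$, take $\epsilon\to 0$ to reduce (\ref{eqn:pseudo}) to a statement about a random sum. Second, apply concentration in $N$. For step one, the resolvent expansion $(z-G)^{-1}=R_iL_i^{\mathrm{t}}/(z-\lambda_i)+\OO(1)$ near each eigenvalue yields $\|(z-G)^{-1}\|=\mathscr{O}_{ii}^{1/2}/|z-\lambda_i|\cdot(1+\oo(1))$ as $z\to\lambda_i$. Hence, for $\epsilon$ small enough (depending on $G$ through the minimum spacing and $\max_i\mathscr{O}_{ii}$), the set $\sigma_\epsilon(G)$ is a disjoint union of nearly round components, one around each $\lambda_i$, of area $\pi\epsilon^2\mathscr{O}_{ii}(1+\oo_\epsilon(1))$. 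Components based at $\lambda_i\notin\mathscr{B}_N$ eventually do not reach $\mathscr{B}_N$, and those based inside stay inside, so almost surely
\[
\lim_{\epsilon\to 0}\frac{m(\sigma_\epsilon(G)\cap\mathscr{B}_N)}{\pi\epsilon^2}\;=\;S_N\;:=\;\sum_{\lambda_i\in\mathscr{B}_N}\mathscr{O}_{ii}.
\]
By (\ref{eqn:O11}) and the bulk intensity $\rho_1\sim N/\pi$, one has $\E(S_N)=(N^2/\pi)\int_{\mathscr{B}_N}(1-|z|^2)\,\dd m(z)\,(1+\oo(1))$, matching the denominator in (\ref{eqn:pseudo}). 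Chebyshev's inequality then reduces the corollary to the estimate $\var(S_N)=\oo(\E(S_N)^2)$.

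To bound the variance, I would use the Ginibre identity $\rho_2=\rho_1(z_1)\rho_1(z_2)-|K_N(z_1,z_2)|^2$ with $|K_N|^2\leq(N/\pi)^2e^{-|\omega|^2}$ and split
\[
\var(S_N)=\int\E(\mathscr{O}_{11}^2|z_1)\rho_1\,\dd m+\iint\big[\E(\mathscr{O}_{11}\mathscr{O}_{22})-\E(\mathscr{O}_{11})\E(\mathscr{O}_{22})\big]\rho_2\,\dd m^{\otimes 2}-\iint\E(\mathscr{O}_{11})\E(\mathscr{O}_{22})|K_N|^2\,\dd m^{\otimes 2}.
\]
The diagonal term is handled by extracting the tail bound $\P(\mathscr{O}_{11}>tN(1-|z|^2)\mid\lambda_1=z)\lesssim 1/t^2$ from Theorem \ref{thm:diag}, combined with the deterministic cut-off $\mathscr{O}_{ii}\leq N^{1+\kappa+\epsilon}$ of Corollary \ref{cor:extremes}: this yields $\E(\mathscr{O}_{11}^2\mid\lambda_1=z)=\OO(N^2\log N)$ uniformly in $\mathscr{B}_N$, integrating to $\OO(N^3\log N\cdot m(\mathscr{B}_N))$. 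For the off-diagonal covariance at mesoscopic separation $|\omega|\in[N^{-\kappa/2+\epsilon},N^\sigma]$, a direct simplification of (\ref{eqn:decorrel}) shows that the leading contributions of $\E(\mathscr{O}_{11}\mathscr{O}_{22})$ and $\E(\mathscr{O}_{11})\E(\mathscr{O}_{22})$ cancel, leaving a $N^2/|\omega|^4$ decay whose integral against $\rho_2$ (already reduced by the repulsion factor $1-e^{-|\omega|^2}$) is again $\OO(N^3\log N\cdot m(\mathscr{B}_N))$. For the ultra-microscopic range $|\omega|<N^{-\kappa/2+\epsilon}$, I would use $\mathscr{O}_{11}\mathscr{O}_{22}\leq(\mathscr{O}_{11}^2+\mathscr{O}_{22}^2)/2$ together with the intensity estimate $\int_{|\omega|\leq T}\rho_2\,\dd m(z_2)=\OO(NT^4)$ that follows from Ginibre repulsion. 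The final kernel term $\iint\E(\mathscr{O}_{11})\E(\mathscr{O}_{22})|K_N|^2$ is $\OO(N^3 m(\mathscr{B}_N))$ because $|K_N|^2$ is integrable in the difference. Summing, $\var(S_N)=\OO(N^3\log N\cdot m(\mathscr{B}_N))$, which is $\oo(\E(S_N)^2\asymp N^4 m(\mathscr{B}_N)^2)$ since $N m(\mathscr{B}_N)\geq N^{2a}\to\infty$ by hypothesis on $\mathscr{B}_N$.

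The main obstacle is the heavy tail of the diagonal overlap: the limiting inverse-Gamma distribution of Theorem \ref{thm:diag} has infinite second moment, so no useful estimate for $\E(\mathscr{O}_{11}^2)$ follows from it alone. One must therefore combine the $1/t^2$ tail with the deterministic sub-polynomial cut-off of Corollary \ref{cor:extremes} and truncate, absorbing the cost into the extra $\log N$ factor in the variance. A secondary technical point is that Theorem \ref{thm:meso2} does not cover the ultra-microscopic range $|\omega|<N^{-\kappa/2+\epsilon}$, so this regime must be treated separately via the eigenvalue-repulsion bound on $\int\rho_2$ rather than by an asymptotic formula for the covariance itself.
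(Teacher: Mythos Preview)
Your plan coincides with the paper's: reduce to concentration of $S_N=\sum_{\lambda_i\in\mathscr{B}_N}\mathscr{O}_{ii}$ via the small-$\e$ resolvent expansion, then control $\E|S_N-c_N|^2$ by splitting into diagonal, close-pair and mesoscopic contributions, using truncation plus the $1/t^2$ tail for the first and Theorem~\ref{thm:meso2} for the last. Your explicit $\rho_1,\rho_2,|K_N|^2$ bookkeeping is equivalent to the paper's direct expansion of $\E(S_N-c_N)^2$, and your use of the repulsion factor $1-e^{-|\omega|^2}$ in $\rho_2$ to tame the $1/|\omega|^4$ covariance is exactly what makes the mesoscopic integral $\OO(N^3\log N\cdot m(\mathscr{B}_N))$.

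There is one soft spot. For the ultra-microscopic pairs you propose ``$\mathscr{O}_{11}\mathscr{O}_{22}\leq\tfrac12(\mathscr{O}_{11}^2+\mathscr{O}_{22}^2)$ together with $\int_{|\omega|\leq T}\rho_2=\OO(NT^4)$''. After AM--GM the integrand becomes $\E(\mathscr{O}_{11}^2\mid z_1,z_2)$, conditioned on \emph{two} eigenvalues; nothing in the paper bounds this except the crude pointwise $\mathscr{O}_{11}^2\leq M^2$ on the truncation event. With your stated $M=N^{1+\kappa+\e}$ (which is not what Corollary~\ref{cor:extremes} actually delivers for a generic ball --- the bound there is $N^{1+\kappa_0+\e}m(\Omega_N)^{1/2}$ with $\kappa_0$ tied to the distance of $\Omega_N$ to the edge, so $\kappa_0=1/2$ in the deep bulk), the resulting $M^2\cdot NT^4\cdot m(\mathscr{B}_N)$ is not always $\oo(c_N^2)$ when $\mathscr{B}_N$ sits near the edge and $2\kappa+2a\leq 1$. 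The paper sidesteps this: it splits at the coarser scale $|\omega|=N^{\e}$ and, for close pairs, bounds the \emph{count} $|\{j:|\lambda_j-\lambda_1|<N^{-1/2+\e}\}|\leq CN^{2\e}$ almost surely via the local law. This decouples the count from the overlap and lets the one-point estimate $\E(\mathscr{O}_{11}^2\mathds{1}_{\leq M}\mid z_1)=\OO(N^{2+\e}(1-|z_1|^2)^2)$ be used directly, giving $\OO(N^{3+\e}(1-|z_0|^2)^2 m(\mathscr{B}_N))=\oo(c_N^2)$ uniformly. Either adopt that device, or repair your route by inserting the sharp truncation $M\asymp N^{3/2+\e}(1-|z_0|)\,m(\mathscr{B}_N)^{1/2}$ that Corollary~\ref{cor:extremes} really gives; with that $M$, your $M^2\cdot NT^4\cdot m(\mathscr{B}_N)=\OO(N^{4-2\kappa+\e}(1-|z_0|)^2 m(\mathscr{B}_N)^2)$ is indeed $\oo(c_N^2)$.
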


Finally, we remark that our results  shed some light on natural matrix dynamics on nonnormal matrices, the Dyson-type evolution where all matrix entries follow independent complex Ornstein-Uhlenbeck processes. 
Under this evolution, the eigenvalues follow the dynamics (see Proposition \ref{prop:dynamics})
$$ \dd \lambda_k(t) = \rd M_k(t)  -  \frac{1}{2} \lambda_k(t)\dd t  $$
where the martingales $(M_k)_{1\leq k\leq N}$ have brackets $\langle M_i,M_j\rangle=0$ and
$
\rd\langle M_i,\overline{M_j}\rangle_t=\mathscr{O}_{ij}(t)\frac{\rd t}{N}.
$ 
Based on this observation, Theorem \ref{thm:diag}, Theorem \ref{thm:expoff} and some bound from \cite{Fyodorov2018}, we show that the eigenvalues propagate with diffusive scaling, at equilibrium, but 
slower when close to the boundary.

\begin{corollary}[Diffusive exponent for eigenvalues dynamics]\label{cor:diffusive} Let $c,a>0$ be arbitrarily. Consider the matrix dynamics (\ref{eqn:dynamics}) with initial condition $G(0)$ distributed as (\ref{eqn:Gini}). 
Let  $\mathscr{B}\subset \{|z|<1-c\}$ be a ball, and
$t<N^{-c}$. Then as $N\to\infty$ we have 
\begin{align}
&\E(|\lambda_1(t)-\lambda_1(0)|^2\mathds{1}_{\lambda_1(0)\in \mathscr{B}})=t\int_{\mathscr{B}}(1-|z|^2)\frac{\rd m(z)}{\pi}(1+\oo(1)),\label{eqn:scale}\\
&\E\left((\lambda_1(t)-\lambda_1(0))\overline{(\lambda_2(t)-\lambda_2(0))}\mathds{1}_{\{\lambda_1(0)\in \mathscr{B}\}\cap\{|\la_1(0)-\la_2(0)|<N^{-a}\}}\right)=\oo(tN^{-2a}).\label{eqn:indepen}
\end{align}
\end{corollary}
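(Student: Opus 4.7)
The plan is to apply It\^o's formula to the eigenvalue SDE of Proposition \ref{prop:dynamics}, exploit stationarity of the complex Ginibre measure under the Ornstein--Uhlenbeck flow, and conclude via the overlap asymptotics of Theorem \ref{thm:diag} and Theorem \ref{thm:expoff}. Setting $X_k(t):=\lambda_k(t)-\lambda_k(0)=M_k(t)-\tfrac12\int_0^t\lambda_k(s)\,\rd s$ and using $\rd\langle M_i,\overline{M_j}\rangle_s = N^{-1}\mathscr{O}_{ij}(s)\,\rd s$, It\^o's formula yields
\[
\E\bigl[|X_1(t)|^2\mathds{1}_{\lambda_1(0)\in\mathscr{B}}\bigr]=\int_0^t\frac{1}{N}\E\bigl[\mathscr{O}_{11}(s)\mathds{1}_{\lambda_1(0)\in\mathscr{B}}\bigr]\,\rd s+\mathcal R_1,
\]
and an analogous identity for $X_1(t)\overline{X_2(t)}$ with $\mathscr{O}_{12}$ and indicator on $E:=\{\lambda_1(0)\in\mathscr B\}\cap\{|\lambda_1(0)-\lambda_2(0)|<N^{-a}\}$. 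Here $\mathcal R_k$ collects a martingale part with vanishing expectation (integrability via Burkholder--Davis--Gundy together with the polynomial-in-$N$ tail bounds on $\mathscr O_{ii}$ from Corollary \ref{cor:extremes} and \cite{Fyodorov2018}) and a drift contribution of order $\OO(t^2)$ (since $|\lambda_k(s)|\le 1+\oo(1)$ uniformly with overwhelming probability by the circular law).

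Since $\mathbb P_N$ is the invariant measure of the OU flow, the joint law of $(G(s),\lambda(s),\mathscr O(s))$ is stationary in $s$. The only subtlety is that the indicators use time $0$ rather than $s$; I bridge this via $|\lambda_k(s)-\lambda_k(0)|=\OO(\sqrt s)=\oo(1)$ for $s\le t<N^{-c}$, which makes the probability of crossing the fixed-size boundary $\partial\mathscr B$ vanish as $N\to\infty$. After this swap, the integrand in (i) becomes time-independent and equal to $N^{-1}\E[\mathscr{O}_{11}(0)\mathds{1}_{\lambda_1(0)\in\mathscr B}]$, which by Theorem \ref{thm:diag} (providing $\E[\mathscr O_{11}\mid\lambda_1=z]\sim N(1-|z|^2)$ in the bulk) and the one-point density $\rho_1(z)=N/\pi$ equals $\int_\mathscr B(1-|z|^2)\rd m(z)/\pi\cdot(1+\oo(1))$. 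Integration over $s\in[0,t]$ yields (\ref{eqn:scale}).

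For the decorrelation estimate (\ref{eqn:indepen}) I insert the conditional-expectation formula (\ref{eqn:diagok}) and integrate against the two-point correlation $\rho_2(z_1,z_2)\simeq(N/\pi)^2(1-e^{-|\omega|^2})$ with $\omega=\sqrt N(z_2-z_1)$. The factor $1-e^{-|\omega|^2}$ cancels the denominator of (\ref{eqn:diagok}), and the substitution $z_2\mapsto\omega$ reduces the main contribution to a product of a bulk integral over $z_1\in\mathscr B$ and the integral of $(1-(1+|\omega|^2)e^{-|\omega|^2})/|\omega|^4$ over $\{|\omega|<N^{1/2-a}\}$. In the mesoscopic regime $a<1/2$, the inner integral is uniformly bounded in $N$, giving $N^{-1}\E[\mathscr O_{12}(s)\mathds 1_E]=\OO(N^{-1})=\oo(N^{-2a})$. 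In the microscopic regime $a\ge 1/2$, eigenvalue repulsion forces $\P(E)=\OO(N^{1-4a})=\oo(N^{-2a})$; combined with the tail bounds of \cite{Fyodorov2018} on $\mathscr O_{12}$ applied via Cauchy--Schwarz, one then obtains the required estimate. Time integration gives (\ref{eqn:indepen}).

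The principal technical obstacle is the treatment of the remainder $\mathcal R$ restricted to the rare event $E$, where the cross products between $M_k$ and the drifts must be bounded more carefully than by naive Cauchy--Schwarz. The $L^p$-moments of $M_k$ involve higher moments of the heavy-tailed condition numbers (inverse Gamma in the limit by Theorem \ref{thm:diag}), so one truncates at $\mathscr O_{kk}\le N^{3/2+\epsilon}$ (using Corollary \ref{cor:extremes}) and handles the complementary large-overlap event via the large-deviation bounds of \cite{Fyodorov2018}. Keeping the resulting errors below $tN^{-2a}$ uniformly in the allowed range of $a$ is the most delicate step of the argument.
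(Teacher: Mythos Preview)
Your approach is essentially the paper's: It\^o for the eigenvalue SDE of Proposition~\ref{prop:dynamics}, stationarity of the Ornstein--Uhlenbeck flow to replace time-$0$ indicators by time-$s$ indicators, and then the overlap asymptotics (\ref{eqn:exact}) and (\ref{eqn:diagok}). Two points differ.

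The paper removes the drift by an integrating factor: $e^{t/2}\lambda_1(t)-\lambda_1(0)=\int_0^te^{s/2}\,\rd M_1(s)$ is a pure martingale (an actual one, since $\E[\mathscr{O}_{11}]=\OO(N)$ by (\ref{eqn:exact})), and It\^o applied to its squared modulus yields the bracket $\int_0^te^sN^{-1}\mathscr{O}_{11}(s)\,\rd s$ directly. Your claim that the drift contributes $\OO(t^2)$ ``since $|\lambda_k(s)|\le 1+\oo(1)$'' is insufficient as stated: the drift is $-\Re\int_0^tX_1(s)\overline{\lambda_1(s)}\,\rd s$, and bounding $|\lambda_1|$ alone gives only $\int_0^t\E|X_1(s)|\,\rd s$; one first needs the a~priori estimate $\E|X_1(s)|^2=\OO(s)$ (which follows from the same martingale representation together with $\E[\mathscr{O}_{11}]=\OO(N)$) to reach $\OO(t^{3/2})$. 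For the indicator swap, the paper splits according to the high-probability event $A_{t,\e}=\{\sup_{s\le t}|\lambda_1(s)-\lambda_1(0)|<N^\e t^{1/2}\}$ and controls its complement by H\"older with exponent $p<2$ and Fyodorov's uniform bound $\sup_N\E[(\mathscr{O}_{11}/N)^p]<\infty$; truncation via Corollary~\ref{cor:extremes} is not what is used and would be too coarse here.

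For (\ref{eqn:indepen}) the paper does not split into regimes. It transfers the $L^p$ control to $\mathscr{O}_{12}$ via the elementary inequality $|\mathscr{O}_{12}|\le\tfrac12(\mathscr{O}_{11}+\mathscr{O}_{22})$ and repeats the argument verbatim with (\ref{eqn:diagok}) in place of (\ref{eqn:exact}). Your Cauchy--Schwarz step in the microscopic regime does not close: with $\E[|\mathscr{O}_{12}|^p]^{1/p}=\OO(N)$ for $p<2$ and $\P(E)=\OO(N^{1-4a})$, H\"older with conjugate $q>2$ gives $N^{-1}\E[|\mathscr{O}_{12}|\mathds{1}_E]=\OO(N^{(1-4a)/q})$, and $(1-4a)/q<-2a$ would force $q<2-\tfrac{1}{2a}<2$. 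The direct integration you carry out for $a<1/2$ is precisely the paper's argument, and your final paragraph about a ``principal technical obstacle'' on the rare event $E$ is not needed once one uses $|\mathscr{O}_{12}|\le\tfrac12(\mathscr{O}_{11}+\mathscr{O}_{22})$ and Fyodorov's $L^p$ bound.
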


\noindent
Given the time scale in (\ref{eqn:scale}), we expect that, conditionally on $\{\lambda_i(0)=z\}$, the process
$$\frac{(\lambda_i(ts)-z)_{0\leq s\leq 1}}{\sqrt{t(1-|z|^2)}}$$
converges in distribution to a normalized  complex Brownian motion as $N\to\infty$ ($t$ is any scale $N^{-1+c}<t<N^{-c}$), i.e. 
$
\frac{1}{\sqrt{2}}(B_s^{(i)}+\ii \wt B_s^{(i)})_{0\leq s\leq 1}
$ with $B^{(i)}$ and  $\wt B^{(i)}$ independent standard Brownian motions.
Moreover, from (\ref{eqn:indepen}), we expect that these limiting processes associated to different eigenvalues are independent.


\subsection{About the proofs.}\
Our analysis of the condition numbers starts with the observation of Chalker and Mehlig: the overlaps coincide with those of the Schur form of the original operator, a particularly simple decomposition when  the input is a Ginibre matrix. We refer the reader to (\ref{eqn:T}) for this key structure at the source of our inductions.

In Section \ref{sec:diag} our method to prove Theorem \ref{thm:diag}  follows from a simple, remarkable identity in law: the {\it quenched} overlap (i.e. conditionally on eigenvalues and integrating only over the complementary randomness of the Ginibre ensemble) is a product of $N-1$ independent random variables, see Theorem \ref{thm:quenched}. In the specific case $\lambda_1=0$, for the complex Ginibre ensemble, this split of the distribution of $\mathscr{O}_{11}$ remains in the {\it annealed} setting, as a consequence of a theorem of Kostlan: the radii of eigenvalues of complex Ginibre matrices are independent random variables. For the Ginibre ensemble with conditioned eigenvalue, we give the extension of Kostlan's theorem in Section \ref{eqn:Sec}. This concludes a short probabilistic proof of Theorem \ref{thm:diag} in the case $\lambda_1=0$. 

The extension to general $\lambda_1$ in the bulk proceeds by decomposition of 
$\mathscr{O}_{11}$ into a long-range factor, which gives the deterministic $1-|z|^2$ coefficient, and a short-range factor, responsible for the $\gamma_2^{-1}$ fluctuations. Concentration of the long-range contribution relies on rigidity results for eigenvalues, from \cite{BouYauYin2014II,BreDui2014}. To prove that the short-range contribution is independent of the position of $\lambda_1$, we need strong  form of invariance for our measure, around the conditioned point $\lambda_1$.
While translation invariance for $\mathbb{P}$ follows easily from the explicit form of the Ginibre determinantal kernel, the  invariance of the {\it conditioned} measure requires more involved tools such as
the negative association property for determinantal point processes, see Section \ref{sec:translation}. Corollary \ref{cor:extremes} directly follows from our estimates on the speed of convergence to the inverse Gamma distribution, as explained in Section \ref{sec:diag}.

The proof of theorems \ref{thm:expoff} and \ref{thm:meso2} in Section \ref{sec:offdiag} follows the same scheme: first a quenched identity, then an explicit formula obtained for $z_1=0$ and a localization procedure to isolate the short and long-range contributions. The main difference with the proof of Theorem \ref{thm:diag} concerns the special case $z_1=0$, the other step being more robust.
Due to conditioning on  $z_2$, rotational invariance of the remaining eigenvalues is broken and there is no analogue of Kostlan's theorem to obtain the explicit joint distribution 
of $\mathscr{O}_{12}, \mathscr{O}_{11}$ and $\mathscr{O}_{22}$.
Despite this lack of rotational invariance, Chalker and Mehlig had already obtained a closed-form formula for $\mathbb{E}(\mathscr{O}_{12})$ when $z_1=0$. Remarkably, the second moments are also explicit (although considerably more involved) even for finite $N$, see Proposition \ref{prop:theformula}.
Our correlation estimates on overlaps imply Corollary \ref{cor:pseudo} by a second moment method.
It is plausible yet unclear that Theorem  \ref{thm:meso2} admits generalizations to joint  moments with an arbitrary number of conditioned eigenvalues. In fact, 
our second moment calculation involves a P-recursive sequence (see (\ref{eqn:recf})), for which explicit solutions are not expected in general.
We hope to address the general study of relevant holonomic sequences  in future work.

In Appendix A, we consider eigenvalues dynamics. After deriving the relevant stochastic differential equations in Proposition \ref{prop:dynamics}, we show that Corollary \ref{cor:diffusive} follows from Theorem \ref{thm:diag} and Theorem \ref{thm:expoff}.  The diagonal overlaps dictates the eigenvalues quadratic variation, the off-diagonal overlaps their correlations.

Finally, although this article focuses on the eigenvalues condition numbers, the Schur decomposition technique also readily answers the natural question of angles between normalized Ginibre eigenvectors, as explained in Appendix B.

\begin{figure}[b!]
\centering
\begin{subfigure}{.3\textwidth}
\includegraphics[width=5cm]{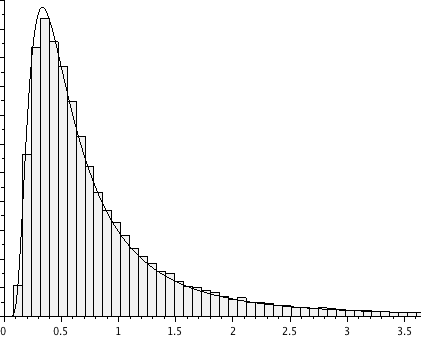}
\caption{Complex Ginibre ensemble, entries have density (\ref{eqn:Gini}).}
\end{subfigure}%
\hspace{0.2cm}
\begin{subfigure}{.3\textwidth}
\includegraphics[width=5cm]{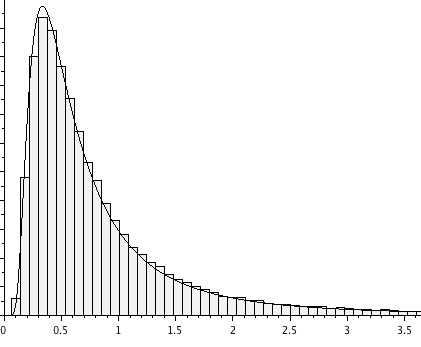}
\caption{Complex Bernoulli,  independent $\pm 1$ real and imaginary parts.}
\end{subfigure}
\hspace{0.2cm}
\begin{subfigure}{.3\textwidth}
\vspace{0cm}\includegraphics[width=5cm]{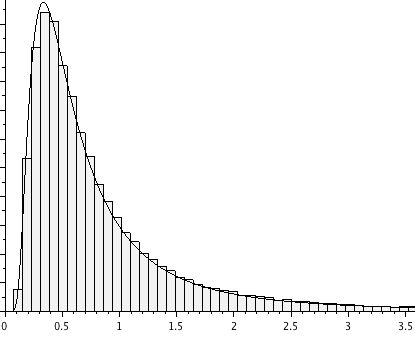}
\caption{Complex Uniform, entries are uniform on $\{|z|<1\}$.}
\end{subfigure}
\caption{The histogram of the overlaps $\frac{\mathscr{O}_{ii}}{N(1-|\la_i|^2)}$ associated to bulk eigenvalues for different densities of the matrix entries. The average is performed over all bulk eigenvalues of a $600\times 600$ matrix, sampled 100 times. The curve gives the density of the inverse Gamma distribution with parameter 2.}
\label{Fig1}
\end{figure}

\subsection{Numerical test for universality of the distribution of overlaps.}\ Universality of eigenvector statistics recently attracted a lot of attention for random Hermitian matrices.
For the Gaussian orthogonal and unitary ensembles, the eigenvectors basis is Haar distributed on  the corresponding unitary group. As a consequence, projections of eigenvectors on
deterministic directions are asymptotically normal, as the projection of the uniform measure on high dimensional spheres (a result due to L\'evy and Borel). These eigenvector statistics are now known to be universal \cite{TaoVu2012,KnoYin2013,BouYau2017}, holding for generalized Wigner matrices.

The situation is quite different for eigenvectors of dense random non Hermitian matrices: orders of magnitude such as delocalization are known \cite{RudVer2015} , but universality seems out of reach with current techniques. In Figure \ref{Fig1}, we numerically investigate whether the inverse Gamma distribution from Theorem \ref{thm:diag} describes the typical behavior of condition numbers.
This leads us to conjecture that  for any complex random matrix with i.i.d. entries with substantial real and imaginary parts, the normalized condition numbers converge to the inverse Gamma distribution with parameter two.

\vspace{-.2cm}

\subsection{Notations and conventions.}\ 
The partial sums of the  exponential are written
\begin{equation}\label{eqn:expo}
e_k^{(\ell)}(x)= \sum_{i=k}^{\ell} {x^i \over i!}
\end{equation}
and we abbreviate  $e_k=e_k^{(\infty)}$, $e^{(N)}=e_0^{(N)}$. Throughout the paper, $0 \leq \chi \leq 1$ is a smooth cut-off function  on $\mathbb{R}_+$ such that
$\chi(x)=1$ for $x<1/2$ and $0$ for $x>1$.
We write $f=\OO(\phi)$ if $|f|<C|\phi|$ for some $C>0$ which does not depend on $N$, and  $f=\oo(\phi)$ if $|f|/|\phi|\to 0$ as $N\to\infty$.
Along this work, the constants $C$ and $c$ are some universal (resp. typically large and small) constants that may vary from line to line.\\

\noindent{\bf Acknowledgement.} The authors thank the referees for particularly precise and pertinent suggestions which helped improving this article.


\section{Diagonal overlaps}\label{sec:diag}

This section first gives a remarkable identity in law for the diagonal overlap conditioned on the eigenvalues, Theorem \ref{thm:quenched}. Eigenvalues
are then integrated, first for $\lambda_1$ at the center of the spectrum thanks to a variant of Kostlan's theorem, then anywhere in the bulk.

\subsection{The quenched diagonal overlap.}\ 
For all $i\neq j$ we  denote
$\alpha_{ij} = \frac{1}{\lambda_i - \lambda_j},$
and $\alpha_{ii}=0$.
These numbers satisfy
\begin{align}  
\alpha_{ij} +  \alpha_{ji} =0,\  \alpha_{ij} +  \alpha_{jk} =\frac{ \alpha_{ij} \alpha_{jk}}{\alpha_{ik}}. \label{eqn:ward}
\end{align}
We first recall the analysis of the overlaps given by Chalker and Mehlig, and include the short proof for completeness.

\begin{proposition}[from \cite{ChaMeh1998,ChaMeh2000,MehCha1998}]\label{fund} The following joint equality in distribution holds: 
$$ \mathscr{O}_{11} =  \sum_{i=1}^N |b_i|^2,  \qquad  \mathscr{O}_{12}= - \overline{b_2} \sum_{i=2}^N b_i \overline{d_i},  \qquad  \mathscr{O}_{22} = (1+|b_2|^2) \sum_{i=2}^N |d_i|^2,$$
where $b_1= d_2=1$, $d_1=0$, and the next terms are defined by the recurrence
$$  b_i =\alpha_{1i} \sum_{k=1}^{i-1} b_k T_{ki}\ {\rm for}\  i\geq 2,\ \  d_i =\alpha_{2i}\sum_{k=1}^{i-1} d_k T_{ki}\ {\rm for}\  i\geq 3.$$
The $T_{ij}$ are independent complex Gaussian variables, centered and with variance $N^{-1}$ ($T_{ij}
\stackrel{{\rm (d)}}{=} 
G_{ij}$). 
\end{proposition}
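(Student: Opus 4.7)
The plan is to exploit the Schur decomposition of Ginibre matrices, which for this ensemble takes a particularly tractable form: one may write $G = U(D+T)U^*$ where $U$ is Haar-distributed on $\U(N)$, $D = \diag(\lambda_1,\dots,\lambda_N)$, $T$ is strictly upper triangular with mutually independent complex Gaussian entries $T_{ij}$ of variance $N^{-1}$ for $i<j$, and the three factors $U$, $D$, $T$ are independent. This follows from the unitary invariance of the Gaussian law on $M_N(\C)$ by a standard change of variables. Since the overlaps $\mathscr{O}_{ij}=(R_j^*R_i)(L_j^*L_i)$ are unchanged under the substitutions $R_i\mapsto U R_i$ and $L_i\mapsto U L_i$ that accompany the conjugation $G\mapsto UGU^*$, it suffices to compute them for the triangular matrix $T' \deq D+T$.

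For such an upper triangular matrix with distinct diagonal entries, the eigenvectors can be obtained by back-substitution. Normalizing $R_i(i)=1$ and $R_i(k)=0$ for $k>i$, the equation $T'R_i=\lambda_i R_i$ read in row $k<i$ gives
\begin{equation*}
(\lambda_i-\lambda_k)\,R_i(k)=\sum_{k<j\leq i} T_{kj}\,R_i(j).
\end{equation*}
Symmetrically, setting $L_i(i)=1$ and $L_i(k)=0$ for $k<i$, the equation $L_i^t T'=\lambda_i L_i^t$ read in column $j>i$ yields $L_i(j)=\alpha_{ij}\sum_{k=i}^{j-1} L_i(k)\,T_{kj}$. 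Biorthogonality $L_j^t R_i=\delta_{ij}$ then follows automatically: for $i\neq j$ the identity $(\lambda_j-\lambda_i)L_j^t R_i=0$ is obtained by evaluating $L_j^t T' R_i$ in two ways, and for $i=j$ only the index $k=i$ contributes to $L_i^t R_i$, giving $1$.

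With these explicit formulas, one reads off the proposition by setting $b_i \deq L_1(i)$ and $d_i \deq L_2(i)$: the recurrences with prefactors $\alpha_{1i}$ and $\alpha_{2i}$ are exactly what the left-eigenvector equations produce. Since $R_1=e_1$, one has $\|R_1\|^2=1$ and hence $\mathscr{O}_{11}=\|L_1\|^2=\sum_{i=1}^N|b_i|^2$. The right-eigenvector recurrence gives $R_2(1)=T_{12}/(\lambda_2-\lambda_1)=-b_2$, so $\|R_2\|^2=1+|b_2|^2$ and therefore $\mathscr{O}_{22}=(1+|b_2|^2)\sum_{i=2}^N|d_i|^2$. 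Finally, $R_2^*R_1=\overline{R_2(1)}=-\overline{b_2}$ and $L_2^*L_1=\sum_{k\geq 2}\overline{d_k}\,b_k$ combine to yield the claimed expression for $\mathscr{O}_{12}$.

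The proof is essentially a bookkeeping exercise once the Schur decomposition is in hand; the only genuinely substantive step is the decomposition itself together with the independence and variance of the $T_{ij}$'s, which is a classical consequence of the rotational invariance of the complex Gaussian. The appearance of $\alpha_{1i}$ (rather than $\alpha_{i1}$) in the recurrence for $b_i$ reflects that $b_i$ corresponds to a left rather than a right eigenvector; this sign structure is what ultimately produces the cancellation ensuring biorthogonality.
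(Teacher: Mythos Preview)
Your proof is correct and follows essentially the same approach as the paper: reduce to the Schur form $G=UTU^*$, use unitary invariance of the overlaps to pass to the triangular matrix, and read off the eigenvectors by back-substitution to obtain the recurrences for $b_i,d_i$ and the stated expressions for $\mathscr{O}_{11},\mathscr{O}_{12},\mathscr{O}_{22}$. Your write-up is slightly more explicit than the paper's (you spell out the right-eigenvector recurrence and verify biorthogonality directly), but the argument is the same.
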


\begin{proof} We defined $X$ and $Y=X^{-1}$ such that $G=X \Delta Y$ (see subsection \ref{overlapssubsec}). The QR algorithm applied to $X$ yields $X=UR$, $Y=R^{-1} U^*$, and thus the Schur form $G=UTU^*$ with $T= R \Delta R^{-1}$. It is straightforward to check that $U$ is independent of $T$ and uniformly distributed on the unitary group. The overlaps are unchanged by an unitary change of basis, and therefore we can study directly the overlaps of the matrix $T$. As proved in \cite[Appendix 35]{Meh1991}, this matrix $T$ has the eigenvalues of $G$ as diagonal entries, and independently the upper triangle consists in uncorrelated Gaussian random variables, $T_{ij}
\stackrel{{\rm (d)}}{=} 
G_{ij}$:
\begin{equation}\label{eqn:T}
T=  \left( \begin{array}{cccc}
\lambda_1 & T_{12} & \dots & T_{1N} \\
0 & \lambda_2 & \dots & T_{2N} \\
 \vdots & \ddots & \ddots & \vdots \\
0 & \dots & 0 & \lambda_N \\
 \end{array} \right).
\end{equation}
For $T$, the right eigenvectors are of type
 $$ R_1=(1,0, \dots, 0)^t, \qquad R_2=(a,1,0,\dots, 0)^t,$$
and the left eigenvectors are denoted
$$L_1=(b_1, \dots, b_N)^t, \qquad L_2=(d_1,\dots, d_N)^t.$$
Biorthogonality relations give $b_1=1$, $d_1=0$, $d_2=1$ and $a=- b_2$. The formulas given for the overlaps follow,  and the recurrence formulas proceed from the definition of the eigenvectors, i.e. $L_j^t T = \lambda_j L_j^t$.  
 \end{proof}
Proposition \ref{fund} shows that the eigenvectors, and thus the overlaps, are obtained according to a very straightforward random process. Indeed, let us consider the sequences of column vectors:
 \begin{align*}
B_k &= (1, b_2,\dots ,b_k)^{\rm t}  \qquad \text{so that } L_1=B_N, \\
D_k &= (0, 1, d_3,\dots , d_k)^{\rm t} \qquad \text{so that } L_2=D_N, \\
T_k &= (T_{1,k+1},\dots ,T_{k,k+1})^{\rm t} \qquad \text{(subset of the $k+1$ th column of $T$)}.
\end{align*}
For any $k$, $T_k$ is a $k$-dimensional centered Gaussian vector with independent coordinates and variance $1/N$. We denote the corresponding $\sigma$-algebras
$$
\mathcal{F}_n=\sigma\left(T_k,1\leq k\leq n\right)=\sigma\left(T_{i,j+1},1\leq i\leq j\leq n\right).
$$
In particular, $b_2=\frac{T_{12}}{\lambda_1-\lambda_2}\in\mathcal{F}_1$.
The recurrence formula from Proposition \ref{fund} becomes
\begin{align}
b_{n+1} = \alpha_{1,n+1}\,  B_n^{\rm t} T_n,\ d_{n+1} = \alpha_{2, n+1}\, D_n^{\rm t} T_n,\ n\geq 1.\label{eqn:rec22}
\end{align}

%
%
%

\begin{theorem}\label{thm:quenched} The following equality in law holds  conditionally on $\{\lambda_1,\dots,\lambda_N\}$:
$$ \mathscr{O}_{11} \overset{(\rm d)}{=} \prod_{n=2}^{N} \Big(1 + {|X_n|^2 \over N |\lambda_1-\lambda_n |^2}\Big) $$
where the $X_n$'s are independent standard complex Gaussian random variables ($X_n\stackrel{{\rm (d)}}{=}
\frac{1}{\sqrt{2}}(\mathscr{N}_1+\ii\mathscr{N}_2)$ with standard real Gaussians $\mathscr{N}_1$, $\mathscr{N}_2$).
\end{theorem}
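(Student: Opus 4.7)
The starting point is Proposition \ref{fund}, which gives $\mathscr{O}_{11}=\|B_N\|^2$ where $B_N=(1,b_2,\dots,b_N)^{\rm t}$ is built by the recursion $b_{n+1}=\alpha_{1,n+1}B_n^{\rm t}T_n$ with $B_1=(1)$, and $T_n$ is a vector of $n$ i.i.d.\ $\mathscr{N}_\C(0,N^{-1})$ entries independent of $\mathcal{F}_{n-1}=\sigma(T_1,\dots,T_{n-1})$. Crucially, once we condition on the eigenvalues of $G$, the Schur decomposition ensures that these $T_{ij}$ are still complex Gaussian with the same law, so the entire analysis can be carried out in the quenched setting. The plan is to turn this recursion into a telescoping product for $\|B_n\|^2$.

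\textbf{Step 1: telescoping identity.} Since $\|B_{n+1}\|^2 = \|B_n\|^2 + |b_{n+1}|^2$ and $|b_{n+1}|^2 = |\alpha_{1,n+1}|^2|B_n^{\rm t}T_n|^2$, I would write
\[
\|B_{n+1}\|^2 \;=\; \|B_n\|^2\left(1+\frac{|B_n^{\rm t}T_n|^2}{|\lambda_1-\lambda_{n+1}|^2\,\|B_n\|^2}\right).
\]
The goal is to identify the bracketed random variable with $1+|X_{n+1}|^2/(N|\lambda_1-\lambda_{n+1}|^2)$ for some standard complex Gaussian $X_{n+1}$ independent of $\mathcal{F}_{n-1}$.

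\textbf{Step 2: unitary invariance at each step.} Given $\mathcal{F}_{n-1}$ (and the eigenvalues), the vector $B_n$ is deterministic while $T_n$ remains a $\C^n$-valued complex Gaussian with covariance $(1/N)\,\mathrm{Id}$, independent of $\mathcal{F}_{n-1}$. Since this law is invariant under any (deterministic) unitary rotation in $\C^n$, the scalar
\[
X_{n+1}\;\deq\;\frac{\sqrt{N}}{\|B_n\|}\,B_n^{\rm t}T_n
\]
is conditionally $\mathscr{N}_\C(0,1)$. Because this conditional law is deterministic (does not depend on $\mathcal{F}_{n-1}$), $X_{n+1}$ is in fact independent of $\mathcal{F}_{n-1}$, hence independent of $X_2,\dots,X_n$. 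Substituting back into Step 1 gives the desired factor.

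\textbf{Step 3: iteration.} Starting from $\|B_1\|^2=1$ and iterating yields
\[
\mathscr{O}_{11}\;=\;\|B_N\|^2\;=\;\prod_{n=2}^N\left(1+\frac{|X_n|^2}{N|\lambda_1-\lambda_n|^2}\right),
\]
with $X_2,\dots,X_N$ mutually independent standard complex Gaussians, as claimed. The whole argument is conditional on the eigenvalues and uses the $T_{ij}$ only through their (unchanged) Gaussian law in that conditioning.

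\textbf{Expected main point.} There is no hard estimate here; the content is structural. The one delicate point to state cleanly is the measurability/independence bookkeeping in Step 2: one has to notice that $\|B_n\|>0$ almost surely (immediate from $b_1=1$), that rescaling by an $\mathcal{F}_{n-1}$-measurable norm preserves conditional Gaussianity, and that a conditional law not depending on the conditioning algebra amounts to genuine independence. Once this filtration argument is in place, the product formula follows by a one-line induction.
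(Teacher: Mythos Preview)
Your proof is correct and follows essentially the same approach as the paper: the same telescoping identity $\|B_{n+1}\|^2=\|B_n\|^2(1+|X_{n+1}|^2/(N|\lambda_1-\lambda_{n+1}|^2))$ with $X_{n+1}=\sqrt{N}\,B_n^{\rm t}T_n/\|B_n\|$, and the same filtration argument showing $X_{n+1}$ is standard complex Gaussian independent of $\mathcal{F}_{n-1}$. Your explicit mention of unitary invariance and of the fact that a deterministic conditional law yields genuine independence makes the bookkeeping slightly more transparent than the paper's terse version, but the substance is identical.
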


\begin{remark}
In particular
\begin{equation}\label{eqn:O11Exp}
\E_T (\mathscr{O}_{11}) = \prod_{n=2}^{N} \Big(1 + \frac{1}{ N |\lambda_1-\lambda_n |^2}\Big),
\end{equation}
where  $\E_T$ is partial integration in the upper-diagonal variables $(T_{ij})_{j>i}$. We therefore recover the result by Chalker and Mehlig \cite{ChaMeh1998,ChaMeh2000,MehCha1998}.
\end{remark}
\begin{proof}
For fixed $N$ and $1\leq n\leq N$  we shall use the notations
\begin{equation}\label{eqn:Ogen}
\mathscr{O}_{11}^{(n)} = \| B_n \|^2,\ 
\mathscr{O}_{12}^{(n)} = - \overline{b_2}\, B_n^{\rm t} \bar D_n,\ 
\mathscr{O}_{22}^{(n)} = (1+|b_2|^2) \| D_n \|^2.
\end{equation}
Note that
$$
\mathscr{O}_{11}^{(1)} =1,\  \mathscr{O}_{11}^{(2)} = 1+ |b_2|^2 = 1 + \Big| {T_{12} \over \lambda_1 - \lambda_2} \Big|^2
$$
and, with (\ref{eqn:rec22}),
$$
\mathscr{O}_{11}^{(n+1)} = \| B_{n+1} \|^2 
= \| B_n \|^2 + |b_{n+1}|^2
= \| B_n \|^2 \Big( 1+ { | \alpha_{1,n+1}\,B_n^{\rm t} T_n |^2 \over  \| B_n \|^2} \Big)
= \mathscr{O}_{11}^{(n)} \Big( 1+ {|X_{n+1}|^2 \over N | \lambda_1 - \lambda_{n+1}|^2}  \Big),
$$
where $X_{n+1}:= \sqrt{N} {B_n^{\rm t}T_n  \over  \| B_n \|}$ is a $\mathcal{F}_{n}$-measurable Gaussian with variance $1$, independent of $\mathcal{F}_{n-1}$. We have therefore proved the expected factorization
with independent $X_n$'s, by an immediate induction.
\end{proof}

%
%
%
%
%
%

\subsection{The annealed diagonal overlap at the origin.}\ 
We recall that a Gamma random variable $\gamma_{\alpha}$ has density $\frac{1}{\Gamma(\alpha)}x^{\alpha-1}e^{-x}$ on $\mathbb{R}_+$, and a Beta random variable $\beta_{a,b}$ has density
$\frac{\Gamma(a+b)}{\Gamma(a)\Gamma(b)}x^{a-1}(1-x)^{b-1}\mathds{1}_{[0,1]}(x)$.

\begin{proposition}\label{prop:at0}
Conditionally on $\{ \lambda_1 =0 \}$, the following equality in distribution holds:
\begin{equation} \mathscr{O}_{11} \overset{(\rm d)}{=} \frac{1}{\beta_{2,N-1}}.\label{eqn:law2}
\end{equation}
In particular,
$
\E \left( \mathscr{O}_{11}\mid \lambda_1=0 \right)=N$ and 
$N^{-1} \mathscr{O}_{11}$ converges weakly to $\gamma_2^{-1}$.
\end{proposition}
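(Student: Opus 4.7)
The plan is to combine the quenched identity from Theorem~\ref{thm:quenched} with the conditional analogue of Kostlan's theorem advertised in the introduction. First, Theorem~\ref{thm:quenched} applied at $\lambda_1=0$ gives, conditionally on $(\lambda_2,\dots,\lambda_N)$,
$$\mathscr{O}_{11}\overset{(\rm d)}{=}\prod_{n=2}^{N}\Bigl(1+\frac{|X_n|^2}{N|\lambda_n|^2}\Bigr),$$
with i.i.d.\ standard complex Gaussians $X_n$, so the $|X_n|^2$ are i.i.d.\ Exp$(1)$ variables independent of the eigenvalues.

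Next, I would use the conditional Kostlan theorem. Conditioning on $\{\lambda_1=0\}$ inserts a factor $\prod_{j=2}^{N}|\lambda_j|^2$ from the Vandermonde, so the remaining points form a rotationally invariant ensemble with weight $w(x)=x\,e^{-Nx}$. The general Kostlan identity for ensembles of the form $\prod|\lambda_i-\lambda_j|^2\prod w(|\lambda_k|^2)$ then shows that $\{N|\lambda_n|^2\}_{n=2}^N$ has, as an unordered set, the same law as independent Gamma variables with densities $\propto x^{k}e^{-Nx}$ on $\R_+$ for $k=1,\dots,N-1$. Since the $X_n$'s are i.i.d.\ and the product is symmetric in $n$, I may relabel so that $N|\lambda_n|^2\overset{(\rm d)}{=}\gamma_n$ for $n=2,\dots,N$, with the $\gamma_n$'s mutually independent and independent of the $X_n$'s. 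This yields
$$\mathscr{O}_{11}\overset{(\rm d)}{=}\prod_{n=2}^{N}\Bigl(1+\frac{E_n}{\gamma_n}\Bigr),\qquad E_n\overset{(\rm d)}{=}\gamma_1,$$
with all the displayed variables independent.

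The last step is pure Beta--Gamma algebra. For each $n$, the pair $\bigl(\gamma_n/(\gamma_n+E_n),\gamma_n+E_n\bigr)$ has distribution $\beta_{n,1}\otimes\gamma_{n+1}$ (independent marginals), so $(1+E_n/\gamma_n)\overset{(\rm d)}{=}1/\beta_{n,1}$, and the resulting $\beta_{n,1}$'s are mutually independent. It remains to identify $\prod_{n=2}^{N}\beta_{n,1}\overset{(\rm d)}{=}\beta_{2,N-1}$, which I would verify on Mellin transforms: since $\E[\beta_{n,1}^{s}]=n/(n+s)$, the product telescopes to
$$\prod_{n=2}^{N}\frac{n}{n+s}=\frac{N!\,\Gamma(s+2)}{\Gamma(N+s+1)}=\E\bigl[\beta_{2,N-1}^{s}\bigr],$$
which establishes \eqref{eqn:law2}. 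The moment statement is then the direct computation $\E[1/\beta_{2,N-1}]=N(N-1)\int_0^1(1-x)^{N-2}\,\dd x=N$; and for the weak limit I would use the representation $1/\beta_{2,N-1}\overset{(\rm d)}{=}1+\gamma_{N-1}/\gamma_2$ with independent $\gamma_2,\gamma_{N-1}$, so that the strong law of large numbers $\gamma_{N-1}/N\to 1$ gives $N^{-1}\mathscr{O}_{11}\to 1/\gamma_2$ in distribution.

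The one delicate point is the conditional Kostlan statement used at the second step: the unconditional version for rotationally invariant weights is classical, but after biasing by the Vandermonde at the inserted point one needs to check carefully that the effective weight $x\,e^{-Nx}$ is still rotationally invariant and that the Kostlan factorization therefore applies with shape parameters shifted by one. Once this input is in hand, everything else reduces to algebraic manipulations within the Beta--Gamma calculus.
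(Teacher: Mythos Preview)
Your proof is correct and follows essentially the same route as the paper's: Theorem~\ref{thm:quenched} plus the conditional Kostlan result (proved in the paper as Corollary~\ref{kostbis}) plus Beta--Gamma algebra. The only cosmetic differences are that the paper invokes the product identity $\prod_{n=2}^{N}\beta_{n,1}\overset{(\rm d)}{=}\beta_{2,N-1}$ from the literature rather than verifying it via Mellin transforms, and obtains the weak limit by a direct change of variables rather than through the representation $1/\beta_{2,N-1}\overset{(\rm d)}{=}1+\gamma_{N-1}/\gamma_2$.
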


\begin{proof} 
With the notations from Theorem \ref{thm:quenched}, we have $(|X_2|^2,\dots,|X_N|^2) \overset{(\rm d)}{=} (\gamma_1^{(2)},\dots\gamma_1^{(N)})$, a collection of $N-1$ independent
Gamma random variables with parameter 1. 
Moreover, still conditionally on $\lambda_1=0$,  from Corollary \ref{kostbis} we have $\{N |\lambda_2|^2,\dots, N |\lambda_N|^2\}\overset{(\rm d)}{=}\{\gamma_2,\dots,\gamma_N\}$, a set of independent
Gamma random variables with corresponding parameters. Theorem \ref{thm:quenched} therefore yields, conditionally on $\lambda_1=0$,
$$ \mathscr{O}_{11}\overset{(\rm d)}{=} \prod_{j=2}^N \left(1 + { \gamma_1^{(j)} \over \gamma_j}\right),$$
where all random variable are independent. Equation (\ref{eqn:law2}) then follows immediately from Lemma \ref{lemmebeta1} below.
This readily implies 
\begin{equation}\label{eqn:expec}
\E \left( \mathscr{O}_{11}\mid  \lambda_1=0 \right) = \E \left(\beta_{2,N-1}^{-1}\right) = {\Gamma(1) \Gamma(N+1) \over \Gamma(2) \Gamma(N) }=N.
\end{equation}
The convergence in distribution follows from a simple change of variables: for any bounded test function $f$,
\begin{equation}\label{eqn:betaasymp}
\E\left(f\left(\frac{1}{N\beta_{2,N-1}}\right)\right)
=
\int_0^1f\left(\frac{1}{Nx}\right)\frac{N!}{(N-2)!}x(1-x)^{N-2}\rd x
=
\int_{N^{-1}}^\infty f\left(t\right)\frac{N-1}{N}\left(1-\frac{1}{Nt}\right)^{N-2}\frac{\rd t}{t^3},
\end{equation}
which clearly converges to the right hand side of (\ref{eqn:density}) as $N\to\infty$.
\end{proof}

\begin{lemma}\label{lemmebeta1} The following equalities in distribution hold, where all random variables with different indexes are independent:
\begin{align}
&{\gamma_a \over \gamma_a + \gamma_b}  \overset{(\rm d)}{=} \beta_{a,b},\label{eqn:Gamma}\\
&\prod_{j=2}^{N} \beta_{j,1}  \overset{(\rm d)}{=} \beta_{2,N-1}\label{eqn:Beta}.
\end{align}
\end{lemma}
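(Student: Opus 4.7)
The plan is to prove the two identities separately: (\ref{eqn:Gamma}) by a standard change of variables from the joint density of $(\gamma_a,\gamma_b)$, and (\ref{eqn:Beta}) by iterating a beta-splitting identity obtained from (\ref{eqn:Gamma}) via an analogous change of variables involving three Gamma random variables. No deep machinery is needed; the computations are classical beta-gamma algebra.

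For (\ref{eqn:Gamma}), I would start from independent Gammas $(\gamma_a,\gamma_b)$ with joint density proportional to $x^{a-1}y^{b-1}e^{-(x+y)}$ on $\mathbb{R}_+^2$, and perform the bijective change of variables $(x,y)\mapsto (u,v)$ where $u=x+y$ and $v=x/(x+y)$. The Jacobian is $u$, and one obtains that the new joint density factors as
\[
\frac{1}{\Gamma(a+b)}u^{a+b-1}e^{-u}\cdot\frac{\Gamma(a+b)}{\Gamma(a)\Gamma(b)}v^{a-1}(1-v)^{b-1}\mathds{1}_{[0,1]}(v),
\]
so $U=\gamma_a+\gamma_b$ and $V=\gamma_a/(\gamma_a+\gamma_b)$ are independent with respective laws $\gamma_{a+b}$ and $\beta_{a,b}$, yielding (\ref{eqn:Gamma}).

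For (\ref{eqn:Beta}), I would first establish the beta-splitting identity: if $\beta_{a,b}$ and $\beta_{a+b,c}$ are independent, then
\[
\beta_{a,b}\cdot\beta_{a+b,c}\overset{(\rm d)}{=}\beta_{a,b+c}.
\]
This follows by taking three independent Gammas $\gamma_a,\gamma_b,\gamma_c$ and writing
\[
\frac{\gamma_a}{\gamma_a+\gamma_b+\gamma_c}=\frac{\gamma_a}{\gamma_a+\gamma_b}\cdot\frac{\gamma_a+\gamma_b}{\gamma_a+\gamma_b+\gamma_c}.
\]
By (\ref{eqn:Gamma}), the left hand side is $\beta_{a,b+c}$ (using $\gamma_b+\gamma_c\overset{(\rm d)}{=}\gamma_{b+c}$). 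The first factor on the right is $\beta_{a,b}$. To identify the second, apply the same change of variables $(\gamma_a+\gamma_b,\gamma_c)\mapsto(\text{sum},\text{ratio})$, which shows that $(\gamma_a+\gamma_b)/(\gamma_a+\gamma_b+\gamma_c)$ is $\beta_{a+b,c}$ and is independent of $\gamma_a+\gamma_b$; since $\gamma_a/(\gamma_a+\gamma_b)$ is a function of $(\gamma_a,\gamma_b)$ that is itself independent of $\gamma_a+\gamma_b$ by (\ref{eqn:Gamma}), one checks that the two factors on the right are independent.

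Given the splitting identity, (\ref{eqn:Beta}) follows by a straightforward induction: apply it with $(a,b,c)=(2,1,N-2)$ to get $\beta_{2,N-1}\overset{(\rm d)}{=}\beta_{2,1}\cdot\beta_{3,N-2}$, then with $(3,1,N-3)$ on the second factor, and so on until only factors of the form $\beta_{j,1}$ remain. The only mildly subtle point—the main thing to verify with care—is the joint independence of the successive factors produced by the induction; this is handled by observing at each step that the second factor is measurable with respect to the sum $\gamma_a+\gamma_b+\gamma_c$, which is independent of the ratio producing the first factor.
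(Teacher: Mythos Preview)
Your proof is correct and follows essentially the same approach as the paper: the paper cites (\ref{eqn:Gamma}) as standard and deduces (\ref{eqn:Beta}) by induction from the beta-splitting identity $\beta_{p,q}\cdot\beta_{p+q,r}\overset{(\rm d)}{=}\beta_{p,q+r}$ (citing Diaconis--Freedman), which is exactly what you establish and use. Your write-up simply supplies the details behind these two references.
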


\begin{proof}
Equation (\ref{eqn:Gamma}) is standard, see e.g. 
\cite[Chapter 25]{JohKot1970}. The equality (\ref{eqn:Beta}) follows by immediate induction from the following property \cite{DiaFre1987}: if $\beta_{p,q}$ and $\beta_{p+q,r}$ are independent, their product has distribution $\beta_{p,q+r}$.
\end{proof}

\subsection{The annealed diagonal overlap in the bulk.}\ \label{subsec:1stpatch}
With the following theorem, we first recall how  the expectation of diagonal overlaps is accessible, following \cite{ChaMeh1998,ChaMeh2000,MehCha1998},\cite{WalSta2015}.  We then
prove Theorem \ref{thm:diag}.

The following was proved by Chalker and Mehlig for $z=0$. They gave convincing arguments for any $z$ in the bulk, a result then proved by Walters and Starr. Explicit formulae have also been recently obtained in \cite{Akemannetal} for the conditional expectation of diagonal and off-diagonal overlaps with respect to any number of eigenvalues. We include the following statement and its short proof for the sake of completeness.

\begin{theorem}[from \cites{ChaMeh1998,ChaMeh2000,MehCha1998,WalSta2015}]\label{meandiag} For any $z \in \mathbb{D}$, we have
\begin{align*} \E \left( \mathscr{O}_{11}\mid \lambda_1=z \right)&  =  N  (1-|z|^2) + \OO\left(\sqrt{N}\frac{e^{- c(z) N }}{1-|z|^2}\right)) \qquad \text{where} \ c(z)= |z|^2-1- \log (|z|^2)>0.\end{align*}
\end{theorem}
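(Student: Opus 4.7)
The strategy is to reduce the conditional expectation to a ratio of two explicit partition-function integrals, evaluate each via Andr\'eief's identity, and extract the asymptotics from the resulting closed form.

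Starting from the quenched formula \eqref{eqn:O11Exp}, one has $\E_T(\mathscr{O}_{11}) = \prod_{n=2}^N\bigl(1 + (N|\lambda_1-\lambda_n|^2)^{-1}\bigr)$. Conditioning on $\lambda_1=z$, the remaining eigenvalues have joint density proportional to $\prod_{n\ge 2}|z-\lambda_n|^2\prod_{2\le i<j}|\lambda_i-\lambda_j|^2 \prod \mu(\lambda_n)$. Multiplying by the quenched expectation, the Vandermonde-type factors $|z-\lambda_n|^2$ combine with the $(N|z-\lambda_n|^2)^{-1}$ terms, giving
\begin{equation*}
\E(\mathscr{O}_{11}\mid\lambda_1=z) = \frac{\int \prod_{n\ge 2}(|z-\lambda_n|^2 + N^{-1}) \prod_{i<j}|\lambda_i-\lambda_j|^2 \prod \mu(\lambda_n)\,d^2\lambda_n}{\int \prod_{n\ge 2}|z-\lambda_n|^2 \prod_{i<j}|\lambda_i-\lambda_j|^2 \prod \mu(\lambda_n)\,d^2\lambda_n}.
\end{equation*}

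By Andr\'eief's identity applied with the monomial basis $\{\lambda^k\}_{k=0}^{N-2}$ (which is orthogonal with respect to $\mu$), each integral equals $(N-1)!$ times the determinant of an $(N-1)\times(N-1)$ tridiagonal matrix, the denominator's matrix $\tilde M$ and the numerator's matrix $\tilde M'$. After scaling row $j$ by $N^j/j!$, both rescaled matrices have sub-diagonal $-\bar z$ and super-diagonal $-z(j+1)/N$, and their diagonals are $|z|^2 + (j+1)/N$ for $\tilde M$ versus $|z|^2 + (j+2)/N$ for $\tilde M'$.

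Setting $x:=N|z|^2$, the principal minors obey the three-term recurrences
\begin{equation*}
N^{m+1} d_m = (x+m+1)\, N^m d_{m-1} - xm\, N^{m-1} d_{m-2},
\qquad
N^{m+1} d'_m = (x+m+2)\, N^m d'_{m-1} - xm\, N^{m-1} d'_{m-2},
\end{equation*}
which are solved explicitly by a short induction exploiting the identity $(m+2)e_0^{(m+1)}(x) - x\,e_0^{(m)}(x) = (m+2-x)e_0^{(m+1)}(x) + x^{m+2}/(m+1)!$. This gives $d_{N-2} = \tfrac{(N-1)!}{N^{N-1}}e_0^{(N-1)}(x)$ and $d'_{N-2} = \tfrac{1}{N^{N-1}}\bigl[(N-1)!(N-x)\,e_0^{(N-1)}(x) + x^N\bigr]$. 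Taking the ratio yields the exact identity
\begin{equation*}
\E(\mathscr{O}_{11}\mid\lambda_1=z) = N(1-|z|^2) + \frac{(N|z|^2)^N}{(N-1)!\,e_0^{(N-1)}(N|z|^2)}.
\end{equation*}

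The asymptotic bound follows from Stirling's formula applied to $(N|z|^2)^N/(N-1)!$ together with the lower bound $e_0^{(N-1)}(x)\ge e^x - \tfrac{x^N/N!}{1-x/N}$ for $x<N$, producing the announced $O\!\left(\sqrt{N}\,e^{-c(z)N}/(1-|z|^2)\right)$ remainder. The main technical point is recognizing and verifying the closed form for $d'_{N-2}$; the pattern emerges quickly from the first few values, and the induction step is routine but must be carried out carefully using the partial-exponential identities.
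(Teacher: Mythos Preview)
Your proof is correct and follows essentially the same route as the paper's: both reduce the conditional expectation, via the quenched product formula \eqref{eqn:O11Exp} and Andr\'eief's identity, to a ratio of tridiagonal determinants, solve the resulting three-term recurrences in closed form, and read off the asymptotics from the partial exponential sums. Your exact expression $N(1-|z|^2)+\dfrac{(N|z|^2)^N}{(N-1)!\,e_0^{(N-1)}(N|z|^2)}$ is just an algebraic rewriting of the paper's $N\dfrac{e^{(N)}(x)}{e^{(N-1)}(x)}-x$; the only cosmetic difference is that you compute the denominator determinant explicitly rather than invoking the precomputed normalization $Z_N^{(z)}$ from Theorem~\ref{cond1}.
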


\begin{proof}
From (\ref{eqn:O11Exp}), we can write
$$\E\left(\mathscr{O}_{11}\mid  \lambda_1=z\right)= \E \left(\prod_{k=2}^{N} \left(1+\frac{1}{ N| z - \lambda_k|^2} \right) \mid \lambda_1= z\right). $$
Theorem \ref{cond1} with $g(\lambda) =1+\frac{1}{ N|z - \lambda|^2}$ then gives
$$\E \left( \prod_{k=2}^{N} g (\lambda_k) \mid  \lambda_1 = z \right)= {\det (f_{ij})_{1\leq i,j\leq N-1} \over Z_N^{(z)}} \ \text{where} \ f_{ij} = \frac{1}{ i!} \int \lambda^{i-1} \bar{\lambda}^{j-1} \left(\frac{1}{N}+ |z -\lambda |^2 \right)  \mu(\dd \lambda).$$
This is the determinant of a tridiagonal matrix, with entries (we use (\ref{eqn:momentgam}))
$$
f_{ii}=\frac{1}{N^i}+\frac{N^{-1}+|z|^2}{iN^{i-1}},\ 
f_{i,i+1}=-\frac{\overline{z}}{N^i},\ 
f_{i,i-1}=- \frac{z}{i N^{i-1}}.
$$
Denoting $x=N |z|^2$ and $d_k=\det((M_{ij})_{1\leq i,j\leq k})$, with the convention $d_0=1$ we have
\begin{align*}
d_1&=\frac{x+2}{N},\\
d_k&=\left(1+\frac{x+1}{k}\right)\frac{1}{N^k}d_{k-1}-\frac{x}{k}\frac{1}{N^{2k-1}}d_{k-2},
\end{align*}
so that $a_k=d_k N^{\frac{k(k+1)}{2}}$ satisfies $a_0=1$, $a_1=x+2$,
$$
a_k=\left(1+\frac{x+1}{k}\right)a_{k-1}-\frac{x}{k}a_{k-2}.
$$
This gives $a_k=(k+1)e^{(k+1)}(x)-x e^{(k)}(x)$ by an immediate induction.
Thus, we conclude 
\begin{equation}\E\left(\mathscr{O}_{11}\mid \lambda_1=z\right) = N\frac{e^{(N)}(x)}{e^{(N-1)}(x)}-x.\label{eqn:exact}\end{equation}
From the asymptotics
$$e^{(N)}(x)=e^x-\sum_{\ell>N}\frac{x^\ell}{\ell!}=e^x+\OO\left(
\frac{x^N}{N!(1-|z|^2)}\right)=e^x\left(1+\OO\left(\frac{e^{N(1-|z|^2+\log |z|^2)}}{\sqrt{2\pi N}(1-|z|^2)}\right)
\right),$$
the expected formula follows.
\end{proof}

%
%
%
%
%

The following proofs make use of special integrability when the conditioned particle is  at the center, together with 
a separation of the short-range and long-range eigenvalues. This separation of scales idea is already present in \cite{ChaMeh2000}, though not rigorous. 
To illustrate the main ideas, we first give an alternative proof of Theorem \ref{meandiag} (with deteriorated error estimate) which does not rely on explicit formulas, but rather on rigidity and translation invariance. We then prove the main result of this section, Theorem \ref{thm:diag}.

\begin{proof}[Alternative proof of Theorem \ref{meandiag}] We denote $\nu_z$ the measure (\ref{eqn:partitionfunction}) conditioned to $\lambda_1=z$. Note that $\nu_z$ is a determinantal measure as it has density $\prod_{2\leq i<j\leq N}|\la_i-\la_j|^2e^{-\sum_2^NV(\lambda_i)}$ for some external potential $V$ (which depends on $z$). With a slight abuse of language, we will abbreviate $\E_{\nu_z}(X)=\E(X\mid \la_1=z)$
even for $X$ a function of the overlaps.

The proof consists in three steps: we first show that we can afford a small cutoff of our test function around the singularity, then we decompose our product into smooth long-range and a short-range parts. The long range concentrates, and the short range is invariant by translation.\\

\noindent{\it First step: small cutoff.} Let $g_z(\lambda)=1+ \frac{1}{N|z-\lambda|^2}$. Remember that from (\ref{eqn:O11Exp}),
$
\E_{\nu_z}(\mathscr{O}_{11}) = \E_{\nu_z}\left(\prod_{i=2}^{N} g_z(\lambda_i)\right).
$
We denote $h_z(\lambda)=g_z(\lambda) \mathds{1}_{{|z-\lambda|}>N^{-A}}$, with $A=A(\kappa)$ a large enough chosen constant, and first prove the following elementary equality:
\begin{equation}\label{eqn:elementary}
\E_{\nu_z}\left(\prod_{i=2}^{N} g_z(\lambda_i)\right)= \E_{\nu_z}\left(\prod_{i=2}^{N} h_z(\lambda_i)\right)+\OO(N^{-3}).
\end{equation}
Note that the exponent $N^{-3}$ here is just chosen for the sake of concreteness. 
The left hand side coincides with
$
\E_{\nu_z}\left(\prod_{i=2}^N(|z-\lambda_i|^2+N^{-1})\right)\left(\E_{\nu_z}\left(\prod_{i=2}^N(|z-\lambda_i|^2)\right)\right)^{-1},
$
so that (\ref{eqn:elementary}) follows if we can prove
\begin{align}
&\E_{\nu_z}\left(\prod_{i=2}^N(|z-\lambda_i|^2+N^{-1})e^{-N(|z|^2-1)})\right)
-
\E_{\nu_z}\left(\prod_{i=2}^N(|z-\lambda_i|^2+N^{-1}\mathds{1}_{|z-\lambda_i|>N^{-A}})e^{-N(|z|^2-1)}\right)=\OO(N^{-B}),
\label{eqn:elem1}
\\
&\E_{\nu_z}\left(\prod_{i=2}^N(|z-\lambda_i|^2)e^{-N(|z|^2-1)}\right)\geq N^{-C}\label{eqn:elem2},
\end{align}
for a constant $B$ sufficiently larger than $C$.
Equation (\ref{eqn:elem2}) follows from Lemma \ref{lem:generalmoments}.
For equation  (\ref{eqn:elem1}), note that the left hand side has size order
$$
\E_{\nu_z}\left(\prod_{i=2}^N(|z-\lambda_i|^2+N^{-1})e^{-N(|z|^2-1)})\mathds{1}_{\exists i: |\lambda_i-z|<N^{-A}}\right)
=\OO(N^{-A/10})\E_{\nu_z}\left(\prod_{i=2}^N(|z-\lambda_i|^2+N^{-1})^2e^{-2N(|z|^2-1)}\right)^{1/2},
$$
by Cauchy-Schwarz inequality, union bound, and considering that $A$ can be taken as large as needed.
This last expectation is bounded by Lemma \ref{lem:generalmoments}, which concludes the proof of
(\ref{eqn:elementary}) by choosing $A$ large enough.\\

\noindent{\it Second step:  the long-range contribution concentrates.} 
We smoothly separate the short-range from a long-range contributions on the right hand side of (\ref{eqn:elementary}). For this, we define:
\begin{align}
\chi_{z,\delta} (\la)& = \chi \Big(N^{\frac{1}{2}-\delta}|z-\la| \Big) \quad \text{with} \quad \delta \in (0, \kappa) \label{eqn:chii}\\
f^{\ell}_z(\la)& =\frac{1}{ N |z - \lambda|^2}  (1 - \chi_{z,\delta} (\lambda)) \\
\bar f_z & =(N-1)\int_{\mathbb{D}} {1 - \chi_{z,\delta} (\lambda)  \over N|z - \lambda|^2} \frac{\dd m(\lambda)}{\pi} \\
h_z(\lambda)&=e^{h_z^{s}(\lambda)+h_z^{\ell}(\lambda)},\notag\\
h_z^{s}(\lambda)&=  \log \Big( 1 + \frac{1}{ N |z - \la|^2}\mathds{1}_{{|z-\lambda|}>N^{-A}} \Big) \chi_{z,\delta} (\la),\label{eqn:short}\\
h_z^{\ell}(\la)&=  \log \Big( 1 + \frac{1}{ N |z - \la|^2} \Big) (1 - \chi_{z,\delta} (\la)),\notag
\end{align}
Note that
\begin{equation}\label{eqn:est}
\left|\sum_{i=2}^Nh^\ell_z(\lambda_i)-\bar f_z\right|\leq \left|\sum_{i=2}^Nf^\ell_z(\lambda_i)-\bar f_z\right|+\sum_{i=2}^N\frac{1}{N^2|z-\lambda_i|^4}(1 - \chi_{z,\delta} (\lambda_i)).
\end{equation}
To bound the first term on the right hand side, 
we rely on  \cite[Lemma 3.2]{BreDui2014}: for any $\alpha$ such that $\alpha \|f^{\ell}_z\|_\infty<1/3$ 
(in practice $\|f^{\ell}_z\|_\infty<N^{-2\delta}$ so that we will choose $\alpha=c N^{2\delta}$ for some fixed small $c$), we have
\begin{equation}\label{eqn:concentration}
\E_{\nu_z}\left(e^{\alpha(\sum_{i=2}^Nf^\ell_z(\lambda_i)-\E_{\nu_z}\left(\sum_{i=2}^Nf^\ell_z(\lambda_i)\right))}\right)\leq e^{C\alpha^2 {\rm Var}_{\nu_z}(\sum_{i=2}^N f^\ell_z(\lambda_i))}
\end{equation}
for some $C$ which does not depend on $N$. We first bound the above variance. Introduce a partition of type $1=\chi+\sum_{k\geq 1}\xi(2^{-k}x)$ for any $x>0$, with $\xi$ smooth, compactly supported.
Let $f^{\ell}_{z,k}(\lambda)=f^{\ell}_{z}(\lambda)\xi(2^{-k}N^{1/2-\delta}|z-\la|)$ and $K=\min\{k\geq 1:2^kN^{-1/2+\delta}>C\}$ where $C$ and therefore $K$ only depend on $\xi$. Then $\sum_if^{\ell}_z(\lambda_i)=\sum_{k=1}^{K}\sum_if^{\ell}_{z,k}(\lambda_i)$ with probability $1-e^{-cN}$ (here we use that there are no eigenvalues $|\lambda_k|>1+\e$ with probability $1-e^{-c(\e)N}$, thanks to the Corollary $\ref{Kost1}$). Moreover,  from \cite[Theorem 1.2]{BouYauYin2014II}, for any  $\e>0$ and  $D>0$, there exists $N_0>0$ such that for any $N\geq N_0$, $|z|<1$ and   $1\leq k\leq K$ we have
$$
\mathbb{P}_{N-1}\left(
\left|\sum f^{\ell}_{z,k}(\lambda_i)-(N-1)\int_{|\la|<1} f^{\ell}_{z,k}\right|>N^{-2\delta+\e}
\right)
\leq N^{-D}.
$$
This implies
the same estimate for the conditioned measure by a simple Cauchy-Schwarz inequality:
\begin{multline*}
\mathbb{P}_{\nu_z}\left(
\left|\sum f^{\ell}_{z,k}(\lambda_i)-(N-1)\int_{|z|<1} f^{\ell}_{z,k}\right|>N^{-2\delta+\e}
\right)\\
\leq N^C
\mathbb{P}_{N-1}\left(
\left|\sum f^{\ell}_{z,k}(\lambda_i)-(N-1)\int_{|z|<1} f^{\ell}_{z,k}\right|>N^{-2\delta+\e}
\right)^{1/2}
\E_{\nu_z}\left(\prod_{i=2}^N|z-\lambda_i|^4e^{-2N(|z|^2-1)}\right)^{1/2}\leq N^{-\frac{D}{2}+2C}.
\end{multline*}
where we used  Lemma \ref{lem:generalmoments}. We conclude that for any  $\e>0$ and  $D$ we have
$$
\mathbb{P}_{\nu_z}\left(
|\sum f^{\ell}_z(\lambda_i)-\bar f_z|>N^{-2\delta+\e}
\right)\leq N^{-D},
$$
so that ${\rm Var}_{\nu_z}(\sum_{i=2}^N f_z^\ell(\lambda_i))=\OO(N^{-4\delta+\e})$ and $\mathbb{E}_{\nu_z}(\sum_{i=2}^N f_z^\ell(\lambda_i))=\bar f_z+\OO(N^{-2\delta+\e})$.
As a consequence, (\ref{eqn:concentration}) becomes 
\begin{equation}\label{eqn:est1}
\E_{\nu_z}\left(e^{\alpha(\sum_{i=2}^Nf^\ell_z(\lambda_i)-\bar f_z)}\right)\leq e^{C\alpha^2 N^{-4\delta+\e}+C|\alpha| N^{-2\delta+\e}}.
\end{equation}
The same reasoning yields 
\begin{equation}\label{eqn:est2}
\E_{\nu_z}\left(e^{\alpha\sum_{i=2}^N\frac{1}{N^2|z-\lambda_i|^4}(1-\chi_{z,\delta}(\lambda_i))}\right)\leq e^{C\alpha^2 N^{-8\delta+\e}+C|\alpha| N^{-2\delta+\e}}.
\end{equation}
The choice $\alpha=\pm c N^{2\delta}$ in (\ref{eqn:est1}), (\ref{eqn:est2}) together with (\ref{eqn:est}) implies
$$
\mathbb{P}_{\nu_z}(A)<e^{-c N^{\e}},\ \mbox{where}\  A=\left\{
\left|
\sum_{i=2}^Nh^\ell_z(\lambda_i)-\bar f_z
\right|>N^{-2\delta+\e}
\right\}.
$$
This yields, for some $p=1+c(\kappa)$, $c(\kappa)>0$ and some $q,r>1$,
\begin{equation}\label{eqn:conc}
\mathbb{E}_{\nu_z}\left(e^{\sum_i(h^s_z(\lambda_i)+h^\ell_z(\lambda_i))}\mathds{1}_{A}\right)
\leq 
\mathbb{E}_{\nu_z}\left(e^{p\sum_ih^s_z(\lambda_i)}\right)^{1/p}
\mathbb{E}_{\nu_z}\left(e^{q\sum_i h^\ell_z(\lambda_i)}\right)^{1/q}
\mathbb{P}_{\nu_z}(A)^{1/r}\leq e^{-c N^\e}.
\end{equation}
Here we used that the third term has size order $e^{-c N^{\e}}$, the second one is of order  $e^{q\bar f_z}=\OO(N^{C})$ from  (\ref{eqn:est1}), (\ref{eqn:est2}), and so is the first one from Lemma \ref{lem:moment} (we needed the initial small cutoff changing $g$ into $h$ in order to apply this Lemma).
Moreover,
\begin{multline*}
\mathbb{E}_{\nu_z}\left(e^{\sum_i(h^s_z(\lambda_i)+h^\ell_z(\lambda_i))}\mathds{1}_{A^c}\right)
=
(1+\OO(N^{-2\delta+\e}))\mathbb{E}_{\nu_z}\left(e^{\sum_ih^s_z(\lambda_i)+\bar f_z}\right)-
(1+\OO(N^{-2\delta+\e}))\mathbb{E}_{\nu_z}\left(e^{\sum_ih^s_z(\lambda_i)+\bar f_z}\mathds{1}_{A}\right),
\end{multline*}
and this last expectation is of order $e^{-c N^{\e}}$ for the same reason as (\ref{eqn:conc}). To summarize, with the previous two equations we have proved (up to exponentially small error terms)
$$
\mathbb{E}_{\nu_z}\left(\prod_{i=2}^Nh_z(\lambda_i)\right)
=
(1+\OO(N^{-2\delta+\e}))e^{\bar f_z}\ \mathbb{E}_{\nu_z}\left(e^{\sum_ih^s_z(\lambda_i)}\right)
.
$$
%
%
%

\noindent{\it Third step: the local part is invariant.} With $p=1$ in (\ref{eqn:useful}), we have
$$
\mathbb{E}_{\nu_z}\left(e^{\sum_ih_z^s(\lambda_i)}\right)=\mathbb{E}_{\nu_0}\left(e^{\sum_ih_0^s(\lambda_i)}\right)+\OO\left(e^{-c N^{2\kappa}}\right).
$$
This yields
$$
\mathbb{E}_{\nu_z}\left(\prod_{i=2}^Nh_z(\lambda_i)\right)
=
(1+\OO(N^{-2\delta+\e}))e^{\bar f_z-\bar f_0}\ \mathbb{E}_{\nu_0}\left(\prod_{i=2}^Nh_0(\lambda_i)\right).$$
From Lemma \ref{integral1}, $e^{\bar f_z-\bar f_0}=1-|z|^2$, and from
(\ref{eqn:expec}), (\ref{eqn:elementary}) we have  $\mathbb{E}_{\nu_0}\left(\prod_{i=2}^Nh_0(\lambda_i)\right)=N+\OO(N^{-2})$.
This concludes the proof.
\end{proof}

\begin{proof}[Proof of Theorem \ref{thm:diag}]
We follow the same method as in the previous proof, except that we won't need a small a priori cutoff: we are interested in convergence in distribution, not in ${\rm L}^1$.\\

\noindent{\it First step:  the long-range contribution concentrates.} 
We smoothly separate the short-range from a long-range contributions in Theorem \ref{thm:quenched}:
\begin{align}
 \mathscr{O}_{11} &\overset{(\rm d)}{=}e^{\sum_2^N g^s_z(\lambda_i,X_i)+\sum_2^N g^\ell_z(\lambda_i,X_i)},\label{eqn:decompgener}\\
g_z^{s}(\lambda,x)&=  \log \Big( 1 + {|x|^2 \over N |z - \la|^2} \Big) \chi_{z,\delta} (\la),\notag\\
g_z^{\ell}(\la,x)&=  \log \Big( 1 + {|x|^2 \over N |z - \la|^2} \Big) (1 - \chi_{z,\delta} (\la))\notag,
\end{align}
For the convenience of the reader we recall the notations defined above :
\begin{align*}
\chi_{z,\delta} (\la) &= \chi \Big(N^{\frac{1}{2}-\delta}|z-\la| \Big) \quad \text{with} \quad  \delta \in (0, \kappa) \\
f^{\ell}_z(\la,x) & ={|x|^2 \over N |z - \lambda|^2}  (1 - \chi_{z,\delta} (\lambda)) \\
\bar f_z & =(N-1)\int_{\mathbb{D}} {1 - \chi_{z,\delta} (\lambda)  \over N|z - \lambda|^2} \frac{\dd m(\lambda)}{\pi}
\end{align*} 
Let $\mathcal{G}$ be the distribution of $(X_2,\dots,X_N)$. For any $\e>0$, by Gaussian tail we have
$$
\mathcal{G}(B)\geq 1-e^{-cN^{\e/10}},\ \mbox{where}\ B=\{|X_i|^2\leq N^{\e/10}\ \mbox{for all}\ 2\leq i\leq N\}.
$$
Moreover,
\begin{equation}\label{eqn:estII}
|\sum_{i=2}^Ng^\ell_z(\lambda_i,X_i)-\bar f_z|\mathds{1}_{B}\leq |\sum_{i=2}^Nf^\ell_z(\lambda_i,X_i)-\bar f_z|+C N^{\e/2}\sum_{i=2}^N\frac{1}{N^2|z-\lambda_i|^4}(1 - \chi_{z,\delta} (\lambda_i)).
\end{equation}
To bound the first term on the right hand side, we first integrate over the Gaussian variables:
\begin{multline*}
\mathbb{E}_{\nu_z\times \mathcal{G}}\left(
e^{\alpha (\sum_{i=2}^N f_z^{\ell}(\lambda_i,X_i)-\bar f_z)}
\right)
=
\mathbb{E}_{\nu_z}\left(
\prod_{i=2}^N
\frac{1}{1-\alpha\frac{1-\chi_{z,\delta}(\lambda_i)}{N|z-\la_i|^2}}
e^{-\alpha \bar f_z}
\right)\\
\leq 
\mathbb{E}_{\nu_z}\left(
e^{\alpha \left(\sum_{i=2}^N f_z^{\ell}(\lambda_i)-\bar f_z\right)+C\alpha^2\sum_{i=2}^N\frac{1}{N^2|z-\lambda_i|^4}(1 - \chi_{z,\delta} (\lambda_i))^2}
\right)\leq e^{C\alpha^2 N^{-4\delta+\e}+C\alpha N^{-2\delta+\e}},
\end{multline*}
where, for the last inequality, we used (\ref{eqn:est1}) and (\ref{eqn:est2}) together with the Cauchy-Schwarz  inequality, with $\alpha=c N^{2\delta}$ for some fixed small enough $c$ being admissible. With  (\ref{eqn:estII}), we obtain
\begin{equation}\label{eqn:goodset}
\mathbb{P}_{\nu_z\times \mathcal{G}}(A)<e^{-c N^{\e}},\ \mbox{where}\  A=\left\{
\left|
\sum_{i=2}^N g_z^{\ell}(\lambda_i,X_i)-\bar f_z
\right|>N^{-2\delta+\e}
\right\}.
\end{equation}
Let $\xi\in \mathbb{R}$ be fixed.  From the above bound we have 
\begin{multline*}
\mathbb{E}\left(\left(\frac{\mathscr{O}_{11}}{N(1-|z|^2)}\right)^{\ii\xi}\mid \la_1=z\right)=
\mathbb{E}_{\nu_z\times \mathcal{G}}\left(\left(\frac{e^{\sum_2^N g^s_z(\lambda_i,X_i)+\sum_2^N g^\ell_z(\lambda_i,X_i)}}{N(1-|z|^2)}\right)^{\ii\xi} \mathds{1}_{A^c}\right)+\OO(e^{-c N^{\e}})\\
=
\mathbb{E}_{\nu_z\times \mathcal{G}}\left(\left(\frac{e^{\sum_2^N g^s_z(\lambda_i,X_i)+\bar f_z}}{N(1-|z|^2)}\right)^{\ii\xi} \mathds{1}_{A^c}\right)+\OO\left(|\xi| N^{-2\delta+\e}\right)
=
\mathbb{E}_{\nu_z\times \mathcal{G}}\left(\left(\frac{e^{\sum_2^N g^s_z(\lambda_i,X_i)+\bar f_0}}{N}\right)^{\ii\xi} \right)+\OO(|\xi| N^{-2\delta+\e}),
\end{multline*}
where we used $e^{\bar f_z}/(1-|z|^2)=e^{\bar f_0}$, from Lemma \ref{integral1}.

We now define the function $a_z$ (omitting the dependence in $\xi$ in the notation) through
$$
e^{a_z(\la)}=\mathbb{E}_{\mathcal{G}}\left(e^{\ii\xi g_z^s(\la,X_i)}\right).
$$
Note that $a_z$ does not depend on $i$ because the $X_i$'s are identically distributed.
We want to apply Lemma  \ref{lem:invariance2}.
Note that $a_z$ is supported on $|z-\la|<C N^{-\frac{1}{2}+\delta}$ and $\re(a_z)\leq 0$, so that (\ref{eqn:strange2}) and (\ref{eqn:strange3}) are automatically satisfied and  $(N\|\nu\|_1)^r\leq C N^{2r\delta}$, 
hence (\ref{eqn:strange1}) holds for the choice $r=3$, $\delta=\kappa/10$. For this choice of $\delta$, we therefore have
\begin{equation}\label{eqn:FourierGood}
\mathbb{E}_{\nu_z}\left(e^{\sum_{i=2}^N a_z(\la_i)}\right)=
\mathbb{E}_{\nu_0}\left(e^{\sum_{i=2}^N a_0(\la_i)}\right)+\OO\left(e^{-cN^{2\kappa}}\right)
\end{equation}
uniformly in  $\xi$.
This proves
\begin{multline}
\mathbb{E}\left(\left(\frac{\mathscr{O}_{11}}{N(1-|z|^2)}\right)^{\ii\xi}\mid \la_1=z\right)=
\mathbb{E}_{\nu_0\times \mathcal{G}}\left(\left(\frac{e^{\sum_2^N g^s_0(\lambda_i,X_i)+\bar f_0}}{N}\right)^{\ii\xi} \right)+\OO(|\xi| N^{-2\delta+\e})\label{eqn:identity1}\\
=\mathbb{E}\left(\left(\frac{\mathscr{O}_{11}}{N}\right)^{\ii\xi}\mid \la_1=0\right)+\OO(|\xi| N^{-2\delta+\e}).
\end{multline}
Together with Proposition \ref{prop:at0}, this concludes the proof of Theorem \ref{thm:diag}.
\end{proof}

\begin{proof}[Proof of Corollary \ref{cor:extremes}]
We start with the lower bound. 
From (\ref{eqn:goodset}) we have
$$
\mathbb{P}\left(
\frac{\mathscr{O}_{11}}{1-|z|^2}<N^{1-\e}\mid \la_1=z
\right)
\leq 
\mathbb{P}_{\nu_z\times \mathcal{G}}\left(
e^{\sum_2^N g^s_z(\lambda_i,X_i)+\bar f_0}
<N^{1-\frac{\e}{2}}
\right)
+\OO(e^{-c N^\e}).
$$
We now apply Lemma \ref{lem:FourierInversion} to justify that $z$ can essentially be replaced by $z=0$ in the above left hand side. From (\ref{eqn:FourierGood}), the Fourier transforms of $g_z,g_0$ are exponentially close uniformly in the Fourier parameter $\xi$. By choosing in Lemma \ref{lem:FourierInversion} $R=T=e^{N^{\e/10}}$ and $F$ smooth bounded equal to $1$ on  the interval $(-\infty,(1-\e/2\log N-\bar f_0)]$, $0$ on $[(1-\e/2\log N-\bar f_0+1,\infty)$, we have
\begin{multline}
\mathbb{P}_{\nu_z\times \mathcal{G}}\left(
e^{\sum_2^N g^s_z(\lambda_i,X_i)+\bar f_0}
<N^{1-\frac{\e}{2}}
\right)\leq 
\mathbb{P}_{\nu_0\times \mathcal{G}}\left(
e^{\sum_2^N g^s_0(\lambda_i,X_i)+\bar f_0}
<N^{1-\frac{\e}{4}}
\right)+\OO(e^{-N^{\e/10}})\\
\leq
\mathbb{P}\left(\mathscr{O}_{11}<N^{1-\e/8}\mid \la_1=0
\right)+\OO(e^{-N^{\e/10}})=\OO(e^{-N^{\e/10}}),\label{eqn:boundby0}
\end{multline}
where this last probability  was estimated thanks to Proposition \ref{prop:at0}. For $|z|<1-N^{-\frac{1}{2}+\kappa}$, this yields 
$$
\mathbb{P}\left(
\mathscr{O}_{11}<N^{-\frac{1}{2}+\kappa}N^{1-\e}\mid \la_1=z
\right)=\OO(e^{-N^{\e/10}}),
$$
and we conclude by a union bound (an error bound $\oo(N^{-1})$ above would be enough).
For the upper estimate, in the same way as previously, for any $x>0$ we obtain
\begin{equation}\label{eqn:tail}
\mathbb{P}\left(
\frac{\mathscr{O}_{11}}{1-|z|^2}>x\mid \la_1=z
\right)
\leq
\mathbb{P}\left(
\mathscr{O}_{11}>x/2\mid \la_1=0
\right)+\OO(e^{-N^{\e/10}}).
\end{equation}
For $x\gg N$, the following is easy to justify:
$$
\mathbb{P}\left(
\mathscr{O}_{11}>x\mid \la_1=0
\right)
=
N(N-1)\int_0^{1/x}u(1-u)^{N-2}
\rd u
=
\frac{N-1}{N}\int_{x/N}^\infty\left(1-\frac{1}{Nt}\right)^{N-2}\frac{\rd t}{t^3}
\sim
\int_{x/N}^\infty\frac{\rd t}{t^3}=\frac{N^2}{2 x^2}.
$$
We obtained
\begin{multline*}
\sum_{i=1}^N\mathbb{P}\left(\lambda_i\in\Omega_N,\mathscr{O}_{ii}\geq N^{1+\kappa_0+\e}m(\Omega_N)^{1/2}\right)
=N\mathbb{P}\left(\lambda_i\in\Omega_N\right)
\mathbb{P}\left(\mathscr{O}_{11}\geq N^{1+\kappa_0+\e}m(\Omega_N)^{1/2}\mid \lambda_1\in\Omega_N\right)\\
\leq 
N\mathbb{P}\left(\lambda_i\in\Omega_N\right)
\mathbb{P}\left(\frac{\mathscr{O}_{11}}{1-|\lambda_1|^2}\geq N^{\frac{3}{2}+\e}m(\Omega_N)^{1/2}\mid \lambda_1\in\Omega_N\right)
\leq N m(\Omega_N)\frac{N^2}{(N^{\frac{3}{2}+\e}m(\Omega_N)^{1/2})^2}\leq N^{-2\e},
\end{multline*}
which concludes the proof by a union bound.
\end{proof}

\begin{remark} One may wonder about the true asymptotics of the greatest overlap over the whole spectrum.
The above bounds could easily be refined to prove that for any $C>0$ and $ N\ll x\ll N^C$,
$$
\sum_{i=1}^N\mathbb{P}\left(\mathscr{O}_{ii}\geq x\right)\sim N\int_{\mathbb{D}}\frac{N^2(1-|z|^2)^2}{2 x^2}\frac{\rd m(z)}{\pi}=\frac{N^3}{6 x^2}.
$$
If the overlaps are sufficiently independent (a fact suggested by (\ref{eqn:indep})), this hints towards convergence of the maximum to a Fr\'echet distribution: for any fixed $y>0$, as $N\to\infty$
$$\mathbb{P}\left(\max_{1\leq i\leq N}\frac{\mathscr{O}_{ii}}{N^{3/2}}<y\right)\to e^{-\frac{1}{6y^2}}.$$
\end{remark}

Remember that
$0 \leq \chi \leq 1$
is a  smooth cut-off function  on $\mathbb{R}_+$ such that
$\chi(x)=1$ for $x<1/2$ and $0$ for $x>1$, and we denote
$$\chi_{z,\delta} (\la)= \chi \Big( N^{\frac{1}{2}-\delta}|z-\la|\Big).$$

The following three lemmas were used in the previous proofs.
\begin{lemma}\label{integral1}  There exists a constant $c(\chi)$ such that for any $|z|<1-N^{-\frac{1}{2}+\delta}$ we have
$$ \frac{1}{ \pi} \int_{\mathbb{D}} {1 - \chi_{z,\delta} (\lambda)  \over |z - \lambda|^2} \dd m(\lambda) =  (1- 2 \delta) \log (N) + \log (1 - |z|^2 )  + c(\chi). $$
\end{lemma}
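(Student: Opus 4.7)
The plan is to split the integral at the cutoff scale $\eps = N^{-1/2+\delta}$: the inner piece $|\lambda - z| \leq \eps$ will contribute a constant depending only on the shape of $\chi$, while the outer piece $|\lambda - z| > \eps$, on which $1 - \chi_{z,\delta} \equiv 1$, will produce the explicit logarithms. Throughout, the bulk hypothesis $|z| < 1 - N^{-1/2+\delta}$ is used to ensure that the small disc $B(z,\eps)$ sits strictly inside $\mathbb{D}$.

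For the inner part, since $B(z,\eps) \subset \mathbb{D}$ the integrand is unaffected by the boundary, and polar coordinates centred at $z$ followed by the rescaling $s = N^{1/2-\delta}\rho$ give
$$\int_{B(z,\eps)} \frac{1-\chi_{z,\delta}(\lambda)}{|z-\lambda|^2}\frac{\dd m(\lambda)}{\pi} = 2\int_0^1 \frac{1-\chi(s)}{s}\,\dd s \eqd c(\chi),$$
a finite constant because $1-\chi(s)$ vanishes for $s \leq 1/2$. For the outer part, the same polar coordinates $\lambda = z + \rho e^{\ii\theta}$ turn the domain $\{\lambda \in \mathbb{D}: |\lambda - z| > \eps\}$ into $\{(\rho,\theta): \eps < \rho < \rho(\theta)\}$, where $\rho(\theta)$ is the positive root of $\rho^2 + 2\rho\,\re(\bar z e^{\ii\theta}) + |z|^2 - 1 = 0$. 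Since $\rho(\theta) \geq 1-|z| > \eps$, integrating in $\rho$ yields
$$\int_{\mathbb{D}\setminus B(z,\eps)} \frac{\dd m(\lambda)}{\pi|z-\lambda|^2} = 2\int_0^{2\pi}\log\rho(\theta)\,\frac{\dd\theta}{2\pi} - 2\log\eps = 2\int_0^{2\pi}\log\rho(\theta)\,\frac{\dd\theta}{2\pi} + (1-2\delta)\log N.$$

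It remains to establish the angular identity $\int_0^{2\pi}\log\rho(\theta)\,\frac{\dd\theta}{2\pi} = \tfrac{1}{2}\log(1-|z|^2)$. This is the crux of the argument. The original integral depends on $z$ only through $|z|$ (apply $\lambda \mapsto e^{\ii\alpha}\lambda$), so I may assume $z = |z|$ is real nonnegative. In that case the quadratic defining $\rho(\theta)$ has roots $\rho(\theta) = -|z|\cos\theta + \sqrt{1-|z|^2\sin^2\theta}$ and $-\rho(\pi-\theta)$, hence by Vieta $\rho(\theta)\rho(\pi-\theta) = 1 - |z|^2$. Averaging the identity $\log\rho(\theta) + \log\rho(\pi-\theta) = \log(1-|z|^2)$ against $\dd\theta/2\pi$ and using the bijection $\theta \mapsto \pi - \theta$ yields the desired mean. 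Summing the inner and outer contributions produces $(1-2\delta)\log N + \log(1-|z|^2) + c(\chi)$, as claimed. Everything apart from this symmetry step is straightforward bookkeeping with the cutoff; the only place the bulk hypothesis is needed is to guarantee $B(z,\eps) \subset \mathbb{D}$, so that the inner and outer pieces decouple cleanly.
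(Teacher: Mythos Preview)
Your proof is correct and takes a genuinely different route from the paper's. Both arguments reduce to evaluating $\frac{1}{\pi}\int_{\mathbb{D}\setminus B(z,\eps)}|z-\lambda|^{-2}\,\dd m(\lambda)$ and showing its $z$-dependence is exactly $\log(1-|z|^2)$. The paper does this by writing $|\lambda|^{-2}$ as a divergence, applying Green's theorem to reduce to a line integral over $\partial\mathbb{D}$, and then expanding $\log|e^{\ii\theta}-a|$ and $|e^{\ii\theta}-a|^{-2}$ as Fourier series in $\theta$ and summing the resulting double series. You instead work in polar coordinates centred at $z$, integrate radially to obtain $2\int_0^{2\pi}\log\rho(\theta)\,\frac{\dd\theta}{2\pi}$, and evaluate this angular integral by the elegant Vieta observation $\rho(\theta)\rho(\pi-\theta)=1-|z|^2$. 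Your approach is shorter, entirely elementary (no vector calculus or series manipulations), and makes the role of the bulk hypothesis $|z|<1-\eps$ transparent: it is precisely what guarantees $\rho(\theta)>\eps$ for every $\theta$, so the radial integral has a clean lower limit. The paper's approach, while heavier here, generalises more readily to the two-point kernel $\frac{1}{(z_1-\lambda)\overline{(z_2-\lambda)}}$ treated later in Lemma~\ref{lem:integral2}, where no simple Vieta symmetry is available.
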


\begin{proof}
For $z=0$, this is an elementary calculation in polar coordinates, so that we only need to show that for any given $0<\e<1-|z|$ we have (here $\mathbb{D}_a$ is the disk with center $a$, radius $\e$)
\begin{equation}\label{eqn:aim22}
 \frac{1}{ \pi} \int_{\mathbb{D}-\mathbb{D}_z} {1  \over |z - \lambda|^2} \dd m(\lambda) - \frac{1}{ \pi} \int_{\mathbb{D}-\mathbb{D}_0} {1  \over |\lambda|^2} \dd m(\lambda) =\log(1-|z|^2).
\end{equation}
Denote $x+\ii y=r e^{\ii\theta}$ and $a=|z|$. Note that $\frac{1}{r^2}=\partial_x(x\frac{\log r}{r^2})+\partial_y(y\frac{\log r}{r^2})$,
so that by Green's theorem we have
$$
\frac{1}{ \pi} \int_{\mathbb{D}-\mathbb{D}_z} {1  \over |z - \lambda|^2} \dd m(\lambda)=\frac{1}{ \pi} 
\left(\int_{\partial\mathbb{D}}-\int_{\partial\mathbb{D}_z}\right)\frac{\log|re^{\ii\theta}-a|}{|re^{\ii\theta}-a|^2}\left((x-a)\rd y-y\rd x\right).
$$
The second integral  clearly does  not depend on $a$. The first integral can be split into
$$
\frac{1}{ \pi} \int_{\partial\mathbb{D}}\frac{\log|re^{\ii\theta}-a|}{|e^{\ii\theta}-a|^2}\rd\theta
-
\frac{a}{ 2\pi} \int_{\partial\mathbb{D}}\frac{\log|re^{\ii\theta}-a|}{|e^{\ii\theta}-a|^2}(e^{\ii\theta}+e^{-\ii\theta})\rd\theta.
$$
To calculate the first integral above, we expand $\log|e^{\ii\theta}-a|=\Re\sum_{p\geq 1}\frac{1}{p}(ae^{-\ii\theta})^p$,
$|e^{\ii\theta}-a|^{-2}=\sum_{k,\ell\geq 0}a^{k+\ell}e^{\ii(k-\ell)\theta},$ and obtain
$$
\frac{1}{ \pi} \int_{\partial\mathbb{D}}\frac{\log|re^{\ii\theta}-a|}{|e^{\ii\theta}-a|^2}\rd\theta=2\sum_{p\geq 1,k=p+\ell}\frac{a^{p+k+\ell}}{p}=2\frac{\log(1-a^2)}{1-a^2}.
$$
In the same way, we have
$$
\frac{a}{ 2\pi} \int_{\partial\mathbb{D}}\frac{\log|re^{\ii\theta}-a|}{|e^{\ii\theta}-a|^2}(e^{\ii\theta}+e^{-\ii\theta})\rd\theta
=
a\left(\sum_{p\geq 1,k+1=p+\ell}+\sum_{p\geq 1,k-1=p+\ell}\right)\frac{a^{k+\ell+p}}{p}
=
\frac{\log(1-a^2)}{1-a^2}+a^2\frac{\log(1-a^2)}{1-a^2}
$$
To summarize we have proved that
$$
 \frac{1}{ \pi} \int_{\mathbb{D}-\mathbb{D}_z} {1  \over |z - \lambda|^2} \dd m(\lambda)=\log(1-a^2)+c
$$
where $c$ does not depend on $z$,
and (\ref{eqn:aim22}) follows.
\end{proof}

\begin{lemma}\label{lem:moment}
Let $h_z^s$ be given by (\ref{eqn:short}).
For any $\kappa\in(0,1/2)$, there exists $c(\kappa),\, C(\kappa)>0$ such that for any $|z|<1-N^{-\frac{1}{2}+\kappa}$ and $p\in[1,1+c(\kappa)]$, we have
$$
\mathbb{E}_{\nu_z}\left(e^{p\sum_2^N h_z^s(\lambda_i)}\right)\leq N^{C(\kappa)}.
$$
\end{lemma}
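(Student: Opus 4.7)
My plan for Lemma \ref{lem:moment} uses a pointwise reduction, the Schur-form identity with Jensen, and a moment bound on the diagonal overlap $\mathscr{O}_{11}$ that is explicit at $z=0$ via Proposition \ref{prop:at0}.

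First, since $\chi_{z,\delta}(\la),\mathds{1}_{|z-\la|>N^{-A}}\in[0,1]$, the pointwise bound $e^{p h_z^s(\la)}\leq(1+(N|z-\la|^2)^{-1})^p$ lets me drop the cutoffs, so it suffices to bound $\E_{\nu_z}\bigl[\prod_i(1+(N|z-\la_i|^2)^{-1})^p\bigr]$. Next, by Proposition \ref{fund}, conditionally on the eigenvalues $\mathscr{O}_{11}\overset{(\rm d)}{=}\prod_{i\geq 2}(1+|X_i|^2/(N|z-\la_i|^2))$ with i.i.d.\ standard complex Gaussian $X_i$, so Jensen's inequality applied to the convex map $y\mapsto y^p$ (for $p\geq 1$), together with $\E|X_i|^2=1$ and the independence of the $X_i$'s, gives
$$
\E_{\nu_z}\Bigl[\prod_i\bigl(1+(N|z-\la_i|^2)^{-1}\bigr)^p\Bigr]\leq\E\bigl[\mathscr{O}_{11}^p\mid\la_1=z\bigr],
$$
reducing the problem to a polynomial bound on the $p$-th moment of the diagonal overlap.

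At $z=0$, Proposition \ref{prop:at0} gives the exact identity $\mathscr{O}_{11}\overset{(\rm d)}{=}1/\beta_{2,N-1}$, so
$$
\E[\mathscr{O}_{11}^p\mid\la_1=0]=\frac{\Gamma(2-p)\Gamma(N+1)}{\Gamma(N+1-p)}\leq C_p N^p \qquad \text{for } p<2,
$$
which proves the lemma at the origin. For general bulk $z$, integrating out the $X_i$'s first writes $\E[\mathscr{O}_{11}^p\mid\la_1=z]=\E_{\nu_z}[\prod_i\phi_p((N|z-\la_i|^2)^{-1})]$ with $\phi_p(a)=\int_0^\infty(1+at)^p e^{-t}\,\dd t$; by Theorem \ref{cond1} this expectation equals a ratio of $(N-1)\times(N-1)$ determinants, exactly as in the computation underlying Theorem \ref{meandiag}. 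A direct estimate of the resulting determinant ratio yields the polynomial bound $\leq N^{C(\kappa)}$ for $p\in[1,1+c(\kappa)]$.

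The main obstacle is the last step at general $z$: without the rotational symmetry that makes Kostlan's theorem applicable at $z=0$, one must carry out the determinantal analysis directly, and the condition $|z|<1-N^{-1/2+\kappa}$ combined with sufficiently small $c(\kappa)$ is precisely what keeps the Gamma-ratios and entry integrals polynomially controlled. Fortunately only a crude polynomial upper bound is required, not sharp asymptotics, so the determinantal bookkeeping should be feasible.
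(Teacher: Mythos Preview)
Your first three steps are correct and the Jensen step is a nice observation. The pointwise bound in step~1 is immediate; for step~2, conditionally on $(\lambda_i)$ the overlap factorizes as a product of independent terms (this is Theorem~\ref{thm:quenched} rather than Proposition~\ref{fund}), and convexity of $y\mapsto y^p$ gives $\prod_i(1+a_i)^p\leq \E_X\bigl[\prod_i(1+a_i|X_i|^2)^p\bigr]$ factor by factor. At $z=0$ the Beta identity then closes the argument cleanly, and this is essentially equivalent to the paper's use of Kostlan (Corollary~\ref{kostbis}).

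The genuine gap is step~4. You assert that the determinantal computation underlying Theorem~\ref{meandiag} carries over, but it does not. That computation works because $|z-\lambda|^2 g(\lambda)=|z-\lambda|^2+N^{-1}$ is a polynomial of degree one in each of $\lambda,\bar\lambda$; this is exactly what makes the matrix $(f_{ij})$ in Theorem~\ref{cond1} tridiagonal and the determinant explicitly solvable by a three-term recursion. Your test function is $\phi_p\bigl((N|z-\lambda|^2)^{-1}\bigr)$ with $\phi_p(a)=\int_0^\infty(1+at)^pe^{-t}\,\dd t$, and for non-integer $p$ this is not polynomial in $|z-\lambda|^2$. The resulting matrix $(f_{ij})$ is full, there is no recursion, and no ``direct estimate'' of the determinant ratio is available. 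Moreover, a polynomial bound on $\E[\mathscr{O}_{11}^p\mid\lambda_1=z]$ for general bulk $z$ is essentially the content of Theorem~\ref{thm:diag}, whose proof \emph{uses} Lemma~\ref{lem:moment}; invoking it here is circular.

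The paper handles general $z$ by a completely different mechanism: it establishes the $z=0$ case via Kostlan and then transfers to arbitrary bulk $z$ using the translation-invariance results of Section~\ref{sec:translation} (Lemmas~\ref{lem:invariance1} and~\ref{lem:invariance2}). The small cutoff at scale $N^{-A}$ in $h_z^s$ is there precisely to control $\|\nu\|_1$ in hypothesis~(\ref{eqn:strange1}) for $p$ sufficiently close to $1$, and Lemma~\ref{lem:invariance1} applied to the \emph{unconditioned} measure $\E_{N-1}$ handles~(\ref{eqn:strange2}) and~(\ref{eqn:strange3}). Your Jensen reduction is compatible with this route --- you could keep steps~1--3 and replace step~4 by the paper's translation-invariance argument --- but as written, step~4 is not a proof.
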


\begin{proof}
First, the result is true for $z=0$. Indeed,
\begin{equation}
\mathbb{E}_{\nu_0}\left(e^{p\sum_2^N h_0^s(\lambda_i)}\right)
\leq 
\mathbb{E}_{\nu_0}\left(\prod_{i=2}^N\left(1+\frac{1}{N|\lambda_i|}\right)^p\right)
=\prod_{k=2}^N\E\left(\left(1+\frac{1}{\gamma_k}\right)^p\right)
=\OO(N^{C}),\label{eqn:useful1}
\end{equation}
where we used Corollary \ref{kostbis}.


We want to apply Lemma \ref{lem:invariance2} to conclude the proof. For this we need to check conditions (\ref{eqn:strange1}), (\ref{eqn:strange2}) and (\ref{eqn:strange3}) for our function $f=ph^s_z$, and $\nu=e^f-1$.
First note that
$\|\nu\|_1\leq C\int_{N^{-A}<|\la|<1}\frac{1}{(N|\lambda^2|)^p}\leq C N^{-p}N^{A(2p-2)}$, so that (\ref{eqn:strange1}) holds by choosing $p=1+c(\kappa)$ with $c(\kappa)$ small enough.
To prove (\ref{eqn:strange2}) and (\ref{eqn:strange3}), we rely on Lemma \ref{lem:invariance1}:
$$
 \E_{\nu_z}\left(e^{p\sum_{i=2}^Nh^s_z(\la_i)}\right)= \E_{\nu_z}\left(e^{p\sum_{i=2}^Nh^s_0(\la_i)}\right)(1+\oo(1))\leq
 \mathbb{E}\left(
 \left(
 1+\frac{\mathds{1}_{\gamma_1>N^{-A}}}{\gamma_1}
 \right)^p
 \right)
\prod_{k=2}^N\E\left(\left(1+\frac{1}{\gamma_k}\right)^p\right)\leq N^C
$$
where we used (\ref{eqn:strange1}) for the first equation, and
the calculation 
$
\int_{N^{-A}}^{\infty}
\frac{1}{x^p}e^{-x}\leq 
N^C.
$
We therefore obtained
\begin{equation}\label{eqn:useful}
\mathbb{E}_{\nu_z}\left(e^{p\sum_2^N h_z^s(\lambda_i)}\right)
=
\mathbb{E}_{\nu_0}\left(e^{p\sum_2^N h_z^s(\lambda_i)}\right)
+\OO\left(e^{-c N^{2\kappa}}\right)
\end{equation}
for any $1\leq p\leq 1+c(\kappa)$ with $c(\kappa)$ small enough. Equations (\ref{eqn:useful1}) and (\ref{eqn:useful}) conclude the proof.
\end{proof}

To quantitatively invert the Fourier transform, we use the following crude bound, see \cite[Lemma 2.6]{ArgBelBou2017} following from \cite[Corollary 11.5]{BhaRao76}.
\begin{lemma}
\label{lem:FourierInversion} There exists a constant $c$
such that if $\mu$ and $\nu$ are probability measures on $\mathbb{R}$
with Fourier transforms $\hat{\mu}\left({t}\right)=\int e^{\ii{t}{x}}\mu\left({\rm d}x\right)$
and $\hat{\nu}\left({t}\right)=\int e^{\ii{t}{x}}\nu\left({ d}x\right)$,
then for any $R,T>0$ and any function $f:\mathbb{R}\to\mathbb{R}$
with Lipschitz constant $C$,
\begin{equation}
\left|\mu\left(f\right)-\nu\left(f\right)\right|\le c\frac{C}{T}+c\|f\|_{\infty}\left\{ RT\|\1_{\left(-T,T\right)}\left(\hat{\mu}-\hat{\nu}\right)\|_{\infty}+\mu\left([-R,R]^{c}\right)+\nu\left([-R,R]^{c}\right)\right\} .\label{eq: fourier inversion bound}
\end{equation}
\end{lemma}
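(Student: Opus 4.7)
The proof proceeds by a standard Fourier-smoothing argument in the spirit of Bhattacharya--Rao. The plan is as follows.

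First, I would select a Schwartz kernel $K \in \mathcal{S}(\mathbb{R})$ with $\int K = 1$ and $\hat K$ compactly supported in $(-1,1)$ (for instance as the inverse Fourier transform of a smooth bump), and scale it to $K_T(x) = T K(Tx)$, so that $\hat{K_T}$ is supported in $(-T,T)$ and $\int |x|\,K_T(x)\,dx = \OO(1/T)$.

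The argument then decomposes $\mu(f) - \nu(f)$ into three contributions. First, introduce the smoothed function $\tilde f = f * K_T$: the Lipschitz bound on $f$ yields $\|\tilde f - f\|_\infty \le c C/T$, hence $|\mu(f) - \nu(f) - (\mu(\tilde f) - \nu(\tilde f))| \le c C/T$, which produces the first term of the stated bound. Second, Fubini gives $\mu(\tilde f) - \nu(\tilde f) = \int f(x)\, h(x)\,dx$ with $h = (\mu - \nu) * K_T$; since $\hat h = (\hat\mu - \hat\nu)\,\hat{K_T}$ is supported in $(-T,T)$, Fourier inversion gives $\|h\|_\infty \le c T \|\1_{(-T,T)}(\hat\mu - \hat\nu)\|_\infty$, and the crude bound $\int_{|x|\le R}|f h| \le 2R\|f\|_\infty \|h\|_\infty$ produces the $RT\|f\|_\infty\|\1_{(-T,T)}(\hat\mu - \hat\nu)\|_\infty$ contribution. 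Third, on $\{|x|>R\}$, one estimates $|h|\le \mu*|K_T| + \nu*|K_T|$ and, by splitting the convolution integral according to whether the base point of $\mu$ (resp.\ $\nu$) lies in $[-R/2,R/2]$ or not, one controls the tail by the Schwartz contribution $\int_{|v|>RT/2}|K(v)|\,dv$ (negligible at any polynomial rate) plus $\|K\|_1\,\mu([-R/2,R/2]^c)$; a harmless rescaling $R\mapsto 2R$ then recovers the tail bound $c\|f\|_\infty\,(\mu([-R,R]^c) + \nu([-R,R]^c))$.

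The main technical point is the bookkeeping in this tail estimate: since convolution with $K_T$ slightly spreads mass outside $[-R,R]$, one must ensure that the resulting boundary layer is absorbed either into the Schwartz decay of $K$ or into an innocuous rescaling of the universal constant accompanying $R$. All other steps are routine manipulations of Fourier analysis and ultimately reduce to the observation that a function with Fourier transform supported in $(-T,T)$ can be reconstructed on any fixed interval $[-R,R]$ using only the values of its Fourier transform on that interval, which is precisely the mechanism producing the product $RT$ in the main term.
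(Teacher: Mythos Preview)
The paper does not actually prove this lemma: it is quoted from \cite{ArgBelBou2017} (Lemma~2.6 there), which in turn appeals to Corollary~11.5 of \cite{BhaRao76}. Your sketch is precisely the standard smoothing argument that underlies those references---mollify $f$ by a band-limited kernel to produce the $C/T$ term, rewrite the smoothed difference as $\int f\,h$ with $h=(\mu-\nu)*K_T$, bound $\|h\|_\infty$ by Fourier inversion, and split at $|x|=R$---so in that sense you are reproducing the intended proof.

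One bookkeeping point is not fully closed. Your tail step produces, besides $\|K\|_1(\mu+\nu)([-R/2,R/2]^c)$, the Schwartz remainder $\|f\|_\infty\int_{|v|>RT/2}|K(v)|\,dv$, which you dismiss as ``negligible at any polynomial rate''. That is only an asymptotic statement in $RT$; since the lemma is asserted for \emph{all} $R,T>0$, this contribution is not automatically absorbed by the remaining terms, and your argument as written delivers the stated bound plus an extra $c\|f\|_\infty(RT)^{-k}$. This is harmless for the paper's single use of the lemma (in the proof of Corollary~\ref{cor:extremes} one takes $R=T=e^{N^{\e/10}}$, so the remainder is smaller than any other term), but to obtain the inequality verbatim for all $R,T$ one needs a touch more care along the lines of \cite{BhaRao76}. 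This is a cosmetic gap, not a defect in the method.
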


The following crude a priori estimates are used in this paper. Note that for $z$ strictly in the bulk of the spectrum ($|z|<1-\e$ for fixed $\e>0$), the first statement is a simple consequence of 
the main result in \cite{WebWon2017}.

\begin{lemma}\label{lem:generalmoments}
For any $p,\kappa>0$, there exists $C>0$ such that, for large enough $N$, uniformly in $|z_1|, |z_2|<1-N^{-\frac{1}{2}+\kappa}$ we have
\begin{align}
&N^{-C}\leq \E_N\left(\prod_{i=1}^N |z_1-\la_i|^{2p}e^{-pN(|z_1|^2-1)}\right)\leq N^{C},\label{eqn:written}\\
&N^{-C}\leq \E_N\left(\prod_{i=1}^N \left(|z_1-\la_i|^{2}+\frac{1}{N}\right)^pe^{-pN(|z_1|^2-1)}\right)\leq N^{C},\label{eqn:written2}\\
&N^{-C}\leq \E_N\left(\prod_{i=1}^N |z_1-\la_i|^2 |z_2-\la_i|^2e^{-N(|z_1|^2-1)}e^{-N(|z_2|^2-1)}\right)\leq N^{C}.\label{eqn:written3}
\end{align}
\end{lemma}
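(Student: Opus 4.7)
The strategy is to reduce everything to the first inequality \eqref{eqn:written}, and inside that to the single sharp calculation for $p=1$. First I would apply Andreief's identity to the product measure \eqref{eqn:partitionfunction} to get
\[
Z_N \,\mathbb E_N \Bigl(\prod_i |z-\lambda_i|^2 \Bigr) \;=\; N!\,\det M, \qquad M_{ij}=\int \lambda^{i-1}\bar\lambda^{j-1}|z-\lambda|^2\,\frac{N}{\pi}e^{-N|\lambda|^2}\,\rd m(\lambda),
\]
which is tridiagonal (the same tridiagonal structure encountered in the proof of Theorem \ref{meandiag}, only now with the $1/N$ summand removed from the diagonal entries). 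A two-term recurrence on $D_i=\det(M_{\le i})$, after the normalisation $Q_i=D_i\prod_{k<i}N^k/k!$, collapses to $Q_i=(x+i)Q_{i-1}-x(i-1)Q_{i-2}$ with $Q_0=1,Q_1=x+1$, whose solution is $Q_i=i!\,e^{(i)}(x)$ with $x=N|z|^2$. This yields the closed form $\mathbb E_N\bigl(\prod_i |z-\lambda_i|^2\bigr)=\frac{N!}{N^N}e^{(N)}(x)$, and Stirling together with the identity $e^{(N)}(x)e^{-x}=\mathbb P(\text{Poiss}(x)\le N)$ gives
\[
\mathbb E_N\Bigl(\prod_i |z-\lambda_i|^2 e^{-N(|z|^2-1)}\Bigr) \;=\; \sqrt{2\pi N}\,\mathbb P(\text{Poiss}(x)\le N)\,(1+O(1/N)).
\]
For $|z|<1-N^{-1/2+\kappa}$ one has $x\le N-2N^{1/2+\kappa}(1+o(1))$, so by Chernoff the Poisson probability lies in $[1-e^{-cN^{2\kappa}},1]$. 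This gives \eqref{eqn:written} sharply at $p=1$, with both bounds of order $\sqrt N$.

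Next I would bootstrap to arbitrary $p>0$. For integer $p\ge 2$ the same Andreief identity gives $\mathbb E_N(\prod|z-\lambda_i|^{2p})=(N!/Z_N)\det M^{(p)}$ with $M^{(p)}$ a $(2p+1)$-banded matrix; writing $M^{(p)}=AA^*$ where $A\in\mathbb C^{N\times(N+p)}$ has $p+1$ nonzero entries per row coming from the binomial expansion $(z-\lambda)^p=\sum_k\binom{p}{k}(-\lambda)^k z^{p-k}$, Cauchy--Binet reduces $\det M^{(p)}$ to a sum over size-$N$ subsets of columns; each contribution involves a staircase submatrix whose determinant is estimable from the Gaussian moments $k!/N^k$, and careful bookkeeping gives the polynomial bound $\mathbb E_N(\prod|z-\lambda_i|^{2p}e^{-pN(|z|^2-1)})\le N^{C(p)}$. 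For non-integer $p$, Lyapunov's inequality $\mathbb E(X^p)^{1/p}\le \mathbb E(X^{\lceil p\rceil})^{1/\lceil p\rceil}$ covers $p>1$, and Jensen $\mathbb E(X^p)\le (\mathbb E X)^p$ covers $p\in(0,1)$. For the lower bound, Jensen's inequality $\mathbb E(X^p)\ge(\mathbb E X)^p$ combined with the $p=1$ sharp bound handles $p\ge 1$; for $0<p<1$ the Paley--Zygmund inequality $\mathbb P(X\ge \tfrac{1}{2}\mathbb E X)\ge (\mathbb E X)^2/(4\mathbb E X^2)$, together with $\mathbb E X\asymp \sqrt N$ and the upper bound $\mathbb E X^2\le N^{C}$ already proved, gives $\mathbb P(X\ge c\sqrt N)\ge N^{-C}$, whence $\mathbb E(X^p)\ge N^{-C'}$.

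For \eqref{eqn:written2} the lower bound is immediate from $|z-\lambda|^2+1/N\ge |z-\lambda|^2$, and the upper bound follows from the binomial expansion $(|z-\lambda|^2+1/N)^p=\sum_k\binom{p}{k}|z-\lambda|^{2k}N^{-(p-k)}$ (integer $p$) and \eqref{eqn:written} at each integer $k\le p$; non-integer $p$ follows by Lyapunov again. For \eqref{eqn:written3}, Cauchy--Schwarz
\[
\mathbb E_N\Bigl(\prod_i|z_1-\lambda_i|^2|z_2-\lambda_i|^2\,e^{-N(|z_1|^2+|z_2|^2-2)}\Bigr) \le \prod_{k=1,2}\mathbb E_N\Bigl(\prod_i|z_k-\lambda_i|^4\,e^{-2N(|z_k|^2-1)}\Bigr)^{1/2}
\]
reduces the upper bound to \eqref{eqn:written} at $p=2$; for the lower bound, Jensen applied to $\log$ gives $\mathbb E(XY)\ge e^{\mathbb E\log X+\mathbb E\log Y}$, and the individual estimates $\mathbb E\log X=O(\log N)$ follow from step~2 via the relation $\mathbb E(X^p)\ge e^{p\mathbb E\log X}$.

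The only real technical obstacle is the upper bound for integer $p\ge 2$ coming from the banded Andreief determinant: while the $p=1$ case collapses to $e^{(N)}(x)$ via a two-term recurrence, higher $p$ requires either a longer recurrence or direct Cauchy--Binet bounds on the staircase submatrices of $A$. I would only extract a polynomial bound (rather than the sharp $N^{p^2/2}$ scaling predicted by Rider--Sinclair log-normal asymptotics), which is enough here and avoids the delicate combinatorics of the sharp computation.
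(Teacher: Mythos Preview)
Your route is genuinely different from the paper's and mostly sound, but there is one real gap and one incomplete step. The paper never computes the Andr\'eief determinant for general $p$: for all lower bounds it uses the one-line Jensen argument $\E(e^X)\ge e^{\E X}$ together with the accuracy of the one-point function $\rho_1^N$ (so that $2N\int\log|z-\lambda|\,\rho_1^N(\lambda)\,\rd m(\lambda)-N(|z|^2-1)=\OO(1)$), and for the upper bound in \eqref{eqn:written2} it splits $\log_N(z-\lambda)$ into a short-range part (handled by translation invariance, Lemma~\ref{lem:invariance1}, and Kostlan at the origin) and a smooth long-range part (handled by the loop-equation/CLT result of Lambert for smooth linear statistics). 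Your exact $p=1$ computation plus Lyapunov/Jensen/Paley--Zygmund is a nice self-contained alternative that avoids the external input from \cite{Lambert}, at the cost of the Cauchy--Binet combinatorics for integer $p\ge 2$.

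The gap is in your lower bound for \eqref{eqn:written3}. The inequality $\E(XY)\ge e^{\E\log X+\E\log Y}$ is correct, but to conclude $\E(XY)\ge N^{-C}$ you need a \emph{lower} bound on $\E\log X$. The relation $\E(X^p)\ge e^{p\E\log X}$ together with your upper bound $\E(X^p)\le N^C$ only gives $\E\log X\le (C/p)\log N$, the wrong direction; and the lower bound $\E(X^p)\ge N^{-C}$ gives no information on $\E\log X$ through this inequality. You would need a bound on a \emph{negative} moment $\E(X^{-q})\le N^C$, which you have not established. The easy fix is precisely the paper's argument: $\E\log X=2N\int\log|z_1-\lambda|\,\rho_1^N(\lambda)\,\rd m(\lambda)-N(|z_1|^2-1)=\OO(1)$ by the local law for the Ginibre density. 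Separately, your ``careful bookkeeping'' for the $(2p+1)$-banded determinant at integer $p\ge 2$ is only a sketch: the crude Hadamard bound on $|\det A_S|$ is exponentially large, so you need to exploit the staircase block structure of $A_S$ (which does work, as in your $p=1$ calculation, but requires the explicit factorisation of $\det A_S$ over the gaps in $S^c$). The paper sidesteps this entirely via the short/long-range decomposition.
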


\begin{proof}
We start with the lower bounds, which are elementary: as $\E(e^X)>e^{\E(X)}$, we have
$$
\E_N\left(\prod_{i=1}^N |z_1-\la_i|^{2p}e^{-pN(|z_1|^2-1)}\right)\geq \exp\left(2pN\int \log(|z_1-\lambda|)(\rho_1^N(\lambda)-\frac{\mathds{1}_{|\lambda|<1}}{\pi})\rd m(z)\right)\geq \exp(\OO(1)),
$$
where for the last inequality we used that the density of states for the Ginibre ensemble is close to the uniform measure on the disk with high accuracy (se e.g. \cite[Lemma 4.5]{BouYauYin2014II}). This proves the lower bounds in (\ref{eqn:written}) and the lower bounds for (\ref{eqn:written2}), (\ref{eqn:written3}) hold  by the same argument.

For the upper bounds,  we only need to prove  (\ref{eqn:written2}), as (\ref{eqn:written}) will follow by monotonicity, and (\ref{eqn:written3}) by the Cauchy-Schwarz inequality from (\ref{eqn:written}).
Remember the notation (\ref{eqn:chii}) and abbreviate $\log_N(x)=\log (|x|^2+1/N)$. We can bound
\begin{multline}
 \E_N\left(\prod_{i=1}^N \left(|z_1-\la_i|^{2}+\frac{1}{N}\right)^pe^{-pN(|z_1|^2-1)}\right)
 \leq
 \E_N\left(
 e^{2p\sum \log_N(z_1-\lambda_i)\chi_{z_1,\delta}(\lambda_i)-2pN\int \log|z_1-\lambda|\chi_{z_1,\delta}(\lambda)}
 \right)^{\frac{1}{2}}\\
 \times \E_N\left(
 e^{2p\sum \log_N(z_1-\lambda_i)(1-\chi_{z_1,\delta}(\lambda_i))-2pN\int \log|z_1-\lambda|(1-\chi_{z_1,\delta}(\lambda))}.
 \right)^{\frac{1}{2}}\label{eqn:expeverywhere}
\end{multline}

For the first expectation corresponding to the short range, we apply Lemma \ref{lem:invariance1}, observing that $N\|\nu\|_1=\OO(N^{2\delta})$ is negligible for $\delta$ small enough. We obtain that this first expectation is equivalent to
$$
\E_N\left(
 e^{2p\sum \log_N(\lambda_i)\chi_{0,\delta}(\lambda_i)-2pN\int \log|\lambda|\chi_{0,\delta}(\lambda)}
 \right)\leq N^C,
$$
where the above inequality follows from Corollary \ref{Kost1}.

The second expectation in (\ref{eqn:expeverywhere})  is the Laplace transform of smooth linear statistics, so that the loop equations techniques apply to prove it is of polynomial order, see \cite[Theorem 1.3]{Lambert}.
More precisely, \cite{Lambert} applies to the smooth function $(1-\chi_{z_1,\delta})\log$ instead of $(1-\chi_{z_1,\delta})\log_N$, but we can decompose $\log_N(\lambda)=2\log|\lambda|+\log (1+\frac{1}{N|\lambda^2|})$. With the Cauchy-Schwarz inequality we separate contribution from these two functions, then 
 the analogue (for the unconditioned measure) of (\ref{eqn:est1}) shows  the Laplace transform of linear statistics of $(1-\chi_{z_1,\delta})\log (1+\frac{1}{N|\lambda^2|})$ is $\OO(1)$, and finally \cite[Theorem 1.3]{Lambert}  bounds the contribution of $(1-\chi_{z_1,\delta})\log|\lambda|$ by $\OO(N^C)$.
\end{proof}

\section{Off-diagonal overlaps}\label{sec:offdiag}

In this section we consider the distribution of $N$ Ginibre points conditioned to $\{ \lambda_1 = z_1, \lambda_2 = z_2\}$. We will successively prove identities for
the quenched off-diagonal overlaps,  for all $z_1,z_2$, and then get explicit relations for $z_1=0$ in the annealed setting. Finally, these new correlation identities
are extended to any $z_1, z_2$ in the bulk of the spectrum by a decomposition of short and long range contributions.
  
%

\subsection{The quenched off-diagonal overlap.}\ 
Contrary to the diagonal overlap, the factorization here doesn't involve independent variables.

\begin{proposition} The following equality in law holds, conditionally on $\{ \la_1, \dots, \la_N\}$:\label{prop:offdiag}
\begin{align*}
 \mathscr{O}_{12}& \overset{({\rm d})}{=}  - \Big| {T_{12} \over \lambda_1 - \lambda_2} \Big|^2\prod_{n=3}^N \Big(1 + {Z_{n} \over N (\lambda_1-\lambda_{n}) (\overline{\lambda_2 - \lambda_{n}})} \Big),
\end{align*}
where, conditionally on $ \mathcal{F}_{n-2}$, $Z_n$ is a product of two (correlated) complex Gaussian random variables, and $\E(Z_{n}\mid \mathcal{F}_{n-2})=1$.
\end{proposition}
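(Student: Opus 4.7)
The plan is to mimic the inductive telescoping approach used for the diagonal overlap in the proof of Theorem \ref{thm:quenched}, now applied to the off-diagonal partial sum $\mathscr{O}_{12}^{(n)} = -\overline{b_2}\,B_n^t\overline{D_n}$ from (\ref{eqn:Ogen}). Since $B_{n+1}=(B_n,b_{n+1})^t$ and $D_{n+1}=(D_n,d_{n+1})^t$, one gets directly
\[
\mathscr{O}_{12}^{(n+1)} = \mathscr{O}_{12}^{(n)}-\overline{b_2}\,b_{n+1}\overline{d_{n+1}} = \mathscr{O}_{12}^{(n)}\left(1+\frac{b_{n+1}\overline{d_{n+1}}}{B_n^t\overline{D_n}}\right),
\]
and substituting the recurrences (\ref{eqn:rec22}) together with $\alpha_{1,n+1}\,\overline{\alpha_{2,n+1}}=\bigl((\lambda_1-\lambda_{n+1})\overline{(\lambda_2-\lambda_{n+1})}\bigr)^{-1}$ rewrites the bracket as $1+Z_{n+1}/\bigl(N(\lambda_1-\lambda_{n+1})\overline{(\lambda_2-\lambda_{n+1})}\bigr)$ with the natural definition
\[
Z_{n+1} \deq N\,\frac{(B_n^t T_n)\,\overline{(D_n^t T_n)}}{B_n^t\overline{D_n}}.
\]

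Next I verify the probabilistic properties announced for $Z_n$. Conditionally on $\mathcal{F}_{n-1}$ (which makes $B_n$ and $D_n$ measurable), the coordinates of $T_n$ are i.i.d.\ centered complex Gaussians of variance $1/N$, so $(B_n^t T_n,\, D_n^t T_n)$ is a centered complex Gaussian vector whose cross-covariance satisfies
\[
\E\qb{(B_n^t T_n)\,\overline{(D_n^t T_n)}\,\big|\,\mathcal{F}_{n-1}}=\frac{1}{N}\,B_n^t\overline{D_n}.
\]
Hence $Z_{n+1}$ is a product of two correlated complex Gaussians (after absorbing the $\mathcal{F}_{n-1}$-measurable factor $N/(B_n^t\overline{D_n})$) and satisfies $\E(Z_{n+1}\mid \mathcal{F}_{n-1})=1$, which is the stated identity after a shift of index. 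The base case $\mathscr{O}_{12}^{(2)}=-\overline{b_2}\,b_2=-|T_{12}/(\lambda_1-\lambda_2)|^2$ comes from the biorthogonality relations $d_1=0$, $d_2=1$, and iterating up to $n=N$ yields the announced factorization, using $\mathscr{O}_{12}^{(N)}=\mathscr{O}_{12}$ (again thanks to $d_1=0$). Since the Schur decomposition (\ref{eqn:T}) ensures the strict upper triangle $(T_{ij})_{i<j}$ is independent of the spectrum, conditioning on $\{\lambda_1,\dots,\lambda_N\}$ preserves the joint Gaussian law of the $T_{ij}$'s, so the identity in law holds under this conditioning.

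The argument is essentially routine once Proposition \ref{fund} is in place; the only conceptual point to flag is that, in contrast to Theorem \ref{thm:quenched}, the $Z_n$'s are \emph{not} independent across $n$, because the weights $B_n,D_n$ appearing in their definition are built from the same underlying $(T_{ij})_{i<j}$ used at earlier steps. The correct structural substitute is the filtration $\mathcal{F}_n$ and the martingale-type identity $\E(Z_n\mid\mathcal{F}_{n-2})=1$, which will be the key input for the second-moment computations in Proposition \ref{prop:theformula}. A minor technical check, which holds almost surely, is that $B_n^t\overline{D_n}\neq 0$ for every $2\leq n\leq N-1$, so that the recursion is well-defined at each step.
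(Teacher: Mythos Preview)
Your proof is correct and follows essentially the same route as the paper: the same telescoping of $\mathscr{O}_{12}^{(n)}$ via the recurrences (\ref{eqn:rec22}), the same definition $Z_{n+1}=N\,(B_n^{\rm t}T_n)\,\overline{(D_n^{\rm t}T_n)}/(B_n^{\rm t}\bar D_n)$, and the same conditional-expectation argument using $B_n,D_n\in\mathcal{F}_{n-1}$ and the independence of $T_n$ from $\mathcal{F}_{n-1}$. Your additional remarks on the non-independence of the $Z_n$'s and on the a.s.\ nonvanishing of $B_n^{\rm t}\bar D_n$ are accurate and do not alter the argument.
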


\begin{proof}
As for the diagonal overlap, we simply compute, with the notation (\ref{eqn:Ogen}),
$$
\mathscr{O}_{12}^{(2)} = - |b_2|^2 = - \Big| {T_{12} \over \lambda_1 - \lambda_2} \Big|^2
$$
and 
\begin{multline*}
\mathscr{O}_{12}^{(n+1)}=  - \overline{b_2}\, B_{n+1}^{\rm t} \bar D_{n+1} 
=  - \overline{b_2} ( B_{n}^{\rm t} \bar D_{n} + b_{n+1} \overline{d_{n+1}}) 
= - \overline{b_2} (B_{n}^{\rm t} \bar D_{n} +  \alpha_{1,n+1} B_n^{\rm t}T_n  \overline{\alpha_{2, n+1} D_n^{\rm t} T_n}) \\
=  - \overline{b_2} B_{n}^{\rm t} \bar D_{n}  \Big( 1+ \alpha_{1,n+1} \overline{\alpha_{2, n+1}} { B_n^{\rm t} T_n \overline{D_n^{\rm t} T_n} \over B_{n}^{\rm t} \bar D_{n}}  \Big) 
= \mathscr{O}_{12}^{(n)} \Big( 1+ {Z_{n+1} \over N (\lambda_1 - \lambda_{n+1}) \overline{( \lambda_2 - \lambda_{n+1})}} \Big),
\end{multline*}
where 
\begin{equation}\label{eqn:Zn}
Z_{n+1}=N {B_n^{\rm t} T_n\,  \overline{D_n^{\rm t} T_n} \over B_{n}^{\rm t} \bar D_{n}}.
\end{equation}
Clearly, conditionally on $\mathcal{F}_{n-1}$, $Z_{n+1}$ is a product of two complex Gaussian random variables, a distribution which depends on
$\mathscr{O}_{12}^{(n)}$. Moreover, $B_n,D_n\in\mathcal{F}_{n-1}$ and $T_n$ is independent of $\mathcal{F}_{n-1}$, so that $\E(Z_{n+1}\mid \mathcal{F}_{n-1})=1$.  
\end{proof}

\begin{remark}
By successive conditional expectations with respect to $\mathcal{F}_{n-2},\dots,\mathcal{F}_1$,  Proposition \ref{prop:offdiag} implies
\begin{equation}
\E_T\left(\mathscr{O}_{12}\right) = - \frac{1}{ N |\lambda_1-\lambda_2|^2} \prod_{k=3}^{N} \Big(1 + \frac{1}{ N (\lambda_1-\lambda_k) (\overline{\lambda_2 - \lambda_k})} \Big)\label{eqn:ExpO12},
\end{equation}
an important fact already proved in \cite{ChaMeh1998,ChaMeh2000,MehCha1998}. 
\end{remark}

\subsection{The annealed off-diagonal overlap: expectation.}\ \label{subsec:2ndpatch}
Remarkably, the works \cite{ChaMeh1998,ChaMeh2000,MehCha1998} also explicitly integrated the random variable (\ref{eqn:ExpO12}) over $\lambda_3,\dots,\lambda_N$, in the specific case $\lambda_1=0$. 
We state the resulting asymptotics and add the proof from Chalker and Mehlig, for completeness.

\begin{corollary}[Chalker, Mehlig \cite{ChaMeh1998,ChaMeh2000,MehCha1998}]\label{meandiag2} For any $\e>0$, there exists $c>0$ such that uniformly in  $|z|<N^{-\e}$,
$$\E \left(\mathscr{O}_{12}\mid \lambda_1=0, \lambda_2=z \right)=-\frac{1}{ N |z|^4} \frac{1 - (1 + N |z|^2) e^{-N |z|^2}}{1-e^{-N|z|^2}} + \OO(e^{-c N}).$$
\end{corollary}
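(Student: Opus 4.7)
Starting from the quenched formula (\ref{eqn:ExpO12}) specialized to $\lambda_1=0,\lambda_2=z$, the problem reduces to computing $\E\pb{\prod_{k=3}^N g(\lambda_k)\mid \lambda_1=0,\lambda_2=z}$ with $g(\lambda)=1-(N\lambda\overline{z-\lambda})^{-1}$. The conditional distribution of $(\lambda_3,\dots,\lambda_N)$ has density proportional to $\prod_{3\leq i<j\leq N}|\lambda_i-\lambda_j|^2\prod_k w(\lambda_k)$ with weight $w(\lambda)=|\lambda|^2|z-\lambda|^2e^{-N|\lambda|^2}$, and Heine's identity (exactly as in the proof of Theorem \ref{meandiag}) yields
\[
\E\pB{\prod_{k=3}^N g(\lambda_k)\mid \lambda_1=0,\lambda_2=z}=\frac{\det(h_{ij}(g))_{1\le i,j\le N-2}}{\det(h_{ij}(1))_{1\le i,j\le N-2}},
\]
where $h_{ij}(f)=\frac{N}{\pi}\int_{\mathbb{C}}\lambda^{i-1}\bar\lambda^{j-1}f(\lambda)w(\lambda)\,\dd m(\lambda)$.

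The crucial observation is the algebraic simplification $gw=w-\bar\lambda(z-\lambda)e^{-N|\lambda|^2}/N$, so both integrands decompose into monomials $\lambda^a\bar\lambda^b e^{-N|\lambda|^2}$, which vanish by rotational invariance unless $a=b$. Both matrices are therefore tridiagonal. Setting $x:=N|z|^2$ and rescaling row $i$ by the common factor $N^{i+1}/i!$ (which cancels in the ratio), the rescaled determinants $D_k(1),D_k(g)$ satisfy the three-term recurrences
\[
D_k(1)=(x+k+1)D_{k-1}(1)-xk\,D_{k-2}(1),\qquad D_k(g)=(x+k+2)D_{k-1}(g)-x(k+1)D_{k-2}(g),
\]
with $D_0(1)=D_0(g)=1$, $D_1(1)=x+2$, $D_1(g)=x+3$. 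The main algebraic step is to solve these recurrences: computing small cases reveals that the coefficient of $x^j$ in $D_k(1)$ equals $(k+1)!/(j+1)!$ and in $D_k(g)$ equals $(k+2)!/(j+2)!$, yielding the closed forms
\[
D_k(1)=\frac{(k+1)!}{x}\pb{e^{(k+1)}(x)-1},\qquad D_k(g)=\frac{(k+2)!}{x^2}\pb{e^{(k+2)}(x)-1-x},
\]
which I then verify by matching the coefficient of each $x^j$ on both sides of the recurrences.

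Setting $k=N-2$ and combining with the $-1/(N|z|^2)$ prefactor from (\ref{eqn:ExpO12}) produces the exact finite-$N$ identity
\[
\E\pb{\mathscr{O}_{12}\mid\lambda_1=0,\lambda_2=z}=-\frac{1}{N|z|^4}\cdot\frac{e^{(N)}(x)-1-x}{e^{(N-1)}(x)-1}.
\]
For $|z|<N^{-\e}$ we have $x\leq N^{1-2\e}$, and Stirling's estimate gives $|e^x-e^{(N)}(x)|\leq C\pb{ex/(N+1)}^{N+1}\leq e^{-cN}$ (and analogously at $N-1$), while $e^{(N-1)}(x)-1\geq x$ keeps the denominator uniformly away from zero. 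Replacing $e^{(N)}(x)$ and $e^{(N-1)}(x)$ by $e^x$ and multiplying numerator and denominator by $e^{-x}$ gives the announced formula with error $\OO(e^{-cN})$. The only real difficulty is the algebraic step of identifying the closed-form solutions of the two tridiagonal recurrences; once they are guessed from small cases the inductive verification is routine, and the asymptotic extraction is then immediate.
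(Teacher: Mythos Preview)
Your proof is correct and follows essentially the same route as the paper's: start from (\ref{eqn:ExpO12}), reduce to a ratio of tridiagonal determinants via Andr\'eief/Heine, solve the three-term recurrences in closed form, and extract the asymptotics for $x=N|z|^2\le N^{1-2\e}$. Your rescaled sequences $D_k(1),D_k(g)$ are just $(k+1)!$ times the paper's $a_k$ (for the denominator this is the computation of $Z_N^{(0,z)}$ in Theorem~\ref{cond2}), and your exact finite-$N$ identity coincides with the paper's $-\frac{N}{x^2}\,e_2^{(N)}(x)/e_1^{(N-1)}(x)$. The only imprecision is the phrase ``keeps the denominator uniformly away from zero'': since $e_1^{(N-1)}(x)\ge x$ can be arbitrarily small, you should instead note that $e_2^{(N)}(x)/x^2$ and $e_1^{(N-1)}(x)/x$ are power series with constant terms $\tfrac12$ and $1$, so the ratio and its limit are uniformly bounded and the tail error $\sum_{k\ge N}x^k/k!$ carries enough powers of $x$ to absorb the prefactor; the paper itself dismisses this step as ``standard asymptotics.''
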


\begin{proof}
From (\ref{eqn:ExpO12}), we want to evaluate
$$\E \left(\mathscr{O}_{12}\mid \lambda_1=0, \lambda_2=z \right)=  - \frac{1}{ N|z|^2} \E \left(
\prod_{k=3}^{N} \Big(1 - \frac{1}{ N \lambda_k (\overline{z - \lambda_k})} \Big)
\mid\lambda_1=0, \lambda_2= z \right)$$
From Theorem \ref{cond2} with $g (\lambda) =1 - \frac{1}{ N\lambda ( \overline{z - \lambda})}$, we find that
\begin{align*}
\E\left(\prod_{k=2}^{N} g (\lambda_k) \mid  \lambda_1=0, \lambda_2 = z\right) & = 
\frac{1}{Z_N^{(0,z)}}\det(f_{i,j})_{i,j = 1}^{N-2}
\end{align*}
where 
$$f_{i,j} = \frac{1}{ (i+1)!} \int \lambda^{i-1} \bar{\lambda}^{j-1} |\lambda|^2 |z - \lambda |^2 g (\lambda)  \mu(\dd \lambda)
= \frac{1}{ (i+1)!} \int \lambda^{i-1} \bar{\lambda}^{j-1} \Big( |\lambda|^2 |z-\lambda|^2 -\frac{1}{N} \overline{ \lambda} (z - \lambda) \Big)  \mu(\dd \lambda).
$$
This matrix is tridiagonal with entries
$$
f_{ii}=\frac{1}{N^{i+1}}+\frac{|z|^2}{(i+1) N^i}+\frac{1}{(i+1) N^{i+1}},\ 
f_{i,i+1}=-\frac{\overline{z}}{N^{i+1}},\ 
f_{i,i-1}=- \frac{z}{iN^{i}}.
$$
Let $d_k=\det(f_{i,j})_{i,j = 1}^{k}$ and $x= N|z|^2$.
With the convention $d_0=1$ we have
\begin{align*}
d_1&=\frac{1}{N^2}\left(\frac{3}{2}+\frac{x}{2}\right),\\
d_k&=\left(1+\frac{x+1}{k+1}\right)\frac{1}{N^{k+1}}d_{k-1}-\frac{x}{k}\frac{1}{N^{2k+1}}d_{k-2}.
\end{align*}
so that $a_k=d_k N^{\frac{k(k+3)}{2}}$ satisfies $a_0=1$, $a_1=3/2+x/2$,
$$
a_k=\left(1+\frac{x+1}{k+1}\right)a_{k-1}-\frac{x}{k}a_{k-2}.
$$
An immediate induction gives
$a_k=(k+2)x^{-2}e_2^{(k+2)}(x)$.
Thus, we conclude
$$\E \left( \mathscr{O}_{12} \mid \ \lambda_1=0, \lambda_2=z \right)=  - \frac{N}{x^2}\frac{e_2^{(N)}(x)}{e_1^{(N)}(x)}.$$
The proof is then concluded by standard asymptotics.
\end{proof}

With Corollary \ref{meandiag2}, the expectation of $\mathscr{O}_{12}$ is known for $\la_1=0$. To extend the result to anywhere in the bulk of the spectrum, we mimic the alternative proof of Theorem \ref{meandiag},
from Subsection \ref{subsec:1stpatch}.

\begin{proof}[Proof of Theorem \ref{thm:expoff}]
 We denote $\nu_{z_1,z_2}$ the measure (\ref{eqn:partitionfunction}) conditioned to $\lambda_1=z_1,\la_2=z_2$. Note that $\nu_{z_1,z_2}$ is a determinantal measure.
With a slight abuse of language, we will abbreviate $\E_{\nu_{z_1,z_2}}(X)=\E(X\mid \la_1=z_1,\la_2=z_2)$
even for $X$ a function of the overlaps. 

 We follow the same three steps as in the alternative proof of Theorem \ref{meandiag}. Strictly speaking, if we were to impose $|z_1-z_2|>N^{-C}$ for some fixed $C$, we would not need the first step below,
 as the singularity $1/|z|$ is integrable, contrary to our previous singularity $1/|z|^2$. However, in Theorem \ref{thm:expoff} we allow $z_1$ and $z_2$ to be arbitrarily close, so we first perform an initial small cutoff.\\

\noindent{\it First step: small cutoff.} Let $g_{z_1,z_2}(\lambda)= 1+\frac{1}{N(z_1-\lambda)(\overline{z_2-\lambda})}$. Remember that, from (\ref{eqn:ExpO12})
$$
\E_{\nu_{z_1,z_2}}(\mathscr{O}_{12}) =-\frac{1}{N|z_1-z_2|^2} \E_{\nu_{z_1,z_2}}\left(\prod_{n=3}^{N} g_{z_1,z_2}(\lambda_i)\right).
$$
We denote $h_{z_1,z_2}(\lambda)=g_{z_1,z_2}(\lambda) \mathds{1}_{\la\not\in\mathscr{B}}$ where $\mathscr{B}=\{|\la-z_1|<N^{-A}\}\cup\{|\la-z_2|<N^{-A}\}$. For $A$ a large enough constant, the analogue of 
(\ref{eqn:elementary}) holds:
\begin{equation}\label{eqn:elementaryII}
\E_{\nu_{z_1,z_2}}\left(\prod_{i=3}^{N} g_{z_1,z_2}(\lambda_i)\right)= \E_{\nu_{z_1,z_2}}\left(\prod_{i=3}^{N} h_{z_1,z_2}(\lambda_i)\right)+\OO(N^{-3}).
\end{equation}
Indeed, by making explicit the above conditional measures, (\ref{eqn:elementaryII}) follows from
\begin{equation}\label{eqn:elem1II}
\E_{N-2}\left(\prod_{i=3}^N(|z_1-\lambda_i|^2|z_2-\lambda_i|^2+N^{-1}\overline{(z_1-\la_i)}(z_2-\la_i))e^{-N(|z_1|^2-1)-N(|z_2|^2-1)}\right)
\qquad \qquad \qquad
\end{equation}
$$
- \E_{N-2}\left(\prod_{i=3}^N(|z_1-\lambda_i|^2|z_2-\lambda_i|^2+N^{-1}\overline{(z_1-\la_i)}(z_2-\la_i))\mathds{1}_{\la_i\not\in\mathscr{B}}e^{-N(|z_1|^2-1)-N(|z_2|^2-1)}\right)=\OO(N^{-B}),
$$
and
\begin{equation}\label{eqn:elem2II}
\E_{N-2}\left(\prod_{i=3}^N(|z_1-\lambda_i|^2|z_2-\lambda_i|^2)e^{-N(|z_1|^2-1)-N(|z_2|^2-1)}\right)\geq N^{-C_1}.
\end{equation}
with $B$ much larger than $C_1$.
Lemma \ref{lem:generalmoments} gives (\ref{eqn:elem2II}).
The left hand side of (\ref{eqn:elem1II}) has size order
$$
\E_{N-2}\left(\prod_{i=3}^N(|z_1-\lambda_i|^2+N^{-1})(|z_2-\lambda_i|^2+N^{-1})e^{-N(|z_1|^2-1)-N(|z_2|^2-1)}\mathds{1}_{\exists i: \la_i\in\mathscr{B}}\right)
=\OO(N^{-A+C_2})
$$
by the Cauchy-Schwarz inequality and Lemma \ref{lem:generalmoments}, for some $C_2$ which does not depend on $A$.
This concludes the proof of
(\ref{eqn:elementaryII}) by choosing $A$ large enough.\\

\noindent{\it Second step:  the long-range contribution concentrates.} 
We smoothly separate the short-range from a long-range contributions on the right hand side of (\ref{eqn:elementaryII}):\
\begin{align*}
h_{z_1,z_2}(\lambda)&=e^{h_{z_1,z_2}^{s}(\lambda)+h_{z_1,z_2}^{\ell}(\lambda)},\\
h_{z_1,z_2}^{s}(\lambda)&=  \log \Big( 1 + \frac{1}{N (z_1 - \la)\overline{(z_2-\la)}}\mathds{1}_{ \la \not\in\mathscr{B}} \Big) \chi_{z,\delta} (\la),\\
h_{z_1,z_2}^{\ell}(\la)&=  \log \Big( 1 + \frac{1}{N (z_1 - \la)\overline{(z_2-\la)}} \Big) (1 - \chi_{z,\delta} (\la)),
\end{align*}
and we denote $z=(z_1+z_2)/2$, recall $|z_1-z_2|<N^{-\frac{1}{2}+\kappa-\e}$, $\chi_{z,\delta} (\la)= \chi \Big(N^{\frac{1}{2}-\delta}|z-\la| \Big)$, and choose  $\delta \in (\kappa-\e, \kappa)$.
In the definition of $h_{z_1,z_2}^{s}$, we can choose any branch for the logarithm, this won't have any impact on the rest of the proof.\
In the long-range contribution $h^{\ell}_{z_1,z_2}$, the logarithm is defined by continuity from $\log(1)=0$.
Let $f^{\ell}_{z_1,z_2}(\la)= \frac{1}{N (z_1 - \la)\overline{(z_2-\la)}} (1 - \chi_{z,\delta} (\lambda))$ and $\bar f_{z_1,z_2}=\frac{N-2}{N}\frac{1}{ \pi} \int_{\mathbb{D}} {1 - \chi_{z,\delta} (\lambda)  \over (z_1-\la)\overline{(z_2-\la)}} \dd m(\lambda)$. Note that
\begin{equation}\label{eqn:estIII}
|\sum_{i=3}^Nh^\ell_{z_1,z_2}(\lambda_i)-\bar f_{z_1,z_2}|\leq |\sum_{i=3}^Nf^\ell_{z_1,z_2}(\lambda_i)-\bar f_{z_1,z_2}|+\frac{1}{2}\left(\sum_{i=3}^N\frac{1}{N^2|z_1-\lambda_i|^4}(1 - \chi_{z,\delta} (\lambda_i))+
\sum_{i=3}^N\frac{1}{N^2|z_2-\lambda_i|^4}(1 - \chi_{z,\delta} (\lambda_i))
\right).
\end{equation}
The last two sums  are bounded as in (\ref{eqn:est2}).
For the first term on the right hand side, we  bound the real and imaginary parts separately:
similarly to  (\ref{eqn:concentration}), we have
\begin{equation}\label{eqn:concentrationIII}
\E_{\nu_{z_1,z_2}}\left(e^{\alpha(\sum_{i=3}^N {\rm Re}f^\ell_{z_1,z_2}(\lambda_i)-\E_{\nu_{z_1,z_2}}\left(\sum_{i=3}^N{\rm Re} f^\ell_{z_1,z_2}(\lambda_i)\right))}\right)\leq e^{C\alpha^2 {\rm Var}_{\nu_{z_1,z_2}}(\sum_{i=3}^N {\rm Re}f^\ell_{z_1,z_2}(\lambda_i))}
\end{equation}
where $\alpha=c N^{2\delta}$ for some fixed $c$
and  $C$ which does not depend on $N$. We first bound the above variance. Remember we have a partition of type $1=\chi+\sum_{k\geq 1}\xi(2^{-k}x)$ for any $x>0$, with $\xi$ smooth, compactly supported.
Let $f^{\ell}_{z_1,z_2,k}(\lambda)=f^{\ell}_{z_1,z_2}(\lambda)\xi(2^{-k}N^{1/2-\delta}|z-\la|)$ and $K=\min\{k\geq 1:2^kN^{-1/2+\delta}>10\}$. Then $\sum_if^{\ell}_z(\lambda_i)=\sum_{k=1}^{K}\sum_if^{\ell}_{z,k}(\lambda_i)$ with probability $1-e^{-cN}$, and with  \cite[Theorem 1.2]{BouYauYin2014II}, for any  $\e>0$ and  $D>0$, there exists $N_0>0$ such that for any $N\geq N_0$ and   $1\leq k\leq K$ we have
(we now omit to write the real part, being understanding that $f$ is either ${\rm Re} f$ or ${\rm Im} f$)
$$
\mathbb{P}_{N-2}\left(
\left|\sum f^{\ell}_{z_1,z_2,k}(\lambda_i)-(N-2)\int_{|z|<1} f^{\ell}_{z_1,z_2,k}\right|>N^{-2\delta+\e}
\right)
\leq N^{-D}.
$$
The same estimate holds for the conditioned measure by  the Cauchy-Schwarz inequality:
\begin{multline*}
\mathbb{P}_{\nu_{z_1,z_2}}\left(
|\sum f^{\ell}_{z_1,z_2,k}(\lambda_i)-(N-2)\int_{|z|<1} f^{\ell}_{z_1,z_2,k}|>N^{-2\delta+\e}
\right)\\
\leq
\mathbb{P}_{N-2}\left(
|\sum f^{\ell}_{z_1,z_2,k}(\lambda_i)-(N-2)\int_{|z|<1} f^{\ell}_{z_1,z_2,k}|>N^{-2\delta+\e}
\right)^{1/2}N^C\\
\E_{N-2}\left(\prod_{i=2}^N|z_1-\lambda_i|^4e^{-2N(|z_1|^2-1)}\prod_{i=2}^N|z_2-\lambda_i|^4e^{-2N(|z_2|^2-1)}\right)^{1/2}
\leq N^{-\frac{D}{2}+2C}.
\end{multline*}
for some $C$ which only depends on $\kappa$,
where we used Lemma \ref{lem:generalmoments}. We conclude that for any small $\e>0$ and $D$ we have
$$
\mathbb{P}_{\nu_{z_1,z_2}}\left(
|\sum f^{\ell}_{z_1,z_2}(\lambda_i)-\bar f_{z_1,z_2}|>N^{-2\delta+\e}
\right)\leq N^{-D},
$$
so that ${\rm Var}_{\nu_{z_1,z_2}}(\sum_{i=3}^N f_{z_1,z_2}^\ell(\lambda_i))=\OO(N^{-4\delta+\e})$ and $\mathbb{E}_{\nu_{z_1,z_2}}(\sum_{i=3}^N f_{z_1,z_2}^\ell(\lambda_i))=\bar f_{z_1,z_2}+\OO(N^{-2\delta+\e})$.
As a consequence, (\ref{eqn:concentrationIII}) becomes 
\begin{equation}\label{eqn:est1II}
\E_{\nu_{z_1,z_2}}\left(e^{\alpha(\sum_{i=3}^Nf^\ell_{z_1,z_2}(\lambda_i)-\bar f_{z_1,z_2})}\right)\leq e^{C\alpha^2 N^{-4\delta+\e}+C\alpha N^{-2\delta+\e}}.
\end{equation}
With $\alpha=c N^{2\delta}$, we obtain
$$
\mathbb{P}_{\nu_{z_1,z_2}}(A)<e^{-c N^{\e}},\ \mbox{where}\  A=\left\{
\left|
\sum_{i=2}^Nh^\ell_{z_1,z_2}(\lambda_i)-\bar f_{z_1,z_2}
\right|>N^{-2\delta+\e}
\right\}.
$$
This yields, for some $p=1+c(\kappa)$, $c(\kappa)>0$ and some $q,r>1$,
\begin{equation}\label{eqn:concII}
\mathbb{E}_{\nu_{z_1,z_2}}\left(e^{\sum_i(h^s_{z_1,z_2}(\lambda_i)+h^\ell_{z_1,z_2}(\lambda_i))}\mathds{1}_{A}\right)
\leq 
\mathbb{E}_{\nu_{z_1,z_2}}\left(e^{p\sum_ih^s_{z_1,z_2}(\lambda_i)}\right)^{1/p}
\mathbb{E}_{\nu_{z_1,z_2}}\left(e^{q\sum_i h^\ell_{z_1,z_2}(\lambda_i)}\right)^{1/q}
\mathbb{P}(A)^{1/r}\leq e^{-c N^\e}.
\end{equation}
Here we used that the third term has size order $e^{-c N^{\e}}$, the second one is of order  $e^{q\bar f_{z_1,z_2}}=\OO(N^{C})$, and so is the first one from Lemma \ref{lem:momentII} and 
$|1+\frac{1}{(z_1-\la)\overline{(z_2-\la)}}|\leq (1+\frac{1}{N|z_1-\la|^2})(1+\frac{1}{N|z_2-\la|^2})$.
Moreover,
\begin{multline}
\mathbb{E}_{\nu_{z_1,z_2}}\left(e^{\sum_i(h^s_{z_1,z_2}(\lambda_i)+h^\ell_{z_1,z_2}(\lambda_i))}\mathds{1}_{A^c}\right)
=
(1+\OO(N^{-2\delta+\e}))\mathbb{E}_{\nu_{z_1,z_2}}\left(e^{\sum_ih^s_{z_1,z_2}(\lambda_i)+\bar f_{z_1,z_2}}\right)\\-
(1+\OO(N^{-2\delta+\e}))\mathbb{E}_{\nu_{z_1,z_2}}\left(e^{\sum_ih^s_{z_1,z_2}(\lambda_i)+\bar f_{z_1,z_2}}\mathds{1}_{A^c}\right),\label{eqn:useless}
\end{multline}
and this last expectation is of order $e^{-c N^{\e}}$ for the same reason as (\ref{eqn:concII}). To summarize, with the previous two equations we have proved (up to exponentially small additive error terms)
$$
\mathbb{E}_{\nu_{z_1,z_2}}\left(\prod_{i=3}^Nh_{z_1,z_2}(\lambda_i)\right)
=
(1+\OO(N^{-2\delta+\e}))e^{\bar f_{z_1,z_2}}\ \mathbb{E}_{\nu_{z_1,z_2}}\left(e^{\sum_ih^s_{z_1,z_2}(\lambda_i)}\right)
.
$$
%
%
%

\noindent{\it Third step: the local part is invariant.} For our test function $h^s_{z_1,z_2}$, the reader can easily check  the conditions of Lemma \ref{lem:invariance2II}, so that
$$
\mathbb{E}_{\nu_{z_1,z_2}}\left(e^{\sum_ih_{z_1,z_2}^s(\lambda_i)}\right)=\mathbb{E}_{\nu_{0,z_2-z_1}}\left(e^{\sum_ih_{0,z_2-z_1}^s(\lambda_i)}\right)+\OO\left(e^{-c N^{2\kappa}}\right).
$$
This yields
$$
\mathbb{E}_{\nu_{z_1,z_2}}\left(\prod_{i=3}^Nh_{z_1,z_2}(\lambda_i)\right)
=
(1+\OO(N^{-2\delta+\e}))e^{\bar f_{z_1,z_2}-\bar f_{0,z_2-z_1}}\ \mathbb{E}_{\nu_{0,z_2-z_1}}\left(\prod_{i=3}^Nh_{0,z_2-z_1}(\lambda_i)\right).$$
From Lemma \ref{lem:integral2}, $e^{\bar f_{z_1,z_2}-\bar f_{0,z_2-z_1}}=(1-z_1\overline{z_2})^{\frac{N-2}{N}}$. Together with Corollary \ref{meandiag2}, this concludes the proof.
\end{proof}

\begin{lemma}\label{lem:integral2} For any $\lambda_1, \lambda_2 \in \mathbb{D}$,   
$$ \frac{1}{ \pi} \int_{\mathbb{D}}  \frac{1}{ ( \lambda_1 - z) (\overline{\lambda_2 - z}) } \dd m(z)  = \log \Big({ 1 - \lambda_1 \overline{\lambda_2} \over |\lambda_1-\lambda_2|^2} \Big).$$
\end{lemma}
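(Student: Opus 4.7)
The plan is to turn the integrand into a $\bar z$-derivative and then apply the Cauchy--Pompeiu (inhomogeneous Cauchy) formula on the unit disk. I will use the identity $\partial_{\bar z} \log|\lambda_2 - z|^2 = -1/\overline{\lambda_2-z}$, so that writing $I$ for the left-hand side of the lemma and $J = \oint_{|z|=1} \log|\lambda_2-z|^2 \,(z-\lambda_1)^{-1}\,\dd z$, Cauchy--Pompeiu applied to $f(z) = \log|\lambda_2-z|^2$ at the point $\lambda_1 \in \mathbb{D}$ will yield $\log|\lambda_1-\lambda_2|^2 = J/(2\pi i) - I$ (after using $(z-\lambda_1)^{-1} = -(\lambda_1-z)^{-1}$). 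The problem then reduces to proving that $J/(2\pi i) = \log(1-\lambda_1\overline{\lambda_2})$.

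To evaluate $J$, I will treat it as a function of $\lambda_2$ and $\overline{\lambda_2}$ with $\lambda_1$ held fixed and pin down its two Wirtinger derivatives. Differentiating under the integral, $\partial_{\lambda_2} J = -\oint_{|z|=1}[(\lambda_2-z)(z-\lambda_1)]^{-1}\,\dd z$ vanishes by partial fractions, since the residues at the two enclosed simple poles $z=\lambda_1$ and $z=\lambda_2$ cancel; hence $J$ is antiholomorphic in $\lambda_2$. For $\partial_{\overline{\lambda_2}} J = \oint_{|z|=1} [\overline{(\lambda_2-z)}(z-\lambda_1)]^{-1}\,\dd z$, I will substitute $\overline z = 1/z$ on the unit circle to convert the integrand into the rational function $z\bigl[(z\overline{\lambda_2}-1)(z-\lambda_1)\bigr]^{-1}$; since $|\overline{\lambda_2}|<1$ places the pole at $z = 1/\overline{\lambda_2}$ outside $\mathbb{D}$, only the pole at $z=\lambda_1$ contributes, giving $\partial_{\overline{\lambda_2}} J = -2\pi i \,\lambda_1 /(1-\lambda_1 \overline{\lambda_2})$.

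Integrating back in $\overline{\lambda_2}$ then yields $J = 2\pi i \log(1 - \lambda_1 \overline{\lambda_2}) + C(\lambda_1)$, and I will fix the constant via the boundary value $J\big|_{\lambda_2 = 0} = \oint_{|z|=1}\log|z|^2\,(z-\lambda_1)^{-1}\,\dd z = 0$, since $\log|z|^2$ vanishes on $|z|=1$. This forces $C \equiv 0$ and, combined with the first paragraph, gives $I = \log(1-\lambda_1\overline{\lambda_2}) - \log|\lambda_1-\lambda_2|^2$, which is the claim. The main delicate point will be to justify Cauchy--Pompeiu for $f(z) = \log|\lambda_2-z|^2$, whose singularity at $z=\lambda_2$ is integrable but not bounded; this I will handle by excising a small disk of radius $\varepsilon$ around $\lambda_2$ and passing to the limit, using that the extra boundary contribution is $O(\varepsilon\log\varepsilon) \to 0$, and similarly for the other boundary integrals below, which converge absolutely.
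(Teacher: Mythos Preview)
Your proposal is correct and takes a genuinely different route from the paper. The paper proceeds by a direct computation: it decomposes $\mathbb{D}$ into concentric annuli and small disks around $\lambda_1,\lambda_2$, expands the integrand as a geometric series in polar coordinates on each piece, and sums. You instead recognize the integrand as $(\lambda_1-z)^{-1}\,\partial_{\bar z}\log|\lambda_2-z|^2$ and invoke Cauchy--Pompeiu to convert the area integral into the boundary integral $J$, which you then evaluate by differentiating in the parameter $\lambda_2$ and applying the residue theorem after the substitution $\bar z=1/z$ on $|z|=1$. Your approach is more conceptual and avoids the series bookkeeping; the paper's is more elementary in that it requires nothing beyond power series. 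Both handle the logarithmic singularity at $z=\lambda_2$ by excising a small disk and letting its radius vanish. One harmless slip: your expression for $\partial_{\lambda_2} J$ carries an extra minus sign (since $\partial_{\lambda_2}\log|\lambda_2-z|^2=(\lambda_2-z)^{-1}$), but this does not affect the conclusion that the two residues cancel.
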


\begin{proof} We consider the following domains, assuming $0< |\lambda_1| < |\lambda_2| <1$ and $\epsilon >0$ is small enough. The following computation still holds if $|\lambda_1| = |\lambda_2|$, as long as  $\lambda_1 \neq \lambda_2$. Integrability is clear, as the poles are simple and isolated. Moreover, under these conditions, the integral cancels on the disks $D_1$ and $D_2$.
\begin{minipage}{0.55\linewidth}
   \includegraphics[width=8cm]{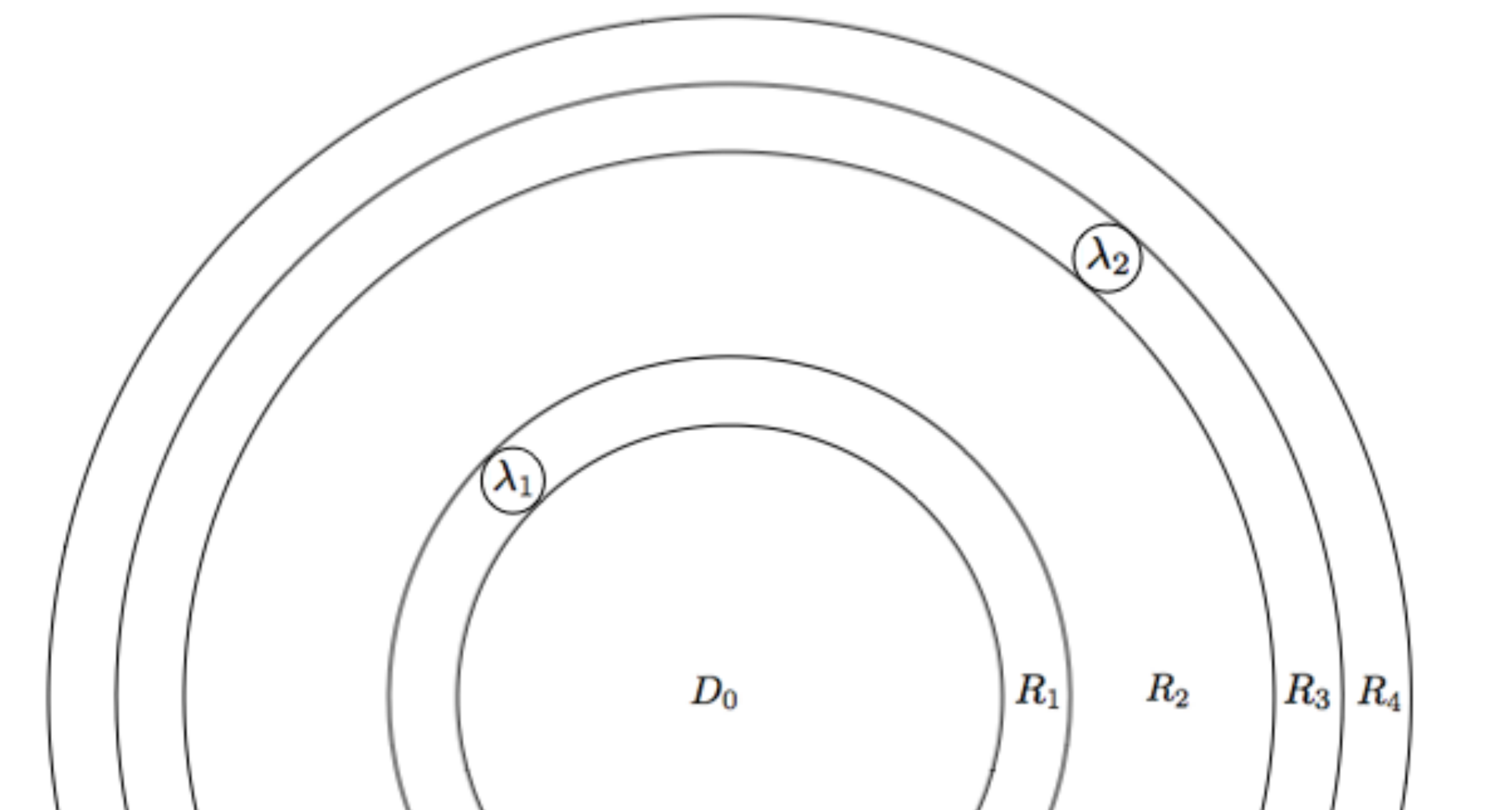}
\end{minipage}\hfill
\begin{minipage}{0.4\linewidth}
\begin{align*} D_0 &= D(0, |\lambda_1|-\epsilon), \\
D_1 &= D(\lambda_1, \epsilon), \\
D_2 &= D(\lambda_2,\epsilon), \\
R_1 & =D(0,|\lambda_1|+\epsilon)-D(0,|\lambda_1|-\epsilon), \\
R_2 & =D(0,|\lambda_2|-\epsilon)-D(0,|\lambda_1|+\epsilon), \\
R_3 & =D(0,|\lambda_2| + \epsilon)-D(0,|\lambda_2|-\epsilon), \\
R_4 & =D(0,1)-D(0,|\lambda_2|+\epsilon) .
\end{align*}
\end{minipage} \\ \\
\noindent Integration over the domain $D_0$ yields
\begin{multline}\label{integr1} \frac{1}{ \pi} \int_{D_0} \frac{1}{  ( \lambda_1 - z) (\overline{\lambda_2 - z})}  \dd m(z)  
 =  \frac{1}{ \pi} \iint_{r=0}^{|\lambda_1|- \epsilon} \frac{1}{  \lambda_1 \overline{\lambda_2}} {r \dd r \dd \theta \over (1 - {r e^{i \theta}  \over \lambda_1}) (1- \overline{r e^{i \theta} \over \lambda_2 }) }
 = 2  \int_{r=0}^{|\lambda_1|- \epsilon} \frac{1}{  \lambda_1 \overline{\lambda_2}} \sum_k  \Big(
{r^2  \over \lambda_1 \overline{\lambda_2}} \Big)^k
r \dd r \\
 =  \int_{r=0}^{|\lambda_1|- \epsilon} {2 r \over  \lambda_1 \overline{\lambda_2}-r^2} \dd r 
 =  \log(\lambda_1 \overline{\lambda_2}) - \log (\lambda_1 \overline{\lambda_2} - (|\lambda_1|- \epsilon)^2).
\end{multline}
The same type of expansion shows that the integral over $R_2$ vanishes and the contribution from
$R_4$ is
\begin{equation}\label{integr2}
\frac{1}{ \pi} \int_{R_4} \frac{1}{ ( \lambda_1 - z) (\overline{\lambda_2 - z})}  \dd m(z)
=  \log (1 - \lambda_1 \overline{\lambda_2}) - \log((|\lambda_2|+\epsilon)^2 - \lambda_1 \overline{\lambda_2}).
\end{equation}
As the expression is integrable, on the domains $R_1, R_3$  there is no contribution as $\epsilon\to0$. Summing (\ref{integr1}) and (\ref{integr2}) gives the result.
\end{proof}

\subsection{The quenched off-diagonal overlap: second moments.}\ 
The main result of this subsection is the following lemma, which gives the expectation of second moments of overlaps conditionally on the eigenvalues positions. 
For this, we define 
\begin{equation}\label{eqn:An}
X_n= \left(  \begin{array}{c}
 |B_{n}^{\rm t} \bar D_{n}|^2 \\
 \|B_n\|^2 \|D_n\|^2 \end{array} \right),\
 \gamma_{ij}=\frac{\alpha_{i,j}}{\sqrt{N}},\ 
A_n = \left( \begin{array}{cc}
 |1 + \gamma_{1,n} \overline{\gamma_{2,n}} |^2&|\gamma_{1,n} \gamma_{2,n}|^2 \\
|\gamma_{1,n} \gamma_{2,n}|^2&(1+|\gamma_{1,n}|^2) (1+|\gamma_{2,n}|^2 ) 
 \end{array} \right).
\end{equation}

\begin{lemma} For any $2\leq n\leq N-1$ we have
\begin{equation}\label{eqn:rec}
\E \left(X_{n+1} \mid \mathcal{F}_{n-1}\right) = A_{n+1} X_{n}.
\end{equation}
In particular,
\begin{equation}\label{eqn:rec2}
\left(
\begin{array}{c}
\E_T(|\mathscr{O}_{12}|^2\mid \mathcal{F}_1)\\
\E_T( \mathscr{O}_{11}\mathscr{O}_{22} \mid \mathcal{F}_1)
\end{array}
\right)
=
\left(
\begin{array}{cc}
|b_2|^2&0\\
0&1+|b_2|^2\\
\end{array}
\right)
\left(\prod_{i=3}^n A_i\right)
\left(
\begin{array}{cc}
|b_2|^2&0\\
0&1+|b_2|^2\\
\end{array}
\right)
\left(
\begin{array}{c}
1\\
1
\end{array}
\right).
\end{equation}
\end{lemma}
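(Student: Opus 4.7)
The plan is to obtain \eqref{eqn:rec} by a direct second-moment computation using the recurrence of Proposition \ref{fund}, and then to iterate it via the tower property to get \eqref{eqn:rec2}. Throughout, we use that $T_n$ is a vector of $n$ i.i.d.\ centered complex Gaussians with variance $1/N$, independent of $\mathcal{F}_{n-1}$, while $B_n, D_n, \gamma_{1,n+1}, \gamma_{2,n+1}$ are $\mathcal{F}_{n-1}$-measurable.

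First I would record the elementary Wick identities for complex Gaussians: for deterministic $u,v\in\mathbb{C}^n$ (to be substituted with $B_n, D_n$), a straightforward application of Isserlis' formula gives
\begin{align*}
\E[(u^{\rm t} T_n)\overline{(v^{\rm t} T_n)}] &= \tfrac{1}{N}\, u^{\rm t}\bar v,
\qquad
\E[(u^{\rm t} T_n)(v^{\rm t} T_n)] = 0,\\
\E\big[|u^{\rm t} T_n|^2\,|v^{\rm t} T_n|^2\big] &= \tfrac{1}{N^2}\big(\|u\|^2\|v\|^2 + |u^{\rm t}\bar v|^2\big).
\end{align*}
Next, from Proposition \ref{fund} and the definitions $b_{n+1}=\sqrt{N}\gamma_{1,n+1}B_n^{\rm t}T_n$, $d_{n+1}=\sqrt{N}\gamma_{2,n+1}D_n^{\rm t}T_n$, one writes the exact updates
\[
B_{n+1}^{\rm t}\bar D_{n+1} = B_n^{\rm t}\bar D_n + N\gamma_{1,n+1}\bar\gamma_{2,n+1}(B_n^{\rm t}T_n)\overline{(D_n^{\rm t}T_n)},
\quad
\|B_{n+1}\|^2 = \|B_n\|^2 + N|\gamma_{1,n+1}|^2|B_n^{\rm t}T_n|^2,
\]
and similarly for $\|D_{n+1}\|^2$. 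Expanding the squared modulus (resp.\ product) and taking $\E(\cdot\mid\mathcal{F}_{n-1})$ entrywise via the identities above, the cross-term in $|B_{n+1}^{\rm t}\bar D_{n+1}|^2$ contributes $2\Re(\gamma_{1,n+1}\bar\gamma_{2,n+1})|B_n^{\rm t}\bar D_n|^2$, which combines with the leading $|B_n^{\rm t}\bar D_n|^2$ and the quartic term to yield
\[
\E\big[|B_{n+1}^{\rm t}\bar D_{n+1}|^2\mid \mathcal{F}_{n-1}\big]
= |1+\gamma_{1,n+1}\bar\gamma_{2,n+1}|^2\,|B_n^{\rm t}\bar D_n|^2 + |\gamma_{1,n+1}\gamma_{2,n+1}|^2\,\|B_n\|^2\|D_n\|^2.
\]
For the product $\|B_{n+1}\|^2\|D_{n+1}\|^2$, the three lower-order pieces combine as $(1+|\gamma_{1,n+1}|^2)(1+|\gamma_{2,n+1}|^2)\|B_n\|^2\|D_n\|^2 - |\gamma_{1,n+1}\gamma_{2,n+1}|^2\|B_n\|^2\|D_n\|^2$, and the quartic term contributes the missing $|\gamma_{1,n+1}\gamma_{2,n+1}|^2(\|B_n\|^2\|D_n\|^2+|B_n^{\rm t}\bar D_n|^2)$. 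Matching these two row-by-row with the definition of $A_{n+1}$ in \eqref{eqn:An} establishes \eqref{eqn:rec}.

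For \eqref{eqn:rec2}, I would iterate \eqref{eqn:rec} using the tower property: since $A_i$ is deterministic once the eigenvalues are fixed (only $\lambda_1,\lambda_2,\lambda_i$ enter), and since $\mathcal{F}_1\subset\mathcal{F}_{i-2}$ for $i\geq 3$, induction gives $\E(X_N\mid\mathcal{F}_1)=\big(\prod_{i=3}^N A_i\big)X_2$ (the upper limit in the paper's display should read $N$). The base case is $X_2 = \big(|b_2|^2,\,1+|b_2|^2\big)^{\rm t}$, obtained from $B_2=(1,b_2)^{\rm t}$, $D_2=(0,1)^{\rm t}$, which is precisely $\mathrm{diag}(|b_2|^2,1+|b_2|^2)(1,1)^{\rm t}$. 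Finally, since $|\mathscr{O}_{12}|^2=|b_2|^2|B_N^{\rm t}\bar D_N|^2$ and $\mathscr{O}_{11}\mathscr{O}_{22}=(1+|b_2|^2)\|B_N\|^2\|D_N\|^2$, left-multiplying $\E(X_N\mid\mathcal{F}_1)$ by $\mathrm{diag}(|b_2|^2,1+|b_2|^2)$ yields \eqref{eqn:rec2}.

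There is no real obstacle; the only place to be careful is the complex-Gaussian fourth moment, where the two Wick contractions produce the off-diagonal $|\gamma_{1,n+1}\gamma_{2,n+1}|^2$ coupling between the two components of $X_n$. Keeping track of this coupling is what makes the system $2\times 2$ (rather than two scalar recursions), and it is exactly what drives the off-diagonal entries of $A_{n+1}$.
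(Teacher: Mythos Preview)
Your proof is correct and follows essentially the same route as the paper's. The paper packages the Gaussian moment identities into auxiliary normalized variables $Z_{n+1}=N\,\dfrac{B_n^{\rm t}T_n\,\overline{D_n^{\rm t}T_n}}{B_n^{\rm t}\bar D_n}$ and $X_{n+1}=\sqrt{N}\,\dfrac{B_n^{\rm t}T_n}{\|B_n\|}$, $Y_{n+1}=\sqrt{N}\,\dfrac{D_n^{\rm t}T_n}{\|D_n\|}$ (with $\E|Z_{n+1}|^2=1+\|B_n\|^2\|D_n\|^2/|B_n^{\rm t}\bar D_n|^2$ and $\E|X_{n+1}Y_{n+1}|^2=1+|B_n^{\rm t}\bar D_n|^2/(\|B_n\|^2\|D_n\|^2)$), whereas you invoke the Wick fourth-moment identity directly; these are the same computation, and the iteration to \eqref{eqn:rec2} via the tower property and the base value $X_2=(|b_2|^2,\,1+|b_2|^2)^{\rm t}$ is identical. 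Your observation that the product in \eqref{eqn:rec2} should run to $N$ is also correct.
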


\begin{proof}
We recall the notation (\ref{eqn:Zn}) and the property $\E(Z_{n+1}\mid \mathcal{F}_{n-1})=1$. A short calculation also gives
$
\E(|Z_{n+1}|^2\mid \mathcal{F}_{n-1})=1+\frac{\|B_n\|^2\|D_n\|^2}{|B_n^{\rm t} \bar D_n|^2}$. Abbreviating $\gamma_k=\gamma_{k,n+1}$, this gives
\begin{align}
\E \left(|B_{n+1}^{\rm t}\bar D_{n+1}|^2\mid  \mathcal{F}_{n-1}\right) & = \E\left(| B_n^{\rm t} \bar D_n|^2 | 1+ \gamma_1 \overline{\gamma_2} Z_{n+1}|^2 \mid \mathcal{F}_{n-1} \right) \notag\\
&=  |B_n^{\rm t} \bar D_n|^2  \E\left(1+  \gamma_1 \overline{\gamma_2} Z_{n+1} +  \overline{\gamma_1} {\gamma_2} \overline{Z_{n+1}} +  |\gamma_1 {\gamma_2} Z_{n+1}|^2\mid \mathcal{F}_{n-1}\right) \notag\\
&= | 1+ \gamma_1 \overline{\gamma_2}|^2 |B_n^{\rm t} \bar D_n|^2 + |\gamma_1 {\gamma_2} |^2 \|B_n\|^2 \|D_n\|^2\label{eqn:calc1}.
\end{align}
We now denote $X=X_{n+1}= \sqrt{N} {B_n^{\rm t}T_n  \over  \| B_n \|}$, $Y=Y_{n+1}= \sqrt{N}{D_n^{\rm t}T_n  \over  \| D_n \|}$, so that
$\E(|X_{n+1}|^2 \mid \mathcal{F}_{n-1})=\E(|Y_{n+1}|^2 \mid \mathcal{F}_{n-1})=1$ and 
$
\E(|X_{n+1}Y_{n+1}|^2 \mid \mathcal{F}_{n-1})=1+\frac{|B_n^{\rm t} \bar D_n|^2}{\|B_n\|^2\|D_n\|^2}
$.
This yields
\begin{align}
\E \left(\|B_{n+1} \|^2 \|D_{n+1} \|^2 \mid  \mathcal{F}_{n-1}\right) & = \E\left(\|B_n\|^2 \|D_n\|^2 \big( 1+ |\gamma_1 X|^2\big)  \big( 1+ |\gamma_2 Y|^2\big)\mid\mathcal{F}_{n-1}\right)\notag\\
&=  \|B_n\|^2 \|D_n\|^2  \E\left(1+  |\gamma_1 X|^2 +  |{\gamma_2}Y|^2 +  |\gamma_1 {\gamma_2} XY |^2\mid  \mathcal{F}_{n-1}\right) \notag\\
&= \big( 1+ |\gamma_1|^2 \big) \big(1 + |\gamma_2|^2 \big) \|B_n\|^2 \|D_n\|^2+ |\gamma_1 {\gamma_2} |^2|B_n^{\rm t} \bar D_n|^2.\label{eqn:calc2}
\end{align}
Equations (\ref{eqn:calc1}) and (\ref{eqn:calc2}) together conclude the proof of (\ref{eqn:rec}).
Denoting $Y_n=\E_{T}(X_n\mid \mathcal{F}_1)$, we obtain
$$
Y_n=\left(\prod_{i=3}^n A_i\right)Y_2=\left(\prod_{i=3}^n A_i\right)\E\left(
\left(\begin{array}{c}
|B_2^{\rm t} \bar D_2|^2\\
\|B_2\|^2\|D_2\|^2
\end{array}
\right)\mid \mathcal{F}_1\right)
=
\left(\prod_{i=3}^n A_i\right)\left(
\begin{array}{c}
|b_2|^2\\
1+|b_2|^2
\end{array}
\right)
$$
and (\ref{eqn:rec2}) immediately follows.
\end{proof}

\subsection{The annealed off-diagonal overlap: second moments for $\la_1=0$.}\ 
We now want to integrate (\ref{eqn:rec2}) over the eigenvalues $\la_3,\dots,\la_N$, first in the special case $\lambda_1=0$.
This requires some new notations.
We abbreviate
$$\delta= N | \lambda_{1} - \lambda_{2} |^2, \ \
a= \frac{\delta}{2} + \sqrt{1 + \frac{\delta^2}{4}},\ \
b=-a^{-1}=\frac{\delta}{2} - \sqrt{1 + \frac{\delta^2}{4}}
$$
and will often use the property
\begin{equation}\label{eqn:abd}
\delta =  a-\frac{1}{a}=b-\frac{1}{b}.
\end{equation}
We also define the following rational fractions of $x$
\begin{align}\label{eqn:u}
u_k(x)&=1- {1-x^{-1} \over  k+3}, \\
 d_k(x,\delta)&=
\frac{ (k+2) (k+3)}{e_1^{(k+1)}(\delta)}
\left(
u_k(x) \frac{1}{(x-1)^2}e_3^{(k)}(\delta)-\frac{\delta u_k(x)}{2 x}
+ \frac{1}{(x-1)^2}\frac{x^2+(k+2)x+ (k+1)(k+3)}{(k+3)!}\delta^{k+1}
\right).
\notag
\end{align}
We can now state the following main proposition on which Theorem  \ref{thm:meso2} depends. The reason why such formulas exist relies on two algebraic facts.
\begin{enumerate}
\item The $A_n$'s from (\ref{eqn:An}) are diagonalizable in the same basis, see (\ref{eqn:diago}). Their commutation was expected, our choice of eigenvalues ordering being arbitrary, while
the left hand side of (\ref{eqn:rec2}) is intrinsic.
\item More surprisingly, the obtained holonomic sequence (\ref{eqn:recf}) is exactly solvable.
\end{enumerate}

\begin{proposition}\label{prop:theformula}
Conditionally on $\{\lambda_1=0,\lambda_2=z\}$, we have
\begin{align*}
\E(|\mathscr{O}_{12}|^2)&=\frac{1}{1+a^2}\left(\frac{a^2}{(a+1)^2}d_{N-2}(a,\delta)+\frac{a^2}{(a-1)^2}d_{N-2}(b,\delta)\right),\\
\E(\mathscr{O}_{11}\mathscr{O}_{22})&=\frac{1}{1+a^2}\left(\frac{1}{(a+1)^2}d_{N-2}(a,\delta)+\frac{a^4}{(a-1)^2}d_{N-2}(b,\delta)\right).
\end{align*}
\end{proposition}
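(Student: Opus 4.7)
The starting point is the quenched identity (\ref{eqn:rec2}), which expresses both $\E_T(|\mathscr{O}_{12}|^2\mid\mathcal{F}_1)$ and $\E_T(\mathscr{O}_{11}\mathscr{O}_{22}\mid\mathcal{F}_1)$ via the matrix product $\prod_{i=3}^N A_i$ acting on $X_2$. The key algebraic observation is the \emph{simultaneous diagonalization} of the symmetric matrices $A_n$: a direct computation, using only the relation (\ref{eqn:abd}) that identifies $-a,-b$ as the two roots of $x^2 + \delta x - 1 = 0$, shows that each $A_n$ has the same orthogonal eigenvectors $v_+ = (-a,1)^{\rm t}$ and $v_- = (1/a,1)^{\rm t}$, with corresponding eigenvalues
\[ \mu_{n,\pm} = (1+|\gamma_{1,n}|^2)(1+|\gamma_{2,n}|^2) + c_\pm |\gamma_{1,n}\gamma_{2,n}|^2, \qquad c_+ = -a,\ c_- = -b. \]
Consequently $\prod_{i=3}^N A_i = M_+ P_+ + M_- P_-$, with $M_\pm := \prod_{i=3}^N \mu_{i,\pm}$ and $P_\pm$ the orthogonal projections onto $v_\pm$.

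Next I decompose $X_2 = \alpha v_+ + \beta v_-$ with $\alpha,\beta$ explicit rational functions of $a$ and $|b_2|^2$, and read off both components of (\ref{eqn:rec2}). Since $|b_2|^2 = |T_{12}|^2/|z|^2$ is a $\text{Gamma}(1)/\delta$ variable independent of $(\lambda_3,\dots,\lambda_N)$, integrating over $T_{12}$ using $\E|b_2|^2 = 1/\delta$ and $\E|b_2|^4 = 2/\delta^2$, together with the simplification $\delta = (a^2-1)/a$, yields after a short algebraic manipulation
\[ \E\bigl(|\mathscr{O}_{12}|^2 \mid \lambda_1=0,\lambda_2=z\bigr) = \frac{1}{1+a^2}\left[\frac{a^2}{(a+1)^2}\E M_+ + \frac{a^2}{(a-1)^2}\E M_-\right], \]
and an analogous identity for $\E(\mathscr{O}_{11}\mathscr{O}_{22}\mid \lambda_1=0,\lambda_2=z)$. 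Comparing with the target formula, it remains to prove $\E M_+ = d_{N-2}(a,\delta)$ and $\E M_- = d_{N-2}(b,\delta)$.

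For each $c \in \{a,b\}$, setting $g_c(\lambda) := \mu_{n,c}|_{\lambda_n = \lambda}$, Theorem \ref{cond2} expresses $\E M_\pm$ as the ratio of an $(N-2)\times(N-2)$ determinant to the normalization $Z_N^{(0,z)}$. A short expansion gives
\[ |\lambda|^2|z-\lambda|^2\, g_c(\lambda) = |\lambda|^2|z-\lambda|^2 + \frac{|\lambda|^2 + |z-\lambda|^2}{N} + \frac{1-c}{N^2}, \]
and because the Gaussian weight $\mu$ is rotationally symmetric about the origin, only monomial pairs $\lambda^p \overline{\lambda}^p$ integrate nontrivially. The resulting matrix $(f_{i,j})$ is therefore \emph{tridiagonal}, with $c$ entering only through the constant $1-c$ in its diagonal entries.

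The main obstacle is obtaining a closed form for the determinant $d_k = \det(f_{i,j})_{i,j\leq k}$. Rescaling $\tilde a_k := N^{k(k+3)/2} d_k$, the sequence $(\tilde a_k)$ satisfies a three-term linear recurrence with polynomial coefficients in $(k,\delta,c)$, i.e.\ it is P-recursive. Explicit solvability rests on the identity $c^2 - \delta c = 1$ shared by $a$ and $b$: substituting this relation into the recurrence collapses the combinatorics into partial sums $e_k^{(\ell)}(\delta)$ of the exponential, plus an explicit polynomial-times-$\delta^{k+1}$ remainder. My plan is to identify the closed-form ansatz from computing $\tilde a_k$ for the first few values of $k$, and then verify it by induction, applying $c^2 = \delta c + 1$ at each step. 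Dividing by $Z_N^{(0,z)}$---itself a simpler tridiagonal determinant whose value supplies the factor $e_1^{(k+1)}(\delta)$ appearing in the denominator of $d_k(x,\delta)$---produces the stated expression. The induction is mechanical once the ansatz is correct; identifying the ansatz, and in particular the unusual factor $u_k(x) = 1 - (1-x^{-1})/(k+3)$, is the delicate step, and the reason the proposition must be proved by a dedicated computation rather than by abstract means.
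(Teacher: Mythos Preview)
Your plan is correct and follows the same architecture as the paper's proof: simultaneous diagonalization of the $A_n$'s (the paper writes this as $A_n = (1+|\gamma_{1,n}|^2)(1+|\gamma_{2,n}|^2)\,{\rm I}_2 + |\gamma_{1,n}\gamma_{2,n}|^2\bigl(\begin{smallmatrix}-\delta&1\\1&0\end{smallmatrix}\bigr)$, from which the common eigenbasis is immediate), integration over $|b_2|^2$ using $\E|b_2|^2=1/\delta$, $\E|b_2|^4=2/\delta^2$ together with $\delta=(a^2-1)/a$ to reach exactly the coefficients $\frac{a^2}{(a\pm1)^2}$ and $\frac{1}{(a+1)^2},\frac{a^4}{(a-1)^2}$, and then Theorem~\ref{cond2} to reduce $\E(M_\pm)$ to a tridiagonal determinant.

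The only substantive difference is in how the resulting three-term recurrence is solved. You propose to guess the closed form from the target $d_{N-2}(x,\delta)$ and verify by induction using $x^2-\delta x=1$; this is legitimate and mechanical once the ansatz is written down. The paper instead first normalizes by the $\delta=0$ solution $a_k^0=(k+2)(k+3)/6$ to obtain a recurrence for $g_k=a_k/a_k^0$, and then makes the key observation that $u_k(a)$ \emph{itself} satisfies this same recurrence (equation~(\ref{eqn:recu})). This reduces the problem to a first-order recurrence for $\delta_k := g_k - g_{k-1}u_k/u_{k-1}$, which telescopes; summing the resulting geometric-type series and simplifying via $\delta=a-a^{-1}$ yields the closed form. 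The paper's route explains structurally \emph{why} the factor $u_k$ appears, whereas your route treats it as given; but both lead to the same destination.
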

\begin{proof}
Importantly, the matrices $A_n, 3\leq n\leq N$, can be diagonalized in the same basis: from (\ref{eqn:An})
an elementary calculation based on (\ref{eqn:ward}) gives
\begin{equation}\label{eqn:diago}
A_n = (1+|\gamma_{1,n}|^2) (1+|\gamma_{2,n}|^2 ) {\rm I}_2 + |\gamma_{1,n} \gamma_{2,n}|^2 \left( \begin{array}{cc}
 - N | \lambda_{1} - \lambda_{2} |^2\nc &  1 \\
1  &  0\nc
 \end{array} \right),
\end{equation}
so that its eigenvectors clearly do not depend on $n$. 
With these notations, the eigenvalues of $A_n$ are 
\begin{align*}
\lambda_+(n) & =(1+|\gamma_{1,n}|^2) (1+|\gamma_{2,n}|^2 ) - |\gamma_{1,n} \gamma_{2,n}|^2  a \\
\lambda_-(n) & = (1+|\gamma_{1,n}|^2) (1+|\gamma_{2,n}|^2 ) - |\gamma_{1,n} \gamma_{2,n}|^2 b,
\end{align*}
and the orthogonal basis
$$
U=\frac{1}{\sqrt{1+a^2}}
 \left( \begin{array}{cc}
a &  -1 \\
1  & a
 \end{array} \right)
$$
diagonalizes all $A_n$'s simultaneously: 
$UA_nU^{t}=\diag(\lambda_+(n),\lambda_-(n))$.
Then (\ref{eqn:rec2}), together with the codiagonalization of the $A_n$'s, $\E |T|^2=1$ and $\E(|T|^4)=2$, yields the following simple expression:
\begin{align}\label{expT}
&\left(
\begin{array}{c}
\E_T(|\mathscr{O}_{12}|^2)\\
\E_T(\mathscr{O}_{11}\mathscr{O}_{22})
\end{array}
\right)
=
\frac{1}{1+a^2}
\left(
\begin{array}{c}
\frac{a^2}{(a+1)^2}d_++\frac{a^2}{(a-1)^2}d_-\\
\frac{1}{(a+1)^2}d_++\frac{a^4}{(a-1)^2}d_-
\end{array}
\right),\\
&d_+=\prod_{n=3}^N\lambda_+(n),\ d_-=\prod_{i=3}^N\lambda_-(n)\label{notationd}.
\end{align}
Note that we have not yet used $z_1=0$: the above formula holds for any given $z_1,z_2$.

The remarkable fact is that in the specific case $z_1=0$, $\E(d_+)$ and $\E(d_-)$ can be calculated, as shown below (here $\E$ denotes integration over all variables except $\la_1,\la_2$).
We start with $\E(d_+)$.
From Theorem \ref{cond2}, the following representation holds:
$$ \E\left(\prod_{n=3}^{N} \lambda_{+}(n) \mid \lambda_1=0, \lambda_2=z\right) = \frac{ \det(f_{i,j})_{1\leq i,j\leq N-2} }{Z_N^{(0,z)}}$$
where $Z_N^{(0,z)}$ is given by (\ref{eqn:normalization}) and
\begin{align*}
f_{i,j} &= \frac{1}{ {( i+\nc1)}! } \int \lambda^{i-1} \bar{\lambda}^{j-1} |\lambda|^2 |z-\lambda|^2 \lambda_+(\lambda) \mu( \dd \lambda)\\
&= \frac{1}{ {( i+\nc1)}! } \int \lambda^{i-1} \bar{\lambda}^{j-1}( |\lambda|^2 |z-\lambda|^2 +N^{-1}  |\lambda|^2 + N^{-1}|z-\lambda|^2 -N^{-2}(a-1)) \mu( \dd \lambda).
\end{align*}
We expand $|\lambda-z|^2 = |\lambda|^2 + |z|^2 - \bar z \lambda - z \bar{\lambda}$. The resulting matrix is thus tridiagonal with
\begin{align*} 
f_{i,i-1}& = {-z }\frac{1}{iN^i},\\
f_{i,i} & = \frac{1}{N^{i+1}}+\frac{2N^{-1}+|z|^2}{(i+1)N^i}+\frac{N^{-1}|z|^2-N^{-2}(a-1)}{i(i+1)N^{i-1}},\\
f_{i,i+1} & = -\bar z\frac{i+2}{(i+1)N^{i+1}}.
\end{align*}
With the notation  $d_k=\det(f_{ij})_{1\leq i,j\leq k}$, the recurrence 
$d_{k} = f_{k,k} d_{k-1} - f_{k,k-1} f_{k-1,k} d_{k-2}$ holds,
so that defining $a_k=d_k N^{\frac{k(k+3)}{2}}$  we have
$$
a_k=\left(1+\frac{2+\delta}{k+1}+\frac{1-a^{-1}}{k(k+1)}\right)a_{k-1}
- \delta
\frac{k+1}{k^2}
a_{k-2},
$$
with the convention $a_0=1$ and 
$
a_1=2+\frac{\delta}{2}+\frac{1-a^{-1}}{2}.
$
Note that for $z=0$ we have $\delta=0$ and $a=1$, hence
$$
a_{k}^{0}:= a_k (z=0) = 2\prod_{j=2}^{k}\left(1+\frac{2}{j+1}\right)=\frac{(k+2)(k+3)}{6}.
$$
As a consequence, $g_k = {a_k \over a_{k}^{0} }$ satisfies $g_0=1$, $g_1=1+\frac{\delta}{4}+\frac{1-a^{-1}}{4}$, and
\begin{align}
g_{k}&=m_1(k,a) g_{k-1}\label{eqn:recf}
-m_2(k,a)
g_{k-2},\\
m_1(k,a)&=1+\frac{\delta}{k+3}+\frac{1-a^{-1}}{k(k+3)},\notag\\
m_2(k,a)&=\delta\frac{(k+1)^2}{k(k+2)(k+3)}.\notag
\end{align}
We cannot see an a priori reason why this equation could be solved, but it can be. We remark that the function $u_k=u_k(a)$ from (\ref{eqn:u})
also satisfies the induction
\begin{equation}\label{eqn:recu}
u_{k}=m_1(k,a) u_{k-1}
-m_2(k,a)
u_{k-2}.
\end{equation}
The reader who would like to check the above equation can substitute $\delta=a-\frac{1}{a}$ and verify that the Laurent series in $a$ on both sides of (\ref{eqn:recu}) coincide.
This equation  implies
\begin{equation}\label{eqn:recu2}
g_{k-1} { u_{k} \over u_{k-1}} = m_1(k,a) g_{k-1} -  m_2 (k,a) g_{k-1} { u_{k-2} \over u_{k-1}}
\end{equation}
Subtracting (\ref{eqn:recu2}) from (\ref{eqn:recf}) gives
$$\delta_{k}:=  g_{k}- g_{k-1} { u_{k}\over u_{k-1}} =  - m_2 (k,a) \Big( g_{k-2} - g_{k-1} { u_{k-2} \over u_{k-1}} \Big) = m_2(k,a) { u_{k-2} \over u_{k-1}} \delta_{k-1},$$
which yields
\begin{equation}
\delta_k = \left(\prod_{j=2}^{k} m_2(j,a) { u_{j-2}\over u_{j-1}}\right) \delta_{1}=36 \frac{1}{ (k+3)!} {k+1 \over k+2} \delta^{k-1} {u_0 \over u_{k-1}} \delta_{1}\label{eqn:mk}.
\end{equation}
Together with
${g_k \over u_k} = {g_{k-1} \over u_{k-1}}+ {\delta_k \over u_k}$, from (\ref{eqn:mk}) we obtain
$$ {g_k \over u_k}=36\sum_{j=1}^k \frac{1}{ (j+3)!} {j+1 \over j+2} \delta^{j-1} {u_0 \over u_{j-1}u_j} \delta_{1}+\frac{g_0}{u_0}.$$
A calculation gives $u_0\delta_1=\frac{1}{6}(a-a^{-1})(1+a^{-1})$, so that
\begin{equation}\label{eqn:interm}
{g_k \over u_k}=6(1+a^{-1})\sum_{j=1}^k \frac{1}{ (j+3)!} {j+1 \over j+2} \delta^{j} \frac{1}{ u_{j-1}u_j}+\frac{1}{u_0}.
\end{equation}
Note that
$$
\frac{1}{u_{j-1}u_j}=\frac{(j+2)(j+3)}{1-a^{-1}}\left(\frac{1}{u_{j-1}}-\frac{1}{u_j}\right),
$$
which simplifies (\ref{eqn:interm}) into
\begin{align*}
{g_k \over u_k}&=6\frac{1+a^{-1}}{1-a^{-1}}\sum_{j=1}^k \frac{1}{ j!(j+2)}\left(\frac{1}{u_{j-1}}-\frac{1}{u_j}\right) \delta^{j}+\frac{1}{u_0}\\
&=
6\frac{1+a^{-1}}{1-a^{-1}}\left(
\sum_{j=1}^{k-1}\frac{1}{u_j}\left(\frac{\delta}{(j+1)!(j+3)}-\frac{1}{j!(j+2)}\right)\delta^{j}
-\frac{1}{k!(k+2)}\frac{\delta^k}{u_k}+\frac{\delta}{3u_0}
\right)
+\frac{1}{u_0}\\
&=
6\frac{a+1}{a-1}\left(
\sum_{j=1}^{k-1}
\left(\frac{a}{(j+2)!}-\frac{1}{(j+1)!}\right)
\delta^{j}
-\frac{1}{k!(k+2)}\frac{\delta^k}{u_k}+\frac{\delta}{3u_0}
\right)
+\frac{1}{u_0}\\
&=
6\frac{a+1}{a-1}\left(
\frac{1}{a\delta^2}\sum_{j=3}^k\frac{\delta^j}{j!}+
\frac{a\delta^{k-1}}{(k+1)!}-\frac{\delta}{2}
-\frac{1}{k!(k+2)}\frac{\delta^k}{u_k}+\frac{\delta}{3u_0}
\right)
+\frac{1}{u_0}
\end{align*}
where we used $\delta=a-a^{-1}$ and (\ref{eqn:u}) at several steps. The above formula can be written in terms of (\ref{eqn:expo}) and further simplified as
$$
g_k=6u_k\frac{a+1}{a-1}\frac{1}{a\delta^2}e_3^{(k)}(\delta)-\frac{3u_k}{a}
+
6\frac{a+1}{a-1}
\frac{a+(k+2)+a^{-1}(k+1)(k+3)}{(k+3)!}\delta^{k-1}.
$$
By definition of $g_k$, $a_k = g_k a_{k}^{0}$, that is
$$ a_k = u_k\frac{1}{(a-1)^2 \delta } (k+2) (k+3)e_3^{(k)}(\delta)-\frac{u_k}{2a} (k+2) (k+3)
+
\frac{a+1}{a-1}
\frac{a+(k+2)+a^{-1}(k+1)(k+3)}{(k+1)!}\delta^{k-1}.$$
Then, using the normalizing constant (\ref{eqn:normalization}), we obtain
$$\E(d_+)= \frac{d_{N-2}}{Z_N^{(0,z)}}
=
\frac{a_{N-2}}{Z_N^{(0,z)} N^{\frac{(N-2) (N+1)}{2}}}
=
\frac{ \delta a_{N-2} }{ e_{1}^{(N-1)} (\delta)}  
$$
as $\delta=N|z|^2$. We find
\begin{equation}\label{eqn1:d+}
\E (d_+) = \frac{ N (N+1)}{e_1^{(N-1)}(\delta)}
\left(
\frac{u_{N-2}(a)}{(a-1)^2} e_3^{(N-2)}(\delta)-\frac{ \delta u_{N-2}(a)}{2 a}
+ \frac{1}{(a-1)^2}\frac{ a^2+ Na + (N-1)(N+1)}{(N+1)!}\delta^{N-1}
\right)
\end{equation}
which has been defined as $d_{N-2}(a, \delta)$. The formula for $\E(d_-)$ is obtained in the exact same way, with the only difference that $a$ is replaced by $b$. Finally, conditionally on $\{\lambda_1=0,\lambda_2=z\}$,
\eqref{expT} gives
$$
\left(
\begin{array}{c}
\E(|\mathscr{O}_{12}|^2)\\
\E(\mathscr{O}_{11}\mathscr{O}_{22})
\end{array}
\right)
=
\frac{1}{1+a^2}
\left(
\begin{array}{c}
\frac{a^2}{(a+1)^2}\E(d_+) + \frac{a^2}{(a-1)^2} \E(d_-)\\
\frac{1}{(a+1)^2}\E(d_+) + \frac{a^4}{(a-1)^2} \E(d_-)
\end{array}
\right).
$$
We can replace $\E(d_+)$ and $\E(d_-)$ by their exact expressions to obtain the claimed formula.
\end{proof}

\begin{proposition}\label{prop:at00} Let $\sigma\in(0,1/2)$.
Denoting $\omega=\sqrt{N}|z|$, uniformly in $|z|\in[0,N^{-\frac{1}{2}+\sigma}]$ we have
\begin{align}
\E\left(|\mathscr{O}_{12}|^2\mid \la_1=0,\la_2=z\right)&= \frac{N^2}{|\omega|^4}\left(1+\OO(N^{2\sigma-1})\right),\\
\E\left(\mathscr{O}_{11}\mathscr{O}_{22}\mid \la_1=0,\la_2=z\right)&= \frac{N^2}{|\omega|^4}\frac{1+|\omega|^4- e^{-|\omega|^2}}{1-e^{-|\omega|^2}}\left(1+\OO(N^{2\sigma-1})\right).
\end{align}
\end{proposition}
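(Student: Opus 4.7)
The plan is to extract the asymptotics directly from the exact finite-$N$ formulas of Proposition \ref{prop:theformula} by analyzing $d_{N-2}(a,\delta)$ and $d_{N-2}(b,\delta)$ in the regime $\delta := |\omega|^2 \in [0, N^{2\sigma}]$. Since $\sigma < 1/2$, Stirling's inequality $\delta^{N-1}/(N+1)! = O((e\delta/N)^N/\sqrt{N})$ shows that the last (polynomial-in-$x$) summand in the definition of $d_{N-2}(x,\delta)$ is exponentially negligible; similarly $e_1^{(N-1)}(\delta) = e_1(\delta)(1 + O(e^{-cN}))$ with $e_1(\delta) = e^\delta - 1$, and $e_3^{(N-2)}(\delta) = e_3(\delta)(1+O(e^{-cN}))$. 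Because $a = \delta/2 + \sqrt{1+\delta^2/4} \leq \delta + 1 = O(N^{2\sigma})$ and $|1/a| \leq 1$, the remaining prefactor satisfies $u_{N-2}(a) = 1+O(N^{-1})$ and $u_{N-2}(b) = 1-(1+a)/(N+1) = 1+O(N^{2\sigma-1})$, which is the source of the announced error rate.

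Using $b = -1/a$, $(b-1)^2 = (a+1)^2/a^2$, $-\delta/(2b) = a\delta/2$, and $(a^2-1)^2 = a^2\delta^2$, the linear combinations in Proposition \ref{prop:theformula} can be rewritten, up to relative error $O(N^{2\sigma-1})$, as
\begin{align*}
\E(|\mathscr{O}_{12}|^2) &\sim \frac{N^2}{\delta^2\, e_1(\delta)}\Big[e_3(\delta) + \delta + \tfrac{\delta^2}{2}\Big], \\
\E(\mathscr{O}_{11}\mathscr{O}_{22}) &\sim \frac{N^2}{\delta^2\, e_1(\delta)}\Big[(1+\delta^2)e_3(\delta) + \tfrac{1}{2}(\delta^4+2\delta^3+3\delta^2+2\delta)\Big].
\end{align*}
The first bracket equals $e_1(\delta)$ by the very definition of $e_3(\delta) = e^\delta-1-\delta-\delta^2/2$, yielding $N^2/\delta^2$. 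The second bracket equals $(1+\delta^2)e^\delta - 1$ by a straightforward polynomial expansion, producing the advertised factor $(1+\delta^2-e^{-\delta})/(1-e^{-\delta})$ after dividing by $e_1(\delta)$.

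The heart of the proof is the algebraic reduction leading to these bracketed forms. It hinges on two identities: for $\E(|\mathscr{O}_{12}|^2)$, the factorization $a^2(a+1)^2 - (a-1)^2 = (a^2+1)(a^2+2a-1)$ together with $a^2 + 2a - 1 = a(\delta+2)$; for $\E(\mathscr{O}_{11}\mathscr{O}_{22})$, the relation $(1+a^6)/(1+a^2) = 1-a^2+a^4 = a^2(1+\delta^2)$ (from $a^2+1/a^2 = \delta^2+2$) and the polynomial division $a^6(a+1)^2 - (a-1)^2 = (1+a^2)(a^6+2a^5-2a^3+2a-1)$, combined with the rewriting $(a^6+2a^5-2a^3+2a-1)/a^3 = (a^3-1/a^3) + 2(a^2+1/a^2) - 2 = \delta^3+2\delta^2+3\delta+2$ (using $a^3-1/a^3 = \delta(\delta^2+3)$). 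The main obstacle is this non-obvious collapse of a sum of rational functions of $a$ with apparent poles at $a = \pm 1$ into a compact function of $\delta$. One must also verify that the two summands of each formula do not cancel at leading order, so that the relative error from $u_{N-2}(b)$ controls the total error without inflation; this is confirmed by inspection in the $\delta \to 0$ limit, where $d_{N-2}^\infty(b)/(a-1)^2 \sim 2N^2/\delta^2$ dominates over $d_{N-2}^\infty(a)/(a+1)^2 = O(N^2)$.
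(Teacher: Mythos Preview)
Your proposal is correct and follows essentially the same approach as the paper: both start from the exact formula of Proposition~\ref{prop:theformula}, drop the exponentially negligible $\delta^{N-1}/(N+1)!$ term, replace $e_1^{(N-1)},e_3^{(N-2)}$ by their limits, approximate $u_{N-2}(a)=1+\OO(N^{-1})$ and $u_{N-2}(b)=1+\OO(N^{2\sigma-1})$, and then reduce the resulting rational expressions in $a$ to functions of $\delta$ via $\delta=a-a^{-1}$. The algebraic organization differs only cosmetically: the paper repeatedly invokes $a^2/(a^2-1)^2=1/\delta^2$ and simplifies term by term (cf.\ \eqref{eqn:d+}, \eqref{eqn:d-}), whereas you work out explicit polynomial factorizations such as $a^2(a+1)^2-(a-1)^2=(a^2+1)a(\delta+2)$; the identities are equivalent.

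One small point: your justification that the $\OO(N^{2\sigma-1})$ relative error is not inflated by cancellation relies on ``inspection in the $\delta\to 0$ limit,'' which strictly speaking does not cover the full range $\delta\in[0,N^{2\sigma}]$. The paper handles this by noting (just before \eqref{eqn1}) that $\E(d_+)>0$ and $\E(d_-)>0$ for all relevant $\delta$, so both summands are nonnegative and no cancellation can occur; you could equally observe that your final bracketed expressions $e_1(\delta)$ and $(1+\delta^2)e^\delta-1$ are positive and of the same order as the dominant summand. This is a minor gap, easily closed.
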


\begin{proof}
We consider asymptotics in Proposition \ref{prop:theformula}. First,
the term $\frac{\delta^{N-1}}{(N+1)!}$  is obviously negligible. Second, we always have $a\geq 1$ and $|b|\geq c/\delta\geq N^{-2\sigma}$, so that 
$u_{N-2}(a)= 1 + \OO(N^{-1})$, $u_{N-2}(b)  = 1 + \OO(N^{2\sigma-1})$. Moreover, 
$$
e_1^{(N-1)}(\delta) = (e^{\delta} -1)  \left(1 + \OO(N^{-1})\right),\ 
e_3^{(N-2)}(\delta) = (e^{\delta} -1- \delta - \frac{\delta^2}{2})  \left(1 + \OO(N^{-1})\right).
$$
From (\ref{eqn:abd}), we have
$ \frac{a^2}{(1+a)^2 (1-a)^2} = \frac{b^2}{(1+b)^2 (1-b)^2} = \frac{1}{\delta^2},$
so that
(\ref{eqn1:d+}) and its analogue for $d_-$ give
\begin{align}
\frac{a^2}{(a+1)^2}\E(d_+) & =  \frac{ N^2 }{ \delta^2 (e^{\delta} -1)}
\left( e^{\delta} -1- \delta - \frac{\delta^2}{2} - \frac{ \delta (a-1)^2}{2a}
\right)  \left(1 + \OO(N^{-1})\right),\label{eqn:d+}\\
\frac{a^2}{(a-1)^2}\E(d_-) & =  \frac{ N^2 }{\delta^2 (e^{\delta} -1)}
\left( a^2\left(e^{\delta} -1- \delta - \frac{\delta^2}{2}\right) + \frac{\delta a}{2}(a+1)^2
\right)  \left(1 + \OO(N^{2\sigma-1})\right).\label{eqn:d-}
\end{align}
%
%
%
We observe that
$$
\frac{1}{a^2+1}\left(
- \frac{ \delta (a-1)^2}{2a}
+ \frac{\delta a}{2}(a+1)^2
\right)=\delta+\frac{\delta^2}{2},
$$
and the previous three equations give
\begin{multline*}
\E(|\mathscr{O}_{12}|^2\mid \la_1=0,\la_2=x) = \frac{1}{1+a^2} \left( \frac{a^2}{(a+1)^2}\E(d_+) + \frac{a^2}{(a-1)^2} \E(d_-) \right) \\
=  \frac{ N^2 }{ \delta^2 (e^{\delta} -1)}
\left( e^{\delta} -1- \delta - \frac{\delta^2}{2} + \delta + \frac{\delta^2}{2}
\right)  \left(1 + \OO(N^{2\sigma-1})\right) 
=  \frac{ N^2 }{ |\omega|^4}
 \left(1 + \OO(N^{2\sigma-1})\right).
\end{multline*}
%
%
The similar computation for $\E(\mathscr{O}_{11} \mathscr{O}_{22}\mid \la_1=0,\la_2=z)$ involves the two terms
\begin{align*}
\frac{1}{(a+1)^2}\E(d_+) & =  \frac{ N^2 }{ e^{\delta} -1} \left( \frac{1}{(a-1)^2 (a+1)^2}
\left( e^{\delta} -1- \delta - \frac{\delta^2}{2} \right) - \frac{ \delta }{2 a (a+1)^2} \right)
 \left(1 + \OO(N^{-1})\right), \\
\frac{a^4}{(a-1)^2}\E(d_-) &  = \frac{ N^2 }{ e^{\delta} -1} \left( \frac{a^6}{(a-1)^2 (a+1)^2}
\left( e^{\delta} -1- \delta - \frac{\delta^2}{2} \right) + \frac{ \delta a^5 }{2 (a-1)^2} \right)
 \left(1 + \OO(N^{2\sigma-1})\right).
 \end{align*}
Moreover, some algebra gives
\begin{align*}
&\frac{1}{1+a^2} \left( \frac{1}{(a-1)^2 (a+1)^2} + \frac{a^6}{(a-1)^2 (a+1)^2} \right)
=
 \frac{1 + \delta^2}{\delta^2},\\
&\frac{1}{1+a^2} \left( \frac{ \delta a^5 }{2 (a-1)^2} - \frac{ \delta }{2 a (a+1)^2} \right)
=
 \frac{ (\delta+1)( \delta^2 + \delta  +2) }{ 2  \delta }.
\end{align*}
Once combined, these  four equations yield
\begin{align*}
\E\left(\mathscr{O}_{11}\mathscr{O}_{22}\mid\la_1=0,\la_2=z\right) 
& = \frac{1}{1+a^2} \left( \frac{1}{(a+1)^2}\E(d_+) + \frac{a^4}{(a-1)^2} \E(d_-) \right) \\
& =
\frac{ N^2 }{ e^{\delta} -1} \left(  \frac{1 + \delta^2}{\delta^2}
\left( e^{\delta} -1- \delta - \frac{\delta^2}{2} \right) +  \frac{ (\delta+1)( \delta^2 + \delta  +2) }{ 2  \delta } \right)
 \left(1 + \OO(N^{2 \sigma-1})\right) \\ 
& = 
\frac{ N^2 }{ |\omega|^4} \frac{1+ |\omega|^4 - e^{- |\omega|^2}}{1-e^{-|\omega|^2} } \left(1 +\OO(N^{2\sigma-1})\right),
\end{align*}
which concludes the proof.
\end{proof}

\subsection{The annealed off-diagonal overlap: second moments in the general case.}\ 
We can now prove Theorem \ref{thm:meso2}. We closely follow the method developed first in our alternative proof of Theorem \ref{meandiag}, in Subsection 
\ref{subsec:1stpatch}, then in our proof of Theorem \ref{thm:expoff} in Subsection \ref{subsec:2ndpatch}. 
In particular, following the proof of Theorem \ref{thm:expoff},
we denote $\nu_{z_1,z_2}$ the measure (\ref{eqn:partitionfunction}) conditioned to $\lambda_1=z_1,\la_2=z_2$.

\begin{proof}[Proof of Theorem \ref{thm:meso2}]
Remember the notation (\ref{notationd}). Assume we can prove  (under the hypothesis of Theorem \ref{thm:meso2}) that
\begin{align}
\E_{\nu_{z_1,z_2}}(d_+)&=(1-|z_1|^2)(1-|z_2|^2)\E_{\nu_{0,z_2-z_1}}(d_+)\left(1+\OO(N^{-2\kappa+\e})\right),\label{eqn1}\\
\E_{\nu_{z_1,z_2}}(d_-)&=(1-|z_1|^2)(1-|z_2|^2)\E_{\nu_{0,z_2-z_1}}(d_-)\left(1+\OO(N^{-2\kappa+\e})\right).\label{eqn2}
\end{align}
From (\ref{eqn:d+}) and (\ref{eqn:d-}), a calculation gives $\E_{\nu_{0,z_2-z_1}}(d_+)>0$, $\E_{\nu_{0,z_2-z_1}}(d_-)>0$, so that (\ref{eqn1}), (\ref{eqn2}) together with (\ref{expT})
give
$$
\left(
\begin{array}{c}
\E_{\nu_{z_1,z_2}}(|\mathscr{O}_{12}|^2)\\
\E_{\nu_{z_1,z_2}}(\mathscr{O}_{11}\mathscr{O}_{22})
\end{array}
\right)
=(1-|z_1|^2)(1-|z_2|^2)
\left(
\begin{array}{c}
\E_{\nu_{0,z_2-z_1}}(|\mathscr{O}_{12}|^2)\\
\E_{\nu_{0,z_2-z_1}}(\mathscr{O}_{11}\mathscr{O}_{22})
\end{array}
\right)\left(1+\OO(N^{-2\kappa+\e})\right).
$$
Together with Proposition \ref{prop:at00}, this concludes the proof.
We therefore only need to show  (\ref{eqn1}). The proof for (\ref{eqn2}) is identical up to trivial adjustments.\\

\noindent{\it First step: small cutoff.}
Our test function of interest and its short-range cut version are
\begin{align}\label{eqn:testfct}
g_{z_1,z_2}(\lambda)&=\left(1+\frac{1}{N|z_1-\lambda|^2}\right)\left(1+\frac{1}{N|z_2-\lambda|^2}\right)-\frac{a}{N^2}\frac{1}{|z_1-\lambda|^2|z_2-\la|^2}\\
h_{z_1,z_2}(\lambda)&=\left(1+\frac{1}{N|z_1-\lambda|^2}\mathds{1}_{\la\not\in\mathscr{B}}\right)\left(1+\frac{1}{N|z_2-\lambda|^2}\mathds{1}_{\la\not\in\mathscr{B}}\right)-\frac{a}{N^2}\frac{1}{|z_1-\lambda|^2|z_2-\la|^2}\mathds{1}_{\la\not\in\mathscr{B}}\label{eqn:testfctII}
\end{align}
where $\mathscr{B}=\{|\la-z_1|<N^{-A}\}\cup\{|\la-z_2|<N^{-A}\}$.
We first prove
(\ref{eqn:elementaryII}) for our new definition of $g_{z_1,z_2},\, h_{z_1,z_2}$.
It follows from
\begin{align}
&\E_{N-2}\left(\prod_{i=3}^N\left((|z_1-\lambda_i|^2+\frac{1}{N})(|z_2-\lambda_i|^2 +\frac{1}{N})-\frac{a}{N^2}\right)e^{-N(|z_1|^2-1)-N(|z_2|^2-1)}\right)\\
&-
\E_{N-2}\left(\prod_{i=3}^N\left((|z_1-\lambda_i|^2 +\frac{\mathds{1}_{\la_i\not\in\mathscr{B}}}{N})(|z_2-\lambda_i|^2 +\frac{\mathds{1}_{\la_i\not\in\mathscr{B}}}{N})-\frac{a}{N^2}\mathds{1}_{\la_i\not\in\mathscr{B}}\right)e^{-N(|z_1|^2-1)-N(|z_2|^2-1)}\right)=\OO(N^{-A/2+C}),
\label{eqn:elem1III}
\end{align}
and (\ref{eqn:elem2II}), for some $C$ which does not depend on $A$.
Equation  (\ref{eqn:elem1III}) holds as the left hand side is bounded by
\begin{multline}
\E_{N-2}\left(\prod_{i=3}^N\left((|z_1-\lambda_i|^2 +\frac{1}{N})(|z_2-\lambda_i|^2 +\frac{1}{N})\right)(1-\prod_3^N\mathds{1}_{\la_i\not\in\mathscr{B}})e^{-N(|z_1|^2-1)-N(|z_2|^2-1)}\right)\\
=\OO(N^{-A/2})\E_{N-2}\left(\prod_{i=3}^N\left((|z_1-\lambda_i|^2 +\frac{1}{N})^2(|z_2-\lambda_i|^2 +\frac{1}{N})\right)^2e^{-2N(|z_1|^2-1)-2N(|z_2|^2-1)}\right)^{1/2},\label{eqn:boundIII}
\end{multline}
where we used the Cauchy-Schwarz inequality and 
$$
\left|(|z_1-\lambda_i|^2 +\frac{1}{N})(|z_2-\lambda_i|^2 +\frac{1}{N})-\frac{a}{N^2}\right|\leq (|z_1-\lambda_i|^2 +\frac{1}{N})(|z_2-\lambda_i|^2 +\frac{1}{N}).
$$
Indeed, after rescaling and shifting and introducing $z$ so that $|z|^2=\delta$, the above bound follows from 
$$
2(1+|\la|^2)(1+|z-\la|^2)-a= ((1+|\la|^2)(1+|z-\la|^2)-\delta)+ (1+|\la|^2)(1+|z-\la|^2)+(\delta-a)\geq 1+\delta-a=1-\frac{1}{a}\geq 0.
$$
We used $(1+|\la|^2)(1+|z-\la|^2)\geq \delta$, as proved by a simple optimization.
The last expectation in (\ref{eqn:boundIII}) has size order at most $N^{C}$ from Lemma \ref{lem:generalmoments}, which concludes the proof of
our initial short-range cutoff by choosing $A$ large enough.\\

\noindent{\it Second step:  the long-range contribution concentrates.} 
We smoothly separate the short-range from a long-range contributions in (\ref{eqn:testfctII}):\
\begin{align*}
h_{z_1,z_2}(\lambda)&=e^{h_{z_1,z_2}^{s}(\lambda)+h_{z_1,z_2}^{\ell}(\lambda)},\\
h_{z_1,z_2}^{s}(\lambda)&=  (\log h_{z_1,z_2}(\lambda)) \chi_{z,\delta} (\la),\\
h_{z_1,z_2}^{\ell}(\la)&=  (\log h_{z_1,z_2}(\lambda)) (1 - \chi_{z,\delta} (\la)),
\end{align*}
and, as earlier in this article, we denote $z=(z_1+z_2)/2$, recall $|z_1-z_2|<N^{-\frac{1}{2}+\sigma}$, $\chi_{z,\delta} (\la)= \chi \Big(N^{\frac{1}{2}-\delta}|z-\la| \Big)$, and choose  $\delta \in (\sigma, \kappa)$. Note that our notation $\delta$ in this step of the proof is unrelated to $\delta= N | \lambda_{1} - \lambda_{2} |^2$ in the previous step. 
We define $f^{\ell}_{z_1,z_2}(\la)= (\frac{1}{N |z_1 - \la|^2}+\frac{1}{N |z_1 - \la|^2})(1 - \chi_{z,\delta} (\lambda))$ and $\bar f_{z_1,z_2}=(N-2)\frac{1}{ \pi} \int_{\mathbb{D}} f^{\ell}_{z_1,z_2}(\la) \dd m(\lambda)$. Note that
$$
\left|\sum_{i=3}^Nh^\ell_{z_1,z_2}(\lambda_i)-\bar f_{z_1,z_2}\right|\leq \left|\sum_{i=3}^Nf^\ell_{z_1,z_2}(\lambda_i)-\bar f_{z_1,z_2}\right|+\left(\sum_{i=3}^N\frac{N^{2\sigma}}{N^2|z_1-\lambda_i|^4}(1 - \chi_{z,\delta} (\lambda_i))+
\sum_{i=3}^N\frac{N^{2\sigma}}{N^2|z_2-\lambda_i|^4}(1 - \chi_{z,\delta} (\lambda_i))
\right),
$$
where we used $a\leq N^{2\sigma}$.

With the exact same reasoning as from (\ref{eqn:concentration}) to (\ref{eqn:est1}) (with $\nu_z$ replaced by $\nu_{z_1,z_2}$), we obtain that $\sum_{i=3}^Nf^\ell_{z_1,z_2}(\lambda_i)-\bar f_{z_1,z_2}$ is exponentially concentrated
on scale $N^{-2\delta+\e}$. Moreover, similarly, to (\ref{eqn:est2}), we now have
$$
\E_{\nu_{z_1,z_2}}\left(e^{\alpha\sum_{i=2}^N\frac{N^{2\sigma}}{N^2|z_1-\lambda_i|^4}(1-\chi_{z,\delta}(\lambda_i))}\right)\leq e^{C\alpha^2 N^{4\sigma-8\delta+\e}+C\alpha N^{2\sigma-2\delta+\e}}.
$$
With $\alpha=c N^{2\delta}$, we therefore obtain
$$
\mathbb{P}_{\nu_{z_1,z_2}}(A)<e^{-c N^{\e}},\ \mbox{where}\  A=\left\{
\left|
\sum_{i=2}^Nh^\ell_{z_1,z_2}(\lambda_i)-\bar f_{z_1,z_2}
\right|>N^{2\sigma-2\delta+\e}
\right\}.
$$
This yields, for some $p=1+c(\kappa)$, $c(\kappa)>0$ and some $q,r>1$,
\begin{equation}\label{eqn:concIII}
\mathbb{E}_{\nu_{z_1,z_2}}\left(e^{\sum_i(h^s_{z_1,z_2}(\lambda_i)+h^\ell_{z_1,z_2}(\lambda_i))}\mathds{1}_{A}\right)
\leq 
\mathbb{E}_{\nu_{z_1,z_2}}\left(e^{p\sum_ih^s_{z_1,z_2}(\lambda_i)}\right)^{1/p}
\mathbb{E}_{\nu_{z_1,z_2}}\left(e^{q\sum_i h^\ell_{z_1,z_2}(\lambda_i)}\right)^{1/q}
\mathbb{P}(A)^{1/r}\leq e^{-c N^\e}.
\end{equation}
Here we used that the third term has size order $e^{-c N^{\e}}$, the second one is of order  $e^{q\bar f_{z_1,z_2}}=\OO(N^{C})$, and so is the first one from Lemma \ref{lem:momentII}.
Moreover,
\begin{multline*}
\mathbb{E}_{\nu_{z_1,z_2}}\left(e^{\sum_i(h^s_{z_1,z_2}(\lambda_i)+h^\ell_{z_1,z_2}(\lambda_i))}\mathds{1}_{A^c}\right)
=
(1+\OO(N^{2\sigma-2\delta+\e}))\mathbb{E}_{\nu_{z_1,z_2}}\left(e^{\sum_ih^s_{z_1,z_2}(\lambda_i)+\bar f_{z_1,z_2}}\right)\\-
(1+\OO(N^{2\sigma-2\delta+\e}))\mathbb{E}_{\nu_{z_1,z_2}}\left(e^{\sum_ih^s_{z_1,z_2}(\lambda_i)+\bar f_{z_1,z_2}}\mathds{1}_{A^c}\right),
\end{multline*}
Following similar arguments as (\ref{eqn:concII}), (\ref{eqn:useless}), still relying on Lemma \ref{lem:momentII}, we finally obtain 
$$
\mathbb{E}_{\nu_{z_1,z_2}}\left(\prod_{i=3}^Nh_{z_1,z_2}(\lambda_i)\right)
=
(1+\OO(N^{2\sigma-2\delta+\e}))e^{\bar f_{z_1,z_2}}\ \mathbb{E}_{\nu_{z_1,z_2}}\left(e^{\sum_ih^s_{z_1,z_2}(\lambda_i)}\right)
.
$$
%
%
%

\noindent{\it Third step: the local part is invariant.} For our test function $h^s_{z_1,z_2}$, the reader can easily check  the conditions of Lemma \ref{lem:invariance2II}:
the only new ingredient is
$$
\int_{\mathscr{B}^c} \frac{\rd m(\la)}{|z_1-\la|^2|z_2-\la|^2}\leq \frac{\log N}{|z_1-z_2|^2},
$$
so that in this setting, the existence of $r>2$, $d<\kappa$ such that
$
(N\|\nu\|_1)^r\leq N^{d}
$
means that there exists $\e>0$ such that
$
\left(\frac{\log N}{N|z_1-z_2|^2}\right)^{2+\e}\leq N^{2\kappa-\e},
$
i.e.
$
|z_1-z_2|\geq N^{-\frac{1}{2}-\frac{\kappa}{2}+\e},
$
as we assumed by hypothesis.
This gives
$$
\mathbb{E}_{\nu_{z_1,z_2}}\left(e^{\sum_ih_{z_1,z_2}^s(\lambda_i)}\right)=\mathbb{E}_{\nu_{0,z_2-z_1}}\left(e^{\sum_ih_{0,z_2-z_1}^s(\lambda_i)}\right)+\OO\left(e^{-c N^{2\kappa}}\right).
$$
This yields
$$
\mathbb{E}_{\nu_{z_1,z_2}}\left(\prod_{i=3}^Nh_{z_1,z_2}(\lambda_i)\right)
=
(1+\OO(N^{2\sigma-2\delta+\e}))e^{\bar f_{z_1,z_2}-\bar f_{0,z_2-z_1}}\ \mathbb{E}_{\nu_{0,z_2-z_1}}\left(\prod_{i=3}^Nh_{0,z_2-z_1}(\lambda_i)\right).$$
From Lemma \ref{integral1}, $e^{\bar f_{z_1,z_2}-\bar f_{0,z_2-z_1}}=(1-|z_1|^2)(1-|z_2|^2)$. With Proposition \ref{prop:at00}, this concludes the proof.
\end{proof}

\begin{lemma}\label{lem:momentII}
For any $0<\sigma<d<\kappa<1/2$, there exists  $p>1$,  $C>0$ such that for any $|z_1|<1-N^{-1/2+\kappa},|z_1-z_2|<N^{-\frac{1}{2}+\sigma}$, $z=(z_1+z_2)/2$, we have
$$
\mathbb{E}_{\nu_{z_1,z_2}}\left(\prod_{k=3}^N\left(\left(1+\frac{1}{N|\lambda_k-z_1|^2}\right)
\left(1+\frac{1}{N|\lambda_k-z_2|^2}\right)\right)^{p\chi_{z,\delta}(\lambda_k)}
\right)\leq N^C.
$$
\end{lemma}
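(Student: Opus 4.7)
The plan follows the same two-step template as the proof of Lemma \ref{lem:moment}: first reduce to the case $z_1 = 0$ via the translation-invariance Lemma \ref{lem:invariance2II}, then bound directly in that base case using the explicit structure of the conditioned density $\nu_{0,w}$.

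For the reduction, I would apply Lemma \ref{lem:invariance2II} to the log of the integrand appearing in the statement, namely
$$
f(\lambda) = p\,\chi_{z,\delta}(\lambda)\,\log\!\left(\!\left(1+\tfrac{1}{N|\lambda-z_1|^2}\right)\!\left(1+\tfrac{1}{N|\lambda-z_2|^2}\right)\!\right),
$$
which is supported in the ball of radius $\OO(N^{-1/2+\delta})$ around $z$. For $p$ close enough to $1$, the associated signed measure $\nu = e^f - 1$ satisfies $N\|\nu\|_1 = \OO(N^{2\delta+\varepsilon})$ (the integrable singularities at $z_1,z_2$ contribute only bounded terms); since $\delta < \kappa$, the smallness hypothesis of Lemma \ref{lem:invariance2II} is met, yielding
$$
\E_{\nu_{z_1,z_2}}\!\Bigl[\prod_{k\geq 3} G(\lambda_k)^{p\chi_{z,\delta}(\lambda_k)}\Bigr] = \E_{\nu_{0,w}}\!\Bigl[\prod_{k\geq 3} G_0(\lambda_k)^{p\chi_{w/2,\delta}(\lambda_k)}\Bigr]\bigl(1+\oo(1)\bigr),
$$
with $w = z_2 - z_1$ and $G_0(\lambda) = (1+(N|\lambda|^2)^{-1})(1+(N|\lambda-w|^2)^{-1})$. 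This reduces matters to proving the bound for the case $z_1 = 0$, $|w|\leq N^{-1/2+\sigma}$.

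In the base case, the crucial observation is that the density of $\nu_{0,w}$ on $(\lambda_3,\ldots,\lambda_N)$ carries the factors $\prod_{k\geq 3}|\lambda_k|^2|\lambda_k-w|^2$, which exactly cancel the singularities of the integrand. Indeed, the pointwise identity
$$
G_0(\lambda)^{p}\,|\lambda|^2|\lambda-w|^2 = (|\lambda|^2+N^{-1})^{p}(|\lambda-w|^2+N^{-1})^{p}\,|\lambda|^{2-2p}|\lambda-w|^{2-2p}
$$
splits the integrand into a bounded contribution (of size $\OO(N^{-2p+4p\delta})$ on the support of the cutoff) and a residual $|\lambda|^{-2(p-1)}|\lambda-w|^{-2(p-1)}$ which is locally integrable in $\mathbb{R}^2$ as long as $p < 2$. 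A H\"older split then separates these two contributions: the bounded part is controlled by Lemma \ref{lem:generalmoments}, while the singular factor must be estimated under the determinantal measure $\nu_{0,w}$.

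The main obstacle is this last estimate on the singular residual, uniformly as $w \to 0$ (where the two conditioning points collide and eigenvalue repulsion near the origin is enhanced). My preferred strategy is to mimic the explicit computation of Corollary \ref{meandiag2}: at the endpoint $p=1$, the combination $G_0 \cdot |\lambda|^2|\lambda-w|^2$ absorbs the conditioning weights, and the resulting conditional expectation reduces to a tridiagonal determinant of the same type as in the proof of Corollary \ref{meandiag2}, whose entries admit closed-form asymptotics of polynomial size in $N$. Interpolating between this $p=1$ formula and Lemma \ref{lem:generalmoments} via H\"older then yields the $N^C$ bound for some $p = 1+c(\kappa,\sigma,\delta)$ with $c$ small enough. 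An alternative route is a Fredholm expansion using the determinantal kernel of $\nu_{0,w}$, which inherits uniform boundedness from the Ginibre kernel once the $|\lambda|^2|\lambda-w|^2$ weights are absorbed into the modified kernel.
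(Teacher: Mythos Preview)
Your reduction step fails at the outset: the function $f$ you define is singular at $z_1$ and $z_2$, and $e^f - 1$ behaves like $(N|\lambda - z_j|^2)^{-p}$ there, which is \emph{not} locally integrable in the plane for any $p\geq 1$. Hence your claim ``$N\|\nu\|_1 = \OO(N^{2\delta+\varepsilon})$'' is false, and Lemma~\ref{lem:invariance2II} cannot be invoked (that lemma also explicitly requires $f=0$ on tiny balls around both conditioning points, which your $f$ does not satisfy). You do recognize the key cancellation---that the conditioning weights $|\lambda|^2|\lambda-w|^2$ absorb one full power of each singular factor---but you place it \emph{after} the translation, whereas it is precisely what is needed to make any translation argument work in the first place.

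The paper's proof reverses this order. It first unwinds the conditioning, writing the expectation under $\nu_{z_1,z_2}$ as a ratio of $\E_{N-2}$-expectations; the numerator then contains
$(1+\tfrac{1}{N|\lambda_k-z_j|^2})^{p}\cdot|\lambda_k-z_j|^2 = (1+\tfrac{1}{N|\lambda_k-z_j|^2})^{p-1}(|\lambda_k-z_j|^2+\tfrac{1}{N})$,
so the singularity drops from exponent $p$ to $p-1$, which is close to $0$. After Cauchy--Schwarz and Lemma~\ref{lem:generalmoments} to strip off the bounded $(|\lambda_k-z_j|^2+N^{-1})$ factors, one is reduced to bounding
$\E_{N-2}\bigl[\prod_k (1+\tfrac{1}{N|\lambda_k-z_1|^2})^{4(p-1)\chi_{z,\delta}(\lambda_k)}\bigr]$,
and now the \emph{simpler} translation Lemma~\ref{lem:invariance1} (for the unconditioned measure) applies, since with exponent $4(p-1)$ small one has $N\|\nu\|_1=\OO(1)$. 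The base case at $z_1=0$ is then immediate from Kostlan's theorem (Corollary~\ref{Kost1}), with no need for tridiagonal determinants or Fredholm expansions.
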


\begin{proof}
The above left hand side is at most
$$
\frac{\mathbb{E}\left(\prod_{k=3}^N\left(\left(1+\frac{1}{N|\lambda_k-z_1|^2}\right)
\left(1+\frac{1}{N|\lambda_k-z_2|^2}\right)\right)^{(p-1)\chi_{z,\delta}(\lambda_k)}
(|z_1-\la_k|^2+\frac{1}{N})(|z_2-\la_k|^2+\frac{1}{N})e^{-N(|z_1|^2-1)-N(|z_2|^2-1)}
\right)}{
\E\left(\prod_{k=3}^N|z_1-\la_k|^2|z_2-\la_k|^2e^{-N(|z_1|^2-1)-N(|z_2|^2-1)}\right)
}
$$
With Lemma \ref{lem:generalmoments} and the Cauchy-Schwarz inequality, we therefore just need to prove that
$$
\mathbb{E}_{N-2}\left(\prod_{k=3}^N\left(\left(1+\frac{1}{N|\lambda_k-z_1|^2}\right)^{4(p-1)\chi_{z,\delta}(\lambda_k)}\right)\right)\leq N^C.
$$
We can apply Lemma \ref{lem:invariance1} (for $p$ small enough we have $N\|\nu\|_1=\OO(1)$) to compare it to the case $z_1=0$, which is easily shown to be $\OO(N^C)$ by Corollary \ref{kostbis}.
\end{proof}


%
%
%
%
%
%

\subsection{Proof of Corollary \ref{cor:pseudo}.}\  Following a notation  from \cite{Tre2005}, let $\kappa(\la_j)=\mathscr{O}_{jj}^{1/2}$ be the condition number associated to $\lambda_j$.
As the spectrum of a $G$ is almost surely simple, from \cite[Equation (52.11)]{Tre2005} we know that
$$
\|(z-G)^{-1}\|=\frac{\kappa(\la_j)}{|z-\la_j|}+\OO\left(\sum_{k\neq j}\frac{\kappa(\la_k)}{|z-\la_k|}\right)
$$
as $z\to\lambda_j$. Together with $\sigma_\e(G)=\left\{z: \|z-G\|^{-1}>\e^{-1}\right\}$, this gives the following almost sure asymptotics:
$$
m(\sigma_\e(G)\cap \mathscr{B}_N)\underset{\e\to 0}{\sim}\sum_{\la_j\in\mathscr{B}_N}\pi(\kappa(\la_j)\e)^2.
$$
Denoting $c_N=N^2\int_{\mathscr{B}_N} (1-|z|^2)\frac{\rd m(z)}{\pi}$, we therefore have
$$
\lim_{\e\to0}\mathbb{P}\left(1-c<\frac{m(\sigma_{\e}(G)\cap \mathscr{B}_N)}{\pi\e^2 c_N}<1+c\right)
=
\mathbb{P}\left(\left|\frac{\sum_{\la_j\in\mathscr{B}_n}\mathscr{O}_{jj}}{c_N}-1\right|<c\right).
$$
From Corollary \ref{cor:extremes},
$$
\mathbb{P}(\exists i: \la_i\in \mathscr{B}_N: |\mathscr{O}_{ii}|>N^{10})=\oo(1),
$$
hence we only need to prove
\begin{equation}\label{eqn:suff}
\mathbb{P}\left(\left|\frac{\sum_{\la_j\in\mathscr{B}_n}\mathscr{O}_{jj}}{c_N}-1\right|<c, \mathscr{O}_{ii}\leq N^{10}\ {\rm for\ all}\ \la_i\in \mathscr{B}_N\right)\to 1.
\end{equation}
We proceed by bounding the second moment
\begin{align}
\E\left(\left|\sum_{\la_j\in\mathscr{B}_n}\mathscr{O}_{jj}-c_N\right|^2\prod_{i=1}^N\mathds{1}_{\mathscr{O}_{ii}\leq N^{10}}\right)&\leq 
\E\left(\sum_{\la_j\in\mathscr{B}_n}\mathscr{O}_{jj}^2\mathds{1}_{\mathscr{O}_{jj}\leq N^{10}}\right)\label{eqn:2nd1}\\
&+\E\left(\sum_{\la_i,\la_j\in\mathscr{B}_n,i\neq j}\mathscr{O}_{ii}\mathscr{O}_{jj}\mathds{1}_{\mathscr{O}_{ii},\mathscr{O}_{jj}\leq N^{10}}\right)\label{eqn:2nd2}\\
&+c_N^2-2c_N\E\left(\sum_{\la_i\in\mathscr{B}_n}\mathscr{O}_{ii}\right)+2c_N\E\left(\sum_{\la_i\in\mathscr{B}_n}\mathscr{O}_{ii}\mathds{1}_{\exists \la_j\in\mathscr{B}_N:\mathscr{O}_{jj}\geq N^{10}}\right)
\label{eqn:2nd3}
\end{align}
To bound the first term, note that
$$
\mathbb{E}\left(\mathscr{O}_{jj}^2\mathds{1}_{\la_j\in\mathscr{B}_N,\mathscr{O}_{jj}\leq N^{10}}\right)
\leq
C\int_{\mathscr{B}_N}\int_{0}^{N^{10}} x\,\mathbb{P}(\mathscr{O}_{jj}\geq x\mid\la_j=z)\rd x\rd m(z).
$$
Following the same reasoning as (\ref{eqn:boundby0}), we have
$$
\mathbb{P}\left(\mathscr{O}_{jj}\geq x\mid\la_j=z\right)\leq \mathbb{P}(\mathscr{O}_{jj}\geq x N^{-\e}(1-|z|^2)^{-1}\mid\la_j=0)+\OO(e^{-N^{\e}})
\leq
\frac{C N^{2+2\e}(1-|z|^2)^2}{x^2}+\OO(e^{-N^\e}),
$$
where we used Proposition \ref{prop:at0}. Denoting $a$ the center of $\mathscr{B}_N$,
we therefore bounded the right hand side of (\ref{eqn:2nd1}) by $N^{{3+3\e}} (1-|a|^2)^2 m(\mathscr{B}_N)\ll c_N^2$ because $m(\mathscr{B}_N)\geq N^{-1+2a}$ and $\e$ is arbitrary.

To bound (\ref{eqn:2nd2}), we first consider close eigenvalues and bound $\mathscr{O}_{ii}\mathscr{O}_{jj}\leq \frac{1}{2}(\mathscr{O}_{ii}^2+\mathscr{O}_{jj}^2)$:
\begin{multline*}
\E\left(\sum_{\la_i,\la_j\in\mathscr{B}_n,i\neq j}\mathscr{O}_{ii}\mathscr{O}_{jj}\mathds{1}_{\mathscr{O}_{ii},\mathscr{O}_{jj}\leq N^{10},|\la_i-\la_j|<N^{-\frac{1}{2}+\e}}\right)
\leq
C N 
\E\left(\mathscr{O}_{11}^2\mathds{1}_{\mathscr{O}_{11}\leq N^{10},\la_1\in\mathscr{B}_N}|\{j:|\la_j-\la_1|\leq N^{-\frac{1}{2}+\e}\}|\right)\\
\leq C N^{1+2\e}\E\left(\mathscr{O}_{11}^2\mathds{1}_{\mathscr{O}_{11}\leq N^{10},\la_1\in\mathscr{B}_N}\right)m(\mathscr{B}_N)
\leq C N^{3+4\e}(1-|a|^2)^2\ll c_N^2.
\end{multline*}
In the second estimate, we used the local law for Ginibre matrices: from \cite[Theorem 4.1]{BouYauYin2014II} the above number of close eigenvalues is at most $C N^{2\e}$ for some large $C$, with probability at least $1-N^{-D}$ for arbitrarily large $D$. 
The third estimate was obtained in the same way we bounded (\ref{eqn:2nd1}).

For eigenvalues at mesoscopic distance in $[N^{-\frac{1}{2}+\e},N^{-\frac{1}{2}+\kappa-a}]$, the contribution of (\ref{eqn:2nd2}) is obtained thanks to (\ref{eqn:decorrel}):
\begin{align*}
&\E\left(\sum_{\la_i,\la_j\in\mathscr{B}_n,N^{-\frac{1}{2}+\e}<|\la_i-\la_j|}\mathscr{O}_{ii}\mathscr{O}_{jj}\right)
=N(N-1)\E\left(\mathscr{O}_{11}\mathscr{O}_{22}\mathds{1}_{\la_1,\la_2\in\mathscr{B}_N,N^{-\frac{1}{2}+\e}<|\la_1-\la_2|}\right)\\
=&
N(N-1)\int_{\mathscr{B}_N^2\cap\{|z_1-z_2|>N^{-\frac{1}{2}+\e}\}}\E\left(\mathscr{O}_{11}\mathscr{O}_{22}\mid\la_1=z_1\la_2=z_2\right)\frac{\rd m(z_1)}{\pi}\frac{\rd m(z_2)}{\pi}
+
\OO(e^{-c N^{\e}})\\
=&
N(N-1)\int_{\mathscr{B}_N^2\cap\{|z_1-z_2|>N^{-\frac{1}{2}+\e}\}}N^2(1-|z_1|^2)(1-|z_2|^2)(1+\OO(N^{-\e}))\frac{\rd m(z_1)}{\pi}\frac{\rd m(z_2)}{\pi}
+
\OO(e^{-c N^{\e}})\\
=&c_N^2(1+\OO(N^{-\e})).
\end{align*}
Finally, the line (\ref{eqn:2nd3}) is easily shown to be of order $-c_N^2(1+N^{-\varepsilon})$ thanks to (\ref{eqn:O11}) and (\ref{eqn:tail}). We conclude that the left hand side of (\ref{eqn:2nd1}) is at most $N^{-\e}c_N^2$, which concludes the proof of (\ref{eqn:suff}) by Markov's inequality.

\section{Translation invariance for conditioned measures}\label{sec:translation}

Recall that the Ginibre kernel is
$$
K_N(z,w)=\frac{N}{\pi}e^{-N\left(\frac{|z|^2}{2}+\frac{|w|^2}{2}\right)}\sum_{k=0}^{N-1}\frac{(Nz\bar w)^k}{k!}.
$$
We also denote its bulk limit as $k_N(z,w)=\frac{N}{\pi}e^{-N(\frac{|z|^2}{2}+\frac{|w|^2}{2}-z\bar w)}$.

\begin{lemma}\label{lem:kernelapprox}
Let $\kappa>0$. There exists $c=c(\kappa)>0$ such that
for any $|z|,|w|\leq 1-N^{-\frac{1}{2}+\kappa}$, we have
$$
K_N(z,w)=k_N(z,w)+\OO(e^{-cN^{2\kappa}})
$$
\end{lemma}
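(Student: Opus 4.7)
The strategy is immediate: $K_N$ and $k_N$ differ by exactly the tail of the truncated exponential series, so
$$K_N(z,w) - k_N(z,w) = -\frac{N}{\pi}\, e^{-N(|z|^2 + |w|^2)/2} \sum_{k \geq N} \frac{(N z \bar w)^k}{k!},$$
and everything reduces to bounding the right-hand side uniformly on the prescribed disk. Setting $\rho := |z||w| \leq (1 - N^{-1/2+\kappa})^2$, I would first apply the standard geometric-series bound
$$\Bigl|\sum_{k\geq N} \frac{(Nz\bar w)^k}{k!}\Bigr| \leq \sum_{k \geq N} \frac{(N\rho)^k}{k!} \leq \frac{(N\rho)^N}{N!} \cdot \frac{N+1}{N+1 - N\rho},$$
valid because $N\rho < N+1$. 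Since $(1-N^{-1/2+\kappa})^2 \leq 1 - N^{-1/2+\kappa}$ for large $N$, one has $1 - \rho \geq N^{-1/2+\kappa}$ and hence $N+1-N\rho \geq N^{1/2+\kappa}$; the geometric factor contributes only a polynomial loss in $N$ and is harmless.

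Next, Stirling's lower bound $N! \geq \sqrt{2\pi N}(N/e)^N$ yields $(N\rho)^N/N! \leq (e\rho)^N/\sqrt{2\pi N}$, and combining with the Gaussian prefactor via the AM--GM inequality $(|z|^2+|w|^2)/2 \geq |z||w| = \rho$ turns the leading contribution into
$$e^{-N(|z|^2+|w|^2)/2} \cdot (e\rho)^N \leq \exp\bigl(N f(\rho)\bigr), \qquad f(\rho) := 1 + \log \rho - \rho.$$
The function $f$ is strictly concave on $(0,\infty)$ with $f(1) = f'(1) = 0$ and $f''(1) = -1$, so a second-order Taylor expansion with Lagrange remainder yields $f(\rho) \leq -(1-\rho)^2/2$ on $[1/2, 1]$. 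Inserting $1-\rho \geq N^{-1/2+\kappa}$ gives $Nf(\rho) \leq -N^{2\kappa}/2$, which is the desired rate. In the easy regime $\rho \leq 1/2$ the function $f$ is bounded above by a negative constant, so the corresponding contribution decays like $e^{-cN}$, which is much stronger.

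Assembling these estimates with the polynomial losses from $N/\pi$, the Stirling constant, and the geometric factor gives the claimed $\OO(e^{-cN^{2\kappa}})$ bound for any $c < 1/2$. The argument is essentially a routine saddle-point tail estimate; the only point requiring care is the near-boundary regime $\rho \to 1$, where the quadratic vanishing of $f$ at its maximum produces precisely the rate $N \cdot (N^{-1/2+\kappa})^2 = N^{2\kappa}$ rather than a pure exponential in $N$, matching the critical distance-to-edge condition $|z|, |w| \leq 1 - N^{-1/2+\kappa}$.
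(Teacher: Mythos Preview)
Your proof is correct and follows the standard route: the paper itself does not give a proof but simply cites \cite[Lemma 4.2]{BouYauYin2014II} as a straightforward adaptation, and your argument---tail of the exponential series controlled via the ratio test, Stirling, AM--GM to reduce to the one-variable function $f(\rho)=1+\log\rho-\rho$, then Taylor expansion at its maximum---is exactly this standard estimate written out in full.
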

\begin{proof}
This is a straightforward adaptation of the proof of \cite[Lemma 4.2]{BouYauYin2014II}.
\end{proof}

We denote $\mathscr{B}_{a,\delta}$ the ball with center $a$ and radius $N^{-\frac{1}{2}+\delta}$.
\begin{lemma}\label{lem:invariance1}
Let $0<\delta<\kappa<1/2$ be fixed constants. Consider any $\mathbb{C}$-valued measurable function $f$ supported on $\mathscr{B}_{0,\delta}$, $|a|\leq 1-N^{-\frac{1}{2}+\kappa}$, and $\nu(z)=e^{f(z)}-1$. 
For any $r>2$ there exists  $c, C>0$ depending only on $\kappa,\delta,r$ such that 
$$
\mathbb{E}\left(e^{\sum_{i=1}^N f(\la_i-a)}\right)=
\mathbb{E}\left(e^{\sum_{i=1}^N f(\la_i)}\right)+\OO\left(e^{-cN^{2\kappa}}
e^{C(N\|\nu\|_1)^r}\right).
$$
\end{lemma}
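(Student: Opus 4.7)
My approach exploits the determinantal structure of the Ginibre ensemble together with the approximate translation invariance of its correlation kernel. Setting $\nu = e^f - 1$ (which is supported on $\mathscr{B}_{0,\delta}$), the Fredholm expansion of the multiplicative functional reads
\begin{equation*}
\E\bigl(e^{\sum_i f(\la_i - a)}\bigr) = \sum_{n \geq 0} \frac{1}{n!} \int_{\mathbb{C}^n} \det\bigl(K_N(z_i, z_j)\bigr)_{i,j=1}^n \prod_{i=1}^n \nu(z_i - a)\, \dd m^n(z),
\end{equation*}
with the analogous expansion for the unshifted expectation. After the change of variables $u_i = z_i - a$ in the first series, both integrals are over $\mathscr{B}_{0,\delta}^n$ weighted by $\prod_i \nu(u_i)$, so the term-by-term comparison reduces to controlling $\det(K_N(u_i + a, u_j + a)) - \det(K_N(u_i, u_j))$.

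A direct computation yields the gauge identity
\begin{equation*}
k_N(z + a, w + a) = k_N(z, w)\, e^{\ii N \im((z - w)\bar a)} = g(z)\, \overline{g(w)}\, k_N(z, w), \qquad g(z) := e^{\ii N \im(z \bar a)}.
\end{equation*}
Because $|g| \equiv 1$, the factors $g(u_i)$ and $\overline{g(u_j)}$ pull out of rows and columns and $\det(k_N(u_i + a, u_j + a)) = \det(k_N(u_i, u_j))$ exactly. The assumptions $\delta < \kappa$ and $|a| \le 1 - N^{-1/2+\kappa}$ place both $u_i$ and $u_i + a$ in the bulk region $\{|z| \le 1 - \tfrac{1}{2}N^{-1/2+\kappa}\}$ whenever $|u_i| \le N^{-1/2+\delta}$, so Lemma~\ref{lem:kernelapprox} gives $K_N - k_N = \OO(e^{-cN^{2\kappa}})$ uniformly at both $(u_i, u_j)$ and $(u_i + a, u_j + a)$ (the mild shrinking of the bulk radius by a factor $1/2$ costs only a harmless constant in $c$). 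Combined with the gauge identity, this reduces the comparison of the two $K_N$-determinants to the error of a single bulk-kernel substitution.

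To convert pointwise closeness into determinantal closeness I would use the telescoping identity $\det A - \det B = \sum_k \det(B_1, \dots, A_k - B_k, \dots, A_n)$ together with Hadamard's inequality and the uniform bound $|K_N|, |k_N| \le N/\pi$, yielding an $n$-th term determinantal difference of order $n^{n/2+1} (N/\pi)^{n-1} e^{-cN^{2\kappa}}$. Integrating against $\prod |\nu(u_i)|$ and dividing by $n!$, Stirling converts the total error into $e^{-cN^{2\kappa}} \sum_n n^{3/2} (C e N\|\nu\|_1 / \sqrt n)^n$. The summand is maximized at $n \sim (N\|\nu\|_1)^2$, giving a net bound of the form $e^{-cN^{2\kappa}}\, e^{C(N\|\nu\|_1)^2}$, so any $r > 2$ is admissible. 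The main obstacle I anticipate is this last summation step---carefully extracting uniform constants from Hadamard's bound so that $1/n!$ beats $n^{n/2}$ uniformly in $\|\nu\|_1$---while the Fredholm expansion and the gauge computation are essentially algebraic.
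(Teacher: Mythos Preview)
Your approach is correct and essentially the same as the paper's. Both proofs write the multiplicative functional as a Fredholm expansion, use that the bulk kernel $k_N$ defines a translation-invariant point process (the paper states this directly; you make it explicit via the gauge identity $k_N(z+a,w+a)=g(z)\overline{g(w)}\,k_N(z,w)$), apply Lemma~\ref{lem:kernelapprox} to replace $K_N$ by $k_N$, and control the resulting determinantal error by a Hadamard-type bound; the paper packages this last step as a citation to \cite[Lemma 3.4.5]{AndGuiZei2010} and sums via H\"older (which forces $r>2$), while your direct maximization of $\sum_n n^{O(1)}(CN\|\nu\|_1/\sqrt n)^n$ in fact yields the slightly sharper exponent $r=2$, from which any $r>2$ follows trivially.
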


\begin{proof} 
Let $K_N^{(a)}(z,w)=K_N(z-a,w-a)$. We define $\|K\|=\sup_{z,w\in{\rm supp}(\nu)}|K(z,w)|$. We successively compare linear statistics for $K_N^{(a)},k_N^{(a)},k_N $ and $K_N$. First note that 
$k_N$ is the kernel of a translation invariant point process, so that comparison between $k_N^{(a)}$ and $k_N$ is trivial. For the other steps, we use 
 \cite[Lemma 3.4.5]{AndGuiZei2010} and obtain
\begin{multline}\label{eqn:initial}
\left|\mathbb{E}\left(e^{\sum_{i=1}^N f(\la_i-a)}\right)-\mathbb{E}\left(e^{\sum_{i=1}^N f(\la_i)}\right)\right|\\
\leq
\sum_{n=1}^\infty\frac{n^{1+\frac{n}{2}}}{n!}\|\nu\|_1^n
\left(
\max(\|K_N^{(a)}\|,\|k_N^{(a)}\|)^{n-1}\|K_N^{(a)}-k_N^{(a)}\|
+
\max(\|K_N\|,\|k_N\|)^{n-1}\|K_N-k_N\|
\right).
\end{multline}
Clearly, $\|K_N\|\leq \frac{N}{\pi}$ and we bound $\|K_N^{(a)}-K_N\|$ with Lemma \ref{lem:kernelapprox}. We conclude that for a universal large enough $C$, and $1/r+1/s=1$, we have
$$
\left|\mathbb{E}\left(e^{\sum_{i=1}^N f(\la_i-a)}\right)-\mathbb{E}\left(e^{\sum_{i=1}^N f(\la_i)}\right)\right|
\leq
e^{-cN^{2\kappa}}\sum_{n=1}^\infty\frac{n^{1+\frac{n}{2}}}{(n!)^{1/s}}\frac{1}{(n!)^{1/r}}\left(\frac{N}{\pi}\|\nu\|_1\right)^n
\leq
Ce^{-cN^{2\kappa}}
e^{C(N\|\nu\|_1)^r},
$$
where in the last inequality we used H{\"o}lder's inequality and $r>2$.
\end{proof}

\begin{lemma}\label{lem:invariance2}
Remember $\nu(z)=e^{f(z)}-1$ and $\mathscr{B}_{a,\delta}$ is the ball with center $a$ and radius $N^{-\frac{1}{2}+\delta}$. 
Let $0<\delta<\kappa<1/2$ be fixed constants. Consider
any $\mathbb{C}$-valued measurable function $f$ supported on $\mathscr{B}_{0,\delta}$,  and $|a|\leq 1-N^{-\frac{1}{2}+\kappa}$.
Assume also that either (i) or (ii) below holds:
\begin{enumerate}[(i)]
\item ${\rm Re}(f)=0$;
\item there exist $d>0$, $p>1$ and $r>2$  such that $rd<2\kappa$, $f=0$ on $|z|<e^{-N^{d}}$, and  ($f_+=\max({\rm Re} f,0)$)
\begin{align}
&(N\|\nu\|_1)^r\leq N^{d}\label{eqn:strange1},\\
&\log \|e^{\sum_{i=2}^Nf_+(\la_i-a)}\|_{{\rm L}^p} \leq N^d, \label{eqn:strange2} \\
& \log \|e^{\sum_{i=2}^Nf_+(\la_i)}\|_{{\rm L}^p} \leq N^d\label{eqn:strange3}.
\end{align}
where the $L^p$ norm is taken with respect to $\E_{N-1}$.
\end{enumerate}
Then for any $q<2\kappa$, uniformly in $f$ satisfying the above hypothese, we have
$$
\mathbb{E}\left(e^{\sum_{i=2}^N f(\la_i-a)}\mid \la_1=a\right)=
\mathbb{E} \left(e^{\sum_{i=2}^N f(\la_i)}\mid \la_1=0\right)\left(1+\OO\left(e^{-cN^{q}}\right)\right)+\OO\left(e^{-cN^{q}}\right).
$$
\end{lemma}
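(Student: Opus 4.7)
My plan is to carry out the comparison at the level of the determinantal kernels that govern the two conditioned measures, and then feed the resulting kernel estimate into the same expansion machinery used in Lemma \ref{lem:invariance1}. The conditioned measure $\nu_a$ obtained by fixing $\lambda_1 = a$ is the Palm measure of the Ginibre DPP, so the remaining $N-1$ points form a DPP with kernel
$$K_N^{(a)}(z,w) = K_N(z,w) - \frac{K_N(z,a)K_N(a,w)}{K_N(a,a)}.$$
Since $\sum_{i=2}^N f(\lambda_i - a)$ is a linear statistic of the shifted variables, the LHS of the lemma is an expectation under the shifted DPP with kernel $\widetilde K^{(a)}(z,w):=K_N^{(a)}(z+a,w+a)$, and the RHS is an expectation under the DPP with kernel $\widetilde K^{(0)}(z,w):=K_N^{(0)}(z,w)$. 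My goal is therefore to show that these two kernels, restricted to $\mathscr B_{0,\delta}\times\mathscr B_{0,\delta}$, agree up to a pure phase (gauge) together with an additive error of size $e^{-cN^{2\kappa}}$; gauge transformations $K\mapsto\phi(z)\overline{\phi(w)}K$ with $|\phi|\equiv1$ leave all correlation determinants invariant.

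The next step is the kernel computation. Using Lemma \ref{lem:kernelapprox}, on the relevant domain we may replace $K_N$ by the bulk limit $k_N(z,w)=\frac{N}{\pi}e^{-N|z-w|^2/2+\ii N\mathrm{Im}(z\bar w)}$ at the cost of $\OO(e^{-cN^{2\kappa}})$ per evaluation. A direct calculation of $k_N(z+a,w+a)$, $k_N(z+a,a)$, $k_N(a,w+a)$ and $k_N(a,a)=N/\pi$ gives the identity
$$k_N(z+a,w+a) - \frac{k_N(z+a,a)k_N(a,w+a)}{k_N(a,a)} = \phi(z)\overline{\phi(w)}\left[k_N(z,w)-\frac{N}{\pi}e^{-N(|z|^2+|w|^2)/2}\right],$$
with $\phi(z)=e^{\ii N\mathrm{Im}(z\bar a)}$. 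Since the bracketed quantity on the right is exactly the bulk-limit analog of $\widetilde K^{(0)}(z,w)$, this yields
$$\widetilde K^{(a)}(z,w)=\phi(z)\overline{\phi(w)}\,\widetilde K^{(0)}(z,w)+\OO\big(Ne^{-cN^{2\kappa}}\big)$$
uniformly on $\mathscr B_{0,\delta}\times\mathscr B_{0,\delta}$. After the harmless gauge transformation, we are comparing two DPPs with kernels differing by $\OO(Ne^{-cN^{2\kappa}})$.

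The comparison is then carried out by the identity of \cite[Lemma 3.4.5]{AndGuiZei2010} exactly as in the proof of Lemma \ref{lem:invariance1}: the difference of the two expectations is bounded by
$$\sum_{n\geq 1}\frac{n^{1+n/2}}{n!}\|\nu\|_1^n\max(\|\widetilde K^{(a)}\|,\|\widetilde K^{(0)}\|)^{n-1}\|\widetilde K^{(a),\mathrm{gauge}}-\widetilde K^{(0)}\|,$$
where $\nu=e^f-1$ and both kernels are of size $\OO(N)$ on $\mathrm{supp}(\nu)$. Under hypothesis (i), $\mathrm{Re}(f)=0$ gives the pointwise bound $|\nu|\leq 2$, so $N\|\nu\|_1\lesssim N^{2\delta}$, and a Hölder/Stirling argument with $r>2$ bounds the series by $\exp(C(N\|\nu\|_1)^r)$; choosing $r$ with $r\delta<\kappa$ makes this negligible compared to $e^{cN^{2\kappa}}$ and produces the claimed error $e^{-cN^q}$ for any $q<2\kappa$.

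The main obstacle will be hypothesis (ii), since there $f$ may be unbounded and $e^f$ may have heavy tails. The plan is to split $f=f_+ -f_-$ where only $f_+=\max(\mathrm{Re}\,f,0)$ causes difficulty, and to truncate the linear statistic at a high threshold $M=N^d$, using the $L^p$ bounds \eqref{eqn:strange2}, \eqref{eqn:strange3} to show that the contribution of the event $\{\sum_i f_+(\lambda_i-a)>M\}$ (and its unconditioned analog) is at most $e^{-N^d/s}$ by Markov's inequality with the conjugate exponent $s$; this reduces us to bounding bounded linear statistics, where the determinantal expansion applies as above. The assumption $rd<2\kappa$ and the condition $\{f=0\}$ on $\{|z|<e^{-N^d}\}$ are exactly what is needed so that $\|\nu\|_1$ remains of moderate polynomial size (thereby keeping the expansion in the geometric regime) and so that the truncation error is dominated by the kernel-comparison error $e^{-cN^{2\kappa}}$. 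Once both contributions are shown to be $o(e^{-cN^q})$ for every $q<2\kappa$, combining yields the stated estimate.
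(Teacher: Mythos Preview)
Your approach is correct and takes a genuinely different route from the paper's. The paper never works with the Palm kernel; instead it writes the conditioned expectation as a ratio over $\mathbb{E}_{N-1}$ weighted by $\prod_{i\ge 2}|\lambda_i-a|^2$, integrates $\lambda_1$ uniformly over a ball of radius $e^{-N^{\kappa_1}}$ around $a$ to pass back to an expectation under $\mathbb{P}_N$, and then uses the negative-association property of determinantal processes to peel off the resulting cutoff constraints, before finally invoking Lemma~\ref{lem:invariance1} (via Cauchy's theorem in an auxiliary parameter) on the unconditioned $N$-point measure. Your argument, by contrast, isolates translation invariance directly at the kernel level: the gauge identity you found is exact for $k_N$, and after Lemma~\ref{lem:kernelapprox} it produces an additive perturbation of the conditioned kernel that plugs straight into the Hadamard-type expansion of Lemma~\ref{lem:invariance1}, now applied to the two Palm DPPs rather than to the unconditioned ones. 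This is cleaner. One remark: your plan for case~(ii) overcomplicates things. The determinantal expansion already bounds $|\mathbb{E}_a-\mathbb{E}_0|$ by $e^{-cN^{2\kappa}}e^{C(N\|\nu\|_1)^r}$, and hypothesis~\eqref{eqn:strange1} with $rd<2\kappa$ alone makes this $O(e^{-cN^q})$ for every $q<2\kappa$; no truncation is needed, and indeed your route does not appear to use \eqref{eqn:strange2}, \eqref{eqn:strange3}, or the vanishing of $f$ near the origin at all --- those hypotheses are artifacts of the paper's tiny-ball and negative-association steps.
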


\begin{proof}
In this proof we first consider the most difficult case (ii), and we will finally mention the simple modifications required for (i). We start with
\begin{equation}\label{eqn:start}
\mathbb{E}_{N}\left(e^{\sum_{i=2}^N f(\la_i-a)}\mid \la_1=a\right)=
\frac{\mathbb{E}_{N-1}\left(e^{\sum_{i=2}^N f(\la_i-a)}\prod_{i=2}^{N}|\la_i-a|^2e^{-N(|a|^2-1)}\right)}
{\mathbb{E}_{N-1}\left(\prod_{i=2}^{N}|\la_i-a|^2e^{-N(|a|^2-1)}\right)}.
\end{equation}
Fix some constants $\kappa_1, \kappa_2$ such that $d<q<\kappa_2<\kappa_1<2\kappa$ and define $\chi^a_j(\la)=\mathds{1}_{|\la-a|<e^{-N^{\kappa_j}}}$, $j=1, 2$.
We first show we can afford imposing $\chi^a_2(\la_i)=0$: for some positive $q$, $r$ such that $p^{-1}+q^{-1}+r^{-1}=1$, we have
\begin{multline}
\mathbb{E}_{N-1}\left(e^{\sum_{i=2}^N f(\la_i-a)}\left(1-\prod_{i=2}^N(1-\chi^a_2(\la_i))\right)\prod_{i=2}^{N}|\la_i-a|^2e^{-N(|a|^2-1)}\right)\\
\leq 
\left\|e^{\sum_{i=2}^N f(\la_i-a)}\right\|_{{\rm L}^p}
\left\|1-\prod_{i=2}^N(1-\chi^a_2(\la_i))\right\|_{{\rm L}^q}
\left\|\prod_{i=2}^{N}|\la_i-a|^2e^{-N(|a|^2-1)}\right\|_{{\rm L}^r}\label{eqn:L3norm}
\end{multline}
where ${\rm L}^p={\rm L}^p(\mathbb{P}_{N-1})$. By hypotheses (\ref{eqn:strange2}) and (\ref{eqn:strange3}), the first norm is at most $e^{c N^d}$. The second is at most
$N\mathbb{P}_{N-1}(|\lambda_2|\leq e^{-N^{\kappa_2}})\leq e^{-cN^{\kappa_2}}$. The third norm is at most $N^{C}$, as a simple consequence of 
Lemma \ref{lem:generalmoments}.
These estimates also hold for $f=0$, so that we proved
\begin{equation}\label{eqn:second}
\mathbb{E}_{N}\left(e^{\sum_{i=2}^N f(\la_i-a)}\mid \la_1=a\right)=
\frac{\mathbb{E}_{N-1}\left(e^{\sum_{i=2}^N f(\la_i-a)}\prod_{i=2}^{N}(1-\chi^a_2(\la_i))|\la_i-a|^2e^{-N(|a|^2-1)}\right)+\OO(e^{-c N^{\kappa_2}})}
{\mathbb{E}_{N-1}\left(\prod_{i=2}^{N}(1-\chi^a_2(\la_i))|\la_i-a|^2e^{-N(|a|^2-1)}\right)+\OO(e^{-c N^{\kappa_2}})}.
\end{equation}
If $|\la_1-a|<e^{-N^{\kappa_1}}$ and $|\la_i-a|>e^{-N^{\kappa_2}}$, $2\leq i\leq N$, we have
$$
\prod_{i=2}^N|\la_i-a|=\left(1+\OO\left(e^{-cN^{\kappa_1}}\right)\right)\prod_{i=2}^N|\la_i-\la_1|,\ \ e^{-N|a|^2}=\left(1+\OO\left(e^{-cN^{\kappa_1}}\right)\right)e^{-N|\la_1|^2}.
$$
The expectation in the numerator of (\ref{eqn:second}) is therefore (in the first equation below $\la_1$ has distribution $U$, the uniform measure on the unit disk with center $a$ and radius $e^{-N^{\kappa_1}}$, with volume $b_N=\pi(e^{-2N^{\kappa_1}})$):
\begin{align}
&
\mathbb{E}_{\mathbb{P}_{N-1}\times U}\left(e^{\sum_{i=2}^N f(\la_i-a)}\prod_{i=2}^{N}(1-\chi^a_2(\la_i))|\la_i-\la_1|^2e^{-N(|\la_1|^2-1)}\right)\left(1+\OO\left(e^{-cN^{\kappa_1}}\right)\right)\notag\\
=&
\frac{e^N Z_N}{Z_{N-1}} \frac{1}{b_N}\mathbb{E}_{{N}}\left(e^{\sum_{i=2}^N f(\la_i-a)}\prod_{i=2}^{N}(1-\chi^a_2(\la_i))\chi^a_1(\la_1)\right)\left(1+\OO\left(e^{-cN^{\kappa_1}}\right)\right)
\label{eqn:U}
\end{align}

We now want to remove the constraint on $(\la_i)_{i=2}^N$, i.e. prove
\begin{equation}\label{eqn:removecuttoff}
\frac{1}{b_N}\mathbb{E}_{{N}}\left(e^{\sum_{i=2}^N f(\la_i-a)}\prod_{i=2}^{N}(1-\chi^a_2(\la_i))\chi^a_1(\la_1)\right)
=
\frac{1}{b_N}\mathbb{E}_{{N}}\left(e^{\sum_{i=2}^N f(\la_i-a)}\chi^a_1(\la_1)\right)+\OO(e^{-cN^{\kappa_2}}).
\end{equation}
This requires a longer argument. Let $\mathscr{B}^a_{i}=\{|z-a|\leq e^{-N^{\kappa_i}}\}$, $i=1$ or $2$, $\xi=\sum_{1}^N\delta_{\la_i}$, $\wt\xi=\sum_{2}^N\delta_{\la_i}$.
Then,
\begin{multline}
\left|e^{\sum_{i=2}^N f(\la_i-a)}(1-\prod_{i=2}^{N}(1-\chi^a_2(\la_i)))\chi^a_1(\la_1)\right| \leq e^{\sum_{i=2}^N{\rm Re}f(\la_i)}\wt\xi(\mathscr{B}^a_{2})\chi^a_1(\la_1)\\
\leq
e^{\sum_{i=2}^N{\rm Re}f(\la_i)}\xi(\mathscr{B}^a_{2}-\mathscr{B}^a_{1})\chi^a_1(\la_1)
+
e^{\sum_{i=2}^N{\rm Re}f(\la_i)}\wt\xi(\mathscr{B}^a_{1})\chi^a_1(\la_1).\label{eqn:xi}
\end{multline}
To bound the first term, we use the negative association property of determinantal point processes for disjoint sets  (see e.g. \cite{Lyo2003}), using $f_+\geq 0$ and $f=0$ on $\mathscr{B}^a_{2}$:
\begin{equation}
\mathbb{E}_N\left(e^{\sum_{i=2}^N{\rm Re}f(\la_i)}\xi(\mathscr{B}^a_{2}-\mathscr{B}^a_{1})\xi(\mathscr{B}^a_{1})\right)\\
\leq
\mathbb{E}_N\left(e^{\sum_{i=2}^Nf_+(\la_i)}\right)\E_N\left(\xi(\mathscr{B}^a_{2}-\mathscr{B}^a_{1})\right)
\E_N\left(\xi(\mathscr{B}^a_{1})\right)\label{eqn:chi}.
\end{equation}
By (\ref{eqn:strange2}) and (\ref{eqn:strange3}), the first expectation above has size order at most $e^{cN^{d}}$. 
The second is of order $e^{-cN^{\kappa_2}}$ and the third one is bounded by $Nb_N(1+\oo(1))$, so that 
the first term in (\ref{eqn:xi}) gives an error $\OO(b_N e^{-c N^{\kappa_2}})$.

For the second term in (\ref{eqn:xi}), we also use the negative association property and $f=0$   on $\mathscr{B}^a_{2}$:
$$\E_N\left(e^{\sum_{i=2}^N{\rm Re}f(\la_i)}\wt\xi(\mathscr{B}^a_{1})\chi^a_1(\la_1)\right)
\leq 
\E_N\left(e^{\sum_{i=2}^Nf_+(\la_i)}\right)\E_N\left(\wt\xi(\mathscr{B}^a_{1})\chi^a_1(\la_1)\right)\leq 
e^{N^d}\E_N\left(\wt\xi(\mathscr{B}^a_{1})\chi^a_1(\la_1)\right).
$$
Together with
\begin{equation}\label{eqn:double1}
\E\left(\wt\xi(\mathscr{B}^a_{1})\chi^a_1(\la_1)\right)\leq \E\left(\xi(\mathscr{B}^a_{1})(\xi(\mathscr{B}^a_{1})-1)\right)=
\int_{(\mathscr{B}^a_{1})^2} |K_N(z_1,z_2)|^2\leq N^2 b_N^2,
\end{equation}
we have proved that the second term in (\ref{eqn:xi}) gives an error $\OO(b_N e^{-c N^{\kappa_2}})$.
This concludes the proof of (\ref{eqn:removecuttoff}), so that the numerator in (\ref{eqn:second}) is 
\begin{multline*}
\frac{e^N Z_N}{Z_{N-1}} \left(\frac{1}{b_N}\mathbb{E}_{{N}}\left(e^{\sum_{i=2}^N f(\la_i-a)}\chi^a_1(\la_1)\right)\left(1+\OO\left(e^{-cN^{q}}\right)\right)+\OO\left(e^{-cN^{q}}\right)\right)
+\OO\left(e^{-cN^{q}}\right)\\
=
\frac{e^N Z_N}{Z_{N-1}} \left(\frac{1}{b_N}\mathbb{E}_{{N}}\left(e^{\sum_{i=2}^N f(\la_i-a)}\chi^a_1(\la_1)\right)\left(1+\OO\left(e^{-cN^{q}}\right)\right)+\OO\left(e^{-cN^{q}}\right)\right),
\end{multline*}
where we used $\frac{e^N Z_N}{Z_{N-1}}\sim c_1 N^{c_2}$ for some $c_1,c_2$, as obtained from (\ref{eqn:partitionfunction}).
In the same way, the denominator in (\ref{eqn:second}) is 
$
\frac{e^N Z_N}{Z_{N-1}} \left(1+\OO\left(e^{-cN^{\kappa_2}}\right)\right)$,
so that we obtained 
\begin{align}\label{eqn:reduction}
\mathbb{E}_{N}\left(e^{\sum_{i=2}^N f(\la_i-a)}\mid \la_1=a\right)\notag
&=\frac{1}{b_N}\mathbb{E}_{N}\left(e^{\sum_{i=2}^N f(\la_i-a)}\chi^a_1(\la_1)\right)\left(1+\OO\left(e^{-cN^{q}}\right)\right)+\OO(e^{-cN^{q}})\\
&=\frac{1}{b_N}\mathbb{E}_{N}\left(e^{\sum_{i=1}^N f(\la_i-a)}\chi^a_1(\la_1)\right)\left(1+\OO\left(e^{-cN^{q}}\right)\right)+\OO(e^{-cN^{q}})\notag\\
&=\frac{1}{Nb_N}\mathbb{E}_{N}\left(e^{\sum_{i=1}^N f(\la_i-a)}\xi(\mathscr{B}^a_1)\right)\left(1+\OO\left(e^{-cN^{q}}\right)\right)+\OO(e^{-cN^{q}}),
\end{align}
where we successively used that fact that $f$ vanishes on $\mathscr{B}_1^a$ and symmetrized.

To conclude the proof, we therefore just need
\begin{equation}\label{eqn:final}
f_a'(0)=f_0'(0)+\OO(e^{-cN^{{2\kappa}}}),\ {\rm where}\ f_a(w)=\frac{1}{Nb_N}\mathbb{E}_{N}\left(e^{\sum_{i=1}^N f(\la_i-a)+w \xi(\mathscr{B}_1^a)}\right).
\end{equation}
From Lemma \ref{lem:invariance1} we know that uniformly on $|w|<1$ we have
$$
f_a(w)=f_0(w)+
\OO(e^{-N^{2\kappa}}),
$$
which proves (\ref{eqn:final}) by Cauchy's theorem, and therefore the lemma in case (ii).

Under the assumption (i), up to (\ref{eqn:reduction}) the results hold and the reasoning is simplified as all $L^p$ norms related to $f$ can be bounded by 1. To justify an analogue of (\ref{eqn:reduction}) and the end of the reasoning, we first replace $f$ by $\wt f=f\mathds{1}_{(\mathscr{B}_1^a)^c}$ and note that
$$
\left|\frac{1}{b_N}\mathbb{E}_{N}\left(\left(e^{\sum_{i=2}^N f(\la_i-a)}-e^{\sum_{i=2}^N \wt f(\la_i-a)}\right)\chi^a_1(\la_1)\right)\right|\leq
\frac{2}{b_N} \E_N(\xi(\mathscr{B}_1^a)(\xi(\mathscr{B}_1^a)-1))=\OO(N^2 b_N),
$$
so that by symmetrizing we now obtain
$$
\mathbb{E}_{N}\left(e^{\sum_{i=2}^N f(\la_i-a)}\mid \la_1=a\right)
=
\frac{1}{Nb_N}\mathbb{E}_{N}\left(e^{\sum_{i=1}^N \wt f(\la_i-a)}\xi(\mathscr{B}^a_1)\right)+\OO(e^{-cN^{\kappa_2}}).
$$
The rest of the proof is identical to case (i).
\end{proof}

We now state and prove an analogue of Lemma \ref{lem:invariance2} when conditioning on two points. We will only need case (ii), as we are interested in expectations in Section \ref{sec:offdiag}, not in
convergence in distribution.

\begin{lemma}\label{lem:invariance2II}
Let $0<\delta<\kappa<1/2$ be fixed constants and $C>0$ fixed, arbitrarily large. Consider
any $\mathbb{C}$-valued measurable function $f$ supported on $\mathscr{B}_{0,\delta}$,  $|a|,|b|\leq 1-N^{-\frac{1}{2}+\kappa}$, and $N^{-C}<|b-a|<N^{-1/2+d}$.
Assume that there exists $d<2\kappa$, $p>1$ and $r>2$ such that  $f=0$ on $|z|<e^{-N^{d}}$, on $|z-(b-a)|<e^{-N^{d}}$, and 
(\ref{eqn:strange1}), (\ref{eqn:strange2}) and (\ref{eqn:strange3}) hold.
Then for any $q<2\kappa$ we have
$$
\mathbb{E}_{N}\left(e^{\sum_{i=3}^N f(\la_i-a)}\mid \la_1=a,\la_2=b \right)=
\mathbb{E}_{N}\left(e^{\sum_{i=3}^N f(\la_i)}\mid \la_1=0,\la_2=b-a\right)\left(1+\OO\left(e^{-cN^{q}}\right)\right)+\OO\left(e^{-cN^{q}}\right).
$$
\end{lemma}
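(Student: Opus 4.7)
The plan is to mimic the three--stage strategy from the proof of Lemma \ref{lem:invariance2}, now adapted to the two--point conditioning case. The crucial structural fact that makes this extension work is that the two conditioning points are separated by at least $N^{-C}$, whereas the exponential cut--off scales $e^{-N^{\kappa_j}}$ involved in the argument are vastly smaller. The two small discs around $a$ and $b$ are therefore disjoint and can be decoupled by negative association of the determinantal Ginibre process.

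First I would write the conditional expectation as the ratio
$$
\mathbb{E}_{N}\!\left(e^{\sum_{i=3}^N f(\la_i-a)}\mid \la_1=a,\la_2=b\right)=\frac{\mathbb{E}_{N-2}\!\left(e^{\sum_{i=3}^N f(\la_i-a)}\prod_{i=3}^N|\la_i-a|^2|\la_i-b|^2 e^{-N(|a|^2+|b|^2-2)}\right)}{\mathbb{E}_{N-2}\!\left(\prod_{i=3}^N|\la_i-a|^2|\la_i-b|^2 e^{-N(|a|^2+|b|^2-2)}\right)},
$$
and introduce two exponentially small cut--offs $\chi^a_2(\la)=\mathds{1}_{|\la-a|<e^{-N^{\kappa_2}}}$ and $\chi^b_2(\la)=\mathds{1}_{|\la-b|<e^{-N^{\kappa_2}}}$ with $d<\kappa_2<\kappa_1<2\kappa$. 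Using Cauchy--Schwarz together with hypotheses (\ref{eqn:strange2})--(\ref{eqn:strange3}) (which transfer from $\E_{N-1}$ to $\E_{N-2}$ after an extra ratio argument controlled by Lemma \ref{lem:generalmoments}), one shows that inserting $\prod_{i=3}^N(1-\chi^a_2(\la_i))(1-\chi^b_2(\la_i))$ in both numerator and denominator changes each expectation by at most $\OO(e^{-cN^{\kappa_2}})$, exactly as in (\ref{eqn:L3norm})--(\ref{eqn:second}).

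Next, on the event where every $\la_i$ ($i\ge 3$) stays outside the $e^{-N^{\kappa_2}}$--neighbourhood of $\{a,b\}$, if we integrate $\la_1$ uniformly over $\mathscr{B}_{a,\kappa_1}$ and $\la_2$ uniformly over $\mathscr{B}_{b,\kappa_1}$ then
$\prod_{i=3}^N|\la_i-a|^2=\bigl(1+\OO(e^{-cN^{\kappa_1}})\bigr)\prod_{i=3}^N|\la_i-\la_1|^2$, the analogous bound holds with $b$ and $\la_2$, and the Gaussian weights satisfy $e^{-N|a|^2}=e^{-N|\la_1|^2}(1+\OO(e^{-cN^{\kappa_1}}))$ (and likewise for $b$). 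Since $|b-a|>N^{-C}\gg e^{-N^{\kappa_1}}$, the two discs $\mathscr{B}_{a,\kappa_1}$ and $\mathscr{B}_{b,\kappa_1}$ are disjoint, so after symmetrization this reduces the numerator (up to exponentially small errors and a harmless ratio of normalizing constants) to
$$
\frac{1}{N(N-1)\,b_N^2}\,\mathbb{E}_N\!\left(e^{\sum_{i=1}^N f(\la_i-a)}\,\bigl(\xi(\mathscr{B}_{a,\kappa_1})\,\xi(\mathscr{B}_{b,\kappa_1})-\mathds{1}_{\text{diagonal}}\bigr)\right),
$$
where $\xi=\sum\delta_{\la_i}$. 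Removing the $(1-\chi^a_2)(1-\chi^b_2)$ restriction on the remaining indices uses the negative association property of determinantal processes on disjoint sets, exactly as in (\ref{eqn:xi})--(\ref{eqn:chi}), with the diagonal contribution handled by the trivial bound (\ref{eqn:double1}) applied separately in $\mathscr{B}_{a,\kappa_1}$ and $\mathscr{B}_{b,\kappa_1}$.

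Finally, to compare the resulting unconditional expectation to its translate by $-a$, I would apply Lemma \ref{lem:invariance1} to the perturbed test function $f(\cdot)+w_1\mathds{1}_{\mathscr{B}_{a,\kappa_1}}+w_2\mathds{1}_{\mathscr{B}_{b,\kappa_1}}$ with $|w_1|,|w_2|<1$. The associated $\nu=e^{\,\cdot\,}-1$ satisfies $N\|\nu\|_1\le N\|e^f-1\|_1+\OO(Nb_N)$, so (\ref{eqn:strange1}) still implies $(N\|\nu\|_1)^r\le N^{d'}$ with $d'<2\kappa$, and Lemma \ref{lem:invariance1} yields uniformly in $(w_1,w_2)$
$$
\mathbb{E}_N\!\left(e^{\sum f(\la_i-a)+w_1\xi(\mathscr{B}_{a,\kappa_1})+w_2\xi(\mathscr{B}_{b,\kappa_1})}\right)=\mathbb{E}_N\!\left(e^{\sum f(\la_i)+w_1\xi(\mathscr{B}_{0,\kappa_1})+w_2\xi(\mathscr{B}_{b-a,\kappa_1})}\right)+\OO(e^{-cN^{2\kappa}}).
$$
Extracting the coefficient of $w_1w_2$ via two applications of Cauchy's theorem, and running the whole reduction backwards around the points $0$ and $b-a$, gives the claimed identity. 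The main obstacle I foresee is step three: one must simultaneously control two disjoint small--ball cut--offs together with the weight $f$, and verify that the negative--association decoupling produces only errors of order $\OO(b_N^2 e^{-cN^{\kappa_2}})$ relative to the natural normalization $b_N^2$. This is genuinely a two--point analogue of the bound (\ref{eqn:xi})--(\ref{eqn:double1}) and is where the assumption $|b-a|>N^{-C}$ (ensuring the two discs never interact) is essential.
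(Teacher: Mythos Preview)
Your proposal is correct and follows essentially the same route as the paper: write the conditional expectation as a ratio, insert exponentially small cut--offs around $a$ and $b$, replace the conditioning by uniform averaging of $\la_1,\la_2$ over the two tiny discs, remove the remaining cut--offs via negative association, symmetrize, and finally compare to the translated version by applying Lemma~\ref{lem:invariance1} to a two--parameter family and differentiating via Cauchy's theorem. Two small remarks: since the discs around $a$ and $b$ are disjoint, the ``diagonal'' term you subtract is identically zero, so $\sum_{i\neq j}\chi^a_1(\la_i)\chi^b_1(\la_j)=\xi(\mathscr{B}^a_1)\xi(\mathscr{B}^b_1)$ exactly; and be careful that in the paper $\mathscr{B}_{a,\delta}$ denotes the ball of radius $N^{-1/2+\delta}$, whereas the balls used in this argument have radius $e^{-N^{\kappa_j}}$ (denoted $\mathscr{B}^a_j$ there).
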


\begin{proof}
We start similarly to the proof of Lemma \ref{lem:invariance2}, by writing
\begin{equation}\label{eqn:startII}
\mathbb{E}_{N}\left(e^{\sum_{i=2}^N f(\la_i-a)}\mid \la_1=a,\la_2=b\right)=
\frac{\mathbb{E}_{N-2}\left(e^{\sum_{i=3}^N f(\la_i-a)}\prod_{i=3}^{N}|\la_i-a|^2e^{-N(|a|^2-1)}\prod_{i=3}^{N}|\la_i-b|^2e^{-N(|b|^2-1)}\right)}
{\mathbb{E}_{N-2}\left(\prod_{i=3}^{N}|\la_i-a|^2e^{-N(|a|^2-1)}\prod_{i=3}^{N}|\la_i-b|^2e^{-N(|b|^2-1)}\right)}.
\end{equation}
Again, we fix some constants $\kappa_1, \kappa_2$ such that $d<\kappa_2<\kappa_1<2\kappa$ and define $\chi^x_j(\la)=\mathds{1}_{|\la-a|<e^{-N^{\kappa_j}}}$, $j=1, 2$, $x=a,b$.
The strict analogue of (\ref{eqn:L3norm}) holds, so that the left hand side of (\ref{eqn:startII}) can be written
\begin{equation}\label{eqn:secondII}
\frac{\mathbb{E}_{N-2}\left(e^{\sum_{i=3}^N f(\la_i-a)}\prod_{i=3}^{N}(1-\chi^a_2(\la_i))|\la_i-a|^2e^{-N(|a|^2-1)}\prod_{i=3}^{N}(1-\chi^b_2(\la_i))|\la_i-b|^2e^{-N(|b|^2-1)}\right)+\OO(e^{-c N^{\kappa_2}})}
{\mathbb{E}_{N-2}\left(\prod_{i=3}^{N}(1-\chi^a_2(\la_i))|\la_i-a|^2e^{-N(|a|^2-1)}\prod_{i=3}^{N}(1-\chi^b_2(\la_i))|\la_i-b|^2e^{-N(|b|^2-1)}\right)+\OO(e^{-c N^{\kappa_2}})}.
\end{equation}
The analogue of (\ref{eqn:U}) then holds exactly in the same way:
the expectation in the numerator of (\ref{eqn:secondII}) is
$$
\frac{e^{2N} Z_N}{|a-b|^2Z_{N-2}} \frac{1}{b_N^2}\mathbb{E}_{{N}}\left(e^{\sum_{i=3}^N f(\la_i-a)}\prod_{i=3}^{N}(1-\chi^a_2(\la_i))(1-\chi^b_2(\la_i))\chi^a_1(\la_1)\chi^b_1(\la_2)\right)\left(1+\OO\left(e^{-cN^{\kappa_1}}\right)\right).
$$
Again, 
we want to remove the constraint on $(\la_i)_{i=3}^N$, i.e. prove
\begin{multline*}
\frac{1}{b_N^2}\mathbb{E}_{{N}}\left(e^{\sum_{i=3}^N f(\la_i-a)}\prod_{i=3}^{N}(1-\chi^a_2(\la_i))(1-\chi^b_2(\la_i))\chi^a_1(\la_1)\chi^b_1(\la_2)\right)\\
=
\frac{1}{b_N^2}\mathbb{E}_{{N}}\left(e^{\sum_{i=3}^N f(\la_i-a)}\chi^a_1(\la_1)\chi^b_1(\la_2)\right)+\OO(e^{-cN^{\kappa_2}}).
\end{multline*}
With the negative association property, the strict analogues of equation (\ref{eqn:xi}), (\ref{eqn:chi}) and (\ref{eqn:double1}) hold, so that
the numerator in (\ref{eqn:secondII}) is 
\begin{multline*}
\frac{e^{2N} Z_N}{|a-b|^2Z_{N-2}} \left(\frac{1}{b_N^2}\mathbb{E}_{{N}}\left(e^{\sum_{i=3}^N f(\la_i-a)}\chi^a_1(\la_1)\chi^b_1(\la_2)\right)+\OO\left(e^{-cN^{\kappa_2}}\right)\right)
+\OO\left(e^{-cN^{\kappa_2}}\right)\\
=
\frac{e^{2N} Z_N}{|a-b|^2Z_{N-2}} \left(\frac{1}{b_N^2}\mathbb{E}_{{N}}\left(e^{\sum_{i=3}^N f(\la_i-a)}\chi^a_1(\la_1)\chi^b_1(\la_2)\right)+\OO\left(e^{-cN^{\kappa_2}}\right)\right)
\end{multline*}
where we used $\frac{e^{2N} Z_N}{Z_{N-2}}\sim c_1 N^{c_2}$ for some $c_1,c_2$, as obtained from (\ref{eqn:partitionfunction}).
In the same way, the denominator in (\ref{eqn:secondII}) is 
$
\frac{e^{2N} Z_N}{|a-b|^2Z_{N-2}}  \left(1+\OO\left(e^{-cN^{\kappa_2}}\right)\right)$,
giving
\begin{align}\label{eqn:reductionII}
&\mathbb{E}_{N}\left(e^{\sum_{i=3}^N f(\la_i-a)}\mid \la_1=a,\la_2=b\right)
=\frac{1}{b_N^2}\mathbb{E}_{N}\left(e^{\sum_{i=3}^N f(\la_i-a)}\chi^a_1(\la_1)\chi^b_1(\la_2)\right)\left(1+\OO(e^{-cN^{q}})\right)+\OO(e^{-cN^{q}})\notag\\
&=\frac{1}{b_N^2}\mathbb{E}_{N}\left(e^{\sum_{i=1}^N f(\la_i-a)}\chi^a_1(\la_1)\chi^b_1(\la_2)\right)\left(1+\OO(e^{-cN^{q}})\right)+\OO(e^{-cN^{q}})\notag\\
&=\frac{1}{N(N-1)b_N^2}\mathbb{E}_{N}\left(e^{\sum_{i=1}^N f(\la_i-a)}\xi(\mathscr{B}^a_1)\xi(\mathscr{B}^b_1)\right)\left(1+\OO(e^{-cN^{q}})\right)+\OO(e^{-cN^{q}}),
\end{align}
where we successively used that fact that $f$ vanishes on $\mathscr{B}_1^a\cup \mathscr{B}_1^b$, $\mathscr{B}_1^a\cap \mathscr{B}_1^b=\varnothing$ (this holds because $|a-b|>N^{-C}$) and symmetrized.
To conclude the proof, we therefore just need $\partial_{z_1z_2}f_{a,b}(0,0)=\partial_{z_1z_2}f_{0,b-a}(0,0)+\OO(e^{-cN^{{q}}})$, where
\begin{equation}\label{eqn:finalII}
f_{a,b}(z_1,z_2)=\frac{1}{N(N-1)b_N^2}\mathbb{E}_{N}\left(e^{\sum_{i=1}^N f(\la_i-a)+z_1 \xi(\mathscr{B}_1^a)+z_2 \xi(\mathscr{B}_1^b)})\right)
\end{equation}
This follows  from  Lemma \ref{lem:invariance1} and Cauchy's Theorem, similarly to the end of the proof of Lemma \ref{lem:invariance2}.
\end{proof}

\section{Andr\'eief's identity and Kostlan's theorem}\label{eqn:Sec}

This section gives applications of Andr\'eief's identity to the conditioned measures of interest in this work. In particular, it proves some slight extensions of Kostlan's theorem (Corollary \ref{Kost1}), following a method from \cite{Dub2017}. The common main tool will be the following classical Lemma, by Andr\'eief \cite{Andreiev} (see \cite{DeiGio} for a short proof). Note that the original proof of Kostlan's theorem \cite{Kos1992} and some of its extensions \cite{HouKriPerVir2006} were based on different arguments.

\begin{lemma}[Andr\'eief's identity] On a measured space $(E, \mathcal{E}, \mu)$ For any functions $(\phi_i, \psi_i)_{i=1}^N \in L_2(\mu)^{2N}$,
$$ \frac{1}{ N! } \int_{E^N} \det \left(\phi_i(\lambda_j) \right) \ \det  \left( \psi_i (\lambda_j) \right) \ \mu (\dd \lambda_1) \dots  \mu ( \dd \lambda_N ) = \det \left(f_{i,j}\right) \quad \text{where} \ f_{i,j} = \int_E \phi_i(\lambda) \psi_j(\lambda) \mu(\dd \lambda).$$
 \end{lemma}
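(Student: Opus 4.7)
The plan is to give the standard combinatorial proof of Andr\'eief's identity via the Leibniz expansion of both determinants, followed by a Fubini-type interchange and a grouping of permutations.

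First I would expand both determinants using the Leibniz formula:
\begin{equation*}
\det(\phi_i(\lambda_j)) = \sum_{\sigma\in S_N}\sgn(\sigma)\prod_{i=1}^N \phi_i(\lambda_{\sigma(i)}),\qquad
\det(\psi_i(\lambda_j)) = \sum_{\tau\in S_N}\sgn(\tau)\prod_{i=1}^N \psi_i(\lambda_{\tau(i)}).
\end{equation*}
The product of these two determinants is thus a double sum over $(\sigma,\tau)\in S_N\times S_N$, each term being a product of $N$ functions evaluated at the $\lambda_j$'s. The integrability hypothesis $\phi_i,\psi_i\in L_2(\mu)$ together with Cauchy-Schwarz ensures each $f_{i,j}=\int \phi_i\psi_j\,\dd\mu$ is finite and justifies applying Fubini's theorem to exchange the (finite) sum and the integral over $E^N$.

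Next I would reindex the integrand: setting $j=\sigma(i)$ in the first product and $j=\tau(i)$ in the second yields
\begin{equation*}
\int_{E^N}\prod_{i=1}^N\phi_i(\lambda_{\sigma(i)})\psi_i(\lambda_{\tau(i)})\,\mu(\dd\lambda_1)\cdots\mu(\dd\lambda_N)
=\prod_{j=1}^N\int_E\phi_{\sigma^{-1}(j)}(\lambda)\psi_{\tau^{-1}(j)}(\lambda)\,\mu(\dd\lambda)=\prod_{j=1}^N f_{\sigma^{-1}(j),\tau^{-1}(j)}.
\end{equation*}
Substituting back into the double sum and changing the indexing variable to $i=\sigma^{-1}(j)$ gives $\prod_j f_{\sigma^{-1}(j),\tau^{-1}(j)}=\prod_i f_{i,(\tau^{-1}\sigma)(i)}$.

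Finally I would introduce the change of summation variable $\rho=\tau^{-1}\sigma$, so that $\sgn(\sigma)\sgn(\tau)=\sgn(\rho)$, and obtain
\begin{equation*}
\int_{E^N}\det(\phi_i(\lambda_j))\det(\psi_i(\lambda_j))\prod_{k=1}^N\mu(\dd\lambda_k)
=\sum_{\sigma\in S_N}\sum_{\rho\in S_N}\sgn(\rho)\prod_{i=1}^N f_{i,\rho(i)}
= N!\sum_{\rho\in S_N}\sgn(\rho)\prod_{i=1}^N f_{i,\rho(i)},
\end{equation*}
since the summand no longer depends on $\sigma$. Recognizing the inner sum as $\det(f_{i,j})$ and dividing by $N!$ concludes the proof. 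There is no real obstacle here; the only points that require care are the justification of Fubini (guaranteed by the $L_2$ hypothesis) and the bookkeeping of signs under the permutation substitution.
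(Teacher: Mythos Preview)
Your proof is correct and is precisely the standard combinatorial argument. The paper does not actually supply its own proof of this lemma; it only cites Andr\'eief's original paper and refers the reader to Deift--Gioev for a short proof, which is essentially the one you have written.
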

 
\begin{theorem}\label{uncond} Let $E=\mathbb{C}$, $g \in L_2(\mu)$, and  $\{ \lambda_1, \dots, \lambda_N\}$ eigenvalues from the  Ginibre ensemble. Then
$$ \E\left( \prod_{k=1}^N g (\lambda_k)\right)= N^{\frac{N(N-1)}{2}}\det (f_{i,j})_{i,j = 1}^{N} \quad \text{where} \ f_{i,j} = \frac{1}{ (j-1)!} \int \lambda^{i-1} \bar{\lambda}^{j-1} g(\lambda) \mu(\dd \lambda).$$ 
\end{theorem}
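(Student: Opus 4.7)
The strategy is a direct calculation via the joint density of Ginibre eigenvalues. First I would write
\[
\E\!\left(\prod_{k=1}^N g(\lambda_k)\right) = \frac{1}{Z_N}\int_{\mathbb{C}^N} \prod_{k=1}^N g(\lambda_k)\, \left|\prod_{j<k}(\lambda_j-\lambda_k)\right|^2 \prod_{k=1}^N \mu(\dd\lambda_k),
\]
using \eqref{eqn:partitionfunction}. The key algebraic step is to factorize the squared Vandermonde as a product of two determinants: setting $\phi_i(\lambda)=\lambda^{i-1}$ and $\psi_j(\lambda)=\bar\lambda^{j-1}$, one has
\[
\left|\prod_{j<k}(\lambda_j-\lambda_k)\right|^2 = \det\bigl(\phi_i(\lambda_j)\bigr)_{i,j=1}^N \cdot \det\bigl(\psi_i(\lambda_j)\bigr)_{i,j=1}^N.
\]

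Next I would absorb the factor $\prod_k g(\lambda_k)$ into the measure and apply Andr\'eief's identity (already stated in the paper) to the symmetric signed measure $g\,\dd\mu$:
\[
\frac{1}{N!}\int \det(\phi_i(\lambda_j))\det(\psi_i(\lambda_j)) \prod_k g(\lambda_k)\mu(\dd\lambda_k) = \det\!\left(\int \lambda^{i-1}\bar\lambda^{j-1} g(\lambda)\,\mu(\dd\lambda)\right)_{i,j=1}^N.
\]
This yields
\[
\E\!\left(\prod_{k=1}^N g(\lambda_k)\right) = \frac{N!}{Z_N}\det\!\left(\int \lambda^{i-1}\bar\lambda^{j-1} g(\lambda)\,\mu(\dd\lambda)\right).
\]

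Finally, I would bookkeep the normalization constants. From $Z_N = N^{-N(N-1)/2}\prod_{j=1}^N j!$ one gets $N!/Z_N = N^{N(N-1)/2}/\prod_{j=1}^{N-1}j!$; pulling the factor $1/(j-1)!$ into the $j$-th column of the determinant produces exactly the entries $f_{ij}$ defined in the statement, and the surviving prefactor $\prod_{j=1}^{N}(j-1)! = \prod_{j=1}^{N-1}j!$ cancels with the same factor in $N!/Z_N$, leaving $N^{N(N-1)/2}\det(f_{ij})$. There is no real obstacle — the only thing to watch is this constant bookkeeping, and the mild integrability check that $g\in L^2(\mu)$ together with polynomial weights guarantees the entries $f_{ij}$ are well defined and Andr\'eief's identity is applicable (since $\lambda^{i-1}\bar\lambda^{j-1}\in L^2(\mu)$ for all $i,j\leq N$).
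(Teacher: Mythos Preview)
Your proof is correct and follows essentially the same approach as the paper: both reduce the claim to a direct application of Andr\'eief's identity with $\phi_i(\lambda)=\lambda^{i-1}$, $\psi_j(\lambda)=\bar\lambda^{j-1}$ against the measure $g\,\mu$, followed by the same constant bookkeeping (the paper compresses this into the single moment identity \eqref{eqn:momentgam} and one sentence). Your treatment of the normalization $N!/Z_N$ and the column-wise absorption of $1/(j-1)!$ is exactly right.
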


\begin{proof}
The following is elementary:
\begin{equation}\label{eqn:momentgam}
\int |\lambda|^{2i}\rd \mu(\lambda)=\frac{i!}{N^i}.
\end{equation}
The proof then follows from Andr\'eief's identity.
\end{proof}

\begin{theorem}\label{cond1}
We have (remember $\mu=\mu^{(N)}$)
$$\E \left(\prod_{k=2}^{N} g (\lambda_k) \mid  \lambda_1 = z\right)= \frac{1}{Z_N^{(z)}} \det(f_{i,j})_{i,j = 1}^{N-1} \quad \text{where} \ f_{i,j} = \frac{1}{ j!} \int \lambda^{i-1} \bar{\lambda}^{j-1} |z -\lambda |^2 g (\lambda)  \mu(\dd \lambda)$$
and $$Z_N^{(z)}=
N^{-\frac{N(N-1)}{2}}e^{(N-1)}\left(N|z|^2\right).$$
\end{theorem}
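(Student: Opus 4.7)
The plan is to follow the same pattern as the preceding Theorem \ref{uncond}, namely to reduce the conditional expectation to a ratio of two Andr\'eief-type determinants, and then evaluate the denominator in closed form.

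First I would write the conditional expectation as a ratio, using the joint density (\ref{eqn:partitionfunction}):
\[
\E \!\left(\prod_{k=2}^{N} g (\lambda_k) \mid  \lambda_1 = z\right)
=
\frac{\displaystyle \int \prod_{k=2}^{N} g(\lambda_k) \prod_{1\leq j<k\leq N} |\lambda_j-\lambda_k|^2 \Big|_{\lambda_1=z} \prod_{k=2}^N \mu(\dd\lambda_k)}
{\displaystyle \int \prod_{1\leq j<k\leq N} |\lambda_j-\lambda_k|^2 \Big|_{\lambda_1=z} \prod_{k=2}^N \mu(\dd\lambda_k)}.
\]
Both numerator and denominator have the factor $\prod_{k=2}^N|z-\lambda_k|^2$ which splits off when one isolates $j=1$ in the Vandermonde, and the remaining piece $\prod_{2\leq j<k\leq N}|\lambda_j-\lambda_k|^2$ is the modulus squared of the $(N-1)\times(N-1)$ Vandermonde determinant $\det(\lambda_i^{j-1})_{2\leq i\leq N,\, 1\leq j\leq N-1}$.

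Next I would apply Andr\'eief's identity with $\phi_i(\lambda)=\lambda^{i-1}$ and $\psi_j(\lambda)=\bar\lambda^{j-1}|z-\lambda|^2 g(\lambda)$. This yields
\[
\int \prod_{k=2}^{N} g(\lambda_k)|z-\lambda_k|^2 \cdot \Big|\det(\lambda_i^{j-1})\Big|^2 \prod_{k=2}^N \mu(\dd\lambda_k)
= (N-1)!\,\det\!\bigl(F_{i,j}\bigr)_{i,j=1}^{N-1}
\]
with $F_{i,j}=\int \lambda^{i-1}\bar\lambda^{j-1}|z-\lambda|^2 g(\lambda)\,\mu(\dd\lambda)$. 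Pulling the factors $1/j!$ out of each column converts $\det(F_{i,j})$ into $\bigl(\prod_{j=1}^{N-1}j!\bigr)\det(f_{i,j})$, with $f_{i,j}$ as stated. Since the prefactors $(N-1)!\prod j!$ (and the global $1/Z_N$) appear identically in numerator and denominator, they cancel, yielding the claimed determinantal formula with $Z_N^{(z)}=\det(f^{0}_{i,j})_{i,j=1}^{N-1}$, where $f^0_{i,j}$ is obtained by taking $g\equiv 1$.

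Finally I would compute $Z_N^{(z)}$ explicitly. Using $|z-\lambda|^2=|z|^2-z\bar\lambda-\bar z\lambda+|\lambda|^2$ together with $\int\lambda^a\bar\lambda^b\mu(\dd\lambda)=\delta_{ab}\frac{a!}{N^a}$, the matrix $(f^0_{i,j})$ is tridiagonal with
\[
f^0_{i,i}=\frac{N|z|^2+i}{i\,N^{i}},\qquad f^0_{i+1,i}=-\frac{z}{(i+1)N^{i}},\qquad f^0_{i,i+1}=-\frac{\bar z}{N^{i}}.
\]
Writing $x=N|z|^2$ and $a_k=\det(f^0_{i,j})_{i,j=1}^k \cdot N^{k(k+1)/2}$, the standard tridiagonal determinant recursion reduces to $a_k=(1+x/k)a_{k-1}-(x/k)a_{k-2}$ with $a_0=1,\,a_1=x+1$. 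A direct induction (using $e^{(k)}(x)=e^{(k-1)}(x)+x^k/k!$) shows $a_k=e^{(k)}(x)$, and hence $Z_N^{(z)}=N^{-N(N-1)/2}e^{(N-1)}(N|z|^2)$, as stated. No step here is genuinely obstructive; the only mild care is in keeping track of how the $\prod 1/j!$ normalizations absorb into the determinant and in confirming the closed form $a_k=e^{(k)}(x)$ for the three-term recurrence.
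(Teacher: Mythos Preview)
Your proposal is correct and follows essentially the same argument as the paper: apply Andr\'eief's identity to obtain the determinantal representation, observe that the resulting matrix (for $g\equiv 1$) is tridiagonal, and solve the three-term recurrence $a_k=(1+x/k)a_{k-1}-(x/k)a_{k-2}$ with $a_0=1$, $a_1=1+x$ to get $a_k=e^{(k)}(x)$. One minor inconsistency: you say you pull $1/j!$ out of each column, but your explicit off-diagonal entries $f^0_{i+1,i}=-z/((i+1)N^i)$ and $f^0_{i,i+1}=-\bar z/N^i$ correspond to the row normalization $1/i!$ (which is what the paper actually uses in its proof); this is harmless since $\prod_i 1/i! = \prod_j 1/j!$ and the determinants coincide.
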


\begin{proof} 
Using Andr\'eief's identity with $\phi_i(\lambda) = \lambda^{i-1} g(\lambda) |z - \lambda|^2 $, $\psi_j(\lambda)= \lambda^{j-1}$, we find
$$  \E\left(\prod_{k=2}^{N} g (\lambda_k)\mid \lambda_1 = z\right)= \frac{1}{Z_N^{(z)}} \det(f_{i,j})_{i,j = 1}^{N-1}$$
where
$$Z_N^{(z)}=\det(M_{ij})_{i,j = 1}^{N-1},\ \  M_{ij}=\frac{1}{ i !} \int \lambda^{i-1} \bar{\lambda}^{j-1} |z - \lambda|^2 \mu(\dd \lambda).$$
By expanding $|z- \lambda|^2 = |z|^2 + |\lambda|^2 - z \overline{\lambda} - \overline{z} \lambda$, we see that that $M$ is tridiagonal, with entries (remember (\ref{eqn:momentgam}))
$$
M_{ii}=\frac{1}{N^i}+\frac{|z|^2}{iN^{i-1}},\ 
M_{i,i+1}=-\frac{\overline{z}}{N^i},\ 
M_{i,i-1}=- \frac{z}{i N^{i-1}}.
$$
Denoting $x=N |z|^2$ and $d_k=\det((M_{ij})_{1\leq i,j\leq k})$, with the convention $d_0=1$ we have
\begin{align*}
d_1&=\frac{1+x}{N}\\
d_k&=\left(1+\frac{x}{k}\right)\frac{1}{N^k}d_{k-1}-\frac{x}{k}\frac{1}{N^{2k-1}}d_{k-2}
\end{align*}
so that $a_k=d_k N^{\frac{k(k+1)}{2}}$ satisfies $a_0=1$, $a_1=1+x$,
$$
a_k=\left(1+\frac{x}{k}\right)a_{k-1}-\frac{x}{k}a_{k-2}.
$$
This gives $a_k=e^{(k)}(x)$ by an immediate induction.
\end{proof}

\begin{theorem}\label{cond2}
We have
$$ \E\left(\prod_{k=3}^{N} g(\lambda_k) \mid \lambda_1 = 0,  \lambda_2 = z\right)= \frac{1}{Z_N^{(0,z)}}\det(f_{i,j})_{i,j = 1}^{N-2} \quad \text{where} \ f_{i,j} = \frac{1}{ (i+1)!} \int \lambda^{i-1} \bar{\lambda}^{j-1} |\lambda|^2 |z - \lambda |^2 g (\lambda)  \mu(\dd \lambda)$$
and 
\begin{equation}\label{eqn:normalization}
Z_N^{(0,z)}=N^{-\frac{(N-2)(N+1)}{2}}\frac{e_1^{(N-1)}(N|z|^2)}{N|z|^2}.
\end{equation}
\end{theorem}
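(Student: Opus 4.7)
The strategy is to mimic the proofs of Theorems \ref{uncond} and \ref{cond1}: factor the squared Vandermonde as a product of two Vandermonde determinants, apply Andr\'eief's identity, and then compute the normalization determinant by exploiting the fact that monomials are orthogonal with respect to $\mu$. Conditionally on $\{\lambda_1=0,\lambda_2=z\}$, the remaining eigenvalues $(\lambda_3,\dots,\lambda_N)$ have joint density proportional to
$$
\prod_{k=3}^{N}|\lambda_k|^2\,|z-\lambda_k|^2 \prod_{3\le j<k\le N}|\lambda_j-\lambda_k|^2 \prod_{k=3}^{N}\mu(\dd\lambda_k),
$$
since the constant $|0-z|^2$ cancels between numerator and denominator of the conditional expectation. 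Writing $\prod_{j<k}|\lambda_j-\lambda_k|^2=|\det(\lambda_k^{i-1})|^2$ and applying Andr\'eief with $\phi_i(\lambda)=\lambda^{i-1}|\lambda|^2|z-\lambda|^2 g(\lambda)$ and $\psi_j(\lambda)=\lambda^{j-1}$ gives a determinantal expression for both the numerator (with $g$) and the denominator (with $g\equiv 1$). The common $(N-2)!$ cancels; the prefactor $1/(i+1)!$ placed in $f_{i,j}$ pulls out of the determinants row-by-row identically on top and bottom, and is merely a convention fixing the form of $Z_N^{(0,z)}$.

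It remains to compute $Z_N^{(0,z)}=\det(M_{i,j})_{i,j=1}^{N-2}$ with
$M_{i,j}=\frac{1}{(i+1)!}\int\lambda^{i-1}\bar\lambda^{j-1}|\lambda|^2|z-\lambda|^2\mu(\dd\lambda)$.
Expanding $|z-\lambda|^2=|z|^2+|\lambda|^2-z\bar\lambda-\bar z\lambda$ and using $\int\lambda^p\bar\lambda^q\mu(\dd\lambda)=\delta_{pq}\,p!/N^p$, the matrix $M$ is tridiagonal, with
$$
M_{i,i}=\frac{x+i+1}{(i+1)N^{i+1}},\qquad M_{i,i-1}=-\frac{z}{(i+1)N^i},\qquad M_{i,i+1}=-\frac{\bar z}{N^{i+1}},
$$
where $x=N|z|^2$. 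Setting $d_k=\det(M_{i,j})_{i,j=1}^{k}$ and rescaling $a_k=d_k\,N^{k(k+3)/2}$ (the scaling already used in the proof of Proposition \ref{prop:theformula}) turns the standard tridiagonal recursion $d_k=M_{k,k}d_{k-1}-M_{k,k-1}M_{k-1,k}d_{k-2}$ into the dimension-free relation
$$
a_k=\frac{x+k+1}{k+1}\,a_{k-1}-\frac{x}{k+1}\,a_{k-2},\qquad a_0=1,\ a_1=\frac{x+2}{2}.
$$

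The final step is to solve this recurrence. Guided by the target formula, I would check by induction that $a_k=e_1^{(k+1)}(x)/x$, the induction step reducing to the identity $e_1^{(k)}(x)-e_1^{(k-1)}(x)=x^k/k!$. Setting $k=N-2$ then gives $d_{N-2}=N^{-(N-2)(N+1)/2}\,e_1^{(N-1)}(N|z|^2)/(N|z|^2)$, which is precisely the claimed expression for $Z_N^{(0,z)}$. No step presents a genuine obstacle: the main creative input is guessing the closed form for $a_k$, but this is essentially forced by the form of the stated normalization and by the analogous explicit computations carried out for Theorem \ref{cond1} and Proposition \ref{prop:theformula}.
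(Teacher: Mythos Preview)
Your proof is correct and follows essentially the same approach as the paper: apply Andr\'eief's identity to obtain the determinantal form, observe that the normalization matrix $M$ is tridiagonal, rescale $a_k=d_kN^{k(k+3)/2}$ to obtain the recursion $a_k=(1+x/(k+1))a_{k-1}-\frac{x}{k+1}a_{k-2}$, and solve by induction. The only cosmetic difference is that you state the closed form $a_k=e_1^{(k+1)}(x)/x$ explicitly, whereas the paper simply says ``this gives the expected result by an immediate induction.''
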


\begin{proof} 
By Andr\'eief's identity, the result holds with
$$Z_N^{(0,z)} = \det(M_{ij})_{i,j = 1}^{N-2},\  M_{ij}=\frac{1}{ (i+1) !} \int \lambda^{i-1} \bar{\lambda}^{j-1} |\lambda|^2 |z - \lambda|^2 \mu(\dd \lambda).$$
Expanding $|z - \lambda|^2 = |z|^2 + |\lambda|^2 - z \overline{\lambda} - \overline{z} \lambda$, we see that $M$ is tridiagonal with entries
$$
M_{ii}=\frac{1}{N^{i+1}}+\frac{|z|^2}{(i+1) N^i},\ 
M_{i,i+1}=-\frac{\overline{z}}{N^{i+1}},\ 
M_{i,i-1}=- \frac{z}{(i+1)N^{i}}.
$$
Denoting $x=N |z|^2$ and $d_k=\det((M_{ij})_{1\leq i,j\leq k})$, with the convention $d_0=1$ we have
\begin{align*}
d_1&=\frac{2+x}{2N^2},\\
d_k&=\left(1+\frac{x}{k+1}\right)\frac{1}{N^{k+1}}d_{k-1}-\frac{x}{k+1}\frac{1}{N^{2k+1}}d_{k-2}.
\end{align*}
so that $a_k=d_k N^{\frac{k(k+3)}{2}}$ satisfies $a_0=1$, $a_1=1+x/2$,
$$
a_k=\left(1+\frac{x}{k+1}\right)a_{k-1}-\frac{x}{k+1}a_{k-2}.
$$
This gives the expected result by an immediate induction.
\end{proof}

\noindent Kostlan's theorem now comes as a corollary, as well as a similar property for the Ginibre ensemble conditioned on $\lambda_1 =0$.

\begin{corollary}[Kostlan]\label{Kost1} The set $N\{ |\lambda_1|^2, \dots, |\lambda_N|^2 \} $ is distributed as $\{\gamma_1,\dots,\gamma_N\}$, a set of (unordered) independent Gamma variables of parameters $1,2, \dots, N$.\label{cor:Kos}
\end{corollary}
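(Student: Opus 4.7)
The plan is to deduce Kostlan's theorem directly from Theorem~\ref{uncond} by testing against rotationally symmetric functions. Pick any bounded continuous $f : \mathbb{R}_+ \to \mathbb{R}$ and apply Theorem~\ref{uncond} with $g(\lambda) = e^{f(|\lambda|^2)}$. The left-hand side becomes the Laplace functional
$$
\E\,\exp\!\Bigl(\sum_{k=1}^N f(|\lambda_k|^2)\Bigr),
$$
and equality of this functional with the corresponding one for the candidate distribution will characterize the law of the unordered multiset $\{|\lambda_k|^2\}$.

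The key observation is that rotational symmetry of $g$ diagonalizes the Andr\'eief matrix. In polar coordinates $\lambda = r e^{\ii\theta}$, the entry
$$
f_{ij} = \frac{1}{(j-1)!}\int \lambda^{i-1}\bar\lambda^{j-1} g(\lambda)\,\mu(\dd\lambda)
$$
contains the angular factor $\int_0^{2\pi} e^{\ii(i-j)\theta}\,\dd\theta = 2\pi\,\delta_{ij}$, so only diagonal entries survive. The substitution $u = N r^2$ then converts the remaining radial integral into a Gamma expectation, yielding
$$
f_{ii} \;=\; \frac{1}{(i-1)!}\cdot\frac{(i-1)!}{N^{i-1}}\,\E\bigl[e^{f(\gamma_i/N)}\bigr] \;=\; \frac{1}{N^{i-1}}\,\E\bigl[e^{f(\gamma_i/N)}\bigr].
$$

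Since the matrix is diagonal, the determinant is the product of the diagonal entries, and the accumulated factor $N^{-\sum_{i=1}^N (i-1)} = N^{-N(N-1)/2}$ exactly cancels the prefactor $N^{N(N-1)/2}$ appearing in Theorem~\ref{uncond}. One therefore obtains
$$
\E\,\exp\!\Bigl(\sum_{k=1}^N f(|\lambda_k|^2)\Bigr) \;=\; \prod_{i=1}^N \E\bigl[e^{f(\gamma_i/N)}\bigr],
$$
which is exactly the Laplace functional of $\sum_i \delta_{\gamma_i/N}$ for independent Gamma variables with the prescribed parameters. Since the identity holds for every admissible $f$, the two unordered multisets have the same law, and rescaling by $N$ finishes the proof. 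There is no serious obstacle here: the only nontrivial input beyond Theorem~\ref{uncond} is the diagonalization of the Andr\'eief matrix induced by rotational invariance of the test function, after which the computation reduces to elementary Gamma identities.
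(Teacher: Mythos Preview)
Your proof is correct and follows essentially the same route as the paper: apply Theorem~\ref{uncond} with a rotationally symmetric test function so that the Andr\'eief matrix becomes diagonal, recognize each diagonal entry as $N^{-(i-1)}$ times a Gamma expectation, and conclude equality of the unordered multisets. The only difference is the final characterization step: the paper takes polynomial $g$ and then invokes Lemma~\ref{sympol} together with moment determinacy of the Gamma laws, whereas you take $g=e^{f}$ and appeal to the Laplace functional characterization of point processes---your variant is slightly more direct since it bypasses Lemma~\ref{sympol}, but the substance of the argument is the same.
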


\begin{proof} Let $g \in \mathbb{C}[X]$ and use Theorem \ref{uncond} with the radially symmetric function $g(|\cdot|^2)$. The relevant matrix is then diagonal, with coefficients
$$ 
f_{i,i} = \frac{1}{ (i-1)!} \int |\lambda|^{2i-2} g(|\lambda|^2) \mu(\dd \lambda) 
= {N^{-(i-1)} \over (i-1)!} \int_{r=0}^{\infty} r^{i-1} g(r/N) e^{-r} \dd r 
 = N^{-(i-1)}\E(g(\gamma_i/N)).
$$
In other words, 
$$
\E\left(\prod_{i=1}^N g(|\lambda_i|^2)\right)=\E\left(\prod_{i=1}^N g(\gamma_{i}/N)\right). 
$$
Note that these statistics characterize the distribution of a set of unordered points, as such expressions with polynomial $g$ generate all symmetric polynomials, as shown in Lemma \ref{sympol}, and the gamma distributions are characterized by their moments. For more details, see \cite{Dub2017}. We conclude that $ N\{ |\lambda_1|^2, \dots, |\lambda_N|^2 \} \stackrel{d}{=} \{ \gamma_1, \dots, \gamma_N\}$.
\end{proof}

\begin{corollary}\label{kostbis} Conditioned on $\{ \lambda_1 =0\}$, $ \{ N |\lambda_2|^2, \dots, N |\lambda_N|^2 \} $ is distributed as $\{\gamma_2,\dots\gamma_N\}$, a set of (unordered) independent Gamma variables of parameters $2,3, \dots, N$.
\end{corollary}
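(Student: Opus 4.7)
The plan is to imitate the proof of Corollary \ref{Kost1}, but starting from the conditional determinantal formula of Theorem \ref{cond1} instead of the unconditional one. The key point is that the conditioning is on $\lambda_1 = 0$, a rotationally invariant point, so rotational symmetry around the origin is preserved.

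Concretely, I will test against an arbitrary polynomial $h$ via $g(\lambda) = h(|\lambda|^2)$. Theorem \ref{cond1} with $z = 0$ gives $Z_N^{(0)} = N^{-N(N-1)/2}$ (since $e^{(N-1)}(0) = 1$), and
\[
\E\Big(\prod_{k=2}^N h(|\lambda_k|^2) \mid \lambda_1 = 0\Big) = \frac{1}{Z_N^{(0)}} \det(f_{i,j})_{i,j=1}^{N-1}, \qquad f_{i,j} = \frac{1}{j!} \int \lambda^{i-1}\bar\lambda^{j-1} |\lambda|^2 h(|\lambda|^2)\, \mu(\dd\lambda).
\]
Passing to polar coordinates $\lambda = re^{\ii\theta}$, the angular integral $\int_0^{2\pi} e^{\ii(i-j)\theta}\dd\theta$ kills all off-diagonal entries, so the matrix $(f_{i,j})$ is diagonal. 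A direct computation using $\mu(\dd\lambda) = (N/\pi)e^{-N|\lambda|^2}\dd m(\lambda)$ and the change of variable $s = Nr^2$ yields
\[
f_{i,i} = \frac{1}{i!}\int_0^\infty \frac{s^i}{N^i} h(s/N) e^{-s}\,\dd s = N^{-i}\, \E\big(h(\gamma_{i+1}/N)\big),
\]
where we used that $\gamma_{i+1}$ has density $s^i e^{-s}/i!$ on $\R_+$.

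Multiplying these and dividing by $Z_N^{(0)}$, the powers of $N$ match exactly since $\sum_{i=1}^{N-1} i = N(N-1)/2$, and we obtain
\[
\E\Big(\prod_{k=2}^N h(|\lambda_k|^2) \mid \lambda_1 = 0\Big) = \prod_{j=2}^N \E\big(h(\gamma_j/N)\big) = \E\Big(\prod_{j=2}^N h(\gamma_j/N)\Big),
\]
with independent Gamma variables on the right. Finally, as in the proof of Corollary \ref{Kost1}, such product moments with polynomial $h$ generate all symmetric polynomials in the $|\lambda_k|^2$ (Lemma \ref{sympol}), and since the Gamma distributions are determined by their moments, this uniquely determines the law of the unordered multiset $\{N|\lambda_2|^2,\dots,N|\lambda_N|^2\}$. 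There is no substantive obstacle here — the whole proof is bookkeeping — the only point that requires a moment of care is verifying that the determinant $\det(f_{i,j})$ really factorizes (rotational invariance) and that the normalization constant $Z_N^{(0)}$ absorbs the powers of $N$ coming from the diagonal entries.
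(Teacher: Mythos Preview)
Your proof is correct and follows essentially the same approach as the paper: apply Theorem \ref{cond1} at $z=0$ with a radial test function $g(\lambda)=h(|\lambda|^2)$, observe the matrix is diagonal with entries $f_{i,i}=N^{-i}\E(h(\gamma_{i+1}/N))$, and conclude via Lemma \ref{sympol} and moment determinacy exactly as in Corollary \ref{Kost1}. The only difference is that you spell out the normalization check $\sum_{i=1}^{N-1} i = N(N-1)/2$ and the polar-coordinate computation a bit more explicitly than the paper does.
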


\begin{proof} Similarly to the proof of Corollary \ref{cor:Kos}, we take $g \in \mathbb{C}[X]$ and the radially symmetric function $g(|\cdot|^2)$. In Theorem \ref{cond1}, we have
$$
f_{i,i}  = \frac{1}{ i!} \int |\lambda|^{2i} g(|\lambda|^2) \mu(\dd \lambda)
= {N^{-i} \over i!} \int_{r=0}^{\infty} r^{i} g(r/N) e^{-r} \dd r
 = N^{-i}\E \big[ g(\gamma_{i+1}/N) \big].
$$
This together with our expression for $Z_N^{(z=0)}$ in Theorem \ref{cond1} yields
\begin{align*} 
\E\left(\prod_{i=2}^N g(|\lambda_i|^2) \mid \lambda_1=0\right)= \E\left(\prod_{i=2}^N g(\gamma_{i})\right)
\end{align*}
and we conclude in the same way that $ N\{ |\lambda_2|^2, \dots, |\lambda_N|^2 \} \stackrel{d}{=} \{ \gamma_2, \dots, \gamma_N\} $.
\end{proof}

\noindent For the proof of the following lemma, we refer to \cite{Dub2017}.  We define the {\it product symmetric polynomials} as the symmetric polynomials given by products of polynomials in one variable:
$$ \mathrm{PS}_{\mathbb{C}}(N) = \left\{ \ \prod_{i=1}^N P(X_i) \ | \ P \in \mathbb{C} [X] \ \right\}$$ 
\begin{lemma}\label{sympol}
$\mathrm{PS}_{\mathbb{C}}(N)$ spans the vector space of symmetric polynomials of $N$ variables.
\end{lemma}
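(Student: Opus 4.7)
The plan is to show that the span $V\subseteq \mathbb{C}[X_1,\dots,X_N]^{S_N}$ of $\mathrm{PS}_{\mathbb{C}}(N)$ contains every product $e_{k_1}\cdots e_{k_M}$ of elementary symmetric polynomials $e_k=e_k(X_1,\dots,X_N)$; once this is established, the fundamental theorem of symmetric polynomials (the $e_k$'s generate the ring of symmetric polynomials as an algebra) immediately implies $V$ is the whole space of symmetric polynomials.

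First I would observe that for any $t\in\mathbb{C}$, the choice $P(X)=1+tX$ yields
$$\prod_{i=1}^N P(X_i)=\prod_{i=1}^N(1+tX_i)=\sum_{k=0}^N e_k\, t^k\in\mathrm{PS}_{\mathbb{C}}(N)\subseteq V.$$
Since this holds for every $t$ and $V$ is a vector space, specializing to $N+1$ distinct values of $t$ and inverting the resulting Vandermonde system (i.e.\ Lagrange interpolation, linear operations that stay inside $V$) extracts each $e_k$ individually. So $e_0,\dots,e_N\in V$.

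Next, to capture products, I would iterate this trick: for arbitrary parameters $t_1,\dots,t_M\in\mathbb{C}$, apply the definition with $P(X)=\prod_{j=1}^M (1+t_jX)$, which is again a one-variable polynomial. Then
$$\prod_{i=1}^N P(X_i)=\prod_{j=1}^M\prod_{i=1}^N(1+t_jX_i)=\prod_{j=1}^M\Bigl(\sum_{k=0}^N e_k\, t_j^k\Bigr)=\sum_{k_1,\dots,k_M=0}^N e_{k_1}\cdots e_{k_M}\, t_1^{k_1}\cdots t_M^{k_M}\in V.$$
Viewing the right-hand side as a $V$-valued polynomial in $(t_1,\dots,t_M)$, a multivariate Vandermonde/interpolation argument (apply the univariate version in each $t_j$ successively) shows that each monomial coefficient $e_{k_1}\cdots e_{k_M}$ belongs to $V$.

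Combining steps two and three with the fundamental theorem of symmetric polynomials gives the result. The only mildly delicate point is the multivariable interpolation in step three, but it reduces by induction on $M$ to the univariate case, so there is no real obstacle.
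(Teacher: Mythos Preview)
Your argument is correct. The paper does not actually supply a proof of this lemma; it only writes ``For the proof of the following lemma, we refer to \cite{Dub2017}.'' Your interpolation approach --- extracting the elementary symmetric polynomials and their products from the generating identities $\prod_i(1+tX_i)$ and $\prod_i\prod_j(1+t_jX_i)$ via (iterated) Vandermonde inversion, then invoking the fundamental theorem of symmetric polynomials --- is a clean self-contained proof that fills this gap. The inductive reduction of the multivariate interpolation to the univariate case is indeed routine, since the Lagrange coefficients extracting the $t_1^{k_1}$-term depend only on the chosen nodes and not on $t_2,\dots,t_M$.
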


%

\setcounter{equation}{0}
\setcounter{theorem}{0}
\renewcommand{\theequation}{A.\arabic{equation}}
\renewcommand{\thetheorem}{A.\arabic{theorem}}
\appendix
\setcounter{secnumdepth}{0}
\section{Appendix A\ \ \ Eigenvalues dynamics}\label{app:dyn}

This Appendix derives the Dyson-type dynamics for eigenvalues of nonnormal matrices. More precisely, we consider the Ornstein-Uhlenbeck version so that the equilibrium measure is the (real or complex) Ginibre ensemble. These dynamics take a particularly simple form in the case of complex Gaussian addition, where the drift term shows no interaction between eigenvalues: only the correlation of martingale terms is responsible for eigenvalues repulsion. 

We also describe natural dynamics with equilibrium measure given by the real Ginibre ensemble. Then, the eigenvalues evolution is more intricate.

It was already noted in \cite{BurGreNowTarWar2014} that  eigenvectors impact the eigenvalues dynamics for nonnormal matrices, and the full dynamics in the complex case have been written down in \cite{GrelaWarchol}.

\subsection{Complex Ginibre dynamics.}\  Let $G(0)$ be a complex matrix of size $N$, assumed to be diagonalized as $YGX = \Delta = \mathrm{Diag}(\lambda_1,\dots,\lambda_N) $, where $X,Y$ are the matrices of the right- and left-eigenvectors of $G(0)$. We also assume that $G(0)$ has simple spectrum, and $X,Y$ invertible. The right eigenvectors $(x_i)$ are the columns of $X$, and the left-eigenvectors $(y_j)$ are the rows of $Y$. They are chosen uniquely such that $XY=I$ and, for any  
$1\leq k\leq N$, 
$X_{kk}=1$.

We now consider the complex Dyson-type dynamics: for any $1\leq i,j\leq N$,
\begin{equation}\label{eqn:dynamics}
\rd G_{ij}(t)=\frac{\rd B_{ij}(t)}{\sqrt{N}}-\frac{1}{2}G_{ij}(t)\rd t,
\end{equation}
where the $B_{ij}$'s are independent standard complex Brownian motions: $\sqrt{2}\re(B_{ij})$ and $\sqrt{2}\im(B_{ij})$
are standard real Brownian motions. One can easily check that $G(t)$ converges to the Ginibre ensemble as $t\to\infty$,
with normalization (\ref{eqn:Gini}).

In the following, the bracket of two complex martingales $M,N$ is defined by bilinearity: $\langle M,N\rangle=\langle\re M,\re N\rangle-\langle\im M,\im N\rangle
+\ii\langle \re M,\im N\rangle+\ii\langle\im M,\re N\rangle$.

\begin{proposition}\label{prop:dynamics} The spectrum $(\lambda_1(t),\dots , \lambda_n(t))$ is a semimartingale satisfying the system of equations
$$ \dd \lambda_k(t) = \rd M_k(t)  -  \frac{1}{2} \lambda_k(t)\dd t  $$
where the martingales $(M_k)_{1\leq k\leq N}$ have brackets $\langle M_i,M_j\rangle=0$ and
$$
\rd\langle M_i,\overline{M_j}\rangle_t=\mathscr{O}_{ij}(t)\frac{\rd t}{N}.
$$ 
\end{proposition}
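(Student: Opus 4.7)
The plan is to apply Itô's formula to each eigenvalue $\lambda_k$, regarded as a function of the entries $(G_{pq})$, and then simply collect the martingale brackets. The starting point is standard first-order perturbation theory: if $G$ has simple spectrum and we use the biorthogonal eigenvectors $R_k, L_k$ normalized by $L_k^{\rm t} R_k = 1$, then
\begin{equation*}
\partial_{G_{pq}}\lambda_k = (L_k)_p\, (R_k)_q,
\end{equation*}
so $\lambda_k$ is locally a holomorphic function of the complex entries $G_{pq}$ on the open set where the spectrum is simple. Since eigenvalue collisions are of complex codimension one and the Ginibre measure is absolutely continuous, a standard argument shows that along (\ref{eqn:dynamics}) the spectrum stays simple for all $t$ almost surely, so this local description is enough.

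Next I would invoke Itô's formula for a holomorphic function of several complex Brownian motions. Because the driving noises satisfy $d\langle B_{pq}, B_{p'q'}\rangle = 0$ (only $d\langle B_{pq},\overline{B_{p'q'}}\rangle = \delta_{pp'}\delta_{qq'}\,dt$ is nonzero), the Itô correction term vanishes identically, and one gets
\begin{equation*}
d\lambda_k \;=\; \sum_{p,q}\partial_{G_{pq}}\lambda_k\,dG_{pq} \;=\; L_k^{\rm t}\,dG\,R_k.
\end{equation*}
Substituting the SDE (\ref{eqn:dynamics}) and using $L_k^{\rm t} G R_k = \lambda_k$, this splits as
\begin{equation*}
d\lambda_k \;=\; \underbrace{\tfrac{1}{\sqrt N}\, L_k^{\rm t}\, dB\, R_k}_{\displaystyle =: dM_k} \;-\; \tfrac{1}{2}\lambda_k\,dt,
\end{equation*}
which already yields the drift part of the statement.

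Finally I would compute the brackets of the continuous local martingales $M_k$ directly from the definition. Using $\langle B_{pq}, B_{p'q'}\rangle = 0$ gives $d\langle M_i, M_j\rangle = 0$. For the conjugate bracket,
\begin{equation*}
d\langle M_i, \overline{M_j}\rangle_t \;=\; \frac{1}{N}\sum_{p,q}(L_i)_p\overline{(L_j)_p}\,(R_i)_q\overline{(R_j)_q}\,dt \;=\; \frac{(L_j^* L_i)(R_j^* R_i)}{N}\,dt \;=\; \frac{\mathscr{O}_{ij}(t)}{N}\,dt,
\end{equation*}
which is exactly the claim. Note that all factors implicitly depend on $t$ through $L_k(t), R_k(t)$, but this is harmless: the identity $\partial_{G_{pq}}\lambda_k(G) = (L_k)_p(R_k)_q$ is pointwise in $G$, and Itô's formula handles the time-dependence automatically.

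The only subtle point will be verifying that the holomorphic Itô formula applies globally along the trajectory, i.e. justifying that the simple-spectrum event has full measure for all $t\ge 0$ and that no boundary terms arise from near-collisions. This can be handled by a standard stopping-time argument: introduce $\tau_\epsilon = \inf\{t:\min_{i\ne j}|\lambda_i(t)-\lambda_j(t)|\le\epsilon\}$, derive the SDE up to $\tau_\epsilon$ where all objects are smooth, and then let $\epsilon\to 0$, using that $\mathbb{P}(\tau_\epsilon<T)\to 0$ (which follows from the absolute continuity of the law of $G(t)$ with respect to Lebesgue measure for $t>0$ together with the fact that the collision set has positive codimension).
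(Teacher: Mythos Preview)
Your proof is correct and follows the same overall strategy as the paper: first-order perturbation formula for eigenvalues, It\^o's formula up to a stopping time $\tau_\epsilon$, the no-collision lemma to let $\epsilon\to 0$, and direct computation of the brackets. The one substantive difference is in how the It\^o correction is handled. You invoke holomorphicity of $\lambda_k$ in the complex entries $G_{pq}$ to conclude immediately that the second-order term vanishes (since $\langle B_{pq},B_{p'q'}\rangle=0$). The paper instead works with real and imaginary parts separately, computes the second derivatives explicitly (Lemma~\ref{lem:secondorder}) as
\[
\partial_{\re G_{ab}}^2\lambda_k = 2\sum_{l\neq k}\frac{Y_{ka}X_{bl}Y_{la}X_{bk}}{\lambda_k-\lambda_l},\qquad
\partial_{\im G_{ab}}^2\lambda_k = -\partial_{\re G_{ab}}^2\lambda_k,
\]
and observes the cancellation in the Laplacian. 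Your shortcut is cleaner for the complex case; the paper's explicit computation is what one needs for the real Ginibre dynamics treated immediately afterwards (Proposition~\ref{prop:dynamicsReal}), where the drift does \emph{not} cancel and the second-order term produces the interaction $\sum_{l\neq k}\mathscr{O}_{k\bar l}/(\lambda_k-\lambda_l)$.
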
 

\begin{remark}
As explained below, this equation (in particular the off-diagonal brackets) is coherent with the eigenvalues repulsion observed in (\ref{eqn:partitionfunction}).
Contrary to the Hermitian Dyson Brownian motion, all eigenvalues are martingales (up to the Ornstein Uhlenbeck drift term), so that their repulsion is not due to direct mutual interaction, but to correlations between these martingales at the microscopic scale.
 
For example, assume that $G(0)$ is already at equilibrium. Using physics conventions, for any bulk eigenvalues $\la_1,\la_2$ satisfying $w=\OO(1)$ (remember $w=\sqrt{N}(\lambda_1-\lambda_2)$), Proposition \ref{prop:dynamics} and Theorem \ref{thm:meso2} imply
$$
\E\left(\rd\lambda_1\rd \overline{\lambda_2}\mid \lambda_1=z_1,\lambda_2=z_2\right)\sim \E(\mathscr{O}_{12}\mid  \lambda_1=z_1,\lambda_2=z_2)\frac{\rd t}{N}\sim-(1-|z_1|^2)\frac{1}{|w|^4}\frac{1-(1+|\omega|^2)e^{-|\omega|^2}}{1-e^{-|\omega|^2}}\rd t$$
in the bulk.
By considering the real part in this equation and denoting $\rd\lambda_1=\rd x_1+\ii\rd y_1$, $\rd\lambda_2=\rd x_2+\ii \rd y_2$, we have in particular 
$\E(\rd x_1\rd x_2+\rd y_1\rd y_2)<0$, and this negative correlation is responsible for repulsion: the eigenvalues tend to move in opposite directions. Moreover, as eigenvalues get closer on the microscopic scale, $w\to 0$ and the repulsion gets stronger:
$$
\E\left(\rd\lambda_1\rd \overline{\lambda_2}\mid \lambda_1=z_1,\lambda_2=z_2\right)\sim -\frac{1-|z_1|^2}{|w|^2}\rd t.
$$

On the other hand, for  mesoscopic scale $N^{-1/2}\ll|\lambda_1-\lambda_2|$, Proposition \ref{prop:dynamics} and Theorem \ref{thm:meso2} give
$
\E\left(\rd\lambda_1\rd \overline{\lambda_2}\right)\sim -\frac{(1-|\lambda_1|^2)}{N^2|\lambda_1-\lambda_2|^4}\rd t=\oo(\rd t)$,
so that increments are uncorrelated for large $N$.
\end{remark}

For a given differential operator $f\mapsto f'$, we introduce the matrix $C=X^{-1}X'$. 
Along the following lemmas, all eigenvalues are assumed to be distinct. In our application, this spectrum simplicity will hold almost surely for any $t\geq 0$ as $G(0)$ has simple spectrum. 

\begin{lemma} We have $X'=XC$ and $Y'=-CY$. \end{lemma}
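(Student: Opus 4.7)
The statement is essentially a definitional unpacking together with standard differentiation of the inverse relation, so the plan is very short.

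First, I would observe that the equation $X' = XC$ is immediate from the definition $C = X^{-1} X'$: left-multiplying by $X$ gives the claim, and this step uses nothing beyond invertibility of $X$, which holds by the standing assumption of simple spectrum.

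For the second equation, the plan is to differentiate the relation $YX = \mathrm{Id}$, which holds by the normalization $XY = I$ (equivalently $Y = X^{-1}$). Product rule yields
\[
Y' X + Y X' = 0,
\]
so $Y' = -Y X' X^{-1}$. Substituting $X' = XC$ from the first part gives
\[
Y' = -Y(XC) X^{-1} = -(YX)\, C\, Y = -C Y,
\]
where the last step uses $YX = \mathrm{Id}$ and $X^{-1} = Y$.

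There is no real obstacle: both identities are purely algebraic consequences of $Y = X^{-1}$ and the definition of $C$. The only mild subtlety to flag is that one must know $X(t)$ (and hence $Y(t)$) is differentiable in $t$ wherever the spectrum is simple, but this follows from the analytic implicit function theorem applied to the characteristic polynomial and the normalization $X_{kk} = 1$, and will be used implicitly throughout the subsequent derivation of the eigenvalue dynamics.
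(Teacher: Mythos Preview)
Your proof is correct and essentially identical to the paper's: the first identity is read off from the definition of $C$, and the second comes from differentiating the inverse relation. The only cosmetic difference is that the paper differentiates $XY=I$ (obtaining $Y' = -X^{-1}X'Y = -CY$ in one step) while you differentiate $YX=I$; both are equivalent.
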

\begin{proof} The first equality is the definition of $C$. For the second one, $XY=I$ gives $XY'+X'Y=0$, hence $Y' = - X^{-1} X' Y = -CY.$ \end{proof}

\begin{lemma}\label{lem:firstorder} The first order perturbation of eigenvalues is given by $\lambda_k ' = y_k G' x_k$. \end{lemma}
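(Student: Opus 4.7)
The plan is to differentiate the defining relation for $\lambda_k$ and exploit biorthogonality of the left and right eigenvectors to eliminate unwanted contributions.

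I would start from the right eigenvalue equation $G x_k = \lambda_k x_k$, which is valid throughout a neighborhood in which $G$ remains diagonalizable with simple spectrum (the assumption in force). Applying the derivation operator $f \mapsto f'$ to both sides yields
\begin{equation*}
G' x_k + G x_k' = \lambda_k' x_k + \lambda_k x_k'.
\end{equation*}
This is an identity of vectors in $\mathbb{C}^N$; at this stage $x_k'$ is a legitimate-but-unknown contribution that I want to remove.

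To kill the unknown $x_k'$ terms, I would left-multiply by the corresponding left eigenvector $y_k$. Using $y_k G = \lambda_k y_k$ and the biorthonormality $y_k x_k = 1$ (which holds by our choice of normalization $XY = I$), the equation becomes
\begin{equation*}
y_k G' x_k + \lambda_k y_k x_k' = \lambda_k' + \lambda_k y_k x_k'.
\end{equation*}
The $\lambda_k y_k x_k'$ terms cancel, giving $\lambda_k' = y_k G' x_k$, as claimed.

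There is essentially no obstacle: the argument is classical first-order perturbation theory and requires only the simplicity of the spectrum (so that the biorthogonal dual $y_k$ to $x_k$ is well defined and $y_k x_k \neq 0$) together with the chosen normalization $y_k x_k = 1$. When this lemma is subsequently applied to the SDE (\ref{eqn:dynamics}), the operator $f \mapsto f'$ will be interpreted in the stochastic (It\^o) sense, but the algebraic manipulation above is unchanged at the level of first-order increments; second-order It\^o corrections contribute to $\dd \lambda_k$ only through the drift and will be handled separately in the derivation of Proposition \ref{prop:dynamics}.
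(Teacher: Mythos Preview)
Your argument is correct. It is the classical single-eigenvalue version of first-order perturbation theory: differentiate $Gx_k=\lambda_k x_k$, pair with $y_k$, and use $y_kG=\lambda_k y_k$ together with $y_kx_k=1$ to cancel the $x_k'$ terms.

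The paper's proof is organized slightly differently: it differentiates the full matrix identity $\Delta=YGX$ and, using $X'=XC$ and $Y'=-CY$, obtains $\Delta'=YG'X+[\Delta,C]$, from which the diagonal entry gives $\lambda_k'=y_kG'x_k$ since $[\Delta,C]_{kk}=0$. The two arguments are equivalent at the level of ideas; the paper's matrix formulation has the minor advantage that the same identity $\Delta'=YG'X+[\Delta,C]$ immediately yields the off-diagonal entries $C_{ij}$ used in the subsequent lemmas, whereas your eigenvector-by-eigenvector approach would require a separate (equally short) computation for those.
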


\begin{proof} We have $\Delta' = (YGX)' = Y'GX + YG'X + YGX' = YG'X + YGXC - C YGX =  YG'X + \Delta C - C \Delta= YG'X + [\Delta,C]$. Therefore $\lambda_k' = (YG'X)_{kk} + [\Delta,C]_{kk} = y_k G' x_k$.  \end{proof}

\begin{lemma} For any  $i \neq j$, $C_{ij} = {y_i G' x_j \over \lambda_j - \lambda_i}$. \end{lemma}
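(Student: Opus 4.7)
The plan is to reuse the matrix identity already extracted in the proof of the previous lemma, namely
\begin{equation*}
\Delta' = YG'X + [\Delta, C],
\end{equation*}
and simply read off the off-diagonal entries. On the left-hand side, $\Delta' = \mathrm{diag}(\lambda_1',\dots,\lambda_N')$ is diagonal, so $(\Delta')_{ij} = 0$ whenever $i \neq j$. Thus the identity reduces, off the diagonal, to $(YG'X)_{ij} + [\Delta, C]_{ij} = 0$.

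Next I would compute $[\Delta, C]_{ij}$ explicitly using that $\Delta$ is diagonal: $(\Delta C)_{ij} = \lambda_i C_{ij}$ and $(C\Delta)_{ij} = C_{ij}\lambda_j$, hence $[\Delta, C]_{ij} = (\lambda_i - \lambda_j) C_{ij}$. Substituting, we get $(\lambda_i - \lambda_j) C_{ij} = -(YG'X)_{ij} = -y_i G' x_j$, and dividing by $\lambda_i - \lambda_j$ (which is nonzero under the simple-spectrum assumption already in force) yields
\begin{equation*}
C_{ij} = \frac{y_i G' x_j}{\lambda_j - \lambda_i}.
\end{equation*}

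There is essentially no obstacle here: the lemma is a one-line consequence of the commutator identity, with the only ingredient being the diagonality of $\Delta$ and the biorthogonal normalization $XY = I$ that was used to derive that identity. The content of the statement is really that the off-diagonal part of $C = X^{-1} X'$ is completely determined by the first-order perturbation of $G$ projected onto eigenvectors, in contrast to the diagonal entries $C_{kk}$ which depend on the chosen normalization of the eigenvectors (here $X_{kk} = 1$) and will need to be determined separately by differentiating this normalization constraint.
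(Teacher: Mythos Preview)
Your proposal is correct and follows essentially the same route as the paper: both use the identity $\Delta' = YG'X + [\Delta,C]$ from the preceding lemma, note that $\Delta'_{ij}=0$ off the diagonal, compute $[\Delta,C]_{ij}=(\lambda_i-\lambda_j)C_{ij}$, and solve for $C_{ij}$.
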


\begin{proof} For such $i$, $j$, $\Delta'_{ij} = 0$. With the same computation as in the previous lemma, this gives
$(YG'X)_{ij} + [\Delta,C]_{ij}=0$.
Thus $(\lambda_i - \lambda_j) C_{ij} = - (YG'X)_{ij} = - y_i G' x_j$, from which the result follows.  
\end{proof}

\begin{lemma} For any $1\leq k\leq N$, $C_{kk} = - \sum_{l \neq k} X_{kl} {y_l G' x_k \over \lambda_k - \lambda_l}$. \end{lemma}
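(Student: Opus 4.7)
The plan is to exploit the normalization convention $X_{kk}=1$ for all $k$, which was fixed at the outset of the appendix together with $XY=I$. Since $X_{kk}$ is constant in $t$ (or more generally under the abstract differentiation operator), we get $X'_{kk}=0$.

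Now read off $X'_{kk}$ from the identity $X'=XC$ of the first lemma: the $(k,k)$ entry gives
\begin{equation*}
0 = X'_{kk} = \sum_{l=1}^{N} X_{kl}\, C_{lk} = C_{kk} + \sum_{l\neq k} X_{kl}\, C_{lk},
\end{equation*}
where I used $X_{kk}=1$ to isolate the diagonal term. This rearranges to
\begin{equation*}
C_{kk} = -\sum_{l\neq k} X_{kl}\, C_{lk}.
\end{equation*}

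Finally, substitute the off-diagonal formula $C_{lk}=\frac{y_l G' x_k}{\lambda_k-\lambda_l}$ from the previous lemma (valid because all eigenvalues are assumed distinct) to obtain the claimed identity. The step is essentially bookkeeping; the only thing to be careful about is that the normalization chosen in the setup of the Appendix is exactly the one that kills $X'_{kk}$, and that the denominator $\lambda_k-\lambda_l$ in the off-diagonal $C_{lk}$ matches the sign convention in the statement.
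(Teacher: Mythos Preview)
Your proof is correct and follows essentially the same approach as the paper: use the normalization $X_{kk}=1$ to get $X'_{kk}=0$, read off the $(k,k)$ entry of $X'=XC$ to isolate $C_{kk}$, and then substitute the off-diagonal formula $C_{lk}=\frac{y_l G' x_k}{\lambda_k-\lambda_l}$ from the previous lemma.
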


\begin{proof} We use the assumption $ X_{kk}=1$. From this, and the definition of $C$, we get
$$X'_{kk}=0=(XC)_{kk} = \sum_{l=1}^n X_{kl} C_{lk} = X_{kk} C_{kk} + \sum_{l \neq k} X_{kl} C_{lk}.$$
As a consequence, 
$C_{kk} = - \sum_{l \neq k} X_{kl} C_{lk}$
and  we obtain
the result thanks to the previous lemma. 
\end{proof}

From now on the differential operator will be either $\partial_{\re G_{ab}}$ ($G' = E_{ab}= \{ \delta_{ia} \delta_{jb} \}_{1\leq i,j\leq N}$), or $\partial_{\im G_{ab}}$, ($G' = \ii E_{ab})$. In both cases, $G''=0$. We denote $C^{\re}$ and $C^{\im}$ accordingly. In particular, for any $k$ and $i\neq j$ the following holds:
\begin{align}& \partial_{\re G_{ab}} \lambda_k  = Y_{ka} X_{b,k},\ 
\partial_{\im G_{ab}}\lambda_k = \ii Y_{ka} X_{b,k} \notag\\
&C_{ij}^{\re}  = {Y_{ia} X_{bj} \over \lambda_j - \lambda_i},\ 
 C_{kk}^{\re} = - \sum_{l \neq k} X_{kl} { Y_{la} X_{b,k} \over \lambda_k - \lambda_l}, \ 
C_{ij}^{\im}  = \ii {Y_{ia} X_{bj} \over \lambda_j - \lambda_i}, \ 
 C_{kk}^{\im} = - \ii \sum_{l \neq k} X_{kl} { Y_{la} X_{b,k} \over \lambda_k - \lambda_l}. \label{eqn:formulaC}
 \end{align}

\begin{lemma} We have 
\begin{align*}
&\partial_{\re G_{ab}} X_{ij} = \sum_{l \neq j}  (X_{il}- X_{ij} X_{jl} ) {Y_{la} X_{bj} \over \lambda_j - \lambda_l},\ \ 
\partial_{\im G_{ab}} X_{ij} = \ii \sum_{l \neq j}  (X_{il}- X_{ij} X_{jl} ) {Y_{la} X_{bj} \over \lambda_j - \lambda_l}. \\
\partial_{\re G_{ab}} Y_{ij} &= \sum_{l \neq i}  \frac{1}{ \lambda_i - \lambda_l} (X_{il}  Y_{la} X_{b,i} Y_{ij} +  Y_{ia} X_{bl} Y_{lj}),\ \ 
  \partial_{\im G_{ab}} Y_{ij} = \ii \sum_{l \neq i}  \frac{1}{ \lambda_i - \lambda_l} (X_{il}  Y_{la} X_{b,i} Y_{ij} +  Y_{ia} X_{bl} Y_{lj}).
\end{align*}
\end{lemma}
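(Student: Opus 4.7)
The key point is that all four formulas follow mechanically from the two identities proved immediately above the statement, namely $X'=XC$ and $Y'=-CY$, combined with the explicit entries of $C$ computed in the displayed formulas preceding the lemma. So my plan is simply to plug in and split off the diagonal contribution, whose treatment is exactly where the normalization convention $X_{kk}=1$ enters.

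For the right eigenvectors, I would write, taking first $G'=E_{ab}$,
\[
\partial_{\re G_{ab}} X_{ij} = (XC^{\re})_{ij} = X_{ij} C^{\re}_{jj} + \sum_{l\neq j} X_{il} C^{\re}_{lj},
\]
then substitute $C^{\re}_{lj}=\frac{Y_{la}X_{bj}}{\lambda_j-\lambda_l}$ (for $l\neq j$) and $C^{\re}_{jj}=-\sum_{l\neq j}X_{jl}\frac{Y_{la}X_{bj}}{\lambda_j-\lambda_l}$. Factoring out $\frac{Y_{la}X_{bj}}{\lambda_j-\lambda_l}$ in the sum indexed by $l\neq j$ yields exactly
\[
\partial_{\re G_{ab}} X_{ij} = \sum_{l\neq j}\bigl(X_{il}-X_{ij}X_{jl}\bigr)\frac{Y_{la}X_{bj}}{\lambda_j-\lambda_l}.
\]
The imaginary version is then immediate, because taking $G'=\ii E_{ab}$ only multiplies every entry of $C$ (both the off-diagonal and the diagonal pieces) by $\ii$, hence the whole expression picks up a single factor of $\ii$.

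For the left eigenvectors, I would apply the same idea to $Y'=-CY$, now splitting on the $i$-index since it is the diagonal $C_{ii}$ that is constrained by the normalization. Writing
\[
\partial_{\re G_{ab}} Y_{ij} = -(C^{\re}Y)_{ij} = -C^{\re}_{ii}Y_{ij} -\sum_{l\neq i} C^{\re}_{il}Y_{lj},
\]
and inserting $C^{\re}_{ii}=-\sum_{l\neq i}X_{il}\frac{Y_{la}X_{bi}}{\lambda_i-\lambda_l}$ and $C^{\re}_{il}=\frac{Y_{ia}X_{bl}}{\lambda_l-\lambda_i}$, I can combine the two sums (flipping the sign in the denominator of the second one) to obtain
\[
\partial_{\re G_{ab}} Y_{ij} = \sum_{l\neq i}\frac{1}{\lambda_i-\lambda_l}\bigl(X_{il}Y_{la}X_{bi}Y_{ij}+Y_{ia}X_{bl}Y_{lj}\bigr),
\]
and again the imaginary derivative is the same expression times $\ii$.

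There is no genuine obstacle in the proof: everything is linear algebra that has already been set up in the preceding sequence of lemmas. The only subtlety worth flagging is the interpretation of the index: the sums here are finite sums indexed by eigenvalue labels, and the assumption of spectrum simplicity (which holds almost surely along the flow as noted in the surrounding discussion) is precisely what ensures all denominators $\lambda_j-\lambda_l$ are nonzero.
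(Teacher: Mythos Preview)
Your proposal is correct and follows essentially the same approach as the paper: compute $(XC)_{ij}$ (resp.\ $-(CY)_{ij}$), split off the diagonal term $C_{jj}$ (resp.\ $C_{ii}$), and substitute the explicit formulas for the entries of $C$ from the preceding display. The paper only writes out the $X$ computation and says the remaining cases are similar, whereas you spell out the $Y$ case as well, but the argument is identical.
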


\begin{proof} Below is the computation for $ \partial_{\re G_{ab}}X_{ij} $. We use $X'=XC$ and (\ref{eqn:formulaC}):
\begin{multline*} X'_{ij} = (XC)_{ij} 
= \sum_{l=1}^n X_{il} C_{lj} 
=  \sum_{l \neq j} X_{il} {Y_{la} X_{bj} \over \lambda_j - \lambda_l} - X_{ij} \sum_{l \neq j} X_{jl} { Y_{la} X_{bj} \over \lambda_j - \lambda_l} 
= \sum_{l \neq j}  (X_{il}- X_{ij} X_{jl} ) {Y_{la} X_{bj} \over \lambda_j - \lambda_l}.
\end{multline*}
The case $\partial_{\im G_{ab}}X_{ij}$ is obtained similarly, as are the formulas for $Y$.
 \end{proof}
 
\begin{lemma}\label{lem:secondorder} The second order perturbation of eigenvalues is given by 
$$\partial_{\re G_{ab}}^2  \lambda_k  = 2 \sum_{l \neq k}  { Y_{ka} X_{bl} Y_{la}  X_{b,k} \over \lambda_k - \lambda_l},\ \ 
\partial_{\im G_{ab}}^2  \lambda_k  = -2 \sum_{l \neq k}  { Y_{ka} X_{bl} Y_{la}  X_{b,k} \over \lambda_k - \lambda_l}. $$
\end{lemma}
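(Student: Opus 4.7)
The plan is to apply the Leibniz rule to the first-order formula from Lemma \ref{lem:firstorder}, then substitute the derivative formulas for $X$ and $Y$ from the preceding lemma, and watch two terms cancel.

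Recall $\partial_{\re G_{ab}}\lambda_k = Y_{ka}X_{bk}$. Differentiating once more and using the product rule,
\begin{equation*}
\partial_{\re G_{ab}}^{2}\lambda_k = (\partial_{\re G_{ab}}Y_{ka})\,X_{bk} + Y_{ka}\,(\partial_{\re G_{ab}}X_{bk}).
\end{equation*}
Now I substitute the two formulas from the preceding lemma, taking $(i,j)=(k,a)$ in the formula for $\partial Y$ and $(i,j)=(b,k)$ in the formula for $\partial X$. This gives four single sums over $l\neq k$, each with denominator $\lambda_k-\lambda_l$. Grouping them:
\begin{align*}
\partial_{\re G_{ab}}^{2}\lambda_k = \sum_{l\neq k}\frac{1}{\lambda_k-\lambda_l}\Big[&\,X_{kl}\,Y_{la}\,X_{bk}\,Y_{ka}\,X_{bk} + Y_{ka}\,X_{bl}\,Y_{la}\,X_{bk} \\
+{}& Y_{ka}\,X_{bl}\,Y_{la}\,X_{bk} - Y_{ka}\,X_{bk}\,X_{kl}\,Y_{la}\,X_{bk}\Big].
\end{align*}
The first and fourth terms are identical up to sign (everything commutes, these are scalars), so they cancel. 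The second and third terms are equal, producing the factor of $2$ and the claimed formula.

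For the imaginary part the computation is verbatim the same, except that each application of $\partial_{\im G_{ab}}$ introduces a factor of $\ii$ via the previous lemma. Two such factors produce an overall $-1$, yielding the stated sign. The main (and only) subtlety is the cancellation above, which is why the final answer involves only one sum rather than four; once that is noticed the identity follows by direct substitution.
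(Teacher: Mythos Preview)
Your proof is correct and essentially identical to the paper's own argument: both differentiate $\lambda_k' = Y_{ka}X_{bk}$ via the product rule, substitute the preceding lemma's formulas for $\partial X_{bk}$ and $\partial Y_{ka}$, observe that the first and fourth of the resulting four terms cancel while the middle two coincide, and handle the imaginary case by the extra factor of $\ii^2=-1$.
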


\begin{proof} We compute the perturbation for $ \partial_{\re G_{ab}} $. Differentiating $\lambda$ a second time gives
$$\lambda_k'' = y_k' G' x_k + y_k G'' x_k+ y_k G' x_k'
 = Y_{ka}' X_{b,k} + Y_{ka} X_{b,k}'.
$$
Replacing $X'$ and $Y'$ with their expressions yields
\begin{align*} \lambda_k'' & =  \sum_{l \neq k}  \frac{1}{ \lambda_k - \lambda_l} (X_{kl}  Y_{la} X_{b,k} Y_{ka} +  Y_{ka} X_{bl} Y_{la})  X_{b,k} + Y_{ka} \sum_{l \neq j}  (X_{bl}- X_{b,k} X_{kl} ) {Y_{la} X_{b,k} \over \lambda_k - \lambda_l} \\
& =  \sum_{l \neq k}  \frac{1}{ \lambda_k - \lambda_l} (X_{kl}  Y_{la} X_{b,k} Y_{ka} X_{b,k} +  Y_{ka} X_{bl} Y_{la}  X_{b,k} + Y_{ka} X_{bl} Y_{la} X_{b,k}-  Y_{ka}X_{b,k} X_{kl} Y_{la} X_{b,k}  ) \\
&= 2 \sum_{l \neq k}  { Y_{ka} X_{bl} Y_{la}  X_{b,k} \over \lambda_k - \lambda_l},
\end{align*}
which concludes the proof, the other cases being similar.
\end{proof}

For the proof of Proposition \ref{prop:dynamics}, we need the following elementary lemma.

\begin{lemma}\label{lem:Collison}
Let $\tau=\inf\{t\geq 0:\exists i\neq j, \lambda_i(t)=\lambda_j(t)\}$. Then $\tau=\infty$ almost surely.
\end{lemma}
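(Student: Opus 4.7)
The plan is to show that the discriminant variety
$$\Sigma := \{M \in M_N(\mathbb{C}) : \mathrm{disc}(\chi_M) = 0\}$$
(where $\chi_M$ denotes the characteristic polynomial of $M$) is polar for the process $(G(t))_{t \geq 0}$. Since the hypothesis that $G(0)$ has simple spectrum is equivalent to $G(0) \notin \Sigma$, establishing $\mathbb{P}(G(t) \in \Sigma \text{ for some } t > 0) = 0$ directly yields $\tau = \infty$ almost surely.

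First I would verify that $\Sigma$ is a proper complex algebraic hypersurface in $M_N(\mathbb{C}) \cong \mathbb{C}^{N^2}$, cut out by the nonzero polynomial $D(M) = \mathrm{disc}(\chi_M)$ (nonzero e.g.\ by evaluation on diagonal matrices with distinct diagonal entries). Its smooth part $\Sigma_{\mathrm{reg}}$ is then a real submanifold of real codimension exactly $2$ inside $\mathbb{R}^{2N^2}$, while the singular locus $\Sigma_{\mathrm{sing}} \subset \{D = 0\} \cap \{\nabla D = 0\}$ is a proper algebraic subvariety of $\Sigma$, and therefore has real codimension at least $4$ in $\mathbb{R}^{2N^2}$.

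Next, (\ref{eqn:dynamics}) defines a complex Ornstein--Uhlenbeck process on $M_N(\mathbb{C})$ with constant isotropic (non-degenerate) diffusion coefficient and locally bounded linear drift $-\tfrac{1}{2}G(t)$. Localizing via exit times from arbitrarily large balls and applying Girsanov's theorem, on every finite interval $[0,T]$ the law of $(G(t))$ is mutually absolutely continuous with respect to Brownian motion on $\mathbb{R}^{2N^2}$ started at $G(0)$. Consequently any set polar for Brownian motion in $\mathbb{R}^{2N^2}$ is polar for $(G(t))$ as well, and it suffices to prove the polarity of $\Sigma$ for Brownian motion.

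This last step relies on the classical potential-theoretic fact that smooth submanifolds of real codimension $\geq 2$ in $\mathbb{R}^d$ have vanishing Newtonian capacity, hence are polar for $d$-dimensional Brownian motion (see e.g.\ M\"orters--Peres, Ch.~3). Applied to the smooth stratum $\Sigma_{\mathrm{reg}}$, this gives polarity of $\Sigma_{\mathrm{reg}}$; the singular part $\Sigma_{\mathrm{sing}}$, being of real codimension $\geq 4$, is polar by a crude Hausdorff-dimension bound. A stratification argument combining these two yields polarity of $\Sigma$, completing the proof. The main delicate point is precisely the borderline real codimension $2$ of $\Sigma_{\mathrm{reg}}$: polarity there depends essentially on the local smoothness of the stratum, so the smooth/singular decomposition of the algebraic variety $\Sigma$ is what makes the potential-theoretic argument go through.
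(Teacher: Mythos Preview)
Your proof is correct and shares the paper's core idea: the set of matrices with a repeated eigenvalue has real codimension at least $2$ in $\mathbb{R}^{2N^2}$, and such sets are polar for Brownian motion (hence for the OU process). The execution differs in the stratification used. The paper decomposes by Jordan type, singling out the stratum $\mathcal{M}_1$ of matrices with a single $2\times 2$ Jordan block (complex codimension $1$) and the stratum $\mathcal{M}_2$ of diagonalizable matrices with one double eigenvalue (complex codimension $3$), citing \cite{OveWom1988,Kel2008} for these codimension counts; more degenerate Jordan structures are left implicit. Your approach via the discriminant hypersurface $\Sigma=\{D=0\}$ is more self-contained: $\Sigma$ is automatically a complex algebraic hypersurface, and the standard regular/singular decomposition of an algebraic variety furnishes the needed smooth stratification without any case analysis on Jordan types. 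You also make explicit the Girsanov localization step reducing the OU process to Brownian motion, which the paper's one-line proof leaves tacit. Both points make your argument a bit more complete, at the price of length; the paper's version is terser but leans more heavily on the cited references.
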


\begin{proof}
The set of matrices $G$ with Jordan form of type
$$
\lambda_1\oplus\dots\oplus \lambda_{N-2}\oplus
\left(\begin{array}{cc}
\lambda_{N-1}&1\\
0&\lambda_{N-1}\end{array}\right)\
({\rm respectively}\ 
\lambda_1\oplus\dots\oplus \lambda_{N-2}\oplus \lambda_{N-1}\oplus\lambda_{N-1})
$$
is a submanifold $\mathcal{M}_1$ (resp. $\mathcal{M}_2$) of $\mathbb{C}^{N^2}$ with  complex codimension $1$ (resp. $3$),  see e.g. \cite{OveWom1988,Kel2008}. Therefore, almost surely, a Brownian motion
in $\mathbb{C}^{N^2}$ starting from a diagonalizable matrix with simple spectrum will not hit  $\mathcal{M}_1$ or $\mathcal{M}_2$. This concludes the proof.
\end{proof}

All derivatives can therefore
be calculated, as eigenvalues and eigenvectors are analytic functions of the matrix entries (see \cite{Kat1980}).

\begin{proof}[Proof of Proposition \ref{prop:dynamics}] 
In our context,the It\^o formula will take the following form: for a function $f$ from $\mathbb{C}^n$ to $\mathbb{C}$ of class $C^2$, where $B_t=(B_t^1,\dots,B_t^n)$ is made of independent  standard complex Brownian motions, we have
\begin{equation}\label{eqn:ItoCom} \dd f (B_t) = \sum_{i=1}^n \Bigg({\partial f \over \partial \re z_i } \dd \re B_t^{i} + {\partial f \over \partial \im z_i} \dd \im B_t^{i} \Bigg)+ \frac{1}{ 2} \Bigg(\sum_{i=1}^n {\partial^2 f \over \partial \re {z_i}^2 } + {\partial^2 f \over \partial {\im z_i}^2 } \Bigg) \dd t.\end{equation}
For any given $0<\e<\min\{|\lambda_i(0)-\lambda_j(0)|,i\neq j\}$, let
\begin{equation}\label{eqn:taue}
\tau_{\e}=\inf\{t\geq 0:\exists i\neq j, |\lambda_i(t)-\lambda_j(t)|<\e\}.
\end{equation}
Eigenvalues are smooth functions of the matrix coefficients on the domain $\cap_{i<j}\{|\lambda_i-\lambda_j|>\e\}$, so that
equation (\ref{eqn:ItoCom}) together with Lemmas \ref{lem:firstorder} and \ref{lem:secondorder} gives the following equality of stochastic integrals, with substantial cancellations of the drift term:
\begin{multline*} \dd \lambda_k(t\wedge\tau_{\e})  = \sum_{i,j=1}^n Y_{ki} X_{jk} \left(\frac{\dd B_{ij}(t\wedge\tau_{\e})}{\sqrt{N}}  -  \frac{G_{ij} }{2}\dd (t\wedge\tau_\e)\right) + 
\frac{1}{N} \sum_{i,j,\ell\neq k}\Bigg(   { Y_{ki} X_{jl} Y_{li}  X_{jk} \over \lambda_k - \lambda_l} -  { Y_{ki} X_{jl} Y_{li}  X_{jk} \over \lambda_k - \lambda_l} \Bigg) \dd (t\wedge\tau_{\e})  \\
=  \sum_{i,j=1}^n Y_{ki} X_{jk} \frac{\dd B_{ij}(t\wedge\tau_{\e})}{\sqrt{N}}  -\frac{1}{2}  \sum_{i,j=1}^n Y_{ki} G_{ij} X_{jk} \dd (t\wedge\tau_{\e}) 
=  \sum_{i,j=1}^n Y_{ki} X_{jk} \frac{\dd B_{ij}(t\wedge\tau_{\e})}{\sqrt{N}}  -\frac{1}{2}   \lambda_k \dd (t\wedge\tau_{\e}).
\end{multline*}
Taking $\e\to 0$ in the above equation together with Lemma \ref{lem:Collison} yields
$$
\dd \lambda_k(t)= \sum_{i,j=1}^n Y_{ki} X_{jk} \frac{\dd B_{ij}(t)}{\sqrt{N}}  -\frac{1}{2}   \lambda_k \dd t.
$$
The eigenvalues martingales terms are correlated. Their brackets are 
\begin{align} &\dd \langle \lambda_i, \bar\lambda_j \rangle_t = \frac{1}{N} \sum_{a,b,c,d=1}^n Y_{ia} X_{b,i} \overline{Y_{jc} X_{d,j}} \dd \langle B_{ab}, \overline{ \dd B_{cd}}\rangle_t=  (X^{\rm t}\overline{X})_{ij} (Y Y^*)_{ij} \frac{\dd t}{N} = \mathscr{O}_{ij}(t) \frac{\dd t}{N},\\ 
&\dd \langle \lambda_i, \lambda_j \rangle_t=0.
\end{align}
This concludes the proof.
\end{proof}

\subsection{Proof of Corollary \ref{cor:diffusive}.}\ 
Let $A_{t,\e}=\{\sup_{0\leq s\leq t}|\la_1(s)-\la_1(0)|<N^\e t^{1/2}\}$. We start by proving that 
\begin{equation}
\mathbb{P}(A_{t,\e})=1-\oo(1).\label{eqn:scaling1}
\end{equation}
From Proposition \ref{prop:dynamics} and It{\^ o}'s formula, we have
\begin{equation}\label{eqn:It1}
e^{\frac{t}{2}}\la_1(t)-\la_1(0)=\int_0^te^{\frac{s}{2}}\rd M_1(s),
\end{equation}
which is a local martingale. It is an actual martingale because 
\begin{equation}\label{eqn:It2}
\E\left(\langle \int_0^{\cdot}e^{\frac{s}{2}}\rd M_1(s)\rangle_t\right)=\int_0^t\E\left(e^{\frac{s}{2}}\frac{\mathscr{O}_{11}(s)}{N}\rd s\right)=\OO(t)<\infty,
\end{equation}
where in the last equality we used $\E(\mathscr{O}_{11}(s))=\OO(N)$, which follows from (\ref{eqn:exact}). The estimate (\ref{eqn:scaling1}) follows by \ Doob's  and Markov's inequalities.

For (\ref{eqn:scale}), we start  with
\begin{equation}\label{eqn:It}
|e^{\frac{t}{2}}\la_1(t)-\la_1(0)|^2=2{\rm Re}\int_0^t\overline{e^{\frac{s}{2}}\la_1(s)-\la_1(0)}e^{\frac{s}{2}}\rd M_1(s) +\int_0^te^s\frac{\mathscr{O}_{11}(s)}{N}\rd s.
\end{equation}
This implies
\begin{equation}\label{eqn:lalalo}
\E\left(e^t|\la_1(t)-\la_1(0)|^2  \mathds{1}_{\{\lambda_1(0)\in \mathscr{B}\}}\right)
=
\int_0^t\E\left(e^s\frac{\mathscr{O}_{11}(s)}{N} \mathds{1}_{\{\lambda_1(0)\in \mathscr{B}\}}\right)\rd s+\oo(t).
\end{equation}
Here, we used that $({\rm Re}\int_0^t\overline{e^{\frac{s}{2}}\la_1(s)-\la_1(0)}e^{\frac{s}{2}}\rd M_1(s) )_t$ is an actual martingale, because the expectation of its bracket is
$$
\int_0^te^{s}\mathbb{E}\left(|e^{\frac{s}{2}}\lambda_1(s)-\lambda_1(0)|^2\frac{\mathscr{O}_{11}(s)}{N}\mathds{1}_{\{\lambda_1(0)\in \mathscr{B}\}}\rd s\right)
\leq
2\int_0^te^{2s}\mathbb{E}\left(|\lambda_1(s)|^2+1)\frac{\mathscr{O}_{11}(s)}{N}\rd s\right)<\infty,
$$
where for the last inequality we used (\ref{eqn:exact}).

To evaluate the right hand side of (\ref{eqn:lalalo}), we would like to change $\lambda_1(0)\in\mathscr{B}$ into $\lambda_1(s)\in\mathscr{B}$. First,
\begin{equation}\label{eqn:lalalololulu}
\left|\E\left(\frac{\mathscr{O}_{11}(s)}{N} \mathds{1}_{A_{t,\e}}\left(\mathds{1}_{\lambda_1(0)\in \mathscr{B}}-\mathds{1}_{\lambda_1(s)\in \mathscr{B}}\right)\right)\right|
\leq \E\left(\frac{\mathscr{O}_{11}(s)}{N} \mathds{1}_{{\rm dist}(\lambda_1(s),\partial \mathscr{B})\leq N^\e t^{1/2}}\right)=\OO(N^\e t^{1/2}),
\end{equation}
where for the last inequality we used (\ref{eqn:exact}), again. 
Moreover, if $1/p+1/q=1$ with $p<2$. we have
\begin{equation}\label{eqn:lalalololululili}
\E\left(\frac{\mathscr{O}_{11}(s)}{N} \mathds{1}_{(A_{t,\e})^c}\right)\leq 
\E\left(\left(\frac{\mathscr{O}_{11}(s)}{N}\right)^p\right)^{1/p}
\mathbb{P}\left((A^{(1)}_{t,\e})^c\right)^{1/q}=\oo(1),
\end{equation}
where we used  \cite[Theorem 2.3]{Fyodorov2018} to obtain that  uniformly in the complex plane and in $N$, $\mathscr{O}_{11}/N$ has finite moment of order $p<2$. Equations 
(\ref{eqn:lalalo}), (\ref{eqn:lalalololulu}) and (\ref{eqn:lalalololululili})  imply 
$$
\E\left(|\la_1(t)-\la_1(0)|^2  \mathds{1}_{\lambda_1(0)\in \mathscr{B}}\right)
=
\int_0^t\E\left(\frac{\mathscr{O}_{11}(s)}{N} \mathds{1}_{\{\lambda_1(s)\in \mathscr{B}\}}\right)\rd s+\oo(t),
$$
and one concludes the proof of (\ref{eqn:scale}) with (\ref{eqn:exact}).

The proof of (\ref{eqn:indepen}) is identical, except that we rely on the off-diagonal bracket $\rd\langle \la_1,\bar \la_2\rangle_s=\mathscr{O}_{12}(s)\frac{\rd s}{N}$, the estimate (\ref{eqn:diagok}), and the elementary inequality
$$
|\mathscr{O}_{12}|=|(R_j^* R_i)(L_j^* L_i)|\leq  \|R_j\| \|R_i\|\|L_j\| \|L_i\|\leq \frac{1}{2}\left( \|R_i\|^2 \|L_i\|^2+\|R_j\|^2\|L_j\|^2\right)=\frac{1}{2}\left(\mathscr{O}_{11}+\mathscr{O}_{22}\right)
$$
to bound the (first and $p$-th) moment of $\mathscr{O}_{12}$ in the whole complex plane based on those of $\mathscr{O}_{11}$, $\mathscr{O}_{22}$.

\subsection{Real Ginibre dynamics.}\ 
We now consider $G(0)$ a real matrix of size $N$, again assumed to be diagonalized as $YGX = \Delta = \mathrm{Diag}(\lambda_1,\dots,\lambda_N) $, where $X,Y$ are the matrices of the right- and left-eigenvectors of $G(0)$. We also assume that $G(0)$ has simple spectrum, and $X,Y$ invertible. We keep the same notations for the right eigenvectors $(x_i)$, columns of $X$, and the left-eigenvectors $(y_j)$, rows of $Y$. They are again chosen such that $XY=I$ and, for any  
$1\leq k\leq N$, 
$X_{kk}=1$.

In this subsection, the real Dyson-type dynamics are ($1\leq i,j\leq N$),
\begin{equation}\label{eqn:real}
\rd G_{ij}(t)=\frac{\rd B_{ij}(t)}{\sqrt{N}}-\frac{1}{2}G_{ij}(t)\rd t,
\end{equation}
where the $B_{ij}$'s are independent standard  Brownian motions. One can easily check that $G(t)$ converges to the real Ginibre ensemble as $t\to\infty$.

Note that the real analogue of Lemma \ref{lem:Collison}
gives weaker repulsion:
the set of real matrices with Jordan form of type
$$
\lambda_1\oplus\dots\oplus \lambda_{N-2}\oplus
\left(\begin{array}{cc}
\lambda_{N-1}&1\\
0&\lambda_{N-1}\end{array}\right)
$$
is a submanifold $\mathcal{M}_1$ of $\mathbb{R}^{N^2}$, supported on $\lambda_{N-1}\in\mathbb{R}$, with real codimension $1$ (as proved by a straightforward adaptation of \cite[Theorem 7]{Kel2008}). Denoting  $\tau=\inf\{t\geq 0:\exists i\neq j, \lambda_i(t)=\lambda_j(t)\}$, under the dynamics (\ref{eqn:real}) for any $t>0$ we therefore have
$$
\mathbb{P}(\tau<t)>0,
$$
so that we can only state the real version of Proposition \ref{prop:dynamics} up to time $\tau$. In fact, collisions occur transforming pairs of real eigenvalues into pairs of complex conjugate eigenvalues,
a mechanism coherent with the random number of real eigenvalues in the real Ginibre ensemble \cite{LehSom1991,Ede1997}.

The overlaps (\ref{eqn:overlap}) are enough to describe the complex Ginibre dynamics, and so are they for the real Ginibre ensemble, up to the introduction of the following notation:
we define $\bar i\in\llbracket 1,N\rrbracket$ through
$\lambda_{\bar i}=\overline{\lambda_i}$, i.e. $\bar i$ is the index of the conjugate eigenvalue to $\lambda_i$. Note that $\bar i=i$ if $\lambda_i\in\mathbb{R}$.
For real matrices, if $L_j,R_j$ are eigenvectors associated to $\lambda_j$,  $\bar L_j,\bar R_j$ are eigenvectors for $\bar\lambda_j$, so that
$$
\mathscr{O}_{i\bar j}=(\bar R_j^* R_i)(\bar L_j^* L_i)=(R_j^{\rm t} R_i)(L_j^{\rm t} L_i).
$$

\begin{proposition}\label{prop:dynamicsReal} The spectrum $(\lambda_1(t),\dots , \lambda_n(t))$ evolves according to the following stochastic equations, up to the first collision:
$$ \dd \lambda_k(t\wedge\tau) = \rd M_k(t\wedge\tau)+ \left(
\sum_{l\neq k}\frac{ {\mathscr{O}}_{k\bar l}}{\lambda_k-\lambda_\ell} 
-\frac{1}{2} \lambda_k\right)\dd (t\wedge\tau) $$
where the martingales $(M_k)_{1\leq k\leq N}$ have brackets
$$
\rd \langle M_i,M_j\rangle_{t\wedge\tau}={\mathscr{O}}_{i\bar j}(t)\frac{\rd (t\wedge\tau)}{N},\ \ 
\rd\langle M_i,\overline{M_j}\rangle_{t\wedge\tau}=\mathscr{O}_{ij}(t)\frac{\rd (t\wedge\tau)}{N}.$$
\end{proposition}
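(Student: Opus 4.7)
My plan is to follow the template of Proposition \ref{prop:dynamics}, with two crucial modifications due to the reality of the driving noise. The first is that Lemma \ref{lem:Collison} fails in the real case: the locus of real matrices with a double real eigenvalue, or with two coinciding complex-conjugate eigenvalues, has real codimension only one, so collisions occur with positive probability in finite time, forcing the localization at $\tau$. The second is that the It\^o correction from the second derivatives of eigenvalues no longer vanishes, and it is this surviving drift that one must identify with the overlap sum.

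First I would work up to the stopping time $\tau_\e$ of (\ref{eqn:taue}) for small $\e>0$, on which $\lambda_k$ and its eigenvectors are smooth functions of the matrix entries, so that Lemmas \ref{lem:firstorder}--\ref{lem:secondorder} apply (their derivations never used reality of $G$). It\^o's formula for $\lambda_k$ viewed as a function of the $N^2$ real variables $G_{ab}$ yields
$$
\rd \lambda_k = \sum_{a,b} Y_{ka} X_{bk}\, \rd G_{ab} + \tfrac{1}{2N}\sum_{a,b}\partial_{G_{ab}}^2\lambda_k\, \rd t,
$$
since $\rd \langle G_{ab}\rangle_t = \rd t/N$. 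The first-order part produces the martingale $M_k$ together with $-\tfrac{1}{2}(YGX)_{kk}\rd t = -\tfrac{1}{2}\lambda_k\rd t$, exactly as in the complex case. The correction term, by Lemma \ref{lem:secondorder}, equals
$$
\tfrac{1}{N}\sum_{l\neq k}\frac{1}{\lambda_k-\lambda_l}\sum_{a,b}Y_{ka}X_{bl}Y_{la}X_{bk}\rd t = \tfrac{1}{N}\sum_{l\neq k}\frac{(YY^{\rm t})_{kl}(X^{\rm t}X)_{lk}}{\lambda_k-\lambda_l}\rd t,
$$
which in the complex case was cancelled by the imaginary-part contribution but now survives.

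The key algebraic step is then to identify this coefficient as an overlap. Since $G$ is real, its eigenvectors come in conjugate pairs $R_{\bar l}=\bar R_l$, $L_{\bar l}=\bar L_l$, so $(X^{\rm t}X)_{lk} = R_l^{\rm t} R_k = R_{\bar l}^* R_k$ and $(YY^{\rm t})_{kl} = L_l^{\rm t} L_k = L_{\bar l}^* L_k$, and their product is exactly $\mathscr{O}_{k\bar l}$. The same device handles the brackets: since the $B_{ab}$ are independent real Brownian motions with $\rd\langle B_{ab},B_{cd}\rangle = \delta_{ac}\delta_{bd}\rd t$,
$$
\rd \langle M_i, M_j\rangle_t = \tfrac{1}{N}\sum_{a,b} Y_{ia}X_{bi} Y_{ja} X_{bj}\rd t = \tfrac{(YY^{\rm t})_{ij}(X^{\rm t}X)_{ij}}{N}\rd t = \mathscr{O}_{i\bar j}\frac{\rd t}{N},
$$
and analogously $\rd \langle M_i, \overline{M_j}\rangle_t = \tfrac{(YY^{*})_{ij}(X^{*}X)_{ji}}{N}\rd t = \mathscr{O}_{ij}\rd t/N$. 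One finally sends $\e\to 0$ to extend the equation to $[0,\tau]$. The main obstacle is not the It\^o calculus itself, which is a verbatim adaptation of the complex proof combined with Lemma \ref{lem:secondorder}, but the algebraic bookkeeping: one must track how the involution $l\mapsto\bar l$ converts the transpose products $(X^{\rm t}X),(YY^{\rm t})$ arising from the real noise into the conjugate-transpose products that define $\mathscr{O}$.
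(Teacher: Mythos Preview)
Your proposal is correct and follows essentially the same approach as the paper: localize at $\tau_\e$, apply It\^o's formula with the first- and second-order perturbation Lemmas \ref{lem:firstorder}--\ref{lem:secondorder}, identify the surviving drift and brackets via the involution $l\mapsto\bar l$ that converts $(X^{\rm t}X),(YY^{\rm t})$ into the adjoint products defining $\mathscr{O}$, and send $\e\to 0$. You are in fact more careful than the paper in retaining the factor $1/N$ in front of the drift sum $\sum_{l\neq k}\mathscr{O}_{k\bar l}/(\lambda_k-\lambda_l)$; the paper's displayed statement and proof both omit it, which appears to be a typo (compare the $\tfrac{1}{N}$ present in the complex case, Proposition \ref{prop:dynamics}).
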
 

Note that  the real eigenvalues have associated real eigenvectors. For those, ${\mathscr{O}}_{k\bar l}=\mathscr{O}_{kl}$, and the variation is real: real eigenvalues remain real as long as they do not collide.\\

\begin{remark}
Proposition \ref{prop:dynamicsReal} is coherent with the attraction between conjugate eigenvalues exhibited in \cite{Mov2016}. In fact, if $\eta=\im(\lambda_k)>0$, the drift interaction term with $\bar \lambda_k$ is
$\mathscr{O}_{kk}/(\lambda_k-\bar\lambda_k)=-\ii \mathscr{O}_{kk}/(2\eta)$, so that these eigenvalues attract each other stronger as they approach the real axis.
\end{remark}

For the proof, we omit the details  and only mention the  differences with respect to Proposition \ref{prop:dynamics}. We apply the It\^o formula for a $\mathscr{C}^2$ function $f$ from $\mathbb{R}^n$ to $\mathbb{C}$, with argument $U_t=(U_t^1,\dots,U_t^n)$ is made of independent Ornstein-Uhlenbeck processes.
Together with the perturbation formulas for $\lambda_k$, Lemmas \ref{lem:firstorder} and \ref{lem:secondorder}, we obtain (remember the notation (\ref{eqn:taue}))
\begin{multline*} \dd \lambda_k(t\wedge\tau_{\e})  = \sum_{i,j=1}^n Y_{ki} X_{jk} \left(\frac{\dd B_{ij}(t\wedge\tau_{\e})}{\sqrt{N}}  -  \frac{1}{2}G_{ij} \dd (t\wedge\tau_\e\right) + \sum_{i,j} \sum_{l \neq k}  { Y_{ki} X_{jl} Y_{li}  X_{jk} \over \lambda_k - \lambda_l}  \dd (t\wedge\tau_{\e})  \\
=  \sum_{i,j=1}^n Y_{ki} X_{jk} \frac{\dd B_{ij}(t\wedge\tau_{\e})}{\sqrt{N}} +\left(\sum_{l\neq k}\frac{ (X^tX)_{lk} (YY^t)_{kl}}{\lambda_k-\lambda_\ell} -\frac{1}{2}   \lambda_k \right)\dd (t\wedge\tau_{\e}).
\end{multline*}
We can take $\e\to 0$ in the above formulas and the brackets are calculated as follows, concluding the proof:
\begin{align} &\dd \langle \lambda_i, \bar\lambda_j \rangle_{t\wedge\tau} = \frac{1}{N} \sum_{a,b,c,d=1}^n Y_{ia} X_{b,i} \overline{Y_{jc} X_{d,j}} \dd \langle B_{ab}, \overline{ \dd B_{cd}}\rangle_{t\wedge\tau}=  (X^{\rm t}\overline{X})_{ij} (Y Y^*)_{ij} \frac{\dd (t\wedge\tau)}{N} = \mathscr{O}_{ij}(t) \frac{\dd (t\wedge\tau)}{N},\\ 
&\dd \langle \lambda_i, \lambda_j \rangle_{t\wedge\tau}=
 \frac{1}{N} \sum_{a,b,c,d=1}^n Y_{ia} X_{b,i} {Y_{jc} X_{d,j}} \dd \langle B_{ab},{ \dd B_{cd}}\rangle_{t\wedge\tau}=  (X^{\rm t}{X})_{ij} (Y Y^{\rm t})_{ij} \frac{\dd(t\wedge\tau)}{N} = \mathscr{O}_{i\bar j}(t) \frac{\dd (t\wedge\tau)}{N}.
\end{align}

\setcounter{equation}{0}
\setcounter{theorem}{0}
\renewcommand{\theequation}{B.\arabic{equation}}
\renewcommand{\thetheorem}{B.\arabic{theorem}}
\appendix
\setcounter{secnumdepth}{0}
\section{Appendix B\ \ \ Normalized eigenvectors}\label{app:nor}

This paper focuses on the condition numbers and off-diagonal overlaps, but the Schur decomposition also easily gives information about other statistics such as the angles between eigenvectors.
We include these results for the sake of completeness.
We denote the complex angle as
$$
\arg(\lambda_1,\lambda_2)={R_1^{*} R_2  \over \| R_1 \| \|R_2 \|},
$$
where the phases of $R_1(1)$ and $R_2(1)$ can be chosen independent uniform on $[0,2\pi)$.
We also define
$$ \Phi (z) = {z \over \sqrt{1+|z|^2} }.$$
\begin{proposition}\label{prop:angle} Conditionally on $\lambda_1=z_1,\lambda_2=z_2$, we have 
$$ \arg(\lambda_1,\lambda_2) \overset{(\rm d)}{=}\Phi \left({X\over \sqrt{N} |z_1 - z_2| }\right)$$
where $X \sim \mathscr{N}_{\mathbb{C}}(0,\frac{1}{2} {\rm Id})$.
\end{proposition}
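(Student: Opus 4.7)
The plan is to exploit the Schur decomposition already used in Proposition \ref{fund}. Write $G = UTU^*$ with $U$ Haar-distributed on the unitary group, independent of the upper-triangular matrix $T$ having structure \eqref{eqn:T}. Since $U$ is unitary, it preserves Hermitian inner products and norms, so the normalized overlap $R_1^* R_2 / (\|R_1\|\|R_2\|)$ is identical for $G$ and $T$ (once we match up the eigenvectors via $R_i^G = U R_i^T$). Consequently, it suffices to compute the angle directly on $T$.

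For $T$, the right eigenvectors associated to $\lambda_1, \lambda_2$ take the simple form $R_1 = (1,0,\dots,0)^{\rm t}$ and $R_2 = (a, 1, 0, \dots, 0)^{\rm t}$, where $a = -b_2 = -T_{12}/(\lambda_1-\lambda_2)$ as computed in the proof of Proposition \ref{fund}. A direct computation then gives
\[
R_1^* R_2 = a, \qquad \|R_1\| = 1, \qquad \|R_2\| = \sqrt{1+|a|^2},
\]
so that
\[
\frac{R_1^* R_2}{\|R_1\|\,\|R_2\|} = \frac{a}{\sqrt{1+|a|^2}} = \Phi(a).
\]

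Conditionally on $\lambda_1 = z_1$ and $\lambda_2 = z_2$, the Schur description ensures that $T_{12}$ remains independent of the eigenvalues and is distributed as $\mathscr{N}_{\mathbb{C}}(0, \tfrac{1}{2N}\mathrm{Id})$, i.e.\ $T_{12} \stackrel{(\rm d)}{=} X/\sqrt{N}$ for $X \sim \mathscr{N}_{\mathbb{C}}(0,\tfrac{1}{2}\mathrm{Id})$. Therefore
\[
a \stackrel{(\rm d)}{=} -\frac{X}{\sqrt{N}\,(z_1 - z_2)}.
\]
Since $X$ is rotationally invariant, multiplication by the fixed unit complex number $-(z_1-z_2)/|z_1-z_2|$ leaves its law unchanged, giving $a \stackrel{(\rm d)}{=} X/(\sqrt{N}|z_1-z_2|)$. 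Applying $\Phi$ (which commutes with unimodular rescaling in absolute value, though here we do not even need that, only the equality in law of $a$) yields the stated identity. The freedom in the phases of $R_1(1)$ and $R_2(1)$ mentioned in the statement corresponds exactly to the rotation invariance of the complex Gaussian $X$, so the identity in distribution holds in the form stated.

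There is no real obstacle here: the result is essentially a direct corollary of the Schur form representation, with the only subtlety being the bookkeeping of phase ambiguity in the definition of normalized eigenvectors, which is precisely absorbed by the rotational invariance of the Gaussian variable $X$.
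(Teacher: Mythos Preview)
Your proof is correct and follows essentially the same approach as the paper: reduce to the Schur form $T$, read off $R_1=(1,0,\dots,0)^{\rm t}$ and $R_2=(a,1,0,\dots,0)^{\rm t}$ with $a=-b_2=-T_{12}/(\lambda_1-\lambda_2)$, compute the normalized inner product as $\Phi(a)$, and then use independence of $T_{12}$ from the spectrum together with rotational invariance of the complex Gaussian. You are in fact a bit more explicit than the paper in justifying the passage from $-X/(\sqrt{N}(z_1-z_2))$ to $X/(\sqrt{N}|z_1-z_2|)$ via the phase invariance of $X$, which is exactly the right way to absorb the phase ambiguity built into the definition of $\arg(\lambda_1,\lambda_2)$.
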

In particular, for $\lambda_1, \lambda_2$ at mesoscopic distance, the complex angle converges in distribution to a Dirac mass at $0$. Therefore in such a setting eigenvectors strongly tend to be orthogonal: matrices sampled from the Ginibre ensemble are not far from normal, when only considering eigenvectors angles.
The limit distribution becomes non trivial in the microscopic scaling $ |\lambda_1 - \lambda_2| \sim N^{-1/2}$, it is the pushforward of a complex Gaussian measure by $\Phi$.
\begin{proof}
From Proposition \ref{fund} we know that $R_1^*R_2=R_{T,1}^*R_{T,2}$, $\|R_1\|=\|R_{T,1}\|$ and $\|R_2\|=\|R_{T,2}\|$, where $R_{T,i}$ (and $L_{T,i}$) are the normalized bi-orthogonal bases of right and left eigenvectors 
for $T$, defined as (\ref{eqn:T}). The first eigenvectors are written $R_{T,1}=(1,0,\dots,0)$ and $R_{T,2}=(a,1,0\dots,0)$ where $a=-\bar b_2=-\frac{T_{12}}{\lambda_1-\lambda_2}$, with $T_{12}$ complex Gaussian $\mathscr{N}\left(0,\frac{1}{2N}{\rm Id}\right)$, independent of $\lambda_1$ and $\lambda_2$. This gives
$$
\arg(\lambda_1,\lambda_2) =-\frac{\bar b_2}{\sqrt{1+|b_2|^2}}
$$
and concludes the proof. 
\end{proof}
From Proposition \ref{prop:angle}, the distribution of the angle for fixed $\la_1$ and random $\la_2$ can easily be inferred.
For example, if $\la_2$ is chosen uniformly among eigenvalues in a macroscopic domain $\Omega\subset\{|z|<1\}$ with nonempty interior, we obtain the convergence in distribution ($X_\Omega$ is uniform on $\Omega$, independent of $\mathscr{N}$)
$$
N | \arg(\lambda_1 ,\lambda_2) |^2 \underset{N\to\infty}{\to}\frac{|\mathscr{N}|^2}{|z_1-X_\Omega|^2}.
$$
When $z_1=0$ and $z_2$ is free, the following gives a more precise distribution, for finite $N$ and in the limit.
\begin{corollary} Conditionally on $\{ \lambda_1 =0 \}$ we have
$$N | \arg(\lambda_1 ,\lambda_2) |^2 \overset{(\rm d)}{=} N\beta_{1,U_N}\overset{(\rm d)}{\longrightarrow}X$$
where $U_N$ is an independent random variable uniform on $ \{ 2,\dots,N\}$, and $X$ has density
$ {1 - (1+t) e^{-t} \over t^2} \mathbf{1}_{\mathbb{R}_+} (t).$
\end{corollary}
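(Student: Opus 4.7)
The plan is to combine Proposition \ref{prop:angle} with the conditional version of Kostlan's theorem (Corollary \ref{kostbis}), then reduce the resulting ratio of Gamma variables to a Beta via Lemma \ref{lemmebeta1}, and finally compute the limiting density explicitly.

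First, I would rewrite the statistic. By Proposition \ref{prop:angle} applied with $z_1=0$, one has
$$ |\arg(\lambda_1,\lambda_2)|^2 = |\Phi(X/(\sqrt N|\lambda_2|))|^2 = \frac{|X|^2/(N|\lambda_2|^2)}{1+|X|^2/(N|\lambda_2|^2)} = \frac{|X|^2}{N|\lambda_2|^2+|X|^2},$$
where $X$ is standard complex Gaussian, independent of the spectrum. Multiplying by $N$,
$$ N|\arg(\lambda_1,\lambda_2)|^2 = \frac{N|X|^2}{N|\lambda_2|^2+|X|^2}. $$
Note $|X|^2\stackrel{(d)}{=}\gamma_1$, independent of everything else.

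Next I would identify $N|\lambda_2|^2$ in law. By Corollary \ref{kostbis}, conditionally on $\lambda_1=0$ the unordered set $\{N|\lambda_2|^2,\dots,N|\lambda_N|^2\}$ is distributed as $\{\gamma_2,\dots,\gamma_N\}$ of independent Gammas. Since $\lambda_2,\dots,\lambda_N$ are exchangeable under $\nu_0$, picking the index $2$ amounts to choosing a uniform label in $\{2,\dots,N\}$, so $N|\lambda_2|^2\stackrel{(d)}{=}\gamma_{U_N}$ with $U_N$ uniform on $\{2,\dots,N\}$ and independent of the $\gamma_k$'s and of $X$. Therefore
$$ N|\arg(\lambda_1,\lambda_2)|^2 \stackrel{(d)}{=} \frac{N\gamma_1}{\gamma_{U_N}+\gamma_1} = N\cdot \frac{\gamma_1}{\gamma_{U_N}+\gamma_1}. $$
Conditionally on $U_N=k$, the identity $\gamma_a/(\gamma_a+\gamma_b)\stackrel{(d)}{=}\beta_{a,b}$ from (\ref{eqn:Gamma}) yields $\gamma_1/(\gamma_k+\gamma_1)\stackrel{(d)}{=}\beta_{1,k}$, which establishes the first equality $N|\arg(\lambda_1,\lambda_2)|^2\stackrel{(d)}{=}N\beta_{1,U_N}$.

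Finally I would derive the limiting density by a direct calculation. Conditional on $U_N=k$, the variable $N\beta_{1,k}$ has density $\frac{k}{N}(1-t/N)^{k-1}\mathbf{1}_{[0,N]}(t)$, so the density of $N\beta_{1,U_N}$ at $t\in(0,N)$ is
$$ f_N(t) = \frac{1}{N(N-1)}\sum_{k=2}^{N} k\left(1-\frac{t}{N}\right)^{k-1}. $$
Setting $q=1-t/N$ and using $\sum_{k=1}^{N}kq^{k-1}=\frac{d}{dq}\frac{1-q^{N+1}}{1-q}=\frac{1-(N+1)q^N(1-q)-q^{N+1}}{(1-q)^2}$, one obtains
$$ f_N(t) = \frac{1}{N(N-1)}\cdot\frac{N^2}{t^2}\Big(1-(1-t/N)^{N+1}-t(1+1/N)(1-t/N)^N\Big) -\frac{1}{N(N-1)}. $$
For fixed $t>0$, $(1-t/N)^N\to e^{-t}$ and $(1-t/N)^{N+1}\to e^{-t}$, so $f_N(t)\to\frac{1-(1+t)e^{-t}}{t^2}$. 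The tightness needed to upgrade pointwise convergence of densities to convergence in distribution is immediate because $f_N$ is bounded on compact sets and $\int_0^\infty \frac{1-(1+t)e^{-t}}{t^2}\,dt=1$ (this integral can be evaluated by integration by parts or by noting it is the limit of a probability density). The main technical point is thus the elementary but careful evaluation of the geometric-type sum above; no real obstacle arises.
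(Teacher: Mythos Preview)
Your proof is correct and follows essentially the same route as the paper: use Proposition~\ref{prop:angle} to write $|\arg(\lambda_1,\lambda_2)|^2$ as a ratio, invoke Corollary~\ref{kostbis} to identify $N|\lambda_2|^2\stackrel{(d)}{=}\gamma_{U_N}$, and then apply the Beta--Gamma identity~(\ref{eqn:Gamma}). The only difference is that you spell out the limiting density computation explicitly via the geometric sum, whereas the paper simply states that it ``follows from the explicit distribution of $\beta$ random variables''; your added detail is correct (and the passage to the limit is justified by Scheff\'e's lemma once you observe the limit integrates to~$1$).
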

\begin{proof} From Corollary \ref{kostbis}, $N |\lambda_2|^2 \sim \gamma_{U_N}$. Together with Lemma \ref{lemmebeta1}, this gives
$$| \arg(\lambda_1 ,\lambda_2) |^2 = {  {|\mathscr{N}|^2 \over N | \lambda_2|^2 } \over 1+  {|\mathscr{N}|^2 \over N | \lambda_2|^2 } } \overset{(\rm d)}{=} {\gamma_1 \over \gamma_1 + \gamma_{U_N}} \overset{(\rm d)}{=} \beta_{1,U_N}.$$
The limiting density then follows from the explicit distribution of $\beta$ random variables.
\end{proof}

\begin{bibdiv}
\begin{biblist}

\bib{Akemannetal}{article}{
title={On the determinantal structure of conditional overlaps for the complex Ginibre ensemble},
author={Akemann, G.},
author={Tribe, R.},
author={Tsareas, A.},
author={Zaboronski, O.},
eprint={arXiv:1903.09016},
year={2019}
}

\bib{AndGuiZei2010}{book}{
   author={Anderson, G.W.},
   author={Guionnet, A.},
   author={Zeitouni, O.},
   title={An introduction to random matrices},
   series={Cambridge Studies in Advanced Mathematics},
   volume={118},
   publisher={Cambridge University Press, Cambridge},
   date={2010},
   pages={xiv+492}
}

\bib{Andreiev}{article}{
   author={Andr\'eief, M.C.},
   title={Note sur une relation entre les int\'egrales d\'efinies des produits des fonctions},
   journal={M\'emoires de la soci\'et\'e des sciences physiques et naturelles de Bordeaux},
   volume={2},
   date={1883},
   pages={1-14}
}

\bib{ArgBelBou2017}{article}{
    AUTHOR = {Arguin, L.-P.},
    AUTHOR = {Belius ,D.},
    AUTHOR = {Bourgade, P.},
     TITLE = {Maximum of the characteristic polynomial of random unitary matrices},
   JOURNAL = {{\it Comm. Math. Phys.}},
    VOLUME = {349},
      YEAR = {2017},
      PAGES = {703--751}
}

\bib{BelNowSpeTar2017}{article}{
   author={Belinschi, S.},
   author={Nowak, M. A.},
   author={Speicher, R.},
   author={Tarnowski, W.},
   title={Squared eigenvalue condition numbers and eigenvector correlations
   from the single ring theorem},
   journal={J. Phys. A},
   volume={50},
   date={2017},
   number={10},
   pages={105204, 11}
}

\bib{BhaRao76}{book}{
   author={Bhattacharya, R. N.},
   author={Ranga Rao, R.},
   title={Normal approximation and asymptotic expansions},
   note={Wiley Series in Probability and Mathematical Statistics},
   publisher={John Wiley \& Sons, New York-London-Sydney},
   date={1976}
}

\bib{BolChaFon2017}{article}{
   author={Bolley, F.},
   author={Chafa\"{\i}, D.},
   author={Fontbona, J.},
   title={Dynamics of a planar Coulomb gas},
   journal={Ann. Appl. Probab.},
   volume={28},
   date={2018},
   number={5},
   pages={3152--3183}
}



\bib{BouYau2017}{article}{
   author={Bourgade, P.},
   author={Yau, H.-T.},
   title={The eigenvector moment flow and local quantum unique ergodicity},
   journal={Comm. Math. Phys.},
   volume={350},
   date={2017},
   number={1},
   pages={231--278}
}

%

\bib{BouYauYin2014II}{article}{
   author={Bourgade, P.},
   author={Yau, H.-T.},
   author={Yin, J.},
   title={The local circular law II: the edge case},
   journal={Probab. Theory Related Fields},
   volume={159},
   date={2014},
   number={3-4},
   pages={619--660}
}

\bib{BreDui2014}{article}{
   author={Breuer, J.},
   author={Duits, M.},
   title={The Nevai condition and a local law of large numbers for
   orthogonal polynomial ensembles},
   journal={Adv. Math.},
   volume={265},
   date={2014}
}

\bib{BurGreNowTarWar2014}{article}{
   author={Burda, Z.},
   author={Grela, J.},
   author={Nowak, M. A.},
   author={Tarnowski, W.},
   author={Warcho\l{}, P.},
   title={Dysonian Dynamics of the Ginibre Ensemble},
   journal={Phys. Rev. Letters},
   volume={113},
   date={2014}
}

\bib{ChaMeh1998}{article}{
   author={Chalker, J. T.},
   author={Mehlig, B.},
      title={Eigenvector statistics in non-Hermitian random matrix ensembles},
   journal={Phys. Rev. Lett.},
   volume={81},
   date={1998},
      number={16},
      pages={3367--3370}
}

\bib{ChaMeh2000}{article}{
   author={Chalker, J. T.},
   author={Mehlig, B.}, 
   title={Statistical properties of eigenvectors in non-Hermitian Gaussian
   random matrix ensembles},
   journal={J. Math. Phys.},
   volume={41},
   date={2000},
   number={5},
   pages={3233--3256}
}

\bib{CrawfordRosenthal}{article}{
  title={Eigenvector correlations in the complex Ginibre ensemble},
  author={Crawford, N.},
  author={Rosenthal, R.},
  eprint={arXiv:1805.08993},
  year={2018}
}

\bib{DavyGenack}{article}{
  title = {Probing nonorthogonality of eigenfunctions and its impact on transport through open systems},
  author = {Davy, M.},
  author={Genack, A. Z.},
  journal = {Phys. Rev. Research},
  volume = {1},
  issue = {3},
  pages = {033026},
  numpages = {7},
  year = {2019},
  month = {Oct},
  publisher = {American Physical Society},
}

\bib{DeiGio}{book}{
   author={Deift, P.},
   author={Gioev, D.},
   title={Random matrix theory: invariant ensembles and universality},
   series={Courant Lecture Notes in Mathematics},
   volume={18},
   publisher={Courant Institute of Mathematical Sciences, New York; American
   Mathematical Society, Providence, RI},
   date={2009},
   pages={x+217}
}

\bib{DiaFre1987}{article}{
   author={Diaconis, P.},
   author={Freedman, D.},
   title={A dozen de Finetti-style results in search of a theory},
   language={English, with French summary},
   journal={Ann. Inst. H. Poincar\'e Probab. Statist.},
   volume={23},
   date={1987},
   number={2, suppl.},
   pages={397--423}
}

\bib{Dub2017}{article}{
   author={Dubach, G.},
   title={Powers of Ginibre Eigenvalues},
   journal = {Electron. J. Probab.},
   volume={23},
   date={2018},
   pages={1--31},
}

\bib{DubachQGE}{article}{
  title={Symmetries of the Quaternionic Ginibre Ensemble},
  author={Dubach, G.},
  eprint={arXiv:1811.03724},
  year={2019}
}

\bib{DubachSpherical}{article}{
  title={On eigenvector statistics in the spherical and truncated unitary ensembles},
  author={Dubach, G.},
  journal={},
  eprint={arXiv:1908.06713},
  year={2019}
}

\bib{ErdKruRen2017}{article}{
   author={Erd{\H o}s, L.},
   author={Kr\"{u}ger, T.},
   author={Renfrew, D.},
   title={Power law decay for systems of randomly coupled differential
   equations},
   journal={SIAM J. Math. Anal.},
   volume={50},
   date={2018},
   number={3},
   pages={3271--3290}
}

\bib{Ede1997}{article}{
   author={Edelman, A.},
   title={The probability that a random real Gaussian matrix has $k$ real
   eigenvalues, related distributions, and the circular law},
   journal={J. Multivariate Anal.},
   volume={60},
   date={1997},
   number={2},
   pages={203--232}}

\bib{For2010}{book}{
   author={Forrester, P. J.},
   title={Log-gases and random matrices},
   series={London Mathematical Society Monographs Series},
   volume={34},
   publisher={Princeton University Press},
   place={Princeton, NJ},
   date={2010},
   pages={xiv+791}}

   
\bib{Fyodorov2018}{article}{
title={On statistics of bi-orthogonal eigenvectors in real and complex ginibre ensembles: combining partial schur decomposition with supersymmetry},
author={Fyodorov, Y. V.},
journal={Communications in Mathematical Physics},
volume={363},
number={2},
pages={579--603},
year={2018},
publisher={Springer}
}

\bib{FyoMeh2002}{article}{
   author={Fyodorov, Y.V.},
   author={Mehlig, B.},
   title={Statistics of resonances and nonorthogonal eigenfunctions in a model for single-channel chaotic scattering},
   journal={Phys Rev E}, 
   volume={66},
   date={2002},
   number={4},
   pages={045202(R)} 
   }

   \bib{FyoSav2012}{article}{
   author={Fyodorov, Y.V.},
   author={Savin, D.V.},
   title={Statistics of Resonance Width Shifts as a Signature of Eigenfunction Nonorthogonality},
   journal={Phys. Rev. Lett.},
   volume={108},
   date={2012},
   number={18},
   pages={184101} 
   }

   \bib{FyoSom2003}{article}{
   author={Fyodorov, Y.V.},
   author={Sommers, H.-J.},
   title={Random matrices close to Hermitian or unitary: overview of methods and results},
   journal={J. Phys. A: Math. Gen.},
   volume={36},
   date={2003},
   number={12},
   pages={3303--3347}
   }

\bib{Gin1965}{article}{
   author={Ginibre, J.},
   title={Statistical ensembles of complex, quaternion, and real matrices},
   journal={J. Mathematical Phys.},
   volume={6},
   date={1965},
   pages={440--449}
}

\bib{GoeSki2011}{article}{
   author={Goetschy, A.},
   author={Skipetrov, S. E..},
   title={Non-Hermitian Euclidean random matrix teory},
   journal={Phys Rev E.},
   volume={84},
   date={2011}
}

\bib{GrelaWarchol}{article}{
  title={Full Dysonian dynamics of the complex Ginibre ensemble},
  author={Grela, J.},
  author={Warcho{\l}, P.},
  journal={Journal of Physics A: Mathematical and Theoretical},
  volume={51},
  number={42},
  pages={425-203},
  year={2018},
}

\bib{GroLegMorRicSav2014}{article}{
   author={Gros, J.B.},
   author={Kuhl, U.},
   author={Legrand, O.},
   author={Mortessagne, F.},
   author={Richalot, E.},
   author={Savin, D. V.},
   title={Experimental Width Shift Distribution: A Test of Nonorthogonality for Local and Global Perturbations},
   journal={Phys Rev Lett.},
   volume={113},
   date={2014}
}

\bib{HouKriPerVir2006}{article}{
   author={Hough, J. B.},
   author={Krishnapur, M.},
   author={Peres, Y.},
   author={Vir\'ag, B.},
   title={Determinantal processes and independence},
   journal={Probab. Surv.},
   volume={3},
   date={2006},
   pages={206--229}
}

%

\bib{JanNorNovPapZah1999}{article}{
   author={Janik, R. A.},
   author={Noerenberg, W.},
   author={Nowak, M. A.},
   author={Papp, G.},
   author={Zahed, I.},
   title={Correlations of eigenvectors for non-Hermitian random-matrix models},
   journal={Phys. Rev. E},
   volume={60},
   date={1999},
   number={3},
   pages={2699--2705}
}

\bib{JohKot1970}{book}{
   author={Johnson, N. L.},
   author={Kotz, S.},
   title={Distributions in statistics. Continuous univariate distributions.
   2. },
   publisher={Houghton Mifflin Co., Boston, Mass.},
   date={1970},
   pages={xiii+306}
}

\bib{Kat1980}{book}{
   author={Kato, T.},
   title={Perturbation Theory for Linear Operators},
   publisher={Springer-Verlag, Berlin, corrected second edition, },
   date={1980},
}

\bib{Kel2008}{article}{
   author={Keller, J. B.},
   title={Multiple eigenvalues},
   journal={Linear Algebra Appl.},
   volume={429},
   date={2008},
   number={8-9},
   pages={2209--2220}
   }
   
\bib{KhoSom2011}{article}{
   author={Khoruzhenko, B. A.},
   author={Sommers, H. J.},
   title={Non-Hermitian Ensembles},
   journal={The Oxford Handbook of Random Matrix Theory, Editors: Akemann, G, Baik, J, Francesco, PD, Oxford University Press},
   date={2011}
}

\bib{Kos1992}{article}{
   author={Kostlan, E.},
   title={On the spectra of Gaussian matrices},
   note={Directions in matrix theory (Auburn, AL, 1990)},
   journal={Linear Algebra Appl.},
   volume={162/164},
   date={1992},
   pages={385--388}
}

\bib{KnoYin2013}{article}{
   author={Knowles, A.},
   author={Yin, J.},
   title={Eigenvector distribution of Wigner matrices},
   journal={Probab. Theory Related Fields},
   volume={155},
   date={2013},
   number={3-4},
   pages={543--582}
}

\bib{Lambert}{article}{
   author={Lambert, G.},
   title={The law of large numbers for the maximum of the characteristic polynomial of the Ginibre ensemble},
   eprint={arXiv:1902.01983},
   date={2019}
}

\bib{LehSom1991}{article}{
   author={Lehmann, N.},
   author={Sommers, H.-J.},
   title={Eigenvalue statistics of random real matrices},
   journal={Phys. Rev. Lett.},
   volume={67},
   date={1991},
   number={8},
   pages={941--944}
}

\bib{Lyo2003}{article}{
   author={Lyons, R.},
   title={Determinantal probability measures},
   journal={Publ. Math. Inst. Hautes \'Etudes Sci.},
   number={98},
   date={2003},
   pages={167--212}
}

\bib{MehCha1998}{article}{
    author={Mehlig, B.},
     author={Chalker, J. T.},
      title={Eigenvector correlations in non-Hermitian random matrix ensembles},
   journal={Ann. Phys.},
   volume={7},
   date={1998},
      number={5-6},
      pages={427--436}
}

\bib{Meh1991}{book}{
   author={Mehta, M. L.},
   title={Random matrices},
   edition={2},
   publisher={Academic Press, Inc., Boston, MA},
   date={1991},
   pages={xviii+562}
}

\bib{Mov2016}{article}{
   author={Movassagh, R.},
   title={Eigenvalue attraction},
   journal={J. Stat. Phys.},
   volume={162},
   date={2016},
   number={3},
   pages={615--643}}

\bib{NowTar2018}{article}{
   author={Nowak, M. A.},
   author={Tarnowski, W.},
   title={Probing non-orthogonality of eigenvectors in non-Hermitian matrix models: diagrammatic approach},
   journal={J. High Energy Phys.},
   date={2018},
   number={6},
   pages={152},
}

\bib{OveWom1988}{article}{
   author={Overton, M. L.},
   author={Womersley, R. S.},
   title={On minimizing the spectral radius of a nonsymmetric matrix
   function: optimality conditions and duality theory},
   journal={SIAM J. Matrix Anal. Appl.},
   volume={9},
   date={1988},
   number={4},
   pages={473--498}
}

\bib{Rot2009}{article}{
   author={Rotter, I.},
   title={A non-Hermitian Hamilton operator and the physics of open quantum systems},
   journal={J. Phys. A: Math. Theor.},
   volume={42},
   date={2009},
   pages={153001}
}

\bib{RudVer2015}{article}{
   author={Rudelson, M.},
   author={Vershynin, R.},
   title={Delocalization of eigenvectors of random matrices with independent
   entries},
   journal={Duke Math. J.},
   volume={164},
   date={2015},
   number={13}
}

\bib{SchFraPatBee2000}{article}{
   author={Schomerus, H.},
   author={Frahm, K.M.},
   author={Patra, M.},
   author={Beenakker, C.W.J.},
   title={Quantum limit of the laser line width in chaotic cavities and statistics of residues of scattering matrix poles},
   journal={Physica A},
   volume={278},
   number={3-4},
   date={2000},
   pages={469--496}
}

\bib{TaoVu2012}{article}{
   author={Tao, T.},
   author={Vu, V.},
   title={Random matrices: universal properties of eigenvectors},
   journal={Random Matrices Theory Appl.},
   volume={1},
   date={2012},
   number={1}
}

\bib{Tre2005}{article}{
   author={Trefethen, L. N.},
   author={Embree, M.},
      title={Spectra and pseudospectra, The Behavior of NonnormaI Matrices and Operators},
   journal={Princeton University Press},
   date={2005},
   number={1}
}

\bib{TreTreRedDri1993}{article}{
   author={Trefethen, L. N.},
   author={Trefethen, A. E.},
   author={Reddy, S. C.},
   author={Driscoll, T. A.},
   title={Hydrodynamic stability without eigenvalues},
   journal={Science},
   volume={261},
   date={1993},
   pages={578,584}
}

\bib{WalSta2015}{article}{
   author={Walters, M.},
   author={Starr, S.},
   title={A note on mixed matrix moments for the complex Ginibre ensemble},
   journal={J. Math. Phys.},
   volume={56},
   date={2015},
   number={1},
   pages={013301, 20}
}

\bib{WebWon2017}{article}{
   author={Webb, C.},
   author={Wong, M. D.},
   title={On the moments of the characteristic polynomial of a Ginibre
   random matrix},
   journal={Proc. Lond. Math. Soc. (3)},
   volume={118},
   date={2019},
   number={5},
   pages={1017--1056}
}

\end{biblist}
\end{bibdiv}

\end{document}